\documentclass{amsart}

\numberwithin{equation}{section}

\usepackage{amssymb}
\usepackage{enumerate, xspace}
\usepackage{tikz}
\usepackage{marvosym} 
\usepackage[sans]{dsfont}        
\usepackage[colorlinks]{hyperref}

\hfuzz=15pt


\newtheorem{thm}{Theorem}[section]
\newtheorem{lem}[thm]{Lemma}
\newtheorem{cor}[thm]{Corollary}

\newtheorem{prop}[thm]{Proposition}
\newtheorem*{conj}{Conjecture} 
\newtheorem{defin}{Definition}

\newtheorem{rem}[thm]{Remark}
\newtheorem*{notation}{Notation}
\newcommand\cA{{\mathcal A}}
\newcommand\cB{{\mathcal B}}

\newcommand\cF{{\mathcal F}}
\newcommand\cG{{\mathcal G}}
\newcommand\cH{{\mathcal H}}

\newcommand\cL{{\mathcal L}}
\newcommand\cO{{\mathcal O}}
\newcommand\cP{{\mathcal P}}

\newcommand\cM{{\mathcal M}}
\newcommand\cT{{\mathcal T}}
\newcommand\cV{{\mathcal V}}
\newcommand\cW{{\mathcal W}}
\newcommand\cZ{{\mathcal Z}}


\newcommand\bB{{\mathbb B}}
\newcommand\bC{{\mathbb C}}

\newcommand\bE{{\mathbb E}}
\newcommand\bF{{\mathbb F}}
\newcommand\bN{{\mathbb N}}
\newcommand\bM{{\mathbb M}}
\newcommand\bP{{\mathbb P}}
\newcommand\bQ{{\mathbb Q}}
\newcommand\bR{{\mathbb R}}

\newcommand\bV{{\mathbb V}}
\newcommand\bZ{{\mathbb Z}}


\newcommand\vf{\varphi}




\newcommand{\Const}{C_\#}
\newcommand\Id{{\mathds{1}}}

\newcommand{\Hl}{\widehat{\mathcal{L}}}
\newcommand\bomega{{\bar\omega}}
\newcommand{\bw}{\bar w}

\newcommand{\betavar}{\varpi}
\newcommand{\exalpha}{\varrho}
\newcommand{\essup}{{\rm ess\, sup}}
\newcommand{\ds}{{\delta_\star}}

\begin{document}

\title{Deterministic walks in random environment}
\author{Romain Aimino}
\address{Romain Aimino\\
Departamento de Matem\'atica\\
Faculdade de Ci\^encias da Universidade do Porto\\
Rua do Campo Alegre, 687, 4169-007 Porto, Portugal.}
\email{{\tt romain.aimino@fc.up.pt}}
\author{Carlangelo Liverani}
\address{Carlangelo Liverani\\
Dipartimento di Matematica\\
II Universit\`{a} di Roma (Tor Vergata)\\
Via della Ricerca Scientifica, 00133 Roma, Italy.}
\email{{\tt liverani@mat.uniroma2.it}}
\begin{abstract}
Motivated by the random Lorentz gas, we study deterministic walks in random environment and show that (in simple, yet relevant, cases) they can be reduced to a class of random walks in random environment where the jump probability depends (weakly) on the past. In addition, we prove few basic results (hopefully the germ of a general theory) on the latter, purely probabilistic, model.
\end{abstract}
\thanks{L.C. thanks Dmitry Dolgopyat and Marco Lenci for several discussions on random walks  through the years. Also we would like to thank Serge Troubetzkoy for pointing out the possibility that, in our setting, the ergodicity of the process as seen from the particle might hold under very weak extra assumptions. We also thank the anonymous referee for several helpful suggestions. This work was partially supported by the European Advanced Grant Macroscopic Laws and Dynamical Systems (MALADY) (ERC AdG 246953). L.C. acknowledges the MIUR Excellence Department Project awarded to the Department of Mathematics, University of Rome Tor Vergata, CUP E83C18000100006. This work started while R.A. was affiliated to Universit\`a di Roma (Tor Vergata). R.A. was partially supported by FCT fellowship SFRH/BPD/123630/2016, by FCT projects PTDC/MAT-CAL/3884/2014 and PTDC/MAT-PUR/28177/2017, with national funds, and by CMUP (UID/MAT/00144/2019), which is funded by FCT with national (MCTES) and European structural funds through the programs FEDER, under the partnership agreement PT2020.}
\keywords{Random Lorentz gas, Random walk in random environment, decay of correlations}
\subjclass[2000]{60J15, 37A25, 37C30}
\maketitle
\section{Introduction}
The motion of a point particle among periodically distributed elastically reflecting convex bodies has been intensively studied for many years, both in the case of diluted obstacles and in the opposite case of high density. Diluted obstacles (the Boltzmann-Grad limit) can be treated by kinetic theory and lattice dynamics ideas (see \cite{MS11} and related work). The high density case has required more specific dynamical systems tools, starting with the seminal work of Bunimovich, Sinai and Chernov \cite{BS91} until the recent and much more precise results obtained in \cite{DSV08}. In particular, the latter result uses the new standard pairs and martingale problem techniques introduced in the field by Dolgopyat (see \cite{DeL15} for an elementary introduction to such ideas and references to the original works).

All the above deal with the periodic case, yet any material is expected to have defects. Hence, the study of obstacles distributed according to a random, translation invariant process is of paramount  importance. 

Unfortunately very few results are available in the non periodic case with the notable exception of the low density regime (Boltzmann-Grad limit), see \cite{marklof-14} and reference therein. For high obstacle density the only results are \cite {Lenci03, Lenci06}, where recurrence is proven  for special examples, and \cite{DSV09}, where the Central Limit Theorem is proven when the periodicity is broken only in a finite region (hence translation invariance is violated). 

However some basic results  hold in full generality: \cite{Conze99, Schmidt98} establish a criteria for recurrence and \cite{Lenci03} shows that recurrent systems are ergodic. For example, by the criteria in \cite{Conze99, Schmidt98}, the problems of recurrence and ergodicity are reduced, in the two dimensional case, to establishing a CLT, see \cite{Lenci06} for details. Also, establishing recurrence in the case of a one dimensional array of obstacles (tubes) is substantially simpler and has been obtained in \cite{CLS10, CLS10b, SLEC11, Lenci-Troubetzkoy}. Nevertheless, even in the simpler one dimensional situation, the study of rate of mixing and CLT is wide open.

Part of the difficulty in studying the above problems stems from the fact that, on the one hand, one needs non trivial results for the dynamical system and, on the other hand, one has to overcome the same  obstacles that exist in analysing the problems of random walks in random environment (see \cite{Zeitouni04} for a review on the subject). In particular, on the dynamical system side it seems necessary to establish some type of {\em memory loss}. In other words, one must show that the deterministic dynamics is akin to a random process.

In this article we aim at separating the above two difficulties, so that they can be (hopefully) solved independently. To this end we investigate more general models: a) deterministic random walks in random environment (see section \ref{sec:walk-det}); b) random walks (with memory) in random environment (see section \ref{sec:walk-rand}). The former include the random Lorentz gases; the latter include {\em persistent random walks} but allows for infinite memory.

These two classes of models are connected by the following conjecture: relevant classes of deterministic walks in random environment are equivalent to the above purely probabilistic models. Hence, e.g.,  establishing the CLT for the probabilistic model implies the CLT for the deterministic walk. 

\subsection{Results and structure of the paper}
The paper is organised as follows: in the next section we discuss briefly the Lorentz gases establishing, in the process, several notations needed in the following. In Section \ref{sec:walk-det} we describe a general class of deterministic walks in random environment which encompass both the Lorentz gas and the example with a simpler dynamics that we will consider later. 

In Section \ref{sec:walk-rand} we describe the class of random systems. We establish properties under which one can prove the ergodicity of the random process as seen from the particle. In particular, if one would succeed in reducing the Lorentz gas to such a probabilistic model, then one would automatically recover all the known results. Of course, we believe that much more  would follow form such a reduction.  As a first step, in Section \ref{sec:sup-simp}, we prove the reversibility of the associated probabilistic model. In the same section we show that if one restricts the dynamics to a Markov one dimensional expanding map, then the dynamical part of the problem can be completely obliterated and one is led to known purely probabilistic models.

Next, in Section \ref{sec:model}, we explore more realistic (but still one dimensional) models. We explain under which conditions the deterministic dynamics can be obliterated yielding a probabilistic model as in Section \ref{sec:walk-rand}. 
In Section \ref{sec:ex-bis} we verify  such conditions for a class of one dimensional non-Markov expanding maps. In the following sections we prove  the statements of Section \ref{sec:model}.

\begin{notation} In the following we will use $\Const$ to designate a generic constant that depends only on the parameters or the assumptions of the considered model. The actual value of $\Const$ is immaterial. In particular, the value of $\Const$ can change from one occurrence to the next.
\end{notation}
\section{The Random Lorentz gas}\label{sec:lorentz}
The random Lorentz gas consists of a distribution of convex, non overlapping, obstacles in $\bR^d$, $d\geq 2$ and independent point particles that move of free motion and collides elastically with the obstacles. If one describes the particle density by a distribution, then the problem is reduced to studying the motion of one particle with an initial distribution given by a measure. If the obstacle distribution is described by some probability measure, then the goal is to study the dynamics of the particle for almost all obstacle distributions.

Of course, the problem depends heavily on the obstacle distribution. Two reasonable assumptions are that the distribution enjoys some type of stationarity and ergodicity with respect to some subgroup of the space translations.
Also a key issue is the existence or not of trajectories that can spend an unboundedly long time without experiencing any collision. Given the many possibilities, let us restrict to small perturbations of a periodic array of discs on $\bZ^2$. Note however that similar examples can be considered for $d=1$ (Lorentz tubes \cite{CLS10, CLS10b, SLEC11}) and $d>2$ . 

Following \cite{Lenci06} we start with an exactly periodic distribution of discs of radius $R$ centred on the square  lattice $\bZ^2$, such a periodic array divides naturally $\bR^2$ in cells. We assume that $R<1/2$, which implies that the obstacles do not overlap. We require that trajectories that are non horizontal nor vertical will eventually collide with one of the obstacle. 

\begin{lem} In a periodic Lorentz gas with obstacles made by discs of radius $R$ centred on the square  lattice $\bZ^2$
 if $R> \frac 1{2\sqrt 2}$, then the only trajectories that never experience a collision are either horizontal or vertical.
 \end{lem}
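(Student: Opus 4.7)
My plan is to analyze the trajectory as a straight line in $\bR^2$ (a non-colliding trajectory by definition never reflects, so it is a bi-infinite line) and to show that if its direction is neither horizontal nor vertical then some lattice point $(m,n)\in\bZ^2$ lies within distance $R$ of it. Since the obstacle at $(m,n)$ is a disc of radius $R$, this forces a collision. I would split into cases according to whether the slope is rational or irrational.

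For an irrational slope, I would use the standard Kronecker/Weyl argument: the projection of the line to the torus $\bR^2/\bZ^2$ is dense, so for every $\eps>0$ there is a lattice point within distance $\eps$ of the line; taking $\eps<R$ closes this case. This is the easy case and no use is made of the explicit bound $R>\frac{1}{2\sqrt{2}}$.

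For a rational slope $p/q$ with $\gcd(p,q)=1$ and $p,q$ both nonzero, write the line in the form $px-qy+c=0$. The distance from $(m,n)\in\bZ^2$ to this line is
\[
\frac{|pm-qn+c|}{\sqrt{p^2+q^2}}.
\]
Since $\gcd(p,q)=1$, the set $\{pm-qn : (m,n)\in\bZ^2\}$ equals $\bZ$, so the infimum over lattice points of the distance equals $\frac{\operatorname{dist}(c,\bZ)}{\sqrt{p^2+q^2}}\le \frac{1}{2\sqrt{p^2+q^2}}$. The largest value of this upper bound, over all coprime pairs with $p,q$ both nonzero, is attained at $(p,q)=(1,\pm1)$ and equals $\frac{1}{2\sqrt{2}}$. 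Hence, under the hypothesis $R>\frac{1}{2\sqrt{2}}$, every line of nonzero rational slope passes within distance $R$ of some lattice point, yielding a collision.

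The main point to get right is the little number-theoretic step identifying the worst case slope: one must check that among coprime pairs $(p,q)\in\bZ^2$ with $pq\ne 0$, $p^2+q^2$ is minimised (and equals $2$) exactly at $(p,q)=(\pm1,\pm1)$, so the diagonal directions are the critical ones; this is immediate from $p^2+q^2\ge 2$ with equality iff $|p|=|q|=1$. Everything else is routine. A small additional remark: strictness of the inequality $R>\frac{1}{2\sqrt{2}}$ is used only to ensure that the line actually enters the open disc (and therefore crosses its boundary transversally); if instead $R=\frac{1}{2\sqrt{2}}$, a diagonal trajectory positioned exactly through the midpoints of the cells would be tangent to the obstacles and the statement would fail, which motivates the strict inequality in the hypothesis.
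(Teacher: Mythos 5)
Your proof is correct, and it takes a genuinely different, more elementary route than the paper's. The paper reduces to the torus $\bR^2/\bZ^2$, passes to the Poincar\'e section $\{(0,s)\}_{s\in[0,1]}$ on which the return map of the flow in direction $(1,\omega)$ is the circle rotation by $\omega$, computes the length of the obstacle's shadow on the section, and compares it to the maximal gap in the rotation orbit, treating $\omega=1$ separately. You instead stay in the plane and bound the distance from the trajectory line to the lattice directly: B\'ezout's identity gives $\{pm-qn:(m,n)\in\bZ^2\}=\bZ$ for coprime $p,q$, so the minimal distance equals $\operatorname{dist}(c,\bZ)/\sqrt{p^2+q^2}\le \tfrac{1}{2\sqrt{p^2+q^2}}$, and minimising $p^2+q^2\geq 2$ over coprime pairs with both entries nonzero shows the diagonals are critical; the irrational-slope case follows from density of the linear flow on the torus. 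Both arguments single out the diagonal directions as extremal --- in the paper this is the $\omega=1$ case, which produces the constant $\tfrac{1}{2\sqrt 2}$ --- but your derivation sidesteps the computation of the shadow length and the gap analysis for rational rotations, replacing them with a one-line coprimality argument, so it is both shorter and more transparent about where the constant comes from. Your closing remark about tangency at $R=\tfrac{1}{2\sqrt 2}$ (a diagonal line through the cell midpoints grazes the obstacles without reflecting) correctly explains why the inequality in the hypothesis must be strict.
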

 
\begin{proof}
By periodicity, we can reduce the motion to a motion on the torus $\bR^2/\bZ^2$ with just one obstacle of radius $R$ at its center. 
Also we can limit ourselves to trajectories of the type $\xi+(1,\omega)t\mod 1$ with $\omega\in (0,1]$ since the other cases can be obtained by symmetry. Note that a flow on the torus in the direction $v_\omega:=(1,\omega)$ induces the rotation $f(s)=s+\omega\mod 1$ on the Poincar\'e section $\{(0, s)\}_{s\in [0,1]}$.  In addition the shadow, in the direction $v_\omega$, of the obstacle on the Poincar\'e section is a segment of length $\frac{2 R\sqrt{1+\omega^2}}{\omega}$. Accordingly, the trajectory can avoid collisions only if the rotation on the section always avoids some interval of such a length. It follows $\omega\not\in\bQ$. On the other hand if $\omega=\frac pq$, $p,q\in\bN$ relatively prime, then $f$ has all periodic orbits. Let us consider first the case $p<q$, then $q=ap+b$ for some $a,b\in \bN$, $b<p$. Accordingly,
\[
1-f^a(0)=1-a\frac pq=\frac{\frac bp}{a+\frac bp}\leq \frac 1{a+1}\leq \frac 12.
\]
Thus the maximal gap in the trajectory of $f$ is of length $1/2$ and hence if $\frac{2 R\sqrt{1+\omega^2}}{\omega}>\frac 12$ all trajectories will collide with the center obstacle. This is implied by $R> \frac 1{4\sqrt 2}$. We are left with the case $\omega=1$. In this case, if $R> \frac1 {2\sqrt 2}$, then the shadow of the obstacle is larger than 1, thus it covers all the Poincar\'e section, hence the claim. 
\end{proof}

In the following we assume $R>\frac{1}{2\sqrt 2}$.

Next, in each cell we assume that there is another disc of radius $r$ with center in a $\delta$-neighborhood of the center of the cell. We assume that $r+\delta+R<1/\sqrt 2$, which imply that the obstacles do not overlap. Moreover,  we assume $R> \frac 1{2\sqrt 2}$ which implies that, in absence of the center obstacle, the only trajectories that never collide are either horizontal or vertical. Note that the above implies $r+\delta< \frac {1}{2\sqrt 2}$. It then suffices to ask $r-\delta+R>\frac 12$ to ensure that the horizontal and vertical trajectories collide with the center obstacle, hence establishing that the array has the finite horizon property. 
The locations of the central obstacle in different cells is an i.i.d. random variable.
\begin{figure}[ht]\
\begin{minipage}{.48 \linewidth} 
\hspace{.5cm}	
\begin{tikzpicture}[scale=0.25]
\fill[gray!20!white] (0,0) circle (1.5);
\draw (0,0) circle (1.5);
\fill[gray!20!white] (4,0) circle (1.5);%
\draw (4,0) circle (1.5);
\fill[gray!20!white] (8,0) circle (1.5);%
\draw (8,0) circle (1.5);
\fill[gray!20!white] (0,-4) circle (1.5);
\draw (0,-4) circle (1.5);
\fill[gray!20!white] (4,-4) circle (1.5);%
\draw (4,-4) circle (1.5);
\fill[gray!20!white] (8,-4) circle (1.5);%
\draw (8,-4) circle (1.5);
\fill[gray!20!white] (0,-8) circle (1.5);
\draw (0,-8) circle (1.5);
\fill[gray!20!white] (4,-8) circle (1.5);%
\draw (4,-8) circle (1.5);
\fill[gray!20!white] (8,-8) circle (1.5);%
\draw (8,-8) circle (1.5);
\fill[gray!20!white] (2.2,-2) circle (1);
\draw (2.2,-2) circle (1);
\fill[gray!20!white] (6,-1.8) circle (1);
\draw (6,-1.8) circle (1);
\fill[gray!20!white] (2,-6.3) circle (1);
\draw (2,-6.3) circle (1);
\fill[gray!20!white] (6,-5.7) circle (1);
\draw (6,-5.7) circle (1);
\end{tikzpicture}
\end{minipage}
\begin{minipage}{.48 \linewidth}
\hspace{1cm}
\begin{tikzpicture}[scale=0.33]
\draw[dashed] (0,0)--(8,0);
\draw [dashed](0,0)--(0,-8);
\draw[dashed](0,-8)--(8,-8);
\draw[dashed](8,-8)--(8,0);
\draw[very thick, dashed] (3,0)--(5,0);
\draw[very thick, dashed] (3,-8)--(5,-8);
\draw[very thick, dashed] (0,-3)--(0,-5);
\draw[very thick, dashed] (8,-5)--(8,-3);
\fill[gray!20!white] (0,0)--(0,-3) arc (-90:0:3)--cycle;
\draw[very thick] (0,-3) arc (-90:0:3);
\fill[gray!20!white] (0,-8)--(3,-8) arc (0:90:3)--cycle;
\draw[very thick] (3,-8) arc (0:90:3);
\fill[gray!20!white] (8,-8)--(5,-8) arc (180:90:3)--cycle;
\draw[very thick] (5,-8) arc (180:90:3);
\fill[gray!20!white] (8,0)--(5,0) arc (180:270:3)--cycle;
\draw[very thick] (5,0) arc (180:270:3);
\fill[gray!20!white] (4.3,-4.4) circle (2);
\draw (4.3,-4.4) circle (2);
\node at (1.7,-1.2) {$C_2$};
\node at (6.3,-1.2) {$C_1$};
\node at (1.7,-6.2) {$C_3$};
\node at (6.4,-6.2) {$C_4$};
\node at (4,-4) {$C_5$};
\node at (8.7,-4) {$B_1$};
\node at (-.8,-4) {$B_2$};
\node at (4,-.5) {$B_3$};
\node at (4,-7.5) {$B_4$};
\end{tikzpicture}
\end{minipage}
\\ \vskip.1cm
{ Fig 1.a {\it Random obstacle configuration}\hskip1cm Fig 1.b {\it Poincar\'e section (in bold)}\hspace{1cm}}
\end{figure}
See Figure 1.a for an illustration.

The above is a reasonable model for a material with a periodic structure and random impurities. 
Let $\|\omega_z\|\leq\delta$ be the displacement of the center obstacle from the center of the cell $z \in \bZ^2$. The obstacles are distributed according to a product measure $\bP$ on $\Omega=\{\omega\in\bR^2\;:\;\|\omega\|\leq\delta\}^{\bZ^2}$. On $\Omega$ are defined the translations $\tau_z:\Omega\to\Omega$ by $z\in\bZ^2$: for all $\bomega\in\Omega$ and $w\in\bZ^2$
\[
\tau_z(\bomega)_w=(\bar \omega)_{w+z}.
\]
Next, consider a particle with position $q\in\bR^2$ and velocity $p$, $\|p\|=1$, moving among the obstacles with elastic collisions. 

The dynamics is deterministic but we consider stochastic initial conditions: Assume w.l.o.g. that the particle starts from the zero cell with positions and velocity described by a smooth distribution $h_0$.

Let $(q(t),p(t))$ be the position and velocity at time $t$. 
Natural questions are: does it exist an asymptotic velocity (Law of large numbers)
\begin{equation}\label{eq:lln}
V=\lim_{t\to\infty} t^{-1} [q(t)-q(0)].
\end{equation}
If so, does the Central Limit Theorem holds? That is, does
\begin{equation}\label{eq:clt}
\frac 1{\sqrt t}[q(t)-q(0)-Vt]
\end{equation}
converge in law to a Gaussian Random Variable $\bP$ a.s. (quenched CLT).
\subsection{Poincar\'e section} Consider a single cell (see Figure 1.b) and let $C=\cup_{i=1}^4 C_i$. $C_i$ being one of the bold arcs centered at the corners of the box, and  let $G_{(0,0)}=C\times [-\pi/2,\pi/2]$;  $[-\pi/2,\pi/2]$ being the angle that the post-collisional velocity forms with the external normal. 
Next, let $\{B_i\}_{i=1}^4$ be the segments constituting the boundary of the box not contained in the obstacles  (see Figure 1.b) and set $G_{(1,0)}=B_1\times [-\pi/2,\pi/2]$,  $G_{(-1,0)}=B_2\times [-\pi/2,\pi/2]$, $G_{(0,1)}=B_3\times [-\pi/2,\pi/2]$ and $G_{(0,-1)}=B_4\times [-\pi/2,\pi/2]$.
Next, let  $\cW= \{(0,0), (\pm 1, 0), (0, \pm 1) \}$, $\cB=\cup_{w\in\cW}G_w$ and consider the phase space $\bB=\cB\times \bZ^2$. This is a Poincar\'e section. 

For each $\bomega$, given a particle at $(x,z)\in\bB$, we follow its motion till it hits $\cB$ again, this defines a map $f_{\bomega_z}(x)$. The  Poincar\'e map $\bF_{\bomega}:\bB\to\bB$ associated to the billiard flow is then defined by
\begin{equation}\label{eq:poinc-map}
\begin{split}
&\bF_{\bomega}(x,z)=(f_{\bomega_{z+e(\bomega_z, x)}}(x),z+e(\bomega_z,x))\\
&e(\bomega_z,x)=\sum_{w \in \cW} \Id_{G_w}(x) w,
\end{split}
\end{equation}
where $\Id_B$ is the characteristic function of the set $B$. 

\subsection{The random process}
For each initial density $h_0$ and $\bomega\in\Omega$ we have thus defined the random variables\footnote{Using the probabilistic usage we will often suppress the $\bomega$ dependency, when this does not create confusion.}
\[
( x(\bomega,n),z(\bomega, n))=\bF^{n}_{\bomega}(x,z).
\]

Let $\bM=\{(z(n))\in (\bZ^2)^{\bN}\;: \; z(0) = 0, \; z(n+1)-z(n)\in\cW\}$ be the path space and consider the random process on $\Omega_\star=\Omega\times\bM$ defined as follows. For each $\bomega\in\Omega$ define the conditional probability
\begin{equation}\label{eq:proba}
\bP_\star(\{z(1),\dots, z(n)\}\;|\;\bomega)=\int_\cB  \prod_{k=0}^{n-1} \Id_{G_{w(k)}}(x(\bomega, k)) h_0(x)dx,
\end{equation}
where $w(k)=z(k+1)-z(k)$. While $\bomega$ is distributed according to $\bP$.

Next, we introduce the transfer operators\footnote{In the present case the set on which we take the sum consists of only one element and the determinant of the Jacobian of the $f_\omega$ is always one.}
\[
\cL_{\bomega, z,w}\vf(x)=\sum_{\{y\;:\; f_{\bomega_{z+w}}(y)=x\}}|\det \partial_y f_{\bomega_{z+w}} (y)|^{-1}\Id_{G_{w}}(y)\vf (y).
\]
Changing variable repeatedly, yields
\begin{equation}\label{eq:transfer-op-prob}
\bP_\star(\{z(1),\dots, z(n)\}\;|\;\bomega)=\int_\cB \cL_{\bomega,z(n-1),w(n-1)}\cdots\cL_{\bomega, z(0),w(0)} h_0.
\end{equation}
Thus the measure $\bP_\star$ can be expressed as products of transfer operators. 
The process $\bP_\star$ is, in general, not Markov, however we conjecture:
\begin{conj}
Under some {\em appropriate technical conditions} on $\bP$ and $h_0$, there exist $\nu \in (0,1)$ and $\Const >0$ such that for all $\bomega\in\Omega$
\[
\left|\mathbb{P}_\star( z(n) \; | \; z^n, \bomega) - \mathbb{P}_\star( \widehat{z}_m(n-m) \; | \; \widehat{z}^{n-m}, \tau_{z(m)} \bomega) \right| \leq \Const \nu^{n-m},
\]
where $z^k=z(1), \ldots, z(k-1)$, $\widehat{z}^k=\widehat{z}_m(1), \ldots, \widehat{z}_m(k-1)$ with $\widehat{z}_m(k) = z(m+k) - z(m)$.
\end{conj}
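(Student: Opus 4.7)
The plan is to express both conditional probabilities via \eqref{eq:transfer-op-prob} as ratios of the \emph{same} functional of a density, evaluated at two different inputs that are close in a projective sense, and then to invoke an exponential memory-loss property for products of the $\cL_{\bomega, z, w}$.

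Since $\int_\cB \cL_{\bomega, z, w} \Phi = \int_{G_w} \Phi$ by the change of variables $x=f_{\bomega_{z+w}}(y)$, equation \eqref{eq:transfer-op-prob} gives
\[
\bP_\star(z(n) \mid z^n, \bomega) \;=\; \frac{\int_{G_{w(n-1)}} \Phi_1}{\int_\cB \Phi_1}, \qquad \Phi_1 = \cL_{\bomega, z(n-2), w(n-2)}\cdots \cL_{\bomega, z(0), w(0)}\, h_0.
\]
On the other hand $\cL_{\bomega, z, w}$ depends on $\bomega$ only through $\bomega_{z+w}$, so one has the covariance $\cL_{\tau_{z(m)}\bomega,\, y,\, v} = \cL_{\bomega,\, y+z(m),\, v}$; combined with $\widehat z_m(k)+z(m)=z(m+k)$ this yields
\[
\bP_\star(\widehat z_m(n-m) \mid \widehat z^{n-m}, \tau_{z(m)}\bomega) \;=\; \frac{\int_{G_{w(n-1)}} \Phi_2}{\int_\cB \Phi_2}, \qquad \Phi_2 = \cL_{\bomega, z(n-2), w(n-2)}\cdots \cL_{\bomega, z(m), w(m)}\, h_0.
\]
Crucially, $\Phi_1$ and $\Phi_2$ are produced by the \emph{same} chain of $N = n-1-m$ operators applied respectively to the inputs $g_1 = \cL_{\bomega, z(m-1), w(m-1)}\cdots \cL_{\bomega, z(0), w(0)}\, h_0$ and $g_2 = h_0$.

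The heart of the argument is to produce a cone $\cC$ of non-negative densities, adapted to the regularity preserved by the system (a Birkhoff cone of log-Lipschitz functions for expanding maps, or a standard-pair / anisotropic-norm setting \`a la \cite{DSV08} in the Lorentz case), such that every $\cL_{\bomega, z, w}$ maps $\cC$ into itself and any $N$-fold composition contracts the projective (Hilbert) diameter at a uniform exponential rate $\nu \in (0,1)$, independently of $\bomega$ and of the cell indices. Standard Birkhoff--Hopf arguments then convert projective contraction into the total-variation bound
\[
\Bigl\|\Phi_1\Big/\!\!\int_\cB\!\Phi_1 \;-\; \Phi_2\Big/\!\!\int_\cB\!\Phi_2\Bigr\|_{L^1} \leq \Const\, \nu^{N},
\]
from which the conjectural inequality follows by testing against $\Id_{G_{w(n-1)}}$. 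The ``appropriate technical conditions" encode exactly that $h_0 \in \cC$ (which forces $g_1 \in \cC$ too) and that the cone, together with its contraction rate, can be chosen uniformly in $\bomega \in \Omega$.

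The main difficulty lies entirely in this cone-contraction step. For one-dimensional Markov expanding maps (Section \ref{sec:sup-simp}) it is routine. For the non-Markov one-dimensional maps of Section \ref{sec:ex-bis} one must adapt Liverani-type arguments to cones simultaneously stable under all $\cL_{\bomega, z, w}$, with the added annoyance that the partitions induced by $\Id_{G_w}$ vary from cell to cell and break any Markov structure. For the genuine Lorentz gas the difficulty becomes formidable: one needs a uniform quenched memory-loss theorem for random products of billiard transfer operators on anisotropic Banach spaces, with a decay rate independent of the obstacle configuration --- precisely the dynamical hurdle the paper sets out to isolate, so that, once cleared, the probabilistic machinery of Section \ref{sec:walk-rand} can take over.
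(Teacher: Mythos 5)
Your reduction is correct and your strategy coincides with the paper's: both routes rewrite the two conditional probabilities as ratios of the \emph{same} chain of transfer operators applied to two different inputs (using the covariance $\cL_{\tau_z\bomega,\,y,\,v}=\cL_{\bomega,\,y+z,\,v}$), and then invoke uniform projective contraction of those products to force exponential loss of memory. This is exactly what the paper does for the simplified one-dimensional model in Sections \ref{sec:equivalence}--\ref{sec:decay}, via Proposition \ref{prop:decay}; there the argument is phrased in terms of convergence of both ratios to the common limit object $\rho_{\tau^n\sigma}\int h_{\tau^{n+1}\sigma}/\int h_{\tau^n\sigma}$ rather than directly comparing the two normalized densities as you do, but these are interchangeable consequences of the same Birkhoff--Hilbert contraction lemma (Lemma \ref{lem:link_hilbert}). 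One technical point worth flagging: in the non-Markov setting with holes (the $\Id_{G_w}$ factors), a single fixed cone invariant under every $\cL_{\bomega,z,w}$ does not exist; the paper instead uses the \emph{family} $\mathcal{C}_\sigma^a=\{h\in{\rm BV}\;:\;h\ge 0,\ \bigvee h\le a\,\Lambda_\sigma(h)\}$ with the sequence-dependent normalizing functional $\Lambda_\sigma$, and proves only that $\Hl_\sigma^{n_0}\mathcal{C}_\sigma^a\subset\mathcal{C}_{\tau^{n_0}\sigma}^{a/2}$ after sufficiently many iterates, with finite image diameter (Lemmata \ref{lem:cone_inv} and \ref{lem:finite_diameter}). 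This ``moving cone'' machinery is essential: a fixed log-Lipschitz or BV cone with $\Lambda_\sigma$ replaced by $\inf$ or $\int$ fails because the hole operators destroy mass nonuniformly. Your assessment of where each case sits on the difficulty scale (Markov routine, non-Markov 1d done in the paper, Lorentz open) is accurate, and you rightly note that the Lorentz case is precisely what remains conjectural.
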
 
If the above were true, then one could reduce the study of the dynamical system to a purely probabilistic problem, albeit not an easy one. See however Section \ref{sec:walk-rand} for the beginning of a theory in some related cases. 

Next, we introduce a large class of dynamical systems, generalising the Lorentz models. Then, we will prove the above conjecture for simple, and not so simple, dynamical systems, giving an idea of what {\em  some appropriate technical conditions} might mean (see Theorem \ref{thm:main}).

\section{Deterministic walks in random environment}\label{sec:walk-det}
As already mentioned there is an extreme scarcity of results pertaining the high density random Lorentz Gas. It is then sensible to consider simpler models in which one can start to solve some of the outstanding difficulties. To this end, following Lenci \cite{Lenci06}, it is convenient to see the Lorentz gas as a special case of a more general class of models: {\em deterministic walks in random environment}.  Even for such models very few results exist. Exceptions are \cite{Little16} in which a zero-one law for systems with local dynamics which are markovian, but deterministic, is established and \cite{SS09} which considers statistical properties for a related (simplified) model with local dynamics consisting of expanding linear maps of the circle or hyperbolic toral automorphisms.

The model can be stated in rather general terms, for simplicity let us restrict to the case of the $\bZ^d$ lattice with bounded jumps $\cW \subset \bZ^d$, $\#\cW<\infty$,\footnote{ That is, only jumps $w\in\cW$ are allowed.} and local dynamics which live all on the same phase space $\cM$.

Consider the set $\cA=\{(f_\alpha, \cM, \cG_\alpha)\}_{\alpha\in A}$, where $(A, \mathcal{S})$ is a measurable space\footnote{For most of our concrete applications, $(A, \mathcal{S})$ will simply be a finite set with the discrete $\sigma$-algebra.}, $f_\alpha:\cM\to\cM$ are maps such that $(\alpha, x) \in A \times \cM \mapsto f_\alpha(x) \in \cM$ is measurable, and $\cG_\alpha=\{G_{\alpha,w}\}_{w\in\cW}$ are partitions of $\cM$ such that, for each $w \in \cW$, the map $(\alpha, x) \mapsto \Id_{G_{\alpha, w}}(x)$ is measurable. The environment is described by the probability space $\Omega=A^{\bZ^d}$ equipped with a translation invariant probability $\bP$. Also we assume that all the maps $f_\alpha : \cM \to \cM$ are nonsingular with respect to some reference measure $m$ on $\cM$. Then, for each realisation $\bomega\in\Omega$ we can define the dynamics $\bF_{\bomega}(\cdot,\cdot):\cM\times\bZ^d\to \cM\times\bZ^d$:
\[
\begin{split} \label{eq:def_dwrd}
&\bF_{\bomega}(x,z)=(f_{\bomega_{z+ e(\bomega_z, x)}}(x), z+e(\bomega_z,x))\\
&e(\alpha,x)=\sum_{w\in\cW}\Id_{G_{\alpha,w}}(x) w ,
\end{split}
\]
and $(x(n),z(n))=\bF_{\bomega}^n(x(0),z(0))$.
The randomness at fixed environment rests in the initial condition: $z(0)=z_0$ while $x(0)$ is distributed according to some probability measure $\mu$ absolutely continuous with respect to $m$. We will assume, w.l.o.g., $z_0 = 0$. Then the path $(z(n))_{n\in\bN}$ belongs to $\bM=\{(z_n)\in(\bZ^d)^\bN\;:\; z_0 = 0, z_{n+1}-z_n\in\cW\}$, which we call the space of {\em admissible paths}, and $\bP_\star$ is the law of the resulting process on $\Omega_\star=\Omega\times \bM$.

Each $f_\alpha$ admits a transfer operator $\cL_{f_\alpha} : L^1(m) \to L^1(m)$ defined by
\begin{equation}\label{eq:transfer}
\int_{\cM} (\cL_{f_\alpha} \phi) \psi dm = \int_{\cM} \phi \cdot \psi \circ f_\alpha dm
\end{equation}
for all $\phi \in L^1(m)$ and $\psi \in L^{\infty}(m)$.

Let $h_0$ be the density of the initial condition ($d \mu = h_0 dm$). Then, setting $w(n)=z(n+1)-z(n)$, a repeated use of \eqref{eq:transfer} in \eqref{eq:proba} yields

\begin{equation}\label{eq:iterate_transfer}
\mathbb{P}_\star(z(1), \ldots, z(n) \; | \; \bomega) = \int_{\cM} \cL_{\bomega, z(n-1), w(n-1)} \cdots \cL_{\bomega, z(0), w(0)} h_0 dm
\end{equation}
for each $\bomega \in \Omega$ and each admissible path $(z(n)) \in \bM$, and with 
\begin{equation}\label{eq:TOdef}
\cL_{\bomega, z, w}(\phi) = \cL_{f_{\bomega_{z + w}}}(\Id_{G_{\bomega_{z}, w}} \phi).
\end{equation}

\subsection{The point of view of the particle}\label{sec:environ}
An important process associated to our deterministic walk is the process of the environment as seen from the particle. This is the dynamical system defined on $\Omega\times \cM$ by
\[
\cF(\bomega,x)=(\tau_{e(\bomega_0, x)}\bomega, f_{\bomega_{e(\bomega_0, x)}} (x)).
\]
This is known to be a fruitful point of view, in particular if one knows the invariant measure. As noted by Lenci \cite{Lenci03, Lenci06}, in an important subclass of deterministic random walks the invariant measure can be trivially computed.
\begin{lem}\label{lem:povop} 
If all the maps $f_\alpha$ have the same invariant measure $\lambda$ and the set $\cG_\alpha$ is deterministic (i.e., it does not depend on $\alpha$), then the probability measure $\bP_0=\bP\times \lambda$ is invariant  for the map $\cF$.
\end{lem}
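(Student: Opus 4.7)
The plan is to verify invariance by a direct test-function computation, exploiting the two hypotheses in the right order. Let $\phi \in L^\infty(\bP \times \lambda)$ be a bounded measurable test function, and decompose the integral of $\phi \circ \cF$ according to the deterministic partition $\cG = \{G_w\}_{w \in \cW}$, which by hypothesis does not depend on $\bomega$. On $G_w$ one has $e(\bomega_0, x) = w$ and hence $\cF(\bomega, x) = (\tau_w \bomega, f_{\bomega_w}(x))$, so
\[
\int \phi \circ \cF \, d\bP_0 = \sum_{w \in \cW} \int_\Omega \int_{G_w} \phi(\tau_w \bomega, f_{\bomega_w}(x)) \, d\lambda(x) \, d\bP(\bomega).
\]

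Next I would swap integration order via Fubini and, for each fixed $x \in G_w$, change variable $\bomega' = \tau_w \bomega$ in the outer integral. Since $\bP$ is translation invariant, $d\bP$ is preserved, and $\bomega_w = (\tau_{-w} \bomega')_w = \bomega'_0$. This step is the whole point of decomposing by $w$: it converts the random shift $\tau_{e(\bomega_0,x)}$, whose amount depends on $x$, into a deterministic $\tau_w$ that can be absorbed. One obtains
\[
\int \phi \circ \cF \, d\bP_0 = \sum_{w \in \cW} \int_{G_w} \int_\Omega \phi(\bomega', f_{\bomega'_0}(x)) \, d\bP(\bomega') \, d\lambda(x).
\]

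Because $\{G_w\}_{w \in \cW}$ is a partition of $\cM$, the sum over $w$ reassembles the $x$-integral over all of $\cM$:
\[
\int \phi \circ \cF \, d\bP_0 = \int_\Omega \int_\cM \phi(\bomega', f_{\bomega'_0}(x)) \, d\lambda(x) \, d\bP(\bomega').
\]
Now, for each fixed $\bomega'$, apply the hypothesis that $f_{\bomega'_0}$ preserves $\lambda$: the inner $x$-integral equals $\int_\cM \phi(\bomega', x) d\lambda(x)$. Putting everything together yields $\int \phi \circ \cF \, d\bP_0 = \int \phi \, d\bP_0$, which is the invariance claim.

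There is no real obstacle; the only subtle point worth flagging is that the $\lambda$-invariance of $f_\alpha$ is used \emph{after} the partition has been reassembled. Restricted to a single cell $G_w$ the map $f_{\bomega_w}$ need not preserve $\lambda\!\!\restriction_{G_w}$, so one cannot apply $f_\alpha$-invariance cell by cell; it is precisely the combined use of translation invariance (to handle the $\bomega$-shift uniformly in $w$) plus the fact that $\cG$ does not depend on $\bomega$ (so that the cells $G_w$ are fixed sets that partition $\cM$ independently of $\bomega$) that allows one to reduce to the full-space integral where $f_\alpha$-invariance applies. Both hypotheses in the statement are therefore used in an essential way.
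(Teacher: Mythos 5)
Your proof is correct and follows essentially the same route as the paper's: both first use translation invariance of $\bP$ to absorb the shift $\tau_{e(x)}$ (exploiting that $e$ is independent of $\bomega$ by determinism of the gates), and only then apply the $\lambda$-invariance of $f_{\bomega_0}$ to the reassembled $x$-integral. The only cosmetic difference is that you spell out the change of variable by decomposing over the cells $G_w$, whereas the paper performs the same substitution $\bomega\mapsto\tau_{e(x)}^{-1}\bomega$ directly for each fixed $x$; your closing remark about the order in which the two hypotheses are used is a nice explicit articulation of what the paper does implicitly.
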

\begin{proof}
Let $\bE_0$ be the expectation with respect to $\bP_0$. Since the set $\cG_\alpha$ is deterministic, we can write $e(\alpha, x) = e(x)$ for all $\alpha \in A$ and $x \in \cM$. For each bounded measurable function $\vf$ we have
\[
\begin{split}
\bE_0(\vf\circ \cF)&= \int \varphi(\tau_{e(x)} \bomega, f_{\bomega_{e(x)}}(x)) \bP(d \bomega) \lambda(dx) \\ 
&= \int \varphi(\tau_{e(x)} \bomega, f_{(\tau_{e(x)} \bomega)_0}(x)) \bP(d \bomega) \lambda(dx) \\ 
&= \int \varphi(\bomega, f_{\bomega_0}(x)) \bP(d \bomega) \lambda(dx) \\
&= \int \varphi(\bomega, x) \bP(d \bomega) \lambda(dx) 
=\bE_0(\vf),
\end{split}
\]
where we have used first the invariance of $\bP$ with respect to the translations and then the invariance of $\lambda$ with respect to the maps $f_\alpha$.
\end{proof}
We have obtained a dynamical system with a finite measure.
\begin{lem}\label{lem:zero-velocity} In the hypotheses of Lemma \ref{lem:povop} the limit 
\[
V=\lim_{n\to\infty}\frac 1n z(n) 
\]
exists $\bP_0$ a.s.. Moreover, if $(\Omega\times\cM,\cF,\bP_0)$ is ergodic, then $V=\lambda(e)$.
\end{lem}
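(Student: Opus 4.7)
My plan is to recognise $z(n)$ as a Birkhoff sum for the dynamical system $(\Omega\times\cM,\cF,\bP_0)$ and then invoke the Birkhoff ergodic theorem.

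First, I would exploit the hypothesis that $\cG_\alpha$ is deterministic to write $e(\alpha,x)=e(x)$, and introduce the bounded measurable function $E:\Omega\times\cM\to\bZ^d$ given by $E(\bomega,x)=e(x)$. Next, I would verify by induction on $n$ that
\[
\cF^n(\bomega,x)=(\tau_{z(n)}\bomega,\,x(n)),\qquad z(n)=\sum_{k=0}^{n-1}e(x(k)).
\]
The base case is the definition of $\cF$. For the inductive step, if $\cF^n(\bomega,x)=(\tau_{z(n)}\bomega,x(n))$, then applying $\cF$ once more produces
\[
\cF^{n+1}(\bomega,x)=\bigl(\tau_{e(x(n))}\tau_{z(n)}\bomega,\,f_{(\tau_{z(n)}\bomega)_{e(x(n))}}(x(n))\bigr)=(\tau_{z(n+1)}\bomega,\,x(n+1)),
\]
using $(\tau_{z(n)}\bomega)_{e(x(n))}=\bomega_{z(n)+e(x(n))}=\bomega_{z(n+1)}$ so that $f_{(\tau_{z(n)}\bomega)_{e(x(n))}}(x(n))$ matches the definition of $x(n+1)$ in the original environment $\bomega$. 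Consequently,
\[
\sum_{k=0}^{n-1}E\circ\cF^k(\bomega,x)=\sum_{k=0}^{n-1}e(x(k))=z(n).
\]

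Having rewritten $z(n)$ as a Birkhoff sum, I apply the Birkhoff ergodic theorem to the $\bP_0$-preserving system $\cF$ (preservation is Lemma \ref{lem:povop}) together with the bounded (hence integrable) function $E$: this yields $\bP_0$-a.s. existence of
\[
V(\bomega,x)=\lim_{n\to\infty}\frac{1}{n}z(n)=\bE_0(E\,|\,\cI),
\]
where $\cI$ is the $\sigma$-algebra of $\cF$-invariant sets. When $(\Omega\times\cM,\cF,\bP_0)$ is ergodic, $\cI$ is trivial, so the conditional expectation collapses to
\[
V=\bE_0(E)=\int_{\Omega\times\cM}e(x)\,\bP(d\bomega)\lambda(dx)=\int_{\cM}e(x)\lambda(dx)=\lambda(e),
\]
as claimed.

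There is essentially no obstacle: once the identification of $z(n)$ as a Birkhoff sum is in place, everything reduces to standard ergodic theory, and boundedness of $E$ follows from finiteness of $\cW$. The only point that requires a bit of care is the inductive verification of $\cF^n(\bomega,x)=(\tau_{z(n)}\bomega,x(n))$, where one must keep track of how the translated environment enters the definition of the next step of the trajectory.
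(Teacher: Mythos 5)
Your proof is correct and follows the same approach as the paper: identify $z(n)=\sum_{k=0}^{n-1}e(x(k))$ as a Birkhoff sum of the bounded observable $\varphi(\bomega,x)=e(\bomega_0,x)=e(x)$ for the $\bP_0$-preserving system $(\Omega\times\cM,\cF,\bP_0)$, then apply the Birkhoff ergodic theorem. You simply spell out the semi-conjugacy induction $\cF^n(\bomega,x)=(\tau_{z(n)}\bomega,x(n))$ more explicitly than the paper, which states it without verification.
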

\begin{proof}
Setting $(\bomega(n),x(n))=\cF^n(\bomega,x)$ yields $\frac 1nz(n)=\frac 1n\sum_{k=0}^{n-1}e(x(k))$.
Hence the existence of the limit follows from Birkhoff ergodic theorem for the dynamical system $(\Omega\times\cM,\cF,\bP_0)$ applied to $\vf(\bomega,x) = e(\bomega_0, x)=e(x)$.  If the system is ergodic, then the limit equals the average of $\vf$ with respected to $\bP_0$ which is equal to $\bE_0(\varphi) = \lambda(e)$. 
\end{proof}
As an application we have the following Lemma (which is implicit in \cite{CLS10}).
\begin{lem} For the Lorentz gas described in section \ref{sec:lorentz},  with $r+\frac{R}{\sqrt 2}<\frac 12$  and $\delta$ small enough, we have $V=0$.\footnote{ The conditions are certainly not optimal, however to obtain the result for a larger set of parameters entails a more refined analysis of the geometry of the trajectories and such an analysis exceeds our present goals.}
\end{lem}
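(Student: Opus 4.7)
The plan is to apply Lemma~\ref{lem:zero-velocity}, so I first need to verify the hypotheses of Lemma~\ref{lem:povop}. The partition $\cG_\alpha = \{G_w\}_{w \in \cW}$ on the Poincar\'e section $\cB$ is in fact deterministic: each $G_w$ is defined purely in terms of the cell geometry — the corner arcs $C$ and the boundary segments $B_1,\dots,B_4$ — and these are unaffected by the random displacement $\alpha = \bomega_z$ of the central obstacle. Moreover, every planar billiard map preserves the standard Liouville-induced measure $d\lambda \propto \cos\theta\, dr\, d\theta$ on its collision boundary, independently of where the convex scatterers are placed; hence all the maps $f_\alpha$ share the same common invariant probability $\lambda$ on $\cB$. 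Lemma~\ref{lem:povop} then gives that $\bP_0 = \bP \times \lambda$ is invariant under $\cF$ and that the Birkhoff average $V$ exists $\bP_0$-a.s.

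Next I would compute $\lambda(e) = \sum_{w \in \cW} w \, \lambda(G_w)$. The cell, together with the four corner discs, is invariant under the dihedral group $D_4$ of rotations by $\pi/2$ and reflections across the coordinate and diagonal axes; these symmetries cyclically permute the four boundary segments $B_1,\dots,B_4$ (and also the corner arcs $C_1,\dots,C_4$), while the measure $\lambda$ is itself $D_4$-invariant. Therefore
\[
\lambda(G_{(1,0)}) = \lambda(G_{(-1,0)}) = \lambda(G_{(0,1)}) = \lambda(G_{(0,-1)}),
\]
and the four contributions cancel in pairs, yielding $\lambda(e) = 0$. Note that this cancellation only requires the $D_4$-invariance of $\lambda$ and of the geometric partition, not any specific bound on $r$, $R$, or $\delta$.

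The remaining — and hardest — step is to deduce from Lemma~\ref{lem:zero-velocity} that the a.s.\ constant $V$ equals $\lambda(e)=0$, which requires the ergodicity of $(\Omega \times \cM, \cF, \bP_0)$. This is where the geometric conditions $r + R/\sqrt 2 < 1/2$ and $\delta$ small enter. For $\delta$ sufficiently small the environment is a small random perturbation of a periodic finite-horizon Sinai billiard, whose single-cell dynamics is uniformly hyperbolic and locally ergodic; the condition $r + R/\sqrt 2 < 1/2$ should be used to control the trajectory structure within a single cell, guaranteeing that a particle entering through any $B_i$ has a controlled interaction with the central obstacle before leaving (so that the induced first-return map has good expansion/distortion properties, uniformly in $\bomega_z$). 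The natural route to ergodicity then couples this single-cell local ergodicity with a zero-one argument exploiting the i.i.d.\ structure of $\bP$ across cells: any $\cF$-invariant set has, for $\bP$-a.e.\ $\bomega$, a trace on $\cM$ that is $\lambda$-a.s.\ constant by the billiard local ergodicity, and the translation-ergodicity of $\bP$ forces this constant to be $0$ or $1$. I expect this ergodicity step to be the main technical obstacle; once it is in hand, the combination with $\lambda(e) = 0$ closes the proof via Lemma~\ref{lem:zero-velocity}.
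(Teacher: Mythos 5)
Your setup is correct: you rightly verify that the gates are deterministic and that all the maps share the common invariant measure $\lambda \propto \cos\theta\, dr\, d\theta$, so Lemma~\ref{lem:povop} applies. Your computation $\lambda(e)=0$ via the $D_4$ symmetry of the periodic cell is also valid; the paper instead invokes the invariance of $\lambda$ under $p \mapsto -p$, which sends $G_w$ to $G_{-w}$ and cancels the sum more directly, but the two arguments are equally good for this step and neither needs the geometric conditions.

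The genuine gap is the ergodicity of $(\Omega\times\cM,\cF,\bP_0)$, which you correctly identify as the crux but then leave as a sketch: ``local ergodicity of the billiard plus a zero--one argument using the i.i.d.\ structure.'' This is not a proof. In particular, single-cell hyperbolicity and local ergodicity by themselves do not control whether the walk on $\bZ^2$ sees enough of the environment: one must rule out invariant sets that are measurable with respect to $\bomega$ and respect the translation structure, and for that one needs some concrete ``connectivity'' of the deterministic dynamics between the exit gates. The paper closes the gap by citing \cite[Theorem~5.4]{Lenci06}, whose hypothesis is precisely such a connectivity condition, and then spends the bulk of the proof verifying it: one exhibits, for $\delta = 0$, non-singular trajectories joining the interior of any $B_i$ to the interior of any other $B_j$, and the condition $r + R/\sqrt{2} < 1/2$ is used at a concrete geometric step (to guarantee that a trajectory reflected off a corner disc $C_1$ with a suitable angle misses the central obstacle $C_5$, so that one can vary the impact parameter continuously and sweep out all exit directions). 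Taking $\delta$ small then preserves these open trajectory families by continuity. Your proposal gestures at this role for the hypotheses (``should be used to control the trajectory structure'') but does not carry it out, so as written the ergodicity step --- and hence the identification $V = \lambda(e)$ --- is not established.
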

\begin{proof}
We apply \cite[Theorem 5.4]{Lenci06} to prove that $(\Omega \times \cM, \cF,\bP_0)$ is ergodic. Then, Lemma \ref{lem:zero-velocity} implies that $V=\lambda\left(\sum_{w \in \cW} w G_w\right)$ which is zero due to the fact that $\lambda$ is invariant for the change $p\to -p$.

To verify the hypotheses of  \cite[Theorem 5.4]{Lenci06} it suffices to prove that there are non singular trajectories that go from the interior of any $B_i$ to the interior of any other $B_j$ (here and in the following we use the notation of Figure 1.b) for $\delta=0$, the result will then follow by the continuity of the non singular trajectories with respect to the positions of the obstacles. Let us start with a trajectory connecting $B_2$ to $B_1$. 

Consider the trajectory $\ell_h$ starting from $(0,h)$, $h\geq r$, with  direction $(1,0)$.\footnote{ Here we have chosen coordinates in which the center of $C_5$ is $(0,0)$.} 
If $h=1/2-R/\sqrt 2=:h_0$ then the trajectory reflects on the obstacle $C_1$ at the point $(1/2-R/\sqrt 2, 1/2-R/\sqrt 2)$ and, after reflection, has the direction $(0,-1)$. Note that if $r+\frac{R}{\sqrt 2}<\frac 12$, then such trajectory misses $C_5$ and the next collision is with $C_4$ while the previous is with $C_2$. Accordingly, if we increase $h$ the velocity, after the collision with $C_1$, rotates clockwise till, for some $h_->h_0$, the trajectory is tangent to $C_5$, let $v_-:=v(\theta_-)$, with $v(\theta)=(\cos\theta,\sin\theta)$ and  $\theta_-\in(-\pi, -\pi/2)$, be the velocity at tangency. On the other hand for  $h=1/2$ the trajectory is periodic between $C_2$ and $C_1$ and if we decrease $h$ the velocity, after reflection with $C_1$, rotates counterclockwise. Thus there exists $h_+$ such that the trajectory, after reflection with $C_1$ is tangent to the upper side of  $C_5$, let $v_+:=v(\theta_+)$, $\theta_+\in(\pi,  3\pi/2)$, be the velocity at tangency. It follows that the trajectories we are considering, for $h\in (h_-,h_+)$, will first collide with $C_1$ and then with $C_5$. In addition, the velocity after collision will go continuously from $v_+$ to $v_-$ while the collision point will belong to an interval $I$ on the boundary of $C_5$ delimited by the points $r(\sin\theta_+,-\cos\theta_+)$ and $r(\sin\theta_-,-\cos\theta_-)$. Moreover, in this open interval the velocity after colliding with $C_5$ cannot belong to $\{v_+$, $v_-\}$ since $v_+$ and $v_-$, at the points in $I$, point toward the interior of $C_5$. Also, near $v_+$, the post collisional velocity with $C_5$ rotates clockwise. Hence there exists an interval $(\bar h_-,\bar h_+)\subset (h_-,h_+)$ such that the angle $\theta$ of the velocity $v(\theta)$, after colliding with $C_5$, varies continuously from $\pi/2$ to  $- \pi/2$. Thus,
for $h\in (\bar h_-,\bar h_+)$, the intersection of the trajectory with the line $\{(\frac 12, t)\}_{t\in\bR}$ goes continuously from $+\infty$ to  $- \infty$. Accordingly, there exists a value for which the trajectory crosses the center of $B_1$ without any further collision. By symmetry such a trajectory connects the interiors of $B_2$ to $B_1$, and the same holds for $\delta$ small enough.

Analogously there are trajectories connecting $B_3$ and $B_4$. 

Let us now construct trajectories that connect $B_2$ to $B_3$. 
The diagonal trajectory tangent to $C_5$ is given by 
$\frac{1}{\sqrt 2}(-r,r)+t(1,1)$. Such a line intersects the boundary $\{(-1/2, s)\}_{s\in[-1/2,1/2]}$ of the box at the point $(-1/2, -1/2+r \sqrt 2)$ hence either it enters in $B_2$, if $R <r \sqrt 2$, or it collides with $C_3$, if $1/2>R > r \sqrt 2$. 

In the latter case we can consider all the trajectories that collide at $(-r,r)$ symmetrically with respect to the diagonal. When the angle of the trajectory with the normal to $C_5$ goes from $\pi/2$ to $0$ the trajectory goes from colliding with $C_3$ to colliding with $C_2$, hence for intermediate angles there must be trajectories that enter $B_2$. By symmetry such trajectories connect $B_2$ and $B_3$ and will survive for small, but positive, $\delta$. The same argument provides trajectories that connect $B_3, B_1$; $B_1, B_4$ and $B_4, B_2$. We have thus verified the hypothesis of  \cite[Theorem 5.4]{Lenci06}. 
\end{proof}
\section{Gibbs random walks in random environment}\label{sec:walk-rand}
We do not expect the process described by $\bP_\star$ to be Markov, yet we expect that the jump rates have a weak dependence of the past, provided the maps are strongly chaotic. To be more precise, we conjecture that the process is a {\em random walk in random environment with weak memory}. In the probabilistic literature random walks with a finite memory are called {\em persistent}, here we expect the memory to be infinite although depending weakly on the far past, like a potential of a Gibbs measure. Let us specify exactly what we mean by this.

Let $(A, \mathcal{S})$ be a measurable space and $\cW \subset \bZ^d$, $2 \le \sharp \cW < \infty$. Consider the measurable space $\Omega=A^{\bZ^d}$ and a translation invariant, ergodic, probability distribution $\bP$ that describes the distribution of the environments $\bomega\in\Omega$. For each $n\in \bN$ and $(w_0, \ldots, w_{n-1}) \in \cW^n$, assume that are given compatible probabilities $p(\bomega, n, \cdot )$ on $\cW^n$,  i.e. $$\sum_{(w_0, \ldots, w_{n-1}) \in  \cW^n} p(\bomega, n, w_0 \ldots w_{n-1})=1,$$ and $$p(\bomega, n, w_0 \ldots w_{n-1}) = \sum_{w \in \cW} p(\bomega, n+1, w_0 \ldots w_{n-1} w)$$ for all $\bomega \in \Omega$, $n\ge0$ and $(w_0, \ldots, w_{n-1}) \in \cW^n$. Assume also that all the maps $\bomega \mapsto p(\bomega, n, w_0 \ldots w_{n-1})$ are measurable.
We have then for each $\bomega \in \Omega$ a probability measure ${\bP}^{\bomega}$ on the space $\cW^{\bN}$ by Kolmogorov extension theorem. By the monotone class theorem, the map $G \mapsto {\bP}^{\bomega}(G)$ is measurable for any measurable set $G \subset \cW^\bN$, and we can thus define a probability measure ${\bP}_\star$ on $\Omega \times \cW^{\bN}$ by 
\[
\bP_\star (d \bomega, d \bw) = \mathbb{P}(d \omega) \mathbb{P}^{\bomega}(d \bw).
\]

\begin{rem}\label{rem:measures} The measure $\bP_\star$ can be naturally identified with a measure on the space $\Omega_\star = \Omega \times \bM$, where $\bM = \{ (z_n) \in (\bZ^d)^{\bN} \, : \, z_0 = 0, \, z_{n+1} - z_n \in \cW, \, \forall n \}$ is the space of admissible paths starting at 0, since there is a 1-to-1 correspondence between elements of $\cW^n$ and admissible paths of length $n$, via the relations $w_k = z_{k+1} - z_k$.
\end{rem}

\subsection{ The weak memory requirement}
We find convenient, although not strictly necessary, to require the following assumption that ensures that all admissible paths have positive probability:
\begin{description} 
\item[{\bf (Pos)}] for $\bP$-a.e. $\bomega \in \Omega$, for all $n \ge 0$ and all $\bw \in \cW^\bN$, $$p(\bomega, n, \bw_0 \ldots \bw_{n-1})>0.$$
\end{description}

Let $\bP_{\bomega}( \bw_n \; | \; \bw_0 \ldots \bw_{n-1})$ be the conditional probability $\frac{p(\bomega, n+1, \bw_0 \ldots \bw_{n-1} \bw_n)}{p(\bomega, n, \bw_0 \ldots \bw{n-1})}$.

The weak memory requirement  is made precise by the following:
\begin{description}
\item[{\bf (Exp)}] there exist $C_*>0$ and $\nu \in (0,1)$ such that for $\bP$-a.e. $\bomega \in \Omega$, all $n > m \ge 0$ and all $\bw \in \cW^\bN$
\begin{equation}\label{eq:gibbs}
\left|  \bP_{\bomega}(\bw_n \; | \; \bw_0 \ldots \bw_{n-1}) - \bP_{\tau_{z_{m}} \bomega} ( \bw_n \; | \; \bw_m \ldots \bw_{n-1}) \right| \leq C_* \nu^{n-m},
\end{equation}
where $z_m = \sum_{k=0}^{m-1} \bw_k$.
\end{description}

\begin{rem} This is the property we have conjectured to be true for the random Lorentz gas at the end of Section \ref{sec:lorentz}.
\end{rem}

Note that the above condition implies loss of memory:
\begin{lem}\label{lem:old-forget}
If $\bP_\star$ satisfies \eqref{eq:gibbs}, then (using the notation of Remark \ref{rem:measures}),
\[
\left|\bP_\star( z_{n+1} \;|\; z_{n-m+1},\dots, z_n, \bomega)-\bP_\star( z_{n+1} \;|\; z_1,\dots,  z_n, \bomega)\right|\leq \Const \nu^{m}.
\]
\end{lem}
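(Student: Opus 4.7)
The plan is to bring both conditional probabilities into comparison with a single deterministic auxiliary quantity via assumption (Exp), then conclude by a triangle inequality. The fine conditioning is, by the very definition of $\bP_\star$ and the identification in Remark \ref{rem:measures},
\[
\bP_\star(z_{n+1}\mid z_1,\dots,z_n,\bomega)=\bP_{\bomega}(\bw_n\mid \bw_0,\dots,\bw_{n-1}),
\]
with $\bw_k=z_{k+1}-z_k$. The coarse conditioning is then, by the tower property of conditional expectation, obtained by averaging the above over the ``hidden'' variables $z_1,\dots,z_{n-m}$ conditionally on $(z_{n-m+1},\dots,z_n,\bomega)$.

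The main step is to invoke (Exp) with its free index set equal to $n-m+1$ (where $m$ is the parameter of the present lemma). This yields
\[
\bigl|\bP_{\bomega}(\bw_n\mid \bw_0,\dots,\bw_{n-1}) - \bP_{\tau_{z_{n-m+1}}\bomega}(\bw_n\mid \bw_{n-m+1},\dots,\bw_{n-1})\bigr|\leq C_*\nu^{m-1}.
\]
Calling the second term $Q$, the decisive observation is that $Q$ is measurable with respect to $\sigma(\bomega, z_{n-m+1},\dots,z_{n+1})$: the translation base point $z_{n-m+1}$ and the $m-1$ increments $\bw_{n-m+1},\dots,\bw_{n-1}$ are all determined by the visible conditioning $z_{n-m+1},\dots,z_n$ (together with $z_{n+1}$ which appears through $\bw_n$).

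Since $Q$ factors out of the coarse conditional expectation, taking conditional expectations of the above inequality preserves the bound, so
\[
\bigl|\bP_\star(z_{n+1}\mid z_{n-m+1},\dots,z_n,\bomega)-Q\bigr|\leq C_*\nu^{m-1},
\]
and trivially $\bigl|\bP_\star(z_{n+1}\mid z_1,\dots,z_n,\bomega)-Q\bigr|\leq C_*\nu^{m-1}$. The triangle inequality then delivers the claim with $\Const=2C_*/\nu$. Edge cases $m\in\{0,1\}$ are handled trivially by the bound $\le 2$, absorbed into $\Const$.

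The only subtlety, and the reason I am not worried about a real obstacle, is the bookkeeping choice of index $n-m+1$ rather than $n-m$. Using $n-m$ would be the ``natural'' match to $\nu^m$, but it would introduce the increment $\bw_{n-m}=z_{n-m+1}-z_{n-m}$ into $Q$, which depends on the hidden $z_{n-m}$ and breaks measurability with respect to $\sigma(\bomega,z_{n-m+1},\dots,z_n)$. Shifting the cutoff by one additional step is the minimal way to restore measurability of the target quantity, and the resulting factor $\nu^{-1}$ is harmless since it is absorbed into $\Const$.
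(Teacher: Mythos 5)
Your proof is correct and follows the same strategy as the paper: both compare the fine and coarse conditional probabilities to the common auxiliary quantity $Q = \bP_{\tau_{z_{n-m+1}}\bomega}(\bw_n \mid \bw_{n-m+1}, \dots, \bw_{n-1})$ via \textbf{(Exp)} with cutoff $n-m+1$, then conclude by the triangle inequality. The paper carries out the tower-property averaging explicitly as a weighted sum over hidden paths in $\cP(z_{n-m+1})$, whereas you state it abstractly as a conditional expectation; this is the same step, and the index shift you flag (using $n-m+1$ rather than $n-m$ so that $Q$ is coarse-measurable) is exactly the one the paper's proof uses.
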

\begin{proof}
Let $\cP(z_{n-m+1})$ be the set of admissible paths $(w_0, \ldots, w_{n-m})$ that arrive in $z_{n-m+1}$, i.e. $\sum_{i=0}^{n-m} w_i = z_{n-m+1}$, and set $w_i = z_{i+1} - z_i$ for $i=n-m+1, \ldots, n$.
Then we have, using {\bf (Exp)}:
\[
\begin{aligned}
&\bP_\star( z_{n+1} \;|\; z_{n-m+1},\dots, z_n, \bomega)  = \sum_{w \in \cP(z_{n-m+1})} \frac{p(\bomega, n+1, w_0 \ldots w_n)}{\bP_\star(z_{n-m+1}, \ldots, z_{n} \; | \; \bomega)} \\
& = \sum_{w \in \cP(z_{n-m+1})} \bP_{\bomega}( w_n \; | \; w_0 \ldots w_{n-1}) \frac{p(\bomega, n, w_0 \ldots w_{n-1})}{\bP_\star(z_{n-m+1}, \ldots, z_{n} \; | \; \bomega)} \\
& = \bP_{\tau_{z_{n-m+1}} \bomega}(w_n \; | \; w_{n-m+1} \ldots w_{n-1}) + \cO(\nu^m),
\end{aligned}
\]
and 
\[
\begin{split}
\bP_\star( z_{n+1} \;|\; z_1,\dots,  z_n, \bomega) &= \bP_{\bomega}(w_n \; | \; w_0 \ldots w_{n-1})\\
 &= \bP_{\tau_{z_{n-m+1}} \bomega}(w_n \; | \; w_{n-m+1} \ldots w_{n-1}) + \cO(\nu^m).
\end{split}
\]

\end{proof}

\subsection{The point of view of the particle and three further assumptions}\label{sec:environ2}
One can define the process of the environment as seen from the particle. It is given by the dynamical system on the space $\Omega_\star = \Omega \times \cW^{\bN}$ defined by
\[
\cF_\star(\bomega, \bw) = (\tau_{\bw_0} \bomega, \tau_\star \bw),
\]
where $\tau_\star : \cW^\bN \to \cW^\bN$ is the unilateral shift.
Note that, in general, $\bP_\star$ is not invariant for $\cF_\star$.
Next, let us show that some easy properties of the Markov case persist in the present context, under reasonable extra conditions.

\begin{rem}\label{rem:povotp}
Note that in the probabilistic literature, see for instance \cite{Zeitouni04}, it is more usual to consider the random process $\bomega(n) = \tau_{z(n)} \bomega$ on $\overline{\Omega} = \Omega^{\bN}$, and its law $\overline{\bP}$ when $\bomega(0)$ is distributed according to $\mathbb{P}$ (as we did in Section \ref{sec:environ}). When the set of periodic environments has probability 0, these two points of view are equivalent, since the map which associates to each $(\bomega, \bw) \in \Omega_\star$ the corresponding sequence $(\bomega(n)) \in \overline{\Omega}$ is invertible almost everywhere and realizes a conjugacy between the two dynamical systems $(\Omega_\star, \cF_\star, \bP_\star)$ and $(\overline{\Omega}, \bar{\tau}, \overline{\bP})$, where $\bar{\tau}$ is the shift on $\overline{\Omega}$. The same comments hold also for the definition given in Section \ref{sec:environ}. See also Remark \ref{rem:environ2} for further comments.
\end{rem}

We are interested in the asymptotic properties for $z_n$. Note that if we define $\varphi(\bomega, \bw) = \bw_0$, then we have $z_n = \sum_{k=0}^{n-1} \varphi \circ \cF_\star^k$.

The next assumption will imply the existence of a $\cF_\star$-invariant measure:

\begin{description}
\item[{\bf (Abs)}] There exists $C_0>0$ such that for $\bP$-a.e. $\bomega \in \Omega$, all $n \ge 0$, all $k \ge 1$ and all $\bw \in \cW^\bN$,
\[
C_0^{-1} \le \sum_{(w_1, \ldots, w_k) \in \cW^k} \frac{p(\tau_{-(w_1 + \ldots + w_k)} \bomega, n+k, w_1 \ldots w_k \bw_0 \ldots \bw_{n-1})}{p(\bomega, n, \bw_0 \ldots \bw_{n-1})} \le C_0.
\]
\end{description}
To legitimate this assumption, we prove two relevant facts.

\begin{lem} \label{lem:abs_cond} If there exists a probability measure $\bQ_\star$ equivalent to $\bP_\star$, invariant for $\cF_\star$ and such that $c^{-1} \le \frac{d\bQ_\star}{d\bP_\star} \le  c$ for some $c>0$, then {\bf (Abs)} holds with $C_0=c^2$. In particular, {\bf (Abs)} holds if $\bP_\star$ is $\cF_\star$-invariant.
\end{lem}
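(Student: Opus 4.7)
The plan is to extract the pointwise inequality in \textbf{(Abs)} by testing the $\cF_\star$-invariance of $\bQ_\star$ on a carefully chosen family of cylinder sets and then removing the $\bomega$-integration via the arbitrariness of the ``$\bomega$-slice''.

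First, since $c^{-1} \le \frac{d\bQ_\star}{d\bP_\star} \le c$, we have $c^{-1}\bP_\star(A) \le \bQ_\star(A) \le c\,\bP_\star(A)$ for every measurable $A \subset \Omega_\star$. Combining this with the $\cF_\star$-invariance of $\bQ_\star$, namely $\bQ_\star(A) = \bQ_\star(\cF_\star^{-k}A)$, yields the two-sided comparison
\[
c^{-2}\,\bP_\star(A) \;\le\; \bP_\star(\cF_\star^{-k} A) \;\le\; c^{2}\,\bP_\star(A)
\]
valid for every measurable $A$ and every $k \ge 1$.

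Next, I will specialize this to $A = B \times C$, where $B \subset \Omega$ is an arbitrary measurable set and $C = \{\bw \in \cW^{\bN} : \bw_0 = \bw_0', \ldots, \bw_{n-1} = \bw_{n-1}'\}$ is a cylinder of length $n$. From the definition $\cF_\star(\bomega,\bw) = (\tau_{\bw_0}\bomega, \tau_\star \bw)$, iteration gives
\[
\cF_\star^{-k}(B \times C) = \{(\bomega,\bw) \,:\, \bw_k = \bw_0',\ldots,\bw_{k+n-1} = \bw_{n-1}',\; \tau_{\bw_0 + \cdots + \bw_{k-1}}\bomega \in B\}.
\]
Computing with $\bP_\star = \bP(d\bomega)\,\bP^\bomega(d\bw)$ produces $\bP_\star(A) = \int_B p(\bomega, n, \bw_0' \ldots \bw_{n-1}')\,\bP(d\bomega)$, while
\[
\bP_\star(\cF_\star^{-k}A) = \int_\Omega \sum_{(w_1,\ldots,w_k) \in \cW^k} p(\bomega, n+k, w_1 \ldots w_k \bw_0' \ldots \bw_{n-1}')\,\Id_B(\tau_{w_1+\cdots+w_k}\bomega)\,\bP(d\bomega).
\]
Using the translation invariance of $\bP$ to change variables $\bomega \mapsto \tau_{-(w_1+\cdots+w_k)}\bomega$ in each summand, the right-hand side becomes
\[
\int_B \sum_{(w_1,\ldots,w_k)} p(\tau_{-(w_1+\cdots+w_k)}\bomega, n+k, w_1 \ldots w_k \bw_0' \ldots \bw_{n-1}')\,\bP(d\bomega).
\]

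Finally, substituting these two expressions into the two-sided bound and noting that $B$ is an arbitrary measurable subset of $\Omega$, I conclude that the integrands themselves must satisfy the same two-sided bound $\bP$-a.e., that is,
\[
c^{-2}\,p(\bomega,n,\bw_0' \ldots \bw_{n-1}') \;\le\; \sum_{(w_1,\ldots,w_k)} p(\tau_{-(w_1+\cdots+w_k)}\bomega, n+k, w_1 \ldots w_k \bw_0' \ldots \bw_{n-1}') \;\le\; c^{2}\,p(\bomega,n,\bw_0' \ldots \bw_{n-1}').
\]
Since the set of cylinder words $(\bw_0',\ldots,\bw_{n-1}')$ is countable across all $n$ and $k$, the full-measure set of $\bomega$ can be chosen uniformly, and we obtain \textbf{(Abs)} with $C_0 = c^2$. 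The second assertion follows by taking $\bQ_\star = \bP_\star$, so that $c = 1$. I expect no real obstacle here; the only delicate point is verifying that testing on product sets $B \times C$ with cylinder $C$ is enough to pin down the integrand pointwise, which is standard measure theory given the arbitrariness of $B$.
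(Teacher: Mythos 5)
Your argument is correct and follows the paper's own proof essentially step by step: compute $\cF_\star^{-k}(B\times[\bw_0\ldots\bw_{n-1}])$, write both $\bP_\star(B')$ and $\bP_\star(\cF_\star^{-k}B')$ as integrals over $B$ using the translation invariance of $\bP$, sandwich $\bP_\star(\cF_\star^{-k}B')$ between $c^{\mp2}\bP_\star(B')$ via the invariance of $\bQ_\star$ and the density bounds, and then let $B$ range over all measurable sets to extract the pointwise inequality for $\bP$-a.e.\ $\bomega$, with a final countability argument over cylinders. No substantive differences in approach or technique.
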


\begin{proof} For all measurable set $B \subset \Omega$ and all cylinder $[\bw_0 \ldots \bw_{n-1}] \subset \cW^\bN$, the preimage $\cF_\star^{-k}(B \times [\bw_0, \ldots, \bw_{n-1}])$ is equal to the disjoint union $$\bigcup_{(w^1, \ldots, w^k) \in \cW^k} \tau_{-(w^1 + \ldots + w^k)}(B) \times [w^1 \ldots w^k \bw_0 \ldots \bw_{n-1}].$$
Let $B' = B \times [\bw_0 \ldots \bw_{n-1}]$. By definition of $\bP_\star$, we have
\[
\bP_\star(B') = \int_B p(\bomega, n, \bw_0 \ldots \bw_{n-1}) \bP(d \bomega)
\]
and
\[
\begin{aligned}
\bP_\star(\cF_\star^{-k}B') & = \sum_{(w^1, \ldots, w^k) \in \cW^k} \int_{\tau_{-(w^1 + \ldots + w^k)}B} p (\bomega, n+k, w^1 \ldots w^k \bw_0 \ldots \bw_{n-1}) \bP(d \bomega) \\
& = \int_B \sum_{(w^1, \ldots, w^k) \in \cW^k} p(\tau_{w^1 + \ldots + w^k} \bomega, n+k, w^1 \ldots w^k \bw_0 \ldots \bw_{n-1}) \bP(d \bomega)
\end{aligned}
\]
thanks to the translation invariance of $\bP$.

Since $\bP_\star(B') \le c \bQ_\star(B') = c \bQ_\star(\cF_\star^{-k} B') \le c^2 \bP_\star( \cF_\star^{-k} B')$, we have
\[
\begin{aligned}
\int_B p(\bomega, n, &\bw_0 \ldots \bw_{n-1}) \bP(d \bomega) \\
& \le c^2 \int_B \sum_{(w^1, \ldots, w^k) \in \cW^k} p(\tau_{w^1 + \ldots + w^k} \bomega, n+k, w^1 \ldots w^k \bw_0 \ldots \bw_{n-1}) \bP(d \bomega),
\end{aligned}
\]
and similarly,
\[
\begin{aligned}
\int_B \sum_{(w^1, \ldots, w^k) \in \cW^k} p(\tau_{w^1 + \ldots + w^k} \bomega, n+k, &w^1 \ldots w^k \bw_0 \ldots \bw_{n-1}) \bP(d \bomega)\\
& \le c^2 \int_B p(\bomega, n, \bw_0 \ldots \bw_{n-1}) \bP(d \bomega).
\end{aligned}
\]
Since the set of cylinders is countable, this proves the lemma.
\end{proof}
We say that the process is {\em reversible} if $\cW$ is symmetric (i.e. $-w \in \cW$ for all $w \in \cW$) and, for $\bP$-a.e. $\bomega \in \Omega$, all $n \ge 0$ and all $\bw \in \cW^\bN$:
\[
p(\bomega, n, w_0 \ldots w_{n-1})=p(\tau_{(w_0+\dots+w_{n-1})}\bomega, n, -w_{n-1}, \ldots, - w_{0}).
\]
 Note that this definition of reversibility is more akin to the one used in Dynamical Systems than the one used in Markov processes (the self-adjointness of the generator). However it does have relevant implications.
\begin{lem}\label{lem:rev} If the process is reversible, then $\bP_\star$ is $\cF_\star$-invariant. In particular, {\bf (Abs)} is verified, by Lemma \ref{lem:abs_cond}.
\end{lem}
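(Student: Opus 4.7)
The plan is to verify $\cF_\star$-invariance on the $\pi$-system of cylinder sets $B' = B \times [\bw_0, \ldots, \bw_{n-1}]$, where $B \subset \Omega$ is measurable, and to reduce it via translation invariance of $\bP$ to a pointwise identity on the conditional probabilities $p(\bomega, n, \cdot)$ that will follow from reversibility and the compatibility relation.

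First I would compute $\bP_\star(\cF_\star^{-1} B')$ exactly as in the proof of Lemma \ref{lem:abs_cond} (with $k=1$): decomposing $\cF_\star^{-1} B' = \bigcup_{w \in \cW} \tau_{-w}(B) \times [w \bw_0 \ldots \bw_{n-1}]$ and using translation invariance of $\bP$ to rewrite
\[
\bP_\star(\cF_\star^{-1} B') = \int_B \sum_{w \in \cW} p(\tau_{-w} \bomega, n+1, w \bw_0 \ldots \bw_{n-1}) \, \bP(d \bomega).
\]
Since $\bP_\star(B') = \int_B p(\bomega, n, \bw_0 \ldots \bw_{n-1}) \bP(d\bomega)$, invariance will follow once I show
\begin{equation}\label{eq:pointwise_invariance}
\sum_{w \in \cW} p(\tau_{-w}\bomega, n+1, w \bw_0 \ldots \bw_{n-1}) = p(\bomega, n, \bw_0 \ldots \bw_{n-1})
\end{equation}
for $\bP$-a.e.\ $\bomega$ and every admissible $(\bw_0, \ldots, \bw_{n-1})$.

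Next I would derive \eqref{eq:pointwise_invariance} from reversibility. Setting $z_n = \bw_0 + \cdots + \bw_{n-1}$, reversibility gives $p(\bomega, n, \bw_0 \ldots \bw_{n-1}) = p(\tau_{z_n}\bomega, n, -\bw_{n-1} \ldots -\bw_0)$. The compatibility condition applied to the reversed word on the right yields
\[
p(\tau_{z_n}\bomega, n, -\bw_{n-1} \ldots -\bw_0) = \sum_{w \in \cW} p(\tau_{z_n}\bomega, n+1, -\bw_{n-1} \ldots -\bw_0\, w).
\]
Now I apply reversibility again to each summand: the reverse of $(-\bw_{n-1}, \ldots, -\bw_0, w)$ is $(-w, \bw_0, \ldots, \bw_{n-1})$, its entries summing to $-z_n + w$, so
\[
p(\tau_{z_n}\bomega, n+1, -\bw_{n-1} \ldots -\bw_0\, w) = p(\tau_{w}\bomega, n+1, -w, \bw_0, \ldots, \bw_{n-1}).
\]
Finally, using that $\cW$ is symmetric, I reindex the sum $w \mapsto -w$ to obtain precisely the left-hand side of \eqref{eq:pointwise_invariance}, completing the verification. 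Invariance of $\bP_\star$ on cylinder sets then extends to the product $\sigma$-algebra by the $\pi$--$\lambda$ theorem, and {\bf (Abs)} follows from Lemma \ref{lem:abs_cond} with $c = 1$.

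The only nontrivial step is the double application of reversibility sandwiching a use of compatibility; once the bookkeeping of the shift $\tau_{z_n}$ is kept straight and the symmetry of $\cW$ is used to reindex the final sum, the identity is immediate. I do not expect any technical obstacle beyond this algebraic manipulation.
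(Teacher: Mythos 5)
Your proposal is correct and follows essentially the same approach as the paper: decompose $\cF_\star^{-1}$ of a cylinder, use translation invariance of $\bP$, then apply reversibility twice sandwiching the compatibility relation (with the symmetry of $\cW$ used to reindex). The paper chains these identities starting from $\bP_\star(\cF_\star^{-1}B')$ and deriving $\bP_\star(B')$, while you derive the same chain of pointwise identities from the opposite end; this is a purely cosmetic difference.
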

\begin{proof}
It suffices to consider a measurable set $B'$ of the form $B' = B \times [\bw_0, \ldots, \bw_{n-1}]$. We have:
\[
\begin{split}
\bP_\star(\cF_\star^{-1} B') & = \bP_\star \left( \bigcup_{w \in \cW} \tau_{-w}( B) \times [w, \bw_0, \ldots, \bw_{n-1}] \right) \\
&=\sum_{w \in \cW} \int_{\tau_{-w}(B)} p(\bomega, n+1, w, \bw_0, \ldots, \bw_{n-1}) \bP(d \bomega) \\
&=\sum_{w \in \cW} \int_B p(\tau_{-w} \bomega, n+1, w, \bw_0, \ldots, \bw_{n-1}) \bP(d \bomega) \\
&=\sum_{w \in \cW} \int_B p( \tau_{(\bw_0 + \ldots + \bw_{n-1}) }\bomega, n+1, -\bw_{n-1}, \ldots,- \bw_0,- w) \bP(d \bomega) \\
&=\int_B p(\tau_{(\bw_0 + \ldots + \bw_{n-1})} \bomega, n, -\bw_{n-1}, \ldots,- \bw_0) \bP(d \bomega)
\end{split}
\]
\[
\begin{split}
&\phantom{\bP_\star(\cF_\star^{-1} B') }= \int_B p(\bomega, n, \bw_0 \ldots \bw_{n-1}) \bP(d \bomega) = \bP_\star(B'). 
\end{split}
\]
\end{proof}

\begin{rem}\label{rem:toostrong}
Lemma \ref{lem:abs_cond} suggests that ({\bf Abs}) may be too strong. Yet, there are simple models (e.g. Sinai walk, see Example 1 in Section \ref{sec:mark-ex}) for which there does not exist an invariant probability measure absolutely continuous with respect to $\bP_\star$. Hence some condition is necessary.
\end{rem}

As common for random walks, we require an ellipticity assumption:

\begin{description}
\item[{\bf (Ell)}] There exist $\gamma_0 >0$ and $n_\star \ge 0$ such that for $\bP_\star$-a.e. $\bomega \in \Omega$, all $n \ge n_\star$ and all $\bw \in \cW^\bN$,
\[
\bP_{\bomega}(\bw_n \; | \; \bw_0 \ldots \bw_{n-1}) \ge \gamma_0.
\]
\end{description}

Next, we state an ergodicity assumption on the probability measure $\bP$:

\begin{description}
\item[{\bf (Pro)}]  Let $\cV(\cW)=\{z\in\bZ^d\;:\; z=w_0+\cdots+w_{n-1}, n\in\bN, w_i\in\cW\}$. We assume that $G=\cV(\cW)$ is an additive group and that $\bP$ is ergodic with respect to the action of $G$.
\end{description}
\begin{rem}
It might not be necessary to assume that $\cV(\cW)$ is a group, but we will not pursue this direction, as the examples we have in mind satisfies the above assumption: e.g., $\cV(\cW)$ is an additive group whenever $\cW$ is symmetric. We leave to the interested reader possible weakening of property ({\bf Pro}). We nevertheless mention that a closer look at the proof of Theorem \ref{thm:gibbs_walk_erg} reveals that it remains valid  if $\bP$ is ergodic for each translation $\tau_z$, $z \in \bZ^d$, $z\neq 0$ (for instance if $\bP$ is mixing when $d=1$, or if $\bP$ is i.i.d. when $d>1$), without any extra assumption on $\cW$.
\end{rem}


\subsection{A few basic results}\label{sec:ergodicity}\ \\
The above assumptions are justified by the following Theorem. The rest of the section is devoted to its proof.
\begin{thm} \label{thm:gibbs_walk_erg} Suppose that the conditions {\bf (Pos)}, {\bf (Exp)}, {\bf (Abs)}, {\bf (Ell)} and {\bf (Pro)} hold. Then there exists a unique $\cF_\star$-invariant probability measure $\bQ_\star$ equivalent to $\bP_\star$ and the dynamical system $(\Omega_\star, \cF_\star, \bQ_\star)$ is  ergodic. In particular, we have 
\begin{equation} \label{eqn:drift}
\lim_{n \to \infty} \frac{z_n}{n} = V, 
 \end{equation}
$\bP_\star$-a.s., with $V = \int_{\Omega_\star} \varphi \, d \bQ_\star \in \bR^d$.\footnote{ Recall that $\varphi(\bomega, \bw) = \bw_0$.}
\end{thm}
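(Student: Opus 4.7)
\smallskip

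\noindent\textbf{Plan of proof.}

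\emph{Step 1: Construction of $\bQ_\star$ via Krylov--Bogolyubov.}
The computation already carried out in the proof of Lemma \ref{lem:abs_cond} shows that $(\cF_\star^{-k} B')$ can be written as an integral involving precisely the sum appearing in assumption \textbf{(Abs)}. Consequently, for every cylinder set $B'$ and every $k \ge 0$,
\[
C_0^{-1} \, \bP_\star(B') \;\le\; (\cF_\star^{k})_* \bP_\star (B') \;\le\; C_0 \, \bP_\star(B'),
\]
and a monotone class argument extends the inequality to every measurable set. Thus each push-forward has a density $h_k = d(\cF_\star^k)_*\bP_\star / d\bP_\star \in [C_0^{-1},C_0]$. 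Form the Cesaro averages
\[
\bQ_N = \frac{1}{N} \sum_{k=0}^{N-1} (\cF_\star^k)_* \bP_\star,
\]
with density $H_N = \frac{1}{N}\sum_{k=0}^{N-1} h_k \in [C_0^{-1}, C_0]$. By weak-$*$ compactness of the ball of $L^\infty(\bP_\star) = L^1(\bP_\star)^*$, some subsequence $H_{N_j}$ converges weak-$*$ to a limit $h$ with $C_0^{-1} \le h \le C_0$ $\bP_\star$-a.s. Setting $\bQ_\star = h \bP_\star$, the usual telescoping bound $\left|\int \vf \circ \cF_\star \, d\bQ_N - \int \vf \, d\bQ_N\right| \le 2\|\vf\|_\infty / N$ yields $\cF_\star$-invariance. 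Equivalence with $\bP_\star$, as well as the bounded Radon--Nikodym derivative, follows from the two-sided bound on $h$.

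\emph{Step 2: Ergodicity via a martingale/decoupling argument.}
Let $B \subset \Omega_\star$ be $\cF_\star$-invariant with $\bQ_\star(B) > 0$; the goal is $\bQ_\star(B) = 1$. Let $\cH_n = \sigma(\bomega, \bw_0, \dots, \bw_{n-1})$ be the natural filtration and $M_n = \bE_{\bQ_\star}[\Id_B \mid \cH_n]$. L\'evy's upward theorem gives $M_n \to \Id_B$ $\bQ_\star$-a.s. On the other hand, invariance of $B$ gives $\Id_B = \Id_B \circ \cF_\star^n$, and by definition of $\cF_\star$,
\[
M_n(\bomega, \bw) = \bE_{\bQ_\star}\left[ \Id_B(\tau_{z_n}\bomega, \tau_\star^n \bw) \mid \cH_n\right].
\]
I now invoke \textbf{(Exp)}: iterated over the first $k$ future coordinates, it yields that the conditional law of $\tau_\star^n \bw$ given $\cH_n$ differs in total variation from $\bP^{\tau_{z_n} \bomega}$ by at most $C k \nu^{n}$, a bound that transfers from $\bP_\star$ to $\bQ_\star$ thanks to the bounded density $h$. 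Defining the measurable function
\[
\psi(\bomega) = \int \Id_B(\bomega, \bw') \, \bP^{\bomega}(d\bw'),
\]
the above comparison gives $|M_n(\bomega, \bw) - \psi(\tau_{z_n}\bomega)| \to 0$ $\bQ_\star$-a.s. Combining with the martingale convergence,
\[
\psi(\tau_{z_n}\bomega) \xrightarrow[n\to\infty]{} \Id_B(\bomega, \bw) \qquad \bQ_\star\text{-a.s.}
\]
In particular, along $\bQ_\star$-a.e.\ trajectory, $\psi(\tau_{z_n}\bomega)$ converges to a $\{0,1\}$-valued limit.

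\emph{Step 3: Use of \textbf{(Pro)} and conclusion.}
Let $S = \{\bomega : \psi(\bomega) > 0\}$, so that $\bP(S) > 0$ since $\bQ_\star(B) > 0$. From the dichotomy above, $\tau_{z_n}\bomega \in S$ for all large $n$ on the set $\{\Id_B = 1\}$, and $\tau_{z_n}\bomega \notin S$ for all large $n$ on $\{\Id_B = 0\}$. I will argue that $S$ is $G$-invariant modulo $\bP$-null sets: by \textbf{(Pos)} and \textbf{(Ell)}, for every $z \in G = \cV(\cW)$ there are admissible paths of positive probability reaching $z$ in finitely many steps; combined with the two-sided estimate of Step 2 this forces $\psi(\tau_z \bomega) \in \{0, 1\}$ and then $\psi(\tau_z \bomega) = \psi(\bomega)$ for $\bP$-a.e.\ $\bomega$ and every $z \in G$. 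By \textbf{(Pro)}, $\bP(S) = 1$, hence $\bQ_\star(B) = \int \psi \, d\bP = 1$, proving ergodicity of $(\Omega_\star, \cF_\star, \bQ_\star)$. Uniqueness of $\bQ_\star$ among $\cF_\star$-invariant probabilities equivalent to $\bP_\star$ is standard: the Radon--Nikodym derivative between two such measures is $\cF_\star$-invariant, hence constant by ergodicity, hence equal to $1$. Finally, the LLN \eqref{eqn:drift} follows from Birkhoff's ergodic theorem applied to $\vf(\bomega, \bw) = \bw_0$ under $(\cF_\star, \bQ_\star)$, since $z_n = \sum_{k=0}^{n-1} \vf \circ \cF_\star^k$ and $\bP_\star$-a.s.\ convergence is equivalent to $\bQ_\star$-a.s.\ convergence.

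\emph{The main obstacle} lies in Step 2/3: upgrading the total-variation estimate coming from \textbf{(Exp)} to a clean pointwise statement about $\psi \circ \tau_{z_n}$, and transporting the resulting $\{0,1\}$-dichotomy along the orbit of the walk to all of $G$ so that \textbf{(Pro)} applies. Handling the interplay between the conditional law under $\bQ_\star$ (only known through the bounded density $h$) and the reference laws $\bP^{\bomega}$ (for which \textbf{(Exp)} is stated) will require care, as will showing that the walk actually has a chance to reach every $z \in G$ — a mild irreducibility statement for which \textbf{(Pos)} and \textbf{(Ell)} are precisely tailored.
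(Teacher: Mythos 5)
Your Step~1 (Krylov--Bogolyubov construction) is correct and a legitimate alternative to the paper's route. The paper instead builds $h_\star$ by applying a Kakutani--Yosida mean ergodic theorem to the transfer operator $\cL_\star$ (Lemma \ref{lem:ergodic_average}); your pushforward/weak-$*$ argument reaches the same measure with the same two-sided bounds from {\bf (Abs)}. Both are fine.

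The genuine gap is in Step~2, at the decoupling estimate. You claim that, iterating {\bf (Exp)} over $k$ future coordinates, the conditional law of $\tau_\star^n\bw$ given $\cH_n$ differs from $\bP^{\tau_{z_n}\bomega}$ in total variation by $C k\nu^n$. This misreads {\bf (Exp)}. That condition compares the conditional at a fixed time $n'>m$ when we truncate the history at time $m$:
\[
\bigl|\,\bP_\bomega(\bw_{n'}\mid \bw_0\ldots\bw_{n'-1}) - \bP_{\tau_{z_m}\bomega}(\bw_{n'}\mid\bw_m\ldots\bw_{n'-1})\,\bigr| \le C_*\nu^{n'-m}.
\]
To compare with the \emph{fresh} process from $\tau_{z_n}\bomega$ you must take $m=n$, and then the error at the $i$-th future step ($n'=n+i$) is $C_*\nu^{i}$, not $C_*\nu^{n}$. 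Summing over $i$ gives an error of order $1$, uniformly in $n$; it does not vanish. (Worse: for $i=0$, {\bf (Exp)} gives no bound at all, since it requires $n'>m$.) Consequently $M_n$ does \emph{not} converge to $\psi(\tau_{z_n}\bomega)$ for any function $\psi$ depending only on $\bomega$. This is not a technicality: for a persistent walk, the law of $\bw_n$ genuinely depends on the recent increments $\bw_{n-1},\bw_{n-2},\dots$, so any correct version of your decoupling must keep a window of recent $\bw$-coordinates as arguments.

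This is exactly what the paper's argument supplies, by a different mechanism: it proves a Lasota--Yorke inequality for $\cL_\star$ on the H\"older space $\cH_\nu$ (Lemma \ref{lem:ly_lstar}), and deduces from it that any bounded fixed point of $\cL_\star$, in particular $\mathds{1}_B h_\star$ and hence $\mathds{1}_B$, lies in $\cH_\nu$ (Lemma \ref{lem:invariant_set_finite}). Since $\mathds{1}_B$ is $\{0,1\}$-valued and $\nu$-H\"older in $\bw$, it depends only on $(\bomega, w_0,\dots,w_{N_B-1})$ for some finite $N_B$; one then shifts by an admissible loop using {\bf (Pro)} to kill the $\bw$-dependence entirely and reduce to $G$-ergodicity of $\bP$. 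Your Step~3, which tries to propagate a $\{0,1\}$ dichotomy of $\psi$ along the random orbit $z_n$ to all of $G$, is also only sketched, but its difficulty is downstream of the failed Step~2. To repair the proposal along your lines you would need to replace $\psi(\tau_{z_n}\bomega)$ by a function of $(\tau_{z_{n-L}}\bomega,\bw_{n-L},\dots,\bw_{n-1})$ for a large but fixed $L$, at which point you effectively reprove that $\mathds{1}_B$ has short memory in $\bw$ — i.e., you are back to the paper's regularity argument.
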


\begin{rem} \label{rem:zero-velocity} If $\bP_\star$ is invariant, then 
\[
V = \int_{\Omega_\star} \varphi \, d\bP_\star = \sum_{w \in \cW} w \int_\Omega p(\bomega, 1, w) \, \bP(d \bomega).
\]
\end{rem}

As a simple but important consequence, we deduce the recurrence in 1-d:

\begin{cor} \label{cor:recurrence} Under the conditions of the above theorem, if $d =1$ and $V=0$, then the process $(z_n)$ is recurrent: $z_n= 0$ infinitely often, $\bP_\star$-a.s..
\end{cor}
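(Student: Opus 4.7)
My strategy is to combine Theorem \ref{thm:gibbs_walk_erg} with the classical Atkinson--Schmidt recurrence theorem for centered cocycles over ergodic systems. First I apply Theorem \ref{thm:gibbs_walk_erg}: the hypotheses {\bf (Pos)}, {\bf (Exp)}, {\bf (Abs)}, {\bf (Ell)}, {\bf (Pro)} yield a unique $\cF_\star$-invariant probability $\bQ_\star$ equivalent to $\bP_\star$, and ergodicity of $(\Omega_\star, \cF_\star, \bQ_\star)$. Since $\cW$ is finite, the function $\varphi(\bomega, \bw) = \bw_0$ is bounded, hence in $L^1(\bQ_\star)$, and $z_n = \sum_{k=0}^{n-1} \varphi \circ \cF_\star^k$ is its Birkhoff cocycle. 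The hypothesis $V = 0$ is exactly $\int \varphi \, d\bQ_\star = 0$.

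Next, I would invoke the recurrence theorem of Atkinson (see also Schmidt \cite{Schmidt98} and Conze \cite{Conze99}, already cited in the introduction): for any ergodic measure-preserving system $(X, T, \mu)$ and any $f \in L^1(\mu)$ with $\int f \, d\mu = 0$, one has $\liminf_n |S_n f| = 0$ $\mu$-a.s. Applied here this gives $\liminf_n |z_n| = 0$, $\bQ_\star$-a.s. Because $d = 1$ and $\cW \subset \bZ$, every $z_n$ is an integer, so $\liminf_n |z_n| = 0$ forces $z_n = 0$ for infinitely many $n$, $\bQ_\star$-a.s.; by the equivalence $\bQ_\star \sim \bP_\star$ the same conclusion holds $\bP_\star$-a.s., completing the argument.

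The only substantive input beyond Theorem \ref{thm:gibbs_walk_erg} is the Atkinson recurrence step, and that is the main obstacle. A self-contained treatment would begin with the observation that the events $\{z_n \to +\infty\}$ and $\{z_n \to -\infty\}$ are $\cF_\star$-invariant (because $z_n \circ \cF_\star = z_{n+1} - \bw_0$), so by ergodicity each has $\bQ_\star$-measure $0$ or $1$, and then rule out both possibilities using $V = 0$ via the Hopf decomposition of the skew-product on $\Omega_\star \times \bZ$ defined by $(\bomega, \bw, m) \mapsto (\cF_\star(\bomega, \bw), m + \varphi(\bomega, \bw))$. This last step is the content of Atkinson's argument; everything else in the corollary is bookkeeping.
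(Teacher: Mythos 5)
Your proposal follows essentially the same route as the paper: appeal to Theorem~\ref{thm:gibbs_walk_erg} for ergodicity of $(\Omega_\star,\cF_\star,\bQ_\star)$, then invoke the Atkinson recurrence theorem for centered $L^1$ cocycles, and transfer back to $\bP_\star$ via equivalence of measures. One small caveat the paper flags and you gloss over: the Atkinson/Schmidt recurrence theorem is classically stated for \emph{invertible} measure-preserving systems, whereas $(\Omega_\star,\cF_\star,\bQ_\star)$ is a one-sided shift; to apply it here one should pass to the natural extension (as the paper does, citing Lenci), so your claim that the theorem holds ``for any ergodic measure-preserving system'' needs that extra sentence of justification.
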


\begin{proof}[{\bf\emph{Proof of Corollary \ref{cor:recurrence}}}] We refer to \cite{schmidt2006recurrence} for a nice survey on the recurrence of cocycles. Since $(\Omega_\star, \cF_\star, \bQ_\star)$ is ergodic by Theorem \ref{thm:gibbs_walk_erg} when $d=1$, the walk is recurrent if $V= 0$, see \cite{Atkinson76} or \cite[Theorem 3]{schmidt2006recurrence}. Note that this result for recurrence of cocycles is stated for invertible dynamical systems, but it can be extended to non-invertible systems using the natural extension, see for instance \cite[Appendix B]{Lenci08}.
\end{proof}

\begin{rem}
When $d=2$ and $V=0$, if $(z_n)$ satisfies an annealed central limit theorem, i.e. if $\frac{z_n}{\sqrt{n}}$ converges in law to a Gaussian distribution under the probability measure $\bQ_\star$, then the process $(z_n)$ is recurrent by the results of Conze \cite{Conze99} and Schmidt \cite{Schmidt98}.
\end{rem}

From now on and till the end of the section we will assume conditions {\bf (Pos)}, {\bf (Exp)}, {\bf (Abs)}, {\bf (Ell)} and {\bf (Pro)} if not explicitly stated otherwise.

To prove Theorem \ref{thm:gibbs_walk_erg}, we will analyze the properties of the transfer operator $\cL_\star$ associated to $\cF_\star$ with respect to $\bP_\star$. More precisely, we will show that the operator $\cL_\star$ enjoys some regularization properties on a space of H\"older functions. We first define the usual separation time on $\cW^\bN$ by 
\[
s(\bw, \bw') = \inf \{ n \ge 0 \, : \, \bw_n \neq \bw'_n \},
\]
and for $0 < \nu < 1$, the metric $d_\nu(\bw, \bw') = \nu^{s(\bw, \bw')}$ on $\cW^\bN$. 

For a measurable function $f : \Omega \times \cW^\bN \to \bC$, we set:
\[
\|f \|_\infty = \underset{\bomega \in \Omega} \essup \sup_{\bw \in \cW^\bN} |f(\bomega, \bw)|,
\]
\[
| f |_\nu = \underset{\bomega \in \Omega}\essup \sup_{\bw \neq \bw'} \frac{|f(\bomega, \bw) - f(\bomega, \bw')|}{d_\nu(\bw, \bw')},
\]
and define 
\[
\cH_\infty = \{f : \Omega \times \cW^\bN \to \bC \, : \, \|f \|_\infty < \infty \},
\]
\[
\cH_\nu = \{ f : \Omega \times \cW^\bN \to \bC \, : \, \|f \|_\nu := \|f\|_\infty + | f |_\nu < \infty \}.
\]
The space $\cH_\nu$ is a Banach algebra.
The following result about density of $\cH_\nu$ is based on very classical ideas, but we include it here for completeness:
\begin{lem} \label{lem:htheta_dense}
For any function $\varphi \in L^1(\bP_\star)$, there exists $(\varphi_\epsilon)_\epsilon \subset \cH_\nu$ such that $\varphi_\epsilon \to \varphi$ in $L^1(\bP_\star)$. Moreover, if $\varphi$ is bounded, $(\varphi_\epsilon)_\epsilon$ can be chosen such that $\sup_\epsilon \| \varphi_\epsilon \|_\infty < \infty$.
\end{lem}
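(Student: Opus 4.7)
The plan is to reduce to approximating indicator functions of measurable rectangles $B\times C$ with $B\subset\Omega$ measurable and $C\subset\cW^\bN$ a cylinder, and then to observe that such indicators already lie in $\cH_\nu$.

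First, I would note that any function constant on a cylinder $[w_0\ldots w_{n-1}]$ is in $\cH_\nu$: its $\|\cdot\|_\infty$ is finite, and if $\bw,\bw'$ lie on different sides of the cylinder then $s(\bw,\bw')\le n-1$, so $d_\nu(\bw,\bw')\ge\nu^{n-1}$ and $|f|_\nu\le 2\|f\|_\infty\nu^{-(n-1)}<\infty$. Similarly, any bounded measurable function of $\bomega$ alone belongs to $\cH_\nu$, since it does not depend on $\bw$ and thus has zero H\"older seminorm. Because $\cH_\nu$ is a Banach algebra, products $\Id_B(\bomega)\Id_C(\bw)$ with $B$ measurable and $C$ a cylinder belong to $\cH_\nu$, and hence so do finite linear combinations of such products.

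Next, I would carry out the standard three-step $L^1$ approximation. Given $\varphi\in L^1(\bP_\star)$ and $\epsilon>0$: (i) approximate $\varphi$ in $L^1$ by a bounded simple function $\psi=\sum_i c_i\Id_{A_i}$ with $A_i$ measurable in $\Omega\times\cW^\bN$; (ii) for each $A_i$, use that the measurable rectangles with cylinder second factor form a semi-algebra generating the product $\sigma$-algebra, so by the standard approximation lemma for product measures (or a direct monotone class argument), there is a finite disjoint union $E_i=\bigsqcup_j B_{i,j}\times C_{i,j}$ of such rectangles with $\bP_\star(A_i\triangle E_i)<\epsilon$; (iii) set $\varphi_\epsilon=\sum_i c_i\Id_{E_i}$, which lies in $\cH_\nu$ by the previous paragraph and satisfies $\|\varphi_\epsilon-\psi\|_{L^1(\bP_\star)}\le\epsilon\sum_i|c_i|$. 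Combining gives $\varphi_\epsilon\to\varphi$ in $L^1(\bP_\star)$.

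For the last statement, suppose $\|\varphi\|_\infty\le M$. After constructing $\varphi_\epsilon$ as above, I would replace it by its truncation $\widetilde{\varphi}_\epsilon(\bomega,\bw)=\max(-M,\min(M,\varphi_\epsilon(\bomega,\bw)))$. Since $x\mapsto\max(-M,\min(M,x))$ is $1$-Lipschitz, one checks directly that $\widetilde{\varphi}_\epsilon\in\cH_\nu$ with $\|\widetilde{\varphi}_\epsilon\|_\infty\le M$ and $|\widetilde{\varphi}_\epsilon|_\nu\le|\varphi_\epsilon|_\nu$. Moreover $|\widetilde{\varphi}_\epsilon-\varphi|\le|\varphi_\epsilon-\varphi|$ pointwise because $\varphi$ itself is already in $[-M,M]$, so $L^1$ convergence is preserved and the uniform $\|\cdot\|_\infty$ bound holds.

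There is no real obstacle here; the only minor care point is the approximation in step (ii), which in some references is stated for finite measure product spaces and must be applied to the (possibly uncountably generated) $\sigma$-algebra on $\Omega$, but this is a standard application of Carath\'eodory's construction or the $\pi$-$\lambda$ theorem, so I would just cite it rather than redo it.
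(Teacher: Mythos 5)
Your proof is correct, but it takes a genuinely different route from the paper's. The paper treats $\Omega_\star$ as a compact metric space (endowing $\Omega$ with a metric $d_\Omega$ inducing the product topology, and $\Omega_\star$ with $d_\star = d_\Omega + d_\nu$), invokes outer regularity of Borel measures on compact metric spaces to approximate indicator functions by $d_\star$-Lipschitz functions $\min\{k\, d_\star(\cdot,\Omega_\star\setminus O),1\}$, and then builds up via dyadic level sets to bounded and finally $L^1$ functions. Your argument is instead purely measure-theoretic: you exploit the key structural fact that the H\"older seminorm $|\cdot|_\nu$ only sees the $\bw$-variable, so any bounded measurable function of $\bomega$ alone lies in $\cH_\nu$ for free, and then you approximate in $L^1$ by the algebra generated by sets $B\times C$ with $B$ measurable and $C$ a cylinder. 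The trade-off is that the paper's route needs $A$ to be compact metrizable (which it implicitly assumes, cf.\ the footnote that $A$ is finite in the concrete applications), whereas yours works for an arbitrary measurable space $(A,\mathcal{S})$, since it never uses any topology on $\Omega$. Your truncation step at the end, to guarantee the uniform sup bound for bounded $\varphi$, is also clean and uses the $1$-Lipschitz property of clipping rather than the paper's monotone dyadic decomposition. Both proofs are valid; yours is arguably tidier and slightly more general.
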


\begin{proof}
We first consider the case where $\varphi = \mathds{1}_B$ is the indicator function of a measurable set $B \subset \Omega_\star$.
We endow $\Omega_\star$ with the metric $d_\star((\bomega, \bw), (\bomega', \bw')) = d_{\Omega}(\bomega, \bomega') + d_\nu(\bw, \bw')$, where $d_\Omega$ is any metric defining the product topology on $\Omega$. The metric $d_\star$ defines the product topology on $\Omega_\star$. For any open set $O \subset \Omega_\star$, we define 
\[
\varphi_{k, O}(\bomega, \bw)  = \min \{ k \, d_\star( (\bomega, \bw), \Omega_\star \setminus O), 1\}.
\]
We clearly have $0 \le \varphi_{k, O} \le \varphi_{k+1, O} \le \mathds{1}_O \le 1$, and $\lim_k \varphi_{k,O}(\bomega, \bw) = \mathds{1}_O(\bomega, \bw)$ for all $(\bomega, \bw) \in \Omega_\star$. The function $\varphi_{k,O}$ is clearly lipschitzian with respect to the metric $d_\star$, which also implies that $ \varphi_{k,O} \in \cH_\nu$.
Since $\bP_\star$ is a probability measure on the compact metric space $\Omega_\star$, it is outer regular, see \cite[Theorem 2.17]{rudin1987real}: for any Borel set $B \subset \Omega_\star$ and any $\epsilon > 0$, there exists an open set $O_\epsilon \subset \Omega_\star$ such that $B \subset O_\epsilon$ and $\bP_\star(O_\epsilon \setminus B) \le \epsilon$. By the dominated convergence theorem
\[
\lim_{k \to \infty} \int_{\Omega_\star} \left| \varphi_{k, O_\epsilon} - \mathds{1}_{O_\epsilon} \right| \, d \bP_\star = 0.
\]
We choose $k_\epsilon \ge 0$ such that $\int_{\Omega_\star} \left| \varphi_{k, O_\epsilon} - \mathds{1}_{O_\epsilon} \right| \, d \bP_\star \le \epsilon$ and set $\varphi_\epsilon = \varphi_{k_\epsilon, O_\epsilon}$. By the above arguments, we have $\varphi_\epsilon \in \cH_\nu$, $\| \varphi_\epsilon \|_\infty \le 1$, and 
\[
\| \varphi_\epsilon - \mathds{1}_B \|_{L^1(\bP_\star)} \le \| \varphi_\epsilon - \mathds{1}_{O_\epsilon} \|_{L^1(\bP_\star)} + \bP_\star(O_\epsilon \setminus B) \le 2 \epsilon,
\]
which proves the convergence in $L^1(\bP_\star)$. 
Next, assume $\vf\in L^\infty$. Without loss of generality we can assume $\vf\geq 0$ and $\|\vf\|_\infty=2$. Let $B_1=\{\xi\in\Omega_\star\;:\; \vf(\xi)\geq 1\}$ and
\[
B_k=\left\{\xi\in\Omega_\star\;:\; \vf(\xi)\geq 2^{-k}+\sum_{j=0}^{k-1}\Id_{B_j}2^{-j}\right\}.
\]
By construction $\|\vf-\sum_{j=0}^{k-1}\Id_{B_j}2^{-j}\|_\infty\leq 2^{-k}$, hence we can use the above approximations of the characteristic functions to approximate $\vf$ in $L^1$ with a sequence with norm bounded by $2$. The case $\vf\in L^1$ can be obtained by approximation by bounded functions.
\end{proof}
Next, we state a useful technical lemma.
\begin{lem}\label{eq:nonunif-abs}
Under assumptions  {\bf (Pos)}, {\bf (Exp)} and  {\bf (Ell)}  there exists $\gamma_*\in L^\infty(\Omega,\bP)$, $\gamma_\star>0$ such that, for all $n>n_\star$ and $\bw\in\cW^\bN$, we have
\[
\gamma_\star(\omega)\leq \frac{p(\bomega, n, \bw_0 \ldots \bw_{n-1})}{p(\tau_{\bw_0} \bomega, n-1, \bw_1 \ldots \bw_{n-1})} \leq \gamma_\star(\omega)^{-1}
\]
\end{lem}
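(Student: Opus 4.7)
The plan is to apply the multiplicative chain rule to the probabilities in the numerator and denominator, reducing $R$ to a telescoping product of one-step conditional ratios; then control the tail of this product via \textbf{(Exp)} and \textbf{(Ell)}, and bound the finitely many remaining initial factors using \textbf{(Pos)}.

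Concretely, the chain rule gives
\begin{align*}
p(\bomega,n,w_0\ldots w_{n-1}) &= p(\bomega,1,w_0)\prod_{k=1}^{n-1}\bP_\bomega(w_k\mid w_0\ldots w_{k-1}),\\
p(\tau_{w_0}\bomega,n-1,w_1\ldots w_{n-1}) &= p(\tau_{w_0}\bomega,1,w_1)\prod_{k=2}^{n-1}\bP_{\tau_{w_0}\bomega}(w_k\mid w_1\ldots w_{k-1}),
\end{align*}
so that
\[
R=\frac{p(\bomega,1,w_0)\,\bP_\bomega(w_1\mid w_0)}{p(\tau_{w_0}\bomega,1,w_1)}\cdot\prod_{k=2}^{n-1}\frac{\bP_\bomega(w_k\mid w_0\ldots w_{k-1})}{\bP_{\tau_{w_0}\bomega}(w_k\mid w_1\ldots w_{k-1})}.
\]

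For the tail ($k\ge n_\star+1$), \textbf{(Exp)} applied with $m=1$ and $z_m=w_0$ yields $|\bP_\bomega(w_k\mid\cdot)-\bP_{\tau_{w_0}\bomega}(w_k\mid\cdot)|\le C_*\nu^{k-1}$, and \textbf{(Ell)} bounds each denominator below by $\gamma_0$. Each such factor is therefore $1+O(\gamma_0^{-1}\nu^{k-1})$, and since $\sum_{k\ge 1}\nu^{k-1}<\infty$, the tail product $\prod_{k=n_\star+1}^{n-1}$ stays pinched between two positive constants $c$ and $c^{-1}$ that are uniform in $\bomega$, $\bw$, and $n$. This is the analytic heart of the argument.

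To bound the leading factor together with the finitely many $k\in\{2,\ldots,n_\star\}$, I would introduce
\[
\gamma^-(\bomega):=\min\bigl\{\bP_\bomega(v_j\mid v_0\ldots v_{j-1}):0\le j\le n_\star,\;(v_0,\ldots,v_j)\in\cW^{j+1}\bigr\},
\]
with the convention $\bP_\bomega(v_0\mid\emptyset):=p(\bomega,1,v_0)$, together with $\gamma^+(\bomega):=\min_{u\in\cW}\gamma^-(\tau_u\bomega)$. Being finite minima of measurable, $(0,1]$-valued functions (positivity comes from \textbf{(Pos)}; measurability from the hypothesis that $\bomega\mapsto p(\bomega,n,\cdot)$ is measurable), both $\gamma^\pm$ are measurable, $\bP$-a.s.\ positive, and bounded above by $1$, hence belong to $L^\infty(\bP)$. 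Each of the remaining $O(n_\star)$ factors of $R$ is then sandwiched between $\gamma^-(\bomega)$ and $\gamma^+(\bomega)^{-1}$, and it suffices to set $\gamma_\star(\bomega):=c\,\min\bigl(\gamma^-(\bomega)^{n_\star+1},\gamma^+(\bomega)^{n_\star+1}\bigr)$ to obtain the lemma. The main mild technicality will be verifying measurability, positivity, and the $L^\infty$ bound for $\gamma_\star$, but these are all immediate because $\cW^{\le n_\star+1}$ is finite and \textbf{(Pos)} guarantees positivity at every step.
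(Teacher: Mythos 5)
Your proof is correct and follows essentially the same route as the paper's. Unrolling the paper's descent recursion $R(n) = \bP_\bomega(\bw_{n-1}\mid\cdots)/\bP_{\tau_{\bw_0}\bomega}(\bw_{n-1}\mid\cdots) \cdot R(n-1)$ gives precisely your chain-rule factorisation, the tail $k\ge n_\star+1$ is controlled by \textbf{(Exp)} and \textbf{(Ell)} to produce a uniform positive constant in both proofs, and the finitely many head factors are absorbed into a positive, measurable, $L^\infty$ function of $\bomega$ via \textbf{(Pos)} (what the paper condenses into the single base-case ratio $p(\bomega,n_\star+1,\cdot)/p(\tau_{\bw_0}\bomega,n_\star,\cdot)$).
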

\begin{proof}
 For all $n > n_\star$, we have
\[
\begin{aligned}
&\frac{p(\bomega, n, \bw_0 \ldots \bw_{n-1})}{p(\tau_{\bw_0} \bomega, n-1, \bw_1 \ldots \bw_{n-1})}  = \frac{p(\bomega, n, \bw_0 \ldots \bw_{n-1})}{p(\bomega, n-1, \bw_0 \ldots \bw_{n-2})} \frac{p(\bomega, n-1, \bw_0 \ldots \bw_{n-2})}{p(\tau_{\bw_0} \bomega, n-1, \bw_1 \ldots \bw_{n-1})} \\
& = \bP_{\bomega}( \bw_{n-1} \; | \; \bw_{0} \ldots \bw_{n-2}) \frac{p(\bomega, n-1, \bw_0 \ldots \bw_{n-2})}{p(\tau_{\bw_0} \bomega, n-1, \bw_1 \ldots \bw_{n-1})} \\
& \ge \left[ \bP_{\tau_{\bw_0} \bomega}( \bw_{n-1} \; | \; \bw_1 \ldots \bw_{n-2}) - \Const \nu^n \right] \frac{p(\bomega, n-1, \bw_0 \ldots \bw_{n-2})}{p(\tau_{\bw_0} \bomega, n-1, \bw_1 \ldots \bw_{n-1})} \\
& = \left[\frac{p(\tau_{\bw_0} \bomega, n-1, \bw_1 \ldots \bw_{n-1})}{p(\tau_{\bw_0} \bomega, n-2, \bw_1 \ldots \bw_{n-2})} - \Const \nu^n \right] \frac{p(\bomega, n-1, \bw_0 \ldots \bw_{n-2})}{p(\tau_{\bw_0} \bomega, n-1, \bw_1 \ldots \bw_{n-1})} \\
& = \left[ 1 - \Const \nu^n \frac{p(\tau_{\bw_0} \bomega, n-2, \bw_1 \ldots \bw_{n-2})}{p(\tau_{\bw_0} \bomega, n-1, \bw_1 \ldots \bw_{n-1})} \right] \frac{p(\bomega, n-1, \bw_0 \ldots \bw_{n-2})}{p(\tau_{\bw_0} \bomega, n-2, \bw_1 \ldots \bw_{n-2})} \\
& \ge (1 - \Const \gamma_0^{-1 } \nu^n ) \frac{p(\bomega, n-1, \bw_0 \ldots \bw_{n-2})}{p(\tau_{\bw_0} \bomega, n-2, \bw_1 \ldots \bw_{n-2})}\\
&\geq \prod_{j=n_\star +1}^n(1 - \Const \gamma_0^{-1 } \nu^j ) \frac{p(\bomega, n_\star+1, \bw_0 \ldots \bw_{n_\star})}{p(\tau_{\bw_0} \bomega, n_\star, \bw_1 \ldots \bw_{n_\star})},
\end{aligned}
\]
where we have used {\bf (Exp)} at the third line and {\bf (Ell)} at the last line. The lower bound follows then by {\bf (Pos)}, provided that $\Const \gamma_0^{-1 } \nu^{n_\star}<1$, which we can always ensure by eventually redefining $n_\star$.
The upper bound can be established similarly.\footnote{ Note however that {\bf (Ell)} is not needed to prove the upper bound.}
\end{proof}
\begin{rem} Note that a slight strengthening of {\bf (Pos)} would imply that $\gamma_\star$ can be chosen to be constant.\footnote{ That is, one could ask, for all $n>n_\star$, $\inf_{\omega\in\Omega}p(\bomega, n, \bw_0 \ldots \bw_{n-1})>0$, which holds in all the example we have in mind.} Then Lemma \ref{eq:nonunif-abs} would imply:
for each $k\in\bN$, there exists $C_k>0$ such that for $\bP$-a.e. $\bomega \in \Omega$, all $n \ge 0$ and all $\bw \in \cW^\bN$,
\[
C_k^{-1} \le \sum_{(w_1, \ldots, w_k) \in \cW^k} \frac{p(\tau_{-(w_1 + \ldots + w_k)} \bomega, n+k, w_1 \ldots w_k \bw_0 \ldots \bw_{n-1})}{p(\bomega, n, \bw_0 \ldots \bw_{n-1})} \le C_k.
\]
Hence the all point of {\bf (Abs)} rests in the uniformity with respect to $k$.
\end{rem}

We now define what will turn out to be the potential associated to $\cL_\star$:

\begin{lem} \label{lem:potential}
There exists a measurable function $J : \Omega \times \cW^\bN \to \bR^+$ such that for all $n \ge 0$
\begin{equation} \label{eqn:potential}
\underset{\bomega \in \Omega}\essup \sup_{\bw \in \cW^\bN} \left| \frac{p(\bomega, n, \bw_0 \ldots \bw_{n-1})}{p(\tau_{\bw_0} \bomega, n-1, \bw_1 \ldots \bw_{n-1})} - J(\bomega, \bw) \right| \le \Const \nu^n.
\end{equation}
Moreover, $J$ belongs to $\cH_\nu$  and $J(\bomega, \bw) > 0$ for $\bP$-a.e $\bomega \in \Omega$ and all $\bw \in \cW^\bN$.
\end{lem}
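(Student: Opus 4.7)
The plan is to fix, for $n\ge 1$, the ratio
\[
r_n(\bomega,\bw):=\frac{p(\bomega,n,\bw_0\ldots\bw_{n-1})}{p(\tau_{\bw_0}\bomega,n-1,\bw_1\ldots\bw_{n-1})}
\]
(with the convention $p(\cdot,0,\emptyset)=1$), show that $(r_n)$ is Cauchy in $\|\cdot\|_\infty$ at geometric rate $\nu^n$, and set $J:=\lim_n r_n$. Estimate \eqref{eqn:potential} then becomes simply the rate of convergence, and the positivity and H\"older regularity of $J$ are inherited from the corresponding properties of the $r_n$.

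The first key step is a \emph{uniform} upper bound $r_n\le C_0$. Applying {\bf (Abs)} at the environment $\tau_{\bw_0}\bomega$ with length $n-1$ and $k=1$, the single term $w_1=\bw_0$ in the sum (for which $\tau_{-w_1}\tau_{\bw_0}\bomega=\bomega$) is itself bounded by $C_0$, which is precisely the bound on $r_n$. The second step is the telescoping identity
\[
r_n=r_{n-1}\cdot\frac{\bP_{\bomega}(\bw_{n-1}\mid\bw_0\ldots\bw_{n-2})}{\bP_{\tau_{\bw_0}\bomega}(\bw_{n-1}\mid\bw_1\ldots\bw_{n-2})}.
\]
Assumption {\bf (Exp)} with $m=1$ (shifted so that $n$ becomes $n-1$) forces numerator and denominator to agree up to $C_{*}\nu^{n-2}$, while {\bf (Ell)} provides a lower bound $\gamma_0$ on the denominator as soon as $n-1\ge n_\star$. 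Combined with the uniform bound $r_{n-1}\le C_0$ this yields $\|r_n-r_{n-1}\|_\infty\le\Const\,\nu^n$ for $n$ large enough (the finitely many small-$n$ differences being absorbed into the constant). Summing the geometric series, $(r_n)$ converges uniformly to a measurable function $J$ with $\|r_n-J\|_\infty\le\Const\,\nu^n$, which is exactly \eqref{eqn:potential}.

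Positivity of $J$ is immediate from Lemma \ref{eq:nonunif-abs}, which yields $r_n(\bomega,\bw)\ge\gamma_\star(\bomega)>0$ for $\bP$-a.e.\ $\bomega$, uniformly in $n$ and $\bw$; this pointwise lower bound passes to the limit. For the H\"older regularity I would exploit that $r_n$ depends only on $\bw_0,\ldots,\bw_{n-1}$, so that $r_k(\bomega,\bw)=r_k(\bomega,\bw')$ whenever $k:=s(\bw,\bw')\ge 1$. Inserting $r_k$ by the triangle inequality then gives
\[
|J(\bomega,\bw)-J(\bomega,\bw')|\le 2\|J-r_k\|_\infty\le\Const\,\nu^k=\Const\,d_\nu(\bw,\bw'),
\]
whence $|J|_\nu<\infty$; together with the uniform upper bound $J\le C_0$ this places $J$ in $\cH_\nu$. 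The main obstacle is the uniform upper bound on $r_n$: neither {\bf (Exp)} nor {\bf (Ell)} alone controls the denominator of $r_n$ uniformly in $n$, and this is exactly the role played by assumption {\bf (Abs)}, which otherwise would appear to be introduced only for the invariance discussion.
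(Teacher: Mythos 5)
Your proof is correct and follows essentially the same path as the paper's: both define the ratio sequence $p_n$, establish the uniform upper bound $p_n\le C_0$ from {\bf (Abs)}, bound $|p_{n+1}-p_n|\le\Const\nu^n$ by combining {\bf (Exp)} (with $m=1$) with {\bf (Ell)} and the {\bf (Abs)} bound, take the uniform limit, and derive positivity from Lemma~\ref{eq:nonunif-abs} and H\"older regularity from the rate of convergence. The only cosmetic difference is the algebraic rearrangement of the increment (you write $r_n=r_{n-1}\cdot(\text{ratio of conditionals})$ while the paper factors the prefactor $p(\bomega,n,\cdot)/p(\tau_{\bw_0}\bomega,n,\cdot)$ out of the difference), but both rest on the same three assumptions playing the same roles.
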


\begin{proof} Let $p_n(\bomega,  \bw) = \frac{p(\bomega, n,  \bw_0 \ldots \bw_{n-1})}{p(\tau_{ \bw_0} \bomega, n-1, \bw_1 \ldots \bw_{n-1})}$. Using {\bf (Exp)}, we have
\[
\begin{aligned}
&\left| p_{n}(\bomega, \bw) - p_{n+1}(\bomega, \bw) \right| = \frac{p(\bomega, n, \bw_0 \ldots \bw_{n-1})}{p(\tau_{\bw_0} \bomega, n, \bw_1 \ldots \bw_n)}\\
&\phantom{\left| p_{n+1}(\bomega, \bw) - p_n(\bomega, \bw) \right| =}
\times \left| \frac{p(\tau_{\bw_0} \bomega, n, \bw_1 \ldots \bw_n)}{p(\tau_{\bw_0} \bomega, n-1, \bw_1 \ldots \bw_{n-1})} - \frac{p(\bomega, n+1, \bw_0 \ldots \bw_n)}{p(\bomega, n, \bw_0 \ldots \bw_{n-1})}\right| \\
& = \frac{p(\bomega, n, \bw_0 \ldots \bw_{n-1})}{p(\tau_{\bw_0} \bomega, n, \bw_1 \ldots \bw_n)} \left| \bP_{\tau_{\bw_0} \bomega}(\bw_n \; | \; \bw_1 \ldots \bw_{n-1}) - \bP_{\bomega}( \bw_n \; | \; \bw_0 \ldots \bw_{n-1}) \right| \\
& \le \Const \nu^n \frac{p(\bomega, n, \bw_0 \ldots \bw_{n-1})}{p(\tau_{\bw_0} \bomega, n, \bw_1 \ldots \bw_n)}.
\end{aligned}
\]
From {\bf (Abs)}, substituting $\tau_{\bw_0}\bomega$ to $\bomega$, we have
\begin{equation} \label{eqn:bounded_J}
p(\bomega, n, \bw_0 \ldots \bw_{n-1}) \le \Const p(\tau_{\bw_0} \bomega, n-1, \bw_1 \ldots \bw_{n-1}),
\end{equation}
from which it follows, using {\bf (Ell)}, for all $n \ge n_\star$,
\[
\frac{p(\bomega, n, \bw_0 \ldots \bw_{n-1})}{p(\tau_{\bw_0} \bomega, n, \bw_1 \ldots \bw_n)} \le \Const \frac{p(\tau_{\bw_0} \bomega, n-1, \bw_1 \ldots \bw_{n-1})}{p(\tau_{\bw_0} \bomega, n, \bw_1 \ldots \bw_n)} \le \Const \gamma_0^{-1}.
\]
We thus get for all $n \ge n_\star$
\[
\left| p_{n+1}(\bomega, \bw) - p_n(\bomega, \bw) \right| \le \Const \nu^n,
\]
and so for any $m\ge 0$, 
\begin{equation} \label{eqn:cauchy_prob}
\left| p_{n+m}(\bomega, \bw) - p_n(\bomega, \bw) \right| \le \Const \sum_{k=0}^{m-1} \nu^{n+k} \le \Const \nu^n.
\end{equation}
It follows that $(p_n(\bomega, \bw))_n$ is a Cauchy sequence for $\bP$-a.e. $\bomega \in \Omega$ and all $\bw \in \cW^\bN$, and has thus a limit $J(\bomega, \bw)$. Taking the limit $m \to \infty$ in \eqref{eqn:cauchy_prob}, we obtain \eqref{eqn:potential} for all $n \ge n_\star$. From \eqref{eqn:bounded_J}, it follows that $\|J\|_\infty < \infty$, which also allows to deduce \eqref{eqn:potential} for all $n \ge 0$. The fact that $|J|_\nu <\infty$  is a direct consequence of \eqref{eqn:potential}.

The positivity of $J$ follows then by Lemma \ref{eq:nonunif-abs}.
\end{proof}

Accordingly, $\log J$ is H\"{o}lder with respect to the usual metric on the shift. Hence it can be seen as a potential of a Gibbs measure. Of course, such a Gibbs measure is random, depending on $\bomega$, and non translation invariant, but it is a natural generalisation of the usual random walk in random environment situation in which one has a random Markov chain on $\cW^\bN$.

The transfer operator $\cL_\star$ has the following expression:

\begin{lem} \label{lem:transfer_fstar} For any $f \in L^1(\bP_\star)$, we have
\begin{equation} \label{eqn:transfer_fstar}
\cL_\star f (\bomega, \bw) = \sum_{w \in \cW} J( \tau_{- w} \bomega, w \bw) f(\tau_{- w} \bomega, w \bw).
\end{equation}
\end{lem}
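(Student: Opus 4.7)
The transfer operator $\cL_\star$ is characterized by the duality
\[
\int(\cL_\star f)\,g\,d\bP_\star\;=\;\int f\cdot(g\circ\cF_\star)\,d\bP_\star
\]
for all $f\in L^1(\bP_\star)$ and all bounded measurable $g$. My plan is to introduce the candidate operator
\[
\widetilde{\cL}_\star f(\bomega,\bw)\;:=\;\sum_{w\in\cW}J(\tau_{-w}\bomega,w\bw)\,f(\tau_{-w}\bomega,w\bw)
\]
(a finite sum, bounded on $L^1(\bP_\star)$ since $\|J\|_\infty<\infty$) and verify the duality identity for $\widetilde{\cL}_\star$; uniqueness of the transfer operator will then force $\cL_\star=\widetilde{\cL}_\star$ $\bP_\star$-a.e.

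I would compute the right-hand side by partitioning according to the first symbol $\bw_0=w\in\cW$. Since $\cF_\star(\bomega,\bw)=(\tau_w\bomega,\tau_\star\bw)$ on $\{\bw_0=w\}$, the change of variable $\bomega'=\tau_w\bomega$ together with the translation invariance of $\bP$ turns each piece into
\[
\int_{\Omega}\bP(d\bomega')\int f(\tau_{-w}\bomega',w\bw)\,g(\bomega',\tau_\star\bw)\,\mathds{1}_{\{\bw_0=w\}}(\bw)\,\bP^{\tau_{-w}\bomega'}(d\bw).
\]
Writing $\bw'=\tau_\star\bw$ and introducing the finite measure $\nu_w^{\bomega'}:=(\tau_\star)_*\bigl(\mathds{1}_{\{\bw_0=w\}}\bP^{\tau_{-w}\bomega'}\bigr)$ on $\cW^\bN$, the argument reduces to establishing the Radon--Nikodym identity
\[
\frac{d\nu_w^{\bomega'}}{d\bP^{\bomega'}}(\bw')\;=\;J(\tau_{-w}\bomega',w\bw')\qquad\text{for $\bP^{\bomega'}$-a.e.\ }\bw'.
\]
Plugging this in and regrouping the (finite) sum over $w$ then yields exactly $\int\widetilde{\cL}_\star f\cdot g\,d\bP_\star$.

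The identity is proved by a martingale argument. By definition,
\[
\nu_w^{\bomega'}(\bw'_0=w_1,\ldots,\bw'_{n-1}=w_n)\;=\;p(\tau_{-w}\bomega',n+1,w,w_1,\ldots,w_n),
\]
whose ratio with the mass $p(\bomega',n,w_1,\ldots,w_n)$ that $\bP^{\bomega'}$ assigns to the same cylinder is precisely the quantity $p_{n+1}$ from the proof of Lemma~\ref{lem:potential}, evaluated at $(\tau_{-w}\bomega',(w,w_1,\ldots,w_n,\ldots))$. Condition {\bf (Abs)} (applied with $k=1$) forces $\nu_w^{\bomega'}\ll\bP^{\bomega'}$ with density uniformly bounded by $C_0$, so the cylinder ratios above are the conditional expectations, under $\bP^{\bomega'}$, of $d\nu_w^{\bomega'}/d\bP^{\bomega'}$ with respect to the filtration generated by the first $n$ coordinates. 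Doob's martingale convergence theorem makes them converge $\bP^{\bomega'}$-a.s.\ to this density, while Lemma~\ref{lem:potential} makes them converge \emph{uniformly} to $J(\tau_{-w}\bomega',w\bw')$, so the two limits must coincide. The main obstacle is packaging this martingale identification cleanly and, using the $L^\infty$ bound on the density together with the finite cardinality of $\cW$, justifying the Fubini exchanges required to pass from the cylinder identity to the full duality relation; once this is done, the remaining manipulations are just the change of variables described in the previous paragraph.
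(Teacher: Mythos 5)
Your proof is correct, but it takes a genuinely different route from the paper's. The paper works directly: it first restricts to bounded $f,g$ depending only on finitely many coordinates, expands $\int f\,(g\circ\cF_\star)\,d\bP_\star$ as a finite sum over cylinders of length $n+1\ge k$, uses translation invariance of $\bP$, factors out the ratio $p(\tau_{-w}\bomega,n+1,w\bw_0\ldots\bw_{n-1})/p(\bomega,n,\bw_0\ldots\bw_{n-1})$, passes to the limit $n\to\infty$ via Lemma~\ref{lem:potential}, and then extends to general $f\in L^1$, $g\in L^\infty$ by approximation. You instead isolate what is really going on: the conjugate measure $\nu_w^{\bomega'}=(\tau_\star)_*\bigl(\mathds{1}_{\{\bw_0=w\}}\bP^{\tau_{-w}\bomega'}\bigr)$ is absolutely continuous with respect to $\bP^{\bomega'}$ with Radon--Nikodym derivative $J(\tau_{-w}\bomega',w\,\cdot\,)$. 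Your identification of that derivative is sound: $\textbf{(Abs)}$ with $k=1$ bounds each cylinder ratio by $C_0$ (each term of the sum is nonnegative and the sum is $\le C_0$), the cylinder ratios are the conditional expectations $E_{\bP^{\bomega'}}[d\nu_w^{\bomega'}/d\bP^{\bomega'}\mid\cF_n]$, Doob gives $\bP^{\bomega'}$-a.s.\ convergence to the density, and Lemma~\ref{lem:potential} gives $\essup_{\bomega}\sup_{\bw}$ convergence to $J$, so the two limits agree a.e. Once the density is identified, the duality for arbitrary $f\in L^1(\bP_\star)$ and bounded $g$ follows in one stroke by the change of variables you describe, with no separate density-of-cylinder-functions step. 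What your approach buys is a cleaner conceptual picture (the transfer operator weight is literally a Radon--Nikodym derivative, made explicit) and a uniform treatment of general $f,g$; what the paper's approach buys is avoiding martingale theory and the measure-theoretic setup around $\nu_w^{\bomega'}$, sticking to an elementary cylinder computation. Both hinge on Lemma~\ref{lem:potential} for the convergence of the cylinder ratios, so the analytic core is shared.
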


\begin{proof} We have to prove that, for all $f \in L^1(\bP_\star)$ and $g \in L^\infty(\bP_\star)$,  
\begin{equation} \label{eqn:lstar_dual}
\int_{\Omega_\star} f   \, g \circ \cF_\star \, d \bP_\star = \int_{\Omega_\star} \cL_\star f \, g \, d \bP_\star,
\end{equation}
where $\cL_\star f$ is given by \eqref{eqn:transfer_fstar}. We first assume that both $f$ and $g$ are bounded, and depend only on $(\bomega, \bw_0, \ldots, \bw_{k-1})$ for some $k \ge 1$. For any $n \ge k$, we have
\[
\begin{split}
& \int_{\Omega_\star}  f \, g \circ \cF_\star \, d \bP_\star =  \int_{\Omega} \int_{\cW^\bN} f(\bomega, \bw_0, \ldots, \bw_{k-1}) \, g(\tau_{\bw_0} \bomega, \bw_1, \ldots, \bw_k) \bP_{\bomega}(d \bw) \bP(d \bomega) \\
 & = \int_{\Omega} \sum_{\bw_0, \ldots, \bw_n \in \cW} p(\bomega, n+1, \bw_0 \ldots \bw_n) f(\bomega, \bw_0, \ldots, \bw_{k-1}) g(\tau_{\bw_0} \bomega, \bw_1, \ldots, \bw_k) \bP(d \bomega) \\
& = \hskip-12 pt \sum_{\bw_0, \ldots, \bw_n \in \cW} \int_{\Omega} p(\tau_{- \bw_0} \bomega, n+1, \bw_0 \ldots \bw_n) f(\tau_{- \bw_0} \bomega, \bw_0, \ldots, \bw_{k-1}) g(\bomega, \bw_1, \ldots, \bw_k) \bP(d \bomega) \\
& = \int_\Omega \left( \sum_{w \in \cW} \frac{p(\tau_{- w} \bomega, n+1, w \bw_0 \ldots \bw_{n-1})}{p(\bomega, n, \bw_0 \ldots \bw_{n-1})} f(\tau_{- w} \bomega, w, \bw_0, \ldots) \right) g(\bomega, \bw) \bP_\star(d \bomega, d\bw), \\
\end{split}
\]
where we have used the translation invariance of $\bP$ at the third line. Taking the limit as $n \to \infty$ and using Lemma \ref{lem:potential}, we obtain \eqref{eqn:lstar_dual}. The result for general $f$ and $g$ is obtained by approximation.
\end{proof}

Define for each $k\ge 1$, 
\[
J_k(\bomega, \bw) = \prod_{i=0}^{k-1} J( \cF_\star^i (\bomega, \bw)) = \lim_{n \to \infty} \frac{p(\bomega, n, \bw_0 \ldots \bw_{n-1})}{p(\tau_{\bw_0 + \ldots \bw_{k-1}}\bomega, n-k, \bw_k \ldots \bw_{n-1})}.
\]

It is immediate to verify that, for any $f \in L^1(\bP_\star)$,
\[
\cL_\star^k f(\bomega, \bw) = \sum_{w^k \in \cW^k} J_k(\tau_{- w^k} \bomega, w^k \bw) f(\tau_{-w^k} \bomega, w^k\bw)
\]
where, for $w^k=(w_0,\dots, w_{k-1})$, $\tau_{- w^k}=\tau_{-(w_0+\cdots+w_{k-1})}$.

We introduce, for $w^k = (w_0, \ldots, w_{k-1}) \in \cW^k$, the map \[
\psi_{w^k}(\bomega, \bw) = (\tau_{- w^k} \bomega, w^k \bw),
\]
 so that 
\[
\cL_\star^k f = \sum_{w^k \in \cW^k} J_k \circ \psi_{w^k} f \circ \psi_{w^k}.
\]

\begin{lem} \label{lem:potential_iterated}
$J_k$ belong to $\cH_\nu$ for all $k \ge 1$.
\end{lem}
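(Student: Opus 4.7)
The plan is to exploit the Banach algebra structure of $\cH_\nu$, writing $J_k$ as the finite product $J_k=\prod_{i=0}^{k-1}J\circ\cF_\star^i$ and reducing the statement to checking that each factor $J\circ\cF_\star^i$ lies in $\cH_\nu$. Since $J\in\cH_\nu$ by Lemma \ref{lem:potential} and $\cH_\nu$ is a Banach algebra, once each factor is controlled we will have $\|J_k\|_\nu\le\prod_{i=0}^{k-1}\|J\circ\cF_\star^i\|_\nu<\infty$.

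To handle $J\circ\cF_\star^i$, first I would unfold the iterate: a direct induction on $i$, using $\cF_\star(\bomega,\bw)=(\tau_{\bw_0}\bomega,\tau_\star\bw)$, gives $\cF_\star^i(\bomega,\bw)=(\tau_{z_i(\bw)}\bomega,\tau_\star^i\bw)$ where $z_i(\bw)=\bw_0+\cdots+\bw_{i-1}$. The uniform bound $\|J\circ\cF_\star^i\|_\infty\le\|J\|_\infty$ is immediate (note that since $\bP$ is $\tau_z$-invariant and $z_i(\bw)$ ranges over the countable set $\cV(\cW)$, the essential supremum over $\bomega$ is unaffected by the translation). For the H\"older seminorm, fix $\bomega$ and a pair $\bw\neq\bw'$ with $s(\bw,\bw')=n$, and split into two cases. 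If $n\ge i$ then $\bw$ and $\bw'$ agree on the first $i$ coordinates, so $z_i(\bw)=z_i(\bw')$ and the $\bomega$-argument coincides; only the shifted tails differ, with $s(\tau_\star^i\bw,\tau_\star^i\bw')=n-i$, giving
\[
|J\circ\cF_\star^i(\bomega,\bw)-J\circ\cF_\star^i(\bomega,\bw')|\le |J|_\nu\,\nu^{n-i}=|J|_\nu\,\nu^{-i}\,d_\nu(\bw,\bw').
\]
If instead $n<i$, the translated environments and tails may differ arbitrarily, but then $\nu^n\ge\nu^{i-1}$, so the crude estimate
\[
|J\circ\cF_\star^i(\bomega,\bw)-J\circ\cF_\star^i(\bomega,\bw')|\le 2\|J\|_\infty\le 2\|J\|_\infty\,\nu^{-i}\,d_\nu(\bw,\bw')
\]
already suffices. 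Combining the two cases yields $|J\circ\cF_\star^i|_\nu\le(|J|_\nu+2\|J\|_\infty)\nu^{-i}$, hence $J\circ\cF_\star^i\in\cH_\nu$.

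The only mildly subtle point is the case $n<i$: one might hope for a stronger estimate using the smoothness of $J$ in its first argument, but $J$ has only measurable dependence on $\bomega$, so one is forced to use the $L^\infty$ bound there; fortunately the prefactor $\nu^{-i}$ grows only exponentially in $i$, which is harmless for a fixed-$k$ statement. With these bounds in hand, the Banach algebra estimate produces the desired conclusion $J_k\in\cH_\nu$, together with the explicit (albeit $k$-dependent) bound $\|J_k\|_\nu\le\prod_{i=0}^{k-1}(\|J\|_\infty+(|J|_\nu+2\|J\|_\infty)\nu^{-i})$.
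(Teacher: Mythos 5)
Your proof is correct, and it takes a genuinely different route from the paper's. The paper re-uses the approximating sequence $p_{n}$ from the proof of Lemma~\ref{lem:potential}: it sets $p_{n,k}=\prod_{i=0}^{k-1}p_{n-i}\circ\cF_\star^i$, applies the elementary product inequality $\bigl|\prod a_i-\prod b_i\bigr|\le\sum_i|a_i-b_i|\prod_{j\neq i}\max\{a_j,b_j\}$ together with $|J\circ\cF_\star^i-p_{n-i}\circ\cF_\star^i|\le\Const\nu^{n-i}$ to get $|J_k-p_{n,k}|\le C_k\nu^n$, and then observes that $p_{n,k}$ depends only on the first $n$ coordinates of $\bw$, so that when $s(\bw,\bw')=n$ the two $p_{n,k}$-values coincide and the Hölder bound follows by triangle inequality. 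You instead reduce to a single factor: you show $J\circ\cF_\star^i\in\cH_\nu$ with the explicit bound $\|J\circ\cF_\star^i\|_\nu\le\|J\|_\infty+(|J|_\nu+2\|J\|_\infty)\nu^{-i}$ via the two-case argument (when $s(\bw,\bw')\ge i$ the translation $z_i$ matches and the shifted separation drops by $i$; when $s(\bw,\bw')<i$ the trivial sup-norm bound is already of the right order because $\nu^{-i}d_\nu(\bw,\bw')\ge1$), and then invoke the Banach algebra property of $\cH_\nu$. Your approach is more modular: it only uses the output of Lemma~\ref{lem:potential} ($J\in\cH_\nu$), not the approximating sequence, and the translation-invariance point you flag (the essential supremum is stable under $\tau_z$, $z$ ranging over the countable set of reachable sites) is exactly the subtlety one must check; the price is the explicit $\nu^{-i}$ blowup per factor, which, as you note, is harmless for a fixed-$k$ statement. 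Both give a $k$-dependent constant, which is all that is needed here and in the one downstream use (Lemma~\ref{lem:ly_lstar}, where $|J_n|_\nu$ appears multiplied by $\nu^n$).
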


\begin{proof} Recall the notation $p_n(\bomega, \bw) = \frac{p(\bomega, n, \bw_0 \ldots \bw_{n-1})}{p(\tau_{\bw_0} \bomega, n-1, \bw_1 \ldots \bw_{n-1})}$, and set 
\[
p_{n,k}(\bomega, \bw) := \frac{p(\bomega, n, \bw_0 \ldots \bw_{n-1})}{p(\tau_{\bw_0 + \ldots \bw_{k-1}}\bomega, n-k, \bw_k \ldots \bw_{n-1})} = \prod_{i=0}^{k-1} p_{n-i}(\cF_\star^i(\bomega, \bw)).
\]
By Lemma \ref{lem:potential}, we have
\[
\left| J(\cF_\star^i(\bomega, \bw)) - p_{n-i}(\cF_\star^i(\bomega, \bw)) \right| \le \Const \nu^{n-i},
\]
and, consequently, using the inequality
\[
\left| \prod_{i=0}^{k-1} a_i - \prod_{i=0}^{k-1} b_i \right| \le \sum_{i=0}^{k-1} |a_i - b_i| \prod_{j \neq i} \max\{a_j, b_j\},
\]
valid for all non-negative sequences $(a_i), (b_i)$, we obtain
\[
\begin{aligned}
\left| J_k(\bomega, \bw) - p_{n,k}(\bomega, \bw) \right| & \le \Const \nu^n \sum_{i=0}^{k-1} \nu^{-i} \prod_{j \neq i} \max\{ \|J\|_\infty, \| p_{n-i} \|_\infty \} \\
& = C_k \nu^n.
\end{aligned}
\]
where $C_k$ depends only on $k$, since $\sup_n \|p_n\|_\infty < \infty$ by {\bf (Abs)} and $\| J \|_\infty < \infty$ by Lemma \ref{lem:potential}. The lemma follows immediately.
\end{proof}

\begin{lem} \label{lem:operator_bounded}
There exists $\Const >0$ such that $\Const^{-1} \le \cL_\star^k \mathds{1}(\bomega, \bw) \le \Const$ for all $k\ge 0$, $\bP$-a.e. $\bomega \in \Omega$ and all $\bw \in \cW^\bN$.
\end{lem}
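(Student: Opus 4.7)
The plan is to express $\cL_\star^k \mathds{1}$ as a sum of values of the cocycle $J_k$ and then recognize it, up to an exponentially small error, as precisely the ratio that is controlled by {\bf (Abs)}.

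First, by the formula for $\cL_\star^k$ just derived,
\[
\cL_\star^k \mathds{1}(\bomega, \bw) = \sum_{w^k \in \cW^k} J_k(\tau_{-w^k}\bomega, w^k \bw).
\]
Next, recall from Lemma \ref{lem:potential_iterated} (and the formulas in its proof) that
\[
\bigl|J_k(\bomega', \bw') - p_{n,k}(\bomega', \bw')\bigr| \le C_k \nu^n,
\]
where $p_{n,k}(\bomega', \bw') = \tfrac{p(\bomega', n, \bw'_0 \ldots \bw'_{n-1})}{p(\tau_{\bw'_0 + \ldots + \bw'_{k-1}}\bomega', n-k, \bw'_k \ldots \bw'_{n-1})}$. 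Substituting $\bomega' = \tau_{-w^k}\bomega$, $\bw' = w^k \bw$, using that $\tau_{w^k}\tau_{-w^k}\bomega = \bomega$, and summing over the finite set $\cW^k$:
\[
\left| \cL_\star^k \mathds{1}(\bomega, \bw) - \sum_{w^k \in \cW^k} \frac{p(\tau_{-w^k}\bomega, n, w_0 \ldots w_{k-1}\bw_0 \ldots \bw_{n-k-1})}{p(\bomega, n-k, \bw_0 \ldots \bw_{n-k-1})} \right| \le |\cW|^k C_k \nu^n.
\]

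The displayed sum is exactly the quantity controlled by assumption {\bf (Abs)} (with $n$ replaced by $n-k$), so it is bounded above by $C_0$ and below by $C_0^{-1}$ for every $n \ge k$. Letting $n \to \infty$ the error term vanishes, yielding
\[
C_0^{-1} \le \cL_\star^k \mathds{1}(\bomega, \bw) \le C_0
\]
for all $k \ge 1$; the case $k=0$ is trivial since $\cL_\star^0 \mathds{1} = \mathds{1}$. The conclusion follows with $\Const = C_0$.

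The only subtlety is that the error constant $C_k$ in Lemma \ref{lem:potential_iterated} does depend on $k$, so it is essential to first approximate by $p_{n,k}$ with $n$ much larger than $k$ and only then take $n \to \infty$; the uniform bound in $k$ for $\cL_\star^k \mathds{1}$ is inherited from the uniformity in $k$ built into hypothesis {\bf (Abs)}, which is precisely why that assumption was formulated in the form it was. No other step requires more than bookkeeping.
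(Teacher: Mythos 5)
Your proof is correct and follows essentially the same route as the paper's one-line proof: both identify $\cL_\star^k\mathds{1}$ as the $n\to\infty$ limit of the sums controlled by {\bf (Abs)}. Your version is a bit more careful than the paper's — you explicitly track the $k$-dependent error constant $C_k$ from Lemma \ref{lem:potential_iterated} and observe that one must send $n\to\infty$ for fixed $k$ before invoking the $k$-uniform bound from {\bf (Abs)}; the paper elides this by simply writing the limit. Incidentally, the paper's displayed formula for the limiting sum has a typo (the summand as printed doesn't actually depend on the summation variable $(w_0,\ldots,w_{k-1})$); your version with $\tau_{-w^k}\bomega$ in the numerator and $\bomega$ in the denominator is the correct one.
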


\begin{proof} This is a simple reformulation of {\bf (Abs)}, as
\[
\begin{aligned}
\cL_\star^k \mathds{1}(\bomega, \bw) & = \sum_{(w_0, \ldots, w_{k-1}) \in \cW^k} J_k(\tau_{- (w_0 + \ldots + w_{k-1})} \bomega, w_0 \ldots w_{k-1} \bw) \\
& = \lim_{n \to \infty} \sum_{(w_0, \ldots, w_{k-1}) \in \cW^k} \frac{p(\bomega, n, \bw_0 \ldots \bw_{n-1})}{p(\tau_{\bw_0 + \ldots \bw_{k-1}}\bomega, n-k, \bw_k \ldots \bw_{n-1})}.
\end{aligned}
\]
\end{proof}

\begin{lem} \label{lem:ly_lstar} There exist $\Const > 0$  and $\xi \in (0,1)$ such that for all $n \ge 0$ and all $f \in \cH_\nu$,
\[
\begin{aligned}
&\| \cL_\star^n f\|_\infty \le \Const \| f \|_\infty, \\
&\| \cL_\star^n f \|_\nu \le \Const \xi^n \| f \|_\nu + \Const \|f \|_\infty.
\end{aligned}
\]
\end{lem}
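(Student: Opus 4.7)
The plan is to exploit the explicit formula from Lemma \ref{lem:transfer_fstar} iterated,
$$\cL_\star^n f = \sum_{w^n \in \cW^n} (J_n \circ \psi_{w^n}) \cdot (f \circ \psi_{w^n}),$$
together with the uniform mass bound from Lemma \ref{lem:operator_bounded}. The $L^\infty$ estimate is immediate: $|\cL_\star^n f| \le \|f\|_\infty \cdot \cL_\star^n \mathds{1} \le \Const \|f\|_\infty$, giving the first inequality.

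For the H\"older seminorm, fix $\bomega$ and $\bw,\bw' \in \cW^{\bN}$ with separation $s := s(\bw,\bw')$, and set $x = \psi_{w^n}(\bomega,\bw)$, $y = \psi_{w^n}(\bomega,\bw')$. By construction $x$ and $y$ share their first coordinate $\tau_{-w^n}\bomega$, while the second coordinates $w^n\bw,\, w^n\bw'$ have separation $n+s$. I split
$$\cL_\star^n f(\bomega,\bw) - \cL_\star^n f(\bomega,\bw') = \sum_{w^n} J_n(x)[f(x)-f(y)] \;+\; \sum_{w^n}[J_n(x)-J_n(y)]\, f(y) \;=:\; A + B.$$
The term $A$ is the contracting one: $|A| \le |f|_\nu\, \nu^{n+s}\, \cL_\star^n \mathds{1}(\bomega,\bw) \le \Const\, \nu^n \cdot \nu^s\, |f|_\nu$.

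The technical core is bounding $B$. I telescope
$$J_n(x) - J_n(y) = \sum_{i=0}^{n-1} J_i(x)\,[J(\cF_\star^i x) - J(\cF_\star^i y)]\, J_{n-i-1}(\cF_\star^{i+1} y),$$
and observe that $\cF_\star^i x$ and $\cF_\star^i y$ share the first coordinate $\tau_{-(w_i+\dots+w_{n-1})}\bomega$, with second coordinates of separation $n-i+s$; hence by Lemma \ref{lem:potential}, $|J(\cF_\star^i x)-J(\cF_\star^i y)| \le |J|_\nu\, \nu^{n-i+s}$.

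The main obstacle is then bounding $\sum_{w^n} J_i(x)\, J_{n-i-1}(\cF_\star^{i+1} y)$ uniformly in $n$ and $i$. The key point is that $\cF_\star^{i+1} y = \psi_{(w_{i+1},\dots,w_{n-1})}(\bomega,\bw')$ depends on $\bw'$ only through the tail $(w_{i+1},\dots,w_{n-1})$. Decomposing the sum accordingly, I first fix $(w_i,\dots,w_{n-1})$ and bound $\sum_{w_0,\dots,w_{i-1}} J_i(\psi_{w^n}(\bomega,\bw)) \le C_0$ by writing $J_i$ as the defining limit of ratios and applying {\bf (Abs)} directly (with the shifted environment $\tau_{-(w_i+\dots+w_{n-1})}\bomega$ in place of $\bomega$, using the translation invariance of $\bP$). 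Summing then over $w_i$ (a factor $|\cW|$) and over $(w_{i+1},\dots,w_{n-1})$ (bounded by $\cL_\star^{n-i-1}\mathds{1}(\bomega,\bw') \le \Const$ via Lemma \ref{lem:operator_bounded}) yields $\sum_{w^n} J_i(x)\, J_{n-i-1}(\cF_\star^{i+1} y) \le \Const$. Plugging this back gives $|B| \le \Const\, \|f\|_\infty\, \nu^s \sum_{i=0}^{n-1} \nu^{n-i} \le \Const\, \|f\|_\infty\, \nu^s$. Dividing $|A|+|B|$ by $d_\nu(\bw,\bw') = \nu^s$ produces $|\cL_\star^n f|_\nu \le \Const\, \nu^n |f|_\nu + \Const\, \|f\|_\infty$, which combined with the $L^\infty$ bound completes the proof with $\xi = \nu$.
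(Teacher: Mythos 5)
Your proof is correct, but it takes a genuinely different route than the paper's. The paper bounds $|J_n(\eta)-J_n(\eta')|\le |J_n|_\nu\,\nu^n d_\nu(\bw,\bw')$ wholesale and is content to sum the $(\sharp\cW)^n$ terms crudely, arriving at an inequality $\|\cL_\star^n f\|_\nu \le \Const \nu^n\|f\|_\nu + C_n\|f\|_\infty$ with an $n$-dependent constant; it then fixes $k$ large enough that $\Const\nu^k<1$ and iterates $\cL_\star^k$ to upgrade this to the claimed uniform Lasota--Yorke inequality (with $\xi$ being a root of $\Const\nu^k$, not $\nu$ itself). You instead telescope $J_n(x)-J_n(y)=\sum_{i=0}^{n-1}J_i(x)\bigl[J(\cF_\star^i x)-J(\cF_\star^i y)\bigr]J_{n-i-1}(\cF_\star^{i+1}y)$, use the increasing separation $n-i+s$ of $\cF_\star^i x$ and $\cF_\star^i y$, and control $\sum_{w^n}J_i(x)J_{n-i-1}(\cF_\star^{i+1}y)$ by first summing $w_0,\dots,w_{i-1}$ against $J_i$ via {\bf (Abs)} and then summing the tail against $J_{n-i-1}$ via Lemma \ref{lem:operator_bounded}. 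This gives the uniform constant in front of $\|f\|_\infty$ in one shot, yielding the result directly with $\xi=\nu$ and avoiding the iteration step. Your argument is the sharper one (explicit $\xi=\nu$), at the cost of one more layer of combinatorics in the potential decomposition; the paper's is the lazier one-estimate-then-bootstrap strategy, which is shorter to write down but obscures the rate.
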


\begin{proof} For $f \in \cH_\nu$, we have
\[
\begin{aligned}
| \cL_\star^n f | & \le \sum_{w^n \in \cW^n} J_n \circ \psi_{w^n}| f |\circ \psi_{w^n}  \le \|f \|_\infty \cL_\star^n \mathds{1}  \le \Const \|f\|_\infty,
\end{aligned}
\]
by Lemma \ref{lem:operator_bounded}. This proves that $\| \cL_\star^n f \|_\infty \le C \|f \|_\infty$.
We also have, setting $\eta = \psi_{w^n}(\bomega, \bw)$ and $\eta' = \psi_{w^n}(\bomega, \bw')$
\[
\begin{split}
&| \cL_\star^n f(\bomega, \bw) - \cL_\star^n f(\bomega, \bw') | \le \sum_{w^n \in \cW^n} J_n(\eta) | f(\eta) - f(\eta')| +  | J_n(\eta) - J_n(\eta')| |f(\eta')| \\
&\phantom{| \cL_\star^n f} \le \left( \sum_{w^n \in \cW^n} J_n(\eta)  |f |_\nu + \sum_{w^n \in \cW^n} |J_n|_\nu  \|f\|_\infty \right) d_\nu(w^n\bw, w^n \bw') \\
&\phantom{| \cL_\star^n f} \le \left( \cL_\star^n \mathds{1}(\bomega, \bw)  |f|_\nu +  ( \sharp \cW)^n |J_n|_\nu \|f \|_\infty \right)  \nu^n d_\nu(\bw, \bw').
\end{split}
\]
By Lemma \ref{lem:operator_bounded}, this shows that, for all $n \ge 0$ and $f \in \cH_\nu$,
\[
\begin{aligned}
\| \cL_\star^n f \|_\nu & \le \Const \nu^n |f|_\nu + (1 + ( \sharp \cW)^n \nu^n |J_n|_\nu) \|f \|_\infty \\
& \le \Const \nu^n \|f\|_\nu + C_n \|f\|_\infty.
\end{aligned}
\]
In particular, $\cL_\star : \cH_\nu \to \cH_\nu$ is a continuous operator. 

Take $k \ge 0$ such that the term $\widetilde{\nu} :=\Const \nu^k$ in front of $\| f\|_\nu$ is strictly less than 1 and set $\xi=\widetilde\nu^{\frac 1k}$. Writing $n = q k + r$, with $0 \le r <k$, we have, by iterating the previous inequality,
\[
\begin{aligned}
\| \cL_\star^n f \|_\nu & \le \widetilde{\nu}^q \| \cL_\star^r f \|_\nu + \Const C_k (1-\widetilde{\nu}^{-1}) \| f\|_\infty \\
& \le \widetilde{\nu}^q \sup_{r < k} \| \cL_\star^r \|_{\cH_\nu \to \cH_\nu} \|f \|_\nu + \Const \|f \|_\infty \\
& \le \Const \xi^n \|f\|_\nu + \Const \|f\|_\infty.
\end{aligned}
\]
\end{proof}

\begin{lem} \label{lem:ergodic_average} There exists a continuous projection $\Pi : L^1(\bP_\star) \to L^1(\bP_\star)$ with $ \Pi( L^1(\bP_\star) ) = \ker ({\rm id} - \cL_\star)$ such that
\[
\frac{1}{n} \sum_{k=0}^{n-1} \cL_\star^k \to \Pi
\]
in the strong operator topology.
\end{lem}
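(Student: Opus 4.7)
The plan is to deduce the statement from the classical Yosida--Kakutani mean ergodic theorem applied to $\cL_\star$ viewed as an operator on $L^1(\bP_\star)$. Two ingredients are needed: (i) that $\cL_\star$ is a contraction of $L^1(\bP_\star)$, so that the Cesàro averages $A_n := \frac{1}{n}\sum_{k=0}^{n-1} \cL_\star^k$ are uniformly bounded; and (ii) that the orbit $\{\cL_\star^n f\}_{n \ge 0}$ is weakly relatively compact in $L^1(\bP_\star)$ for each $f$ in some dense subspace.

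For (i), I would note that \eqref{eqn:transfer_fstar} exhibits $\cL_\star$ as a positive operator, while setting $g \equiv 1$ in \eqref{eqn:lstar_dual} gives $\int \cL_\star f \, d\bP_\star = \int f \, d\bP_\star$; the two together yield the standard estimate $\|\cL_\star f\|_{L^1(\bP_\star)} \le \|\cL_\star |f|\|_{L^1(\bP_\star)} = \|f\|_{L^1(\bP_\star)}$. For (ii), I would take the dense subspace to be $L^\infty(\bP_\star)$ and use the pointwise bound $|\cL_\star^n f| \le \|f\|_\infty \, \cL_\star^n \mathds{1}$ (from positivity) together with Lemma \ref{lem:operator_bounded} to conclude that the orbit $\{\cL_\star^n f\}_n$ is dominated by the constant $\Const \|f\|_\infty$. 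Since $\bP_\star$ is a probability measure, such a uniformly bounded family is trivially uniformly integrable and hence weakly relatively compact in $L^1(\bP_\star)$ by the Dunford--Pettis theorem.

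With both ingredients at hand, the mean ergodic theorem yields convergence of $A_n f$ in $L^1(\bP_\star)$ for every $f \in L^\infty(\bP_\star)$ to some limit $\Pi_0 f$; the identity $\cL_\star A_n - A_n = \frac{1}{n}(\cL_\star^n - \mathrm{id})$, combined with the uniform bound on $\cL_\star^n f$, then shows $\cL_\star \Pi_0 f = \Pi_0 f$. To pass from $L^\infty(\bP_\star)$ to all of $L^1(\bP_\star)$ I would invoke a $3\epsilon$-argument: $L^\infty(\bP_\star)$ is dense in $L^1(\bP_\star)$ and each $A_n$ is an $L^1$-contraction by (i), so $\Pi_0$ extends by continuity to a bounded operator $\Pi : L^1(\bP_\star) \to L^1(\bP_\star)$ with $\cL_\star \Pi = \Pi$, whence $\Pi L^1(\bP_\star) \subset \ker(\mathrm{id} - \cL_\star)$. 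Since $A_n f = f$ trivially for $f \in \ker(\mathrm{id} - \cL_\star)$, one also has $\Pi f = f$ on that subspace, so $\Pi^2 = \Pi$ and $\Pi L^1(\bP_\star) = \ker(\mathrm{id} - \cL_\star)$. The only non-trivial input is the bound $\cL_\star^n \mathds{1} \le \Const$ from Lemma \ref{lem:operator_bounded}, which ultimately rests on \textbf{(Abs)}; the rest is soft functional analysis and I do not anticipate any serious obstacle.
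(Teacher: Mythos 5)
Your proof is correct and rests on the same pillar as the paper's: the Yosida--Kakutani mean ergodic theorem applied to $\cL_\star$ on $L^1(\bP_\star)$, with weak $L^1$-relative-compactness of $\{A_n f\}_n$ supplied by the uniform $L^\infty$ bound coming ultimately from $\cL_\star^n\mathds{1}\le\Const$ (Lemma \ref{lem:operator_bounded}, hence {\bf (Abs)}). The only genuine difference is the choice of dense subspace on which to verify the weak-compactness hypothesis. The paper works with $\cH_\nu$, whose density in $L^1(\bP_\star)$ requires the approximation argument of Lemma \ref{lem:htheta_dense}, and obtains the $L^\infty$ bound on $\cL_\star^n h$ via the Lasota--Yorke inequality (Lemma \ref{lem:ly_lstar}), then passes to weak $L^1$-compactness through Banach--Alaoglu (weak-$*$ compactness of bounded sets in $L^\infty=(L^1)^*$, which on a probability space implies weak $L^1$-compactness). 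You instead take the trivially dense subspace $L^\infty(\bP_\star)$, use positivity of $\cL_\star$ together with $\cL_\star^n\mathds{1}\le\Const$ to get the same pointwise bound directly, and cite Dunford--Pettis. Your route is a little more economical for this particular lemma since it bypasses $\cH_\nu$ and Lemma \ref{lem:htheta_dense} entirely; the paper needs $\cH_\nu$-density elsewhere anyway (Lemma \ref{lem:invariant_set_finite}), so nothing is saved globally. You also make explicit the $L^1$-contraction estimate $\|\cL_\star f\|_{L^1}\le\|f\|_{L^1}$ from positivity and the fact that $\cL_\star$ preserves the integral against $\bP_\star$, which the paper leaves implicit but which is indeed needed for the uniform boundedness hypothesis of the mean ergodic theorem. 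Both arguments are sound.
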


\begin{proof}
For $h \in \cH_\nu$, Lemma \ref{lem:ly_lstar} implies that $\left\{ \frac{1}{n} \sum_{k=0}^{n-1} \cL_\star^k h \right\}_{n \ge 1}$ is bounded in $L^{\infty}(\bP_\star)$. By the Banach-Alaoglu theorem, since $L^\infty(\bP_\star)$ is the dual of $L^1(\bP_\star)$, the set $\left\{ \frac{1}{n} \sum_{k=0}^{n-1} \cL_\star^k h \right\}_{n \ge 1}$ is weakly relatively compact in $L^1(\bP_\star)$. This holds for all $h \in \cH_\nu$, which is dense in $L^1(\bP_\star)$ by Lemma \ref{lem:htheta_dense}, and so by the Kakutani-Yosida theorem \cite[VIII.5.2, 5.3]{dunford1988linear}, the operators $\frac{1}{n} \sum_{k=0}^{n-1} \cL_\star^k$ converge in the strong operator topology to the projection $\Pi$ with range the set of fixed points of $\cL_\star$ in $L^1(\bP_\star)$ and kernel the closure of $({\rm id} - \cL_\star)(L^1(\bP_\star))$.
\end{proof}

Define
\[
h_\star = \Pi \mathds{1} = \lim_{n\to\infty} \frac{1}{n} \sum_{k=0}^{n-1} \cL_\star^k \mathds{1},
\]
in $L^1(\bP_\star)$. By Lemma \ref{lem:ergodic_average},we have $\cL_\star h_\star = h_\star$. We clearly have $\int_{\Omega_\star} h_\star d \bP_\star = 1$, and the fact that $\Const^{-1} \le h_\star \le \Const$, $\bP_\star$-a.e., is an immediate consequence of Lemma \ref{lem:operator_bounded}. Consequently, the probability measure $\bQ_\star$ defined by 
\begin{equation} \label{eqn:inv_proba}
d \bQ_\star = h_\star d \bP_\star
\end{equation}
is $\cF_\star$-invariant and equivalent to $\bP_\star$.

Next, we show that $\Pi (L^1(\bP_\star))$ is the one-dimensional subspace generated by $h_\star$. Firstly, we prove a useful inclusion in $\cH_\nu$.
\begin{lem} \label{lem:invariant_set_finite}
If $f \in L^\infty(\bP_\star)$ and $\cL_\star f = f$, then $f\in\cH_\nu$.\footnote{ That is, there exists an element in the equivalence class of $f$ that belongs to $\cH_\nu$.} 
\end{lem}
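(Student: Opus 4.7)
The plan is to combine the regularization estimate of Lemma \ref{lem:ly_lstar} with the projection $\Pi$ from Lemma \ref{lem:ergodic_average} and the density Lemma \ref{lem:htheta_dense}, exploiting that Ces\`aro averages of iterates of $\cL_\star$ acquire uniform H\"older control. The core observation is that if $h \in \cH_\nu$ and $S_N := \frac{1}{N}\sum_{n=0}^{N-1}\cL_\star^n$, then by Lemma \ref{lem:ly_lstar}
\[
\| S_N h \|_\nu \le \frac{1}{N}\sum_{n=0}^{N-1}\bigl(\Const \xi^n \|h\|_\nu + \Const \|h\|_\infty\bigr) \le \frac{\Const}{N(1-\xi)} \|h\|_\nu + \Const \|h\|_\infty,
\]
so that for every $N$ sufficiently large, $\|S_N h\|_\nu \le \Const (1 + \|h\|_\infty)$, with the constant depending only on $h$ through $\|h\|_\infty$ and not through $\|h\|_\nu$.

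First I would use Lemma \ref{lem:htheta_dense} to pick a sequence $(f_k) \subset \cH_\nu$ with $\sup_k \|f_k\|_\infty \le \|f\|_\infty$ and $f_k \to f$ in $L^1(\bP_\star)$. Since $\cL_\star f = f$ in $L^1$, Lemma \ref{lem:ergodic_average} gives $\Pi f = f$, and by continuity of $\Pi$ on $L^1(\bP_\star)$, we have $\Pi f_k \to f$ in $L^1(\bP_\star)$. For each fixed $k$, the family $\{S_N f_k\}_{N\ge N_0}$ is uniformly bounded in $\cH_\nu$ by the estimate above, so it is equicontinuous and uniformly bounded on the compact metric space $(\Omega_\star, d_\star)$ introduced in the proof of Lemma \ref{lem:htheta_dense}. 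By Arzel\`a--Ascoli, a subsequence converges uniformly to a continuous limit $\widetilde{g}_k$ with $\|\widetilde{g}_k\|_\nu \le \Const\|f\|_\infty$; uniform convergence implies $L^1$ convergence, and $S_N f_k \to \Pi f_k$ in $L^1$, hence $\widetilde{g}_k = \Pi f_k$ $\bP_\star$-a.e. Thus each $\Pi f_k$ admits a representative in $\cH_\nu$ with H\"older norm uniformly bounded by $\Const\|f\|_\infty$, independently of $k$.

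Next I would pass to the limit in $k$ by the same compactness argument. The sequence $(\widetilde{g}_k)$ is uniformly bounded in $\cH_\nu$ and converges in $L^1(\bP_\star)$ to $f$. A further Arzel\`a--Ascoli extraction yields a subsequence converging uniformly on $\Omega_\star$ to a continuous function $\widetilde{f}$ with $\|\widetilde{f}\|_\nu \le \Const\|f\|_\infty$. Uniform convergence forces $\widetilde{f}=f$ $\bP_\star$-a.e., giving the desired representative of $f$ in $\cH_\nu$.

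The main obstacle is purely conceptual: a direct telescoping of $f - \cL_\star^n f$ fails because the na\"ive sum of contributions $|J_n(\eta)-J_n(\eta')|$ picks up a factor $(\sharp\cW)^n |J_n|_\nu$ that one does not know how to control without already having H\"older regularity of $f$. Bypassing this bootstrap via the Ces\`aro projection, which absorbs the bad $\|h\|_\nu$ term in $1/N$, is the crux; after that, compactness of $\Omega_\star$ and the uniform bound $\|\Pi f_k\|_\nu \le \Const\|f\|_\infty$ do the rest essentially automatically.
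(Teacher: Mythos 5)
Your Ces\`aro-average observation is sound and your overall strategy (regularize an approximating sequence, then pass to the limit) is in the same spirit as the paper's, which decomposes $f=\cL_\star^{n_\epsilon}\varphi_\epsilon + \cL_\star^{n_\epsilon}(f-\varphi_\epsilon)$ with $n_\epsilon$ chosen so $\xi^{n_\epsilon}\|\varphi_\epsilon\|_\nu=\cO(1)$; both use Lemma~\ref{lem:ly_lstar} as the engine. However, there is a genuine gap at the Arzel\`a--Ascoli step, and it cannot be repaired as stated.

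The issue is that a uniform bound in $\cH_\nu$ does \emph{not} give equicontinuity on $(\Omega_\star,d_\star)$. The norm $\|\cdot\|_\nu$ only requires H\"older regularity in the $\bw$ variable and an $L^\infty$ bound in $\bomega$: there is no modulus of continuity in $\bomega$ built into $\cH_\nu$, and in fact $\cL_\star$ immediately destroys $\bomega$-continuity, since by Lemma~\ref{lem:transfer_fstar} it multiplies by the potential $J$, which is only a member of $\cH_\nu$ (measurable in $\bomega$) and not continuous in $\bomega$. So even if the $f_k$ produced by Lemma~\ref{lem:htheta_dense} are $d_\star$-Lipschitz, the functions $S_N f_k$ are not, and $\{S_N f_k\}_N$ is not an equicontinuous family on the compact metric space $\Omega_\star$. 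Consequently, Arzel\`a--Ascoli does not apply, the extraction of a uniformly convergent subsequence is unjustified, and the same problem recurs in your final passage to the limit in $k$.

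The paper avoids compactness entirely: from the decomposition $f=\widehat\varphi_\epsilon+\gamma_\epsilon$ with $\|\widehat\varphi_\epsilon\|_\nu=\cO(1)$ and $\|\gamma_\epsilon\|_{L^1}=\cO(\epsilon)$, it uses Markov's inequality to bound the measure of the bad set $B_{\epsilon,\delta}=\{|\gamma_\epsilon|>\delta\}$, and a Borel--Cantelli argument to show that outside a $\bP_\star$-null set the increments $|f(\bomega,\bw)-f(\bomega,\bw')|$ satisfy the H\"older bound for all $\delta>0$. This establishes, for $\bP$-a.e.\ $\bomega$, the H\"older estimate in $\bw$ directly, without needing any joint continuity or compactness. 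If you want to keep your Ces\`aro-average setup, you would have to replace the Arzel\`a--Ascoli step by a similar a.e.-convergence argument: pass to a subsequence of $S_N f_k$ that converges $\bP_\star$-a.e., and use the full support of $\bP^\bomega$ (guaranteed by {\bf (Pos)}) to extend the limit, for a.e.\ fixed $\bomega$, to a H\"older function on all of $\cW^\bN$ with the same constant. That patches the argument, but it amounts to re-deriving the paper's measure-theoretic step in a different guise.
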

\begin{proof}
Let $(\varphi_\epsilon)_\epsilon \subset \cH_\nu$ be such that $\| f - \varphi_\epsilon \|_{L^1(\bP_\star)} = \cO (\epsilon)$ and $\| \varphi_\epsilon \|_\infty = \cO (1)$, such a sequence exists by Lemma \ref{lem:htheta_dense}. We have
\[
\begin{aligned}
f =  \cL_\star^{n} f & = \cL_\star^{n} \varphi_\epsilon + \cL_\star^{n}(f - \varphi_\epsilon) \\
& =: \widehat{\varphi}_\epsilon^{(n)} + \gamma_\epsilon^{(n)}.
\end{aligned}
\]
This decomposition satisfies
\[
\| \gamma_\epsilon^{(n)} \|_{L^1(\bP_\star)} = \| \cL_\star^{n}(f - \varphi_\epsilon) \|_{L^1(\bP_\star)} \le \|f - \varphi_\epsilon \|_{L^1(\bP_\star)} = \cO (\epsilon),
\]
and
\[
\| \widehat{\varphi}_\epsilon^{(n)} \|_\nu = \| \cL_\star^{n} \varphi_\epsilon \|_\nu \le \Const \xi^{n} \| \varphi_\epsilon \|_\nu + \Const \| \varphi_\epsilon \|_\infty = \cO ( \xi^{n} \| \varphi_\epsilon \|_\nu + 1),
\]
using Lemma \ref{lem:ly_lstar}. If we choose $n_\epsilon$ such that $\xi^{n_\epsilon } \| \varphi_\epsilon\|_\nu = \cO (1)$ and set $\widehat{\varphi}_\epsilon = \widehat{\varphi}_\epsilon^{(n_\epsilon)}$ and $\gamma_\epsilon = \gamma_\epsilon^{(n_\epsilon)}$, we then have $f = \widehat{\varphi}_\epsilon + \gamma_\epsilon$ with $\| \gamma_\epsilon \|_{L^1(\bP_\star)} = \cO (\epsilon)$ and $\| \widehat{\varphi}_\epsilon \|_\nu = \cO (1)$.

For $\delta > 0$, we define 
\[
B_{\epsilon, \delta} = \{ | f - \widehat{\varphi}_\epsilon | > \delta \} = \{ | \gamma_\epsilon | > \delta \},
\]
which satisfies $\bP_\star(B_{\epsilon, \delta}) \le \delta^{-1} \| \gamma_\epsilon \|_{L^1(\bP_\star)}$ by Markov's inequality.

For $\bP$-a.e. $\bomega \in \Omega$ and all $\bw, \bw' \in \cW^\bN$ such that both $(\bomega, \bw)$ and $(\bomega, \bw')$ do not belong to $B_{\epsilon, \delta}$, we have
\[
\begin{aligned}
| f( \bomega, \bw) - f (\bomega, \bw') | & \le | \widehat{\varphi}_\epsilon (\bomega, \bw) - \widehat{\varphi}_\epsilon(\bomega, \bw')| + | \gamma_\epsilon(\bomega, \bw) - \gamma_\epsilon (\bomega, \bw') | \\
& \le | \widehat{\varphi}_\epsilon |_\nu d_\nu( \bw, \bw') + | \gamma_\epsilon(\bomega, \bw) | + | \gamma_\epsilon(\bomega, \bw')| \\
& \le C d_\nu(\bw, \bw') + 2 \delta.
\end{aligned}
\]
We set $B_\delta = \bigcap_{k \ge 0} \bigcup_{j \ge k} B_{2^{-j}, \delta}$, which satisfies $\bP_\star (B_\delta ) = 0$, since 
\[
\bP_\star \left( \bigcup_{j \ge k} B_{2^{-j}, \delta}\right) = \cO \left( \sum_{j \ge k} \| \gamma_{2^{-j} }\|_{L^1(\bP_\star)} \right) = \cO \left(\sum_{j \ge k} 2^{-j} \right) = o (1).
\]
Thus, $B=\cup_{n\in\bN}B_{1/n}$ is also of zero measure and, eventually changing $f$ on the zero measure set $B$, we have $f\in\cH_\nu$.
\end{proof}

We can now prove the main theorem:

\begin{proof}[{\bf\emph{Proof of Theorem \ref{thm:gibbs_walk_erg}}}]
The probability measure $\bQ_\star$ defined by \eqref{eqn:inv_proba} is $\cF_\star$-invariant and equivalent to $\bP_\star$. If $B \subset \Omega_\star$ is a $\cF_\star$-invariant set, we have
\[
\cL_\star(\mathds{1}_B h_ \star) = \cL_\star((\mathds{1}_B \circ \cF_\star ) h_\star) = \mathds{1}_B \cL_\star(h_\star) = \mathds{1}_B h_\star,
\]
and so $\mathds{1}_B h_\star$ is a fixed point of $\cL_\star$ in $L^\infty(\bP_\star)$. 
By Lemma \ref{lem:invariant_set_finite}, we have $\mathds{1}_B h_\star \in \cH_\nu$, and so $\mathds{1}_B=h_\star^{-1}(h_\star \mathds{1}_B)\in\cH_\nu$.\footnote{ Since $h_\star$ belongs to $\cH_\nu$ by Lemma \ref{lem:invariant_set_finite} and so does $h_\star^{-1}$ since $\inf h_\star > 0$.} This implies that there exists $N_B>0$ such that $\mathds{1}_B(\bomega,\bw)=\mathds{1}_B(\bomega, w_0,\dots, w_{N_B-1})$. 

By the invariance of $B$ it follows, for each $m\geq N_B$,
\begin{equation}\label{eq:invariance}
\begin{split}
\Id_B(\bomega, w_0,\dots, w_{N_B-1})&=\Id_B\circ \cF^{m}_\star(\bomega, w_0,\dots, w_{N_B-1})\\
&=\Id_B(\tau_{w_0+\cdots+ w_{m-1}}\bomega, w_{m},\dots, w_{N_B+m-1}).
\end{split}
\end{equation}
By {\bf (Pro)} we can choose $m$ and $w_{N_B}, \dots, w_{m-1}$ such that $w_0+\cdots w_{m-1}=0$. It follows that $\mathds{1}_B (\bomega,\bw)=\mathds{1}_B(\bomega)$. Then \eqref{eq:invariance} implies $\tau_{w_0+ \cdots+ w_{m-1}}B\subset B$ for all $(w_0, \ldots, w_{m-1}) \in \cW^{m}$. Accordingly, $B$ is invariant for the group generated by $\cW$ and, by {\bf (Pro)} again, it is either of zero or full measure due to ergodicity of $\bP$, which concludes the proof.
\end{proof}

\begin{lem} \label{lem:one_dimensional}
For all $f \in L^1(\bP_\star)$, we have
\[
\Pi f = \left( \int_{\Omega_\star} f \, d \bP_\star \right) h_\star.
\]
\end{lem}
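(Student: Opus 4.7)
The plan is to transfer the fixed-point problem for $\cL_\star$ on $L^1(\bP_\star)$ to the transfer operator associated with the invariant ergodic system $(\Omega_\star, \cF_\star, \bQ_\star)$, where the kernel of $\mathrm{id} - \cL_\star$ will be pinned down by a standard Birkhoff argument. The main reduction is that it suffices to prove every $f \in L^1(\bP_\star)$ with $\cL_\star f = f$ is of the form $c\, h_\star$. Indeed, Lemma \ref{lem:ergodic_average} gives $\Pi f \in \ker(\mathrm{id}-\cL_\star)$, and once one writes $\Pi f = c_f h_\star$, integrating against $\bP_\star$ and using the duality $\int \cL_\star^k f \, d\bP_\star = \int f \, d\bP_\star$ (test against the constant $1$ in the defining duality of $\cL_\star$) together with $\int h_\star \, d\bP_\star = 1$ forces $c_f = \int f \, d\bP_\star$.

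For the reduction itself I pass to the $\cF_\star$-invariant probability $\bQ_\star = h_\star \bP_\star$. Since $h_\star$ is two-sidedly bounded (Lemma \ref{lem:operator_bounded}), $L^1(\bP_\star)$ and $L^1(\bQ_\star)$ coincide as sets with equivalent norms. A direct duality computation gives the intertwining identity
\[
\cL_\star(h_\star g) = h_\star \, \cL_\star^{\bQ_\star}(g),
\]
where $\cL_\star^{\bQ_\star}$ denotes the transfer operator of $\cF_\star$ with respect to $\bQ_\star$. Hence $\cL_\star f = f$ if and only if $g := f/h_\star$ satisfies $\cL_\star^{\bQ_\star} g = g$ in $L^1(\bQ_\star)$.

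It then remains to show that the only fixed points of $\cL_\star^{\bQ_\star}$ in $L^1(\bQ_\star)$ are constants. Using ergodicity of $(\Omega_\star, \cF_\star, \bQ_\star)$ from Theorem \ref{thm:gibbs_walk_erg}, for every $\phi \in L^\infty(\bQ_\star)$ iterating the duality yields
\[
\int g \phi \, d\bQ_\star = \int g \, (\phi \circ \cF_\star^k) \, d\bQ_\star, \qquad k \ge 0.
\]
Ces\`aro-averaging in $k$ and invoking Birkhoff's ergodic theorem together with dominated convergence give
\[
\int g \phi \, d\bQ_\star = \left(\int g \, d\bQ_\star \right)\left(\int \phi \, d\bQ_\star\right),
\]
from which testing against indicators forces $g$ to equal the constant $\int g \, d\bQ_\star = \int f \, d\bP_\star$ almost everywhere. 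This yields $f = \left(\int f \, d\bP_\star\right) h_\star$ and closes the argument.

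I do not anticipate any serious technical obstacle. The heavy lifting has been carried out in Theorem \ref{thm:gibbs_walk_erg}; the only mildly delicate point is the reduction to $\cL_\star^{\bQ_\star}$, which hinges on the change of measure via the two-sidedly bounded density $h_\star$, and the concluding remark that in an ergodic system the transfer operator has no non-constant $L^1$ fixed point, which is the short Birkhoff-plus-dominated-convergence step above.
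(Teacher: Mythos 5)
Your argument is correct, but its structure differs from the paper's. You factor the proof in two: (i) characterize $\ker(\mathrm{id}-\cL_\star)$ as the span of $h_\star$, going through the intertwining $\cL_\star(h_\star g)=h_\star\cL_\star^{\bQ_\star}(g)$ to transfer the fixed-point problem to the transfer operator for the invariant measure $\bQ_\star$, and then kill non-constant fixed points of $\cL_\star^{\bQ_\star}$ by the Ces\`aro--Birkhoff--dominated-convergence argument; and (ii) pin down the scalar $c_f=\int f\,d\bP_\star$ by integration. The paper instead evaluates $\int_{\Omega_\star}\varphi\,\Pi f\,d\bP_\star$ directly in one stroke: using the strong operator convergence of Lemma \ref{lem:ergodic_average} and the duality $\int\varphi\,\cL_\star^k f\,d\bP_\star=\int(\varphi\circ\cF_\star^k)f\,d\bP_\star$, it applies the Birkhoff theorem for $\bQ_\star$ and dominated convergence to the Ces\`aro averages of $\varphi\circ\cF_\star^k$, landing immediately on $\bigl(\int\varphi h_\star\,d\bP_\star\bigr)\bigl(\int f\,d\bP_\star\bigr)$ for every $\varphi\in L^\infty$. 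The analytic core --- Ces\`aro averaging, Birkhoff for $\bQ_\star$, and dominated convergence --- is identical; what you add is the explicit change-of-measure to $\cL_\star^{\bQ_\star}$ and the intermediate statement that the $L^1$-fixed space is one-dimensional, which is somewhat more scaffolding but makes the structure of $\Pi$ more transparent, whereas the paper's version is shorter and avoids introducing the transfer operator with respect to $\bQ_\star$. Both are valid; nothing is missing from your proposal.
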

\begin{proof}
For each $\vf\in L^\infty$ we have
\[
\begin{split}
\int_{\Omega_\star} \vf\Pi fd\bP_*&=\lim_{n\to\infty}\frac 1n\sum_{k=0}^{n-1}\int_{\Omega_\star} \vf\cL_\star^kf d\bP_*=\int_{\Omega_\star} \lim_{n\to\infty}\frac 1n\sum_{k=0}^{n-1}\vf\circ\cF_\star^k\cdot fd\bP_\star\\
&=\left[\int_{\Omega_\star} \vf h_\star d\bP_\star\right]\left[\int_{\Omega_\star}  fd\bP_\star\right]
\end{split}
\]
where, in the second equality, we have used Lebesgue dominated convergence Theorem and, in the second line, we have used the Birkhoff theorem and the ergodicity of $\bQ_\star$ (and hence of $\bP_\star$) established in Theorem \ref{thm:gibbs_walk_erg}.
\end{proof}

\subsection{Application to deterministic walks in random environment} \label{sec:app_dwre}

Deterministic walks in random environment, as presented in Section \ref{sec:walk-det}, naturally define random processes as described in the previous subsections. Indeed, if $\cA = \{(f_\alpha, \cM, \cG_\alpha) \}_{\alpha \in A}$ is a deterministic walk in random environment, where all maps $f_\alpha$ are non-singular with respect to some reference measure $m$ on $\cM$, and the initial condition is given by an absolutely continuous probability measure $d \mu = h_0 dm$, then the probabilities $p(\bomega, n, w_0 \ldots w_{n-1})$ are given by 
\begin{equation} \label{eqn:transition_proba}
p(\bomega, n, w_0 \ldots w_{n-1}) = \int_{\cM} \cL_{\bomega, z_{n-1}, w_{n-1}} \ldots \cL_{\bomega, z_0, w_0} h_0 dm,
\end{equation}
as we have seen in Section \ref{sec:walk-det}. Recall that $w_n = e(\bomega_{z_n}, x_n)$, where $(x_n, z_n) = \bF_{\bomega}^n(x_0, z_0)$.

\begin{rem}\label{rem:environ2}
Note that we have a priori defined two different notions of environment as seen from the particle, in subsections \ref{sec:environ} and \ref{sec:environ2}, but the map $\Phi : \Omega \times \cM \to \Omega \times \cW^{\bN}$ defined by $\Phi(\bomega, x) = (\bomega, \bw)$ with $\bw = (w_n)_n$, is a semi-conjugacy between $(\Omega \times \cM, \cF)$ and $(\Omega \times \cW^{\bN}, \cF_\star)$, and if the maps $f_\alpha$ are expansive, it is invertible a.e.
\end{rem}

If we are able to check the assumptions {\bf (Pos), (Exp), (Abs), (Ell)} and {\bf (Pro)}, then Theorem \ref{thm:gibbs_walk_erg} applies, and we deduce the existence of a deterministic drift $V$.

An particular situation, which we have already encountered in Lemma \ref{lem:povop}, occurs when all maps $f_\alpha$ preserve the same invariant measure $d \lambda = h_0 dm$, and the set $\cG_\alpha$ is deterministic, i.e. $G_{\alpha, w } = G_w$ does not depend on $\alpha \in A$. In this case, the measure $\bP_\star$ on $\Omega \times \cW^{\bN}$ is invariant under $\cF_\star$, since it is the push-forward of $\bP_0 = \bP \times \lambda$, which is $\cF$-invariant and the condition {\bf (Abs)} is automatically satisfied by Lemma \ref{lem:abs_cond}.

\begin{rem}
Lemma \ref{lem:zero-velocity} and Remark \ref{rem:zero-velocity} agree, since
\[
\begin{aligned}
\sum_{w \in \cW} w \int_\Omega p(\bomega, 1, w) \bP(d \bomega) & = \sum_{w \in \cW} w \int_\Omega \int_{\cM} \cL_{\bomega, 0, w} h_0 \, dm \, \bP( d \bomega) \\ & = \sum_{w \in \cW} w \int_\Omega \int_{\cM} \cL_{f_{\bomega_w}} \mathds{1}_{G_w} h_0 \, dm \, \bP( d \bomega) \\ &= \sum_{w \in \cW} w \int_{G_w} h_0 dm
\end{aligned}
\]
and 
\[
\begin{aligned}
\bE_0(e \circ \pi) & = \int_\Omega \int_{\cM} e(\pi(\bomega, x)) h_0(x) m(dx) \bP(d \bomega) \\ & = \int_\Omega \int_{\cM} e(\bomega_0, x) h_0(x) m(dx) \bP( d \bomega) \\ &= \sum_{w \in \cW} w \int_{G_w} h_0 dm.
\end{aligned}
\]
\end{rem}

\section{Examples}\label{sec:sup-simp}
In this section we discuss several concrete examples of the previous abstract models. 
\subsection{Lorentz gas as a deterministic walk in random environment}
It is easy to verify that the Random Lorentz gas presented in Section \ref{sec:lorentz} is a special example of a deterministic walk in random environment. However the Random Lorentz gas has several special features:
\begin{enumerate}
\item the maps $f_\alpha$ have all the same invariant measure $\lambda$, the Lebesgue measure;
\item the set $\cG_\alpha$ is non random, i.e. it does not depend from $\alpha$;
\item the associated random process is reversible, in particular $\lambda(e)=0$.
\end{enumerate}
The first two facts are obvious. Let us discuss the third, which we believe to be a key property in this research program (see \cite{Toth1,Toth2,Toth3, Toth4} where, in special models, a similar property is instrumental in establishing the CLT).
The dynamics is reversible under the involution $(q,p)\to (q,-p)$, which, at the level of the Poincar\'e map, writing $x\in\cup_{w \in \cW} G_w$ as $x=(w, s,\theta)$,\footnote{ The curvilinear coordinate $s$ is chosen such that the segments $s \to (w, s)$ and $s \to (-w, s)$, $w \neq 0$ go in the same direction.} reads $i(w,s,\theta,z)=(-w, s, \theta,z+w)$.\footnote{ Recall that we are considering the case of deterministic gates, although the following consideration easily extend to the general case. In particular, $e$ is a function of $x$ only, and we have $e(w,s,\theta) = w$.} Also, let $\pi(w,s,\theta,z)=(w,s,\theta)$ and $i_1(w,s,\theta) = (-w, s, \theta)$ so that $i_1 \circ \pi = \pi \circ i$. Thus, choosing as initial measure the common invariant measure of the Poincar\'e maps $h_0$ (hence $h_0\circ i_1=h_0$), we have
\[
\begin{split}
\bP_\star(\{z(1),\dots, z(n)\}\;|\;\bomega)&=\int_\cB  \prod_{k=0}^{n-1} \Id_{G_{w(k)}}\circ\pi\circ\bF^k_\bomega(x,0) h_0(x)dx\\
&=\int_\cB  \prod_{k=0}^{n-1} \Id_{G_{w(k)}}\circ\pi\circ\bF^k_\bomega\circ i(i_1(x), e(x)) h_0(x)dx\\
&=\int_\cB  \prod_{k=0}^{n-1} \Id_{G_{-w(k)}}\circ\pi\circ\bF^{-k}_\bomega(x,-e(x)) h_0(x)dx
\end{split}
\]
\[
\phantom{\bP_\star(\{z(1),\dots, z(n)\}\;|\;\bomega)}=\int_\cB  \prod_{k=0}^{n-1} \Id_{G_{-w(k)}}(f_{\bomega_{z(k)}}^{-1}\circ\cdots\circ f_{\bomega_{z(1)}}^{-1}(x)) h_0(x)dx,
\]
where, in the third line, we have used the invariance of the measure with respect to $i_1$  and the relation $e(i_1(x)) = -e(x)$; while, in the last line, we have used the formula $\bF_{\bomega}(x,z)=(f_{\bomega_z}^{-1}(x),z-e(f_{\bomega_z}^{-1}(x)))$. Next, using the invariance of the measure with respect to the maps $f_{\bomega_z}$,
\[
\begin{split}
\bP_\star(\{z(1),\dots, z(n)\}\;|\;\bomega)&=\int_\cB  \prod_{k=0}^{n-1} \Id_{G_{-w(k)}}(f_{\bomega_{z(k+1)}}\circ\cdots\circ f_{\bomega_{z(n-1)}}(x)) h_0(x)dx.
\end{split}
\]
Then, setting $\tilde w(k)=-w(n-1-k)$, $\tilde z(k)=\sum_{j=0}^{k-1}\tilde w(j)$ and $\widetilde \omega=\tau_{z(n)}\bomega$,
\begin{equation}\label{eq:reversible}
\begin{split}
\hskip-.2cm\bP_\star(\{z(1),\dots, z(n)\}\;|\;\bomega)&=\int_\cB  \prod_{k=0}^{n-1} \Id_{G_{\tilde w(k)}}(f_{\widetilde\omega_{\tilde z(k)}}\circ\cdots\circ f_{\widetilde\omega_{\tilde z(1)}}(x)) h_0(x)dx\\
&=\bP_\star(\{\tilde z(n-1),\dots,\tilde z(0)\}\;|\;\tau_{z(n)}\bomega).
\end{split}
\end{equation}
Note that $\tilde z(k)=z(n-k)-z(n)$.
Which is the reversibility of the random process. In particular Lemma \ref{lem:rev} applies.

The Lorentz gas has a dynamics that is hard to study. To make further progresses let us consider simpler local dynamics.

\subsection{Markovian models} \label{sec:mark-ex}
To try to get a better feeling for the difficulties involved in studying the above questions, let us try to invent a model stripped of all the technical difficulties present in the Lorentz gas dynamics. For simplicity let us discuss the case $d=1$, although similar considerations hold in any higher dimensional lattice. To simplify the dynamics $f$ in \eqref{eq:poinc-map} let us suppose that it is a map from $[0,1]$ to itself. Hence the map $\mathbb{F}_{\bomega}$ acts on $[0,1] \times \bZ$. Also, we assume that the environment is a random variable distributed according to a Bernoulli product measure over the space $\Omega= A^{\bZ} = \{-1,1\}^{\bZ}$.

\subsubsection*{\bfseries Example 1} The dynamics is defined by the map $f_\alpha(x)=4x\mod 1$ for $\alpha \in A$, with  $G_{-1, -1} = [0, 1/4]$, $G_{-1,+1} = [1/4,1]$ and $G_{+1, -1} = [0,3/4]$, $G_{+1,+1} = [3/4, 1]$.
\begin{rem} Here we are considering a more general situation than the one described for the Lorentz  gas insofar also the gates are random. This is indeed the general case also for the Lorentz gas. We considered the case of deterministic gates only to simplify the exposition.
\end{rem}
Also, we consider the initial distribution $h_0=1$.
Then an elementary computation shows that
\[
\bP_\star(z(n+1)-z(n)=\pm 1\;|\;\bomega, z(n),\dots , z(0)) = \left| G_{\bomega_{z(n)}, \pm 1} \right| =\frac 12\mp\frac{\bomega_{z(n)}}4.
\]
This is an example of Sinai's walk, hence we do not have the classical CLT.

\subsubsection*{\bfseries Example 2}
Assume that $G_{\alpha, -1}= G_{-1} = [0,1/2]$ and $G_{\alpha, +1}= G_{+1} = (1/2,1]$ for any $\alpha$ and the maps are defined by
\[
f_{-1}(x)=\begin{cases} 2x\quad&x\in [0,1/4]\\
4x\mod 1&x>1/4
\end{cases}
\]
\[
f_{+1}(x)=\begin{cases} 4x\mod 1\quad&x\in [0,3/4]\\
2x-1&x>3/4.
\end{cases}
\]
Again let us consider the initial distribution $h_0=1$. 
Denote by $\mathcal{L}_{\alpha, w}$ the operator $\mathcal{L}_{\alpha, w}(\phi) = \mathcal{L}_{f_\alpha}(\mathds{1}_{G_w} \phi)$. The two dimensional vector space  $\bV=\{a_{-1} \Id_{G_{-1}}+a_{+1}\Id_{G_{+1}}\;:\; a_{-1}, a_{+1}\in\bR\}$ is left invariant by the operators $\{\mathcal{L}_{\alpha, w}\}_{\alpha, w}$. Since $h_0 \in \bV$, this allows to compute the transition probabilities by using formula \eqref{eq:iterate_transfer}.

If $ \phi = a_{-1} \Id_{G_{-1}}+a_{+1}\Id_{G_{+1}}$, a direct computation shows that $\mathcal{L}_{\alpha, w}(\phi) = a_w \mathcal{L}_{\alpha, w}(\Id)$, and thus $\mathcal{L}_{\alpha', w'} \mathcal{L}_{\alpha, w} (\phi) = a_w \mathcal{L}_{\alpha', w'} \mathcal{L}_{\alpha, w} \Id$.
For any $\bomega$ and $z(1), \ldots,$ $ z(n), z(n+1)$, denote by $\alpha_k = \bomega_{z(k)+ w(k)}$ and $w_k = w(k) = z(k+1) - z(k)$. We have
\[
\mathbb{P}_\star(z(1), \ldots, z(n) \; | \; \bomega) = \int \mathcal{L}_{\alpha_{n-1}, w_{n-1}} \ldots \mathcal{L}_{\alpha_0, w_0} \Id.
\]
Set $\phi = \mathcal{L}_{\alpha_{n-2}, w_{n-2}} \ldots \mathcal{L}_{\alpha_0, w_0} \Id = a_{-1} \Id_{G_{-1}} + a_{+1} \Id_{G_{+1}} \in \bV$.
We have 
\[
\mathbb{P}_\star(z(1), \ldots, z(n) \; | \; \omega) = \int \mathcal{L}_{\alpha_{n-1}, w_{n-1}} \phi = a_{w_{n-1}} \int \mathcal{L}_{\alpha_{n-1}, w_{n-1}} \Id
\]
and
\[
\mathbb{P}_\star(z(1), \ldots, z(n), z(n+1) \; | \; \bomega) = a_{w_{n-1}} \int \mathcal{L}_{\alpha_n, w_n} \mathcal{L}_{\alpha_{n-1}, w_{n-1}} \Id.
\]
It follows that
\[
 \mathbb{P}_\star( z(n+1) \; | \; z(1), \ldots, z(n), \bomega) = \frac{\int \mathcal{L}_{\alpha_n, w_n} \mathcal{L}_{\alpha_{n-1}, w_{n-1}} \Id}{\int \mathcal{L}_{\alpha_{n-1}, w_{n-1}} \Id}
\]
which is a function of $z(n-1)$, $z(n)$ and $z(n+1)$ only. We have obtained a {\em persistent random walk}, that is a walk where the transition probability depends not only on the current position of the particle but also on its previous position.
\subsubsection*{\bfseries Initial conditions} A natural question that arises at this point is what happens if one starts by a different initial measure.
A moment thought shows that this is a non trivial issue. For instance, in the first example, there exists a Cantor set $C$ (of zero Lebesgue measure) that corresponds to the coordinates $x(n)$ never belonging to $(1/4, 3/4)$. For such points $x \in C$, the set $\{x(n) \in G_{\alpha, w}\}$ does not depend on $\alpha$, and so the process $(z(n))$ is completely unaffected by the environment. If we identify naturally the Cantor set C with $\{- 1, + 1 \}^{\bN}$ (in such a way that $x \in C$ is identified with the sequence $(i_n)$ such that $x(n) \in I_{i_n}$ for all $n \ge 0$, where $I_{-1} = [0, 1/4]$ and $I_{+1} = [3/4, 1]$), then the initial distribution of $x$ can be identified with a probability measure on $\{-1, +1\}^{\bN}$ and this measure will be the distribution law of the random process $(w(n))$. In particular, if we consider the Bernoulli measure with equal probabilities on such a Cantor set as the initial distribution of $x$, then we obtain a standard random walk which has a very different behaviour than the Sinai's walk.

Without going to such extremes, one can (perhaps more naturally) start from a measure absolutely continuous with respect to Lebesgue and wonder which kind of process this will yield. We do not discuss this issue at present because is it part of the more general discussion that we will start in the next section.
\begin{rem} 
The above examples (among other obvious limitations) are unreasonable in one key aspect: their Markov structure. It is inevitable to ask what happens when the Markov structure is absent (as for billiards). The next section is devoted to investigating such a situation.
\end{rem}
\section{Non-Markovian examples: general discussion}\label{sec:model}
We now consider a model of $d$-dimensional deterministic random walk in random environment $\cA=\{(f_\alpha, \cM, \cG_\alpha)\}_{\alpha\in A}$ for a finite set $A$, where $\cM = [0,1]$, all maps $f_\alpha :[0,1] \to [0,1 ]$ are piecewise $C^2$ and uniformly expanding (i.e. $|f_\alpha'| \ge \lambda > 1$), and the partitions $\cG_\alpha = \{G_{\alpha, w}\}_{w \in \cW}$ are made of subintervals of $[0,1]$, for a given bounded subset $\cW \subset \bZ^d$.

Let $\bP$ be a translation invariant probability on the set $\Omega = A^{{\bZ^d}}$. For a given environment $\bomega \in \Omega$, we have the dynamics $\bF_{\bomega} ( \cdot, \cdot) : \cM \times \bZ^d \to \cM \times \bZ^d$ given by $\bF_{\bomega}(x,z) = (f_{\bomega_{z+e(\bomega_z,x)}}(x), z + e(\bomega_z, x))$, where $e(\alpha, x) = \sum_{w \in \cW} \Id_{G_{\alpha, w}}(x) w$.

We are interested in the quenched evolution, $(x_n,z_n)= \bF^n_{\bomega}(x_0,z_0)$, of such a system when the initial condition $x_0$ is distributed according to the probability measure
\[
\mu(\vf)=\int_0^1 \vf(x,0)h_0(x)\, dx
\] for some $h_0\in {\rm BV}$, with $\inf h_0 > 0$.

Recall the definition of the probability measure $\bP_\star$ and the dynamical system $(\Omega_\star, \cF_\star)$ of the point of view of the particle, from Section \ref{sec:walk-rand}.

Our goal is to reduce the study of this model to a probabilistic one. To this end we need some technical conditions. \\
\subsection{ Conditions {\em (C1), (C2), (C3)}}\label{sec:cs} ~ \newline
Let $\cT = \{f_\alpha\}_{\alpha \in A}$ be a finite set of maps on $[0,1]$, and $\cH$ be the (finite) set of all the possible intervals of the partitions, i.e. $\cH = \{G_{\alpha, w}\}_{w \in \cW, \alpha \in A}$.
\begin{notation}The set $\mathcal{T} \times \mathcal{H}$ is canonically isomorphic to $\Sigma=A \times (A \times \cW)$,  with correspondence $\rho : \Sigma \to \cT \times \cH$ given, for $\sigma=(\alpha,\beta)$, by $\rho(\sigma)=(f_\alpha, G_\beta)$. Hence, we can use the notation
$(T_\sigma, H_\sigma)=(f_{\pi_1\circ\rho^{-1}(\sigma)}, G_{\pi_2\circ\rho^{-1}(\sigma)})$, $\pi_1(\alpha,\beta)=\alpha$ and $\pi_2(\alpha,\beta)=\beta$. We will often write $\Sigma=\mathcal{T} \times \mathcal{H}$.
\end{notation}
Let $\tau : \Sigma^{\mathbb{N}} \to \Sigma^{\mathbb{N}}$ be the unilateral shift.
We denote $T_\sigma^n = T_{\sigma_n} \circ \ldots \circ T_{\sigma_1}$ and $H_{\sigma}^n = \bigcap_{j=0}^{n-1} (T_{ \sigma}^j)^{-1}(H_{\sigma_{j+1}})$. 

Let $\mathcal{L}_{\sigma_k}$ be the transfer operator of the map $T_ {\sigma_k}$ with respect to the Lebesgue measure, i.e. 
\[
\mathcal{L}_{\sigma_k} f(x) = \sum_{T_{\sigma_k} y = x} \frac{f(y)}{|T_{\sigma_k} ' (y)|}.
\]

We set $\Hl_{\sigma_k} f = \mathcal{L}_{\sigma_k}(f \mathds{1}_{H_{\sigma_k}})$ and $\Hl_{\sigma}^n = \Hl_{\sigma_n} \circ \ldots \circ \Hl_{\sigma_1}$. We can write $\Hl_{\sigma}^n f(x) = \sum_{T_{\sigma}^n y = x} g_{\sigma}^n (y) f(y)$, where $g_{\sigma}^n = g_{\sigma_1} \times \ldots \times g_{\sigma_n} \circ T_{\sigma}^{n-1}$, with $g_{\sigma_k} = \mathds{1}_{H_{\sigma_k}} \frac{1}{|T_{\sigma_k}'|}$. Let $\Theta^{-1} = \inf_{T \in \mathcal{T}} \inf_x |T'(x)|$, then $\|g_{\sigma}^n\|_{\infty} \le \Theta^n$.

We will only consider systems that satisfy 

\begin{description}
\item[{\em (C1)}]\label{C1}There exists $\ds > 0$ such that for all $\sigma_1 \in \mathcal{T} \times \mathcal{H}$, $\inf \Hl_{\sigma_1} \mathds{1} \ge \ds$.
\end{description}

Observe that this condition is satisfied if, for any choice of $T$ and $H$, $T$ admits at least one full branch inside $H$. By iteration, we also have that $\inf \Hl_\sigma^n \mathds{1} \ge \ds^n$ for any $\sigma \in \Sigma^{\mathbb{N}}$ and $n \ge 1$.
Next, we define the functionals 
\[
\Lambda_{\sigma}(f) = \lim_{n \to \infty} \inf \frac{\Hl_{\sigma}^n f}{\Hl_{\sigma}^n \mathds{1}}.
\]
This limit exists since the sequence is increasing and bounded. Indeed,
\[
\begin{aligned}
\inf \frac{\Hl_\sigma^{n+1}f}{\Hl_\sigma^{n+1} \mathds{1}} & \ge \inf \frac{\Hl_{\sigma_{n+1}}(\Hl_\sigma^n \mathds{1} \frac{\Hl_\sigma^n f}{\Hl_\sigma^n \mathds{1}})}{\Hl_\sigma^{n+1} \mathds{1}} 
\\ & \ge \inf \frac{\Hl_\sigma^n f}{\Hl_\sigma^n \mathds{1}} \inf \frac{\Hl_{\sigma_{n+1}}( \Hl_\sigma^n \mathds{1})}{\Hl_\sigma^{n+1} \mathds{1}}  = \inf \frac{\Hl_\sigma^n f}{\Hl_\sigma^n \mathds{1}} ;
\end{aligned}
\]
and $- \|f \|_\infty \le \inf \frac{\Hl_\sigma^n f}{\Hl_\sigma^n \mathds{1}} \le \|f\|_\infty$.
In particular, for all $n \ge 0$, we have
\begin{equation} \label{eq:monotone_limit}
\Lambda_\sigma(f) \ge \inf \frac{\Hl_{\sigma}^n f}{\Hl_{\sigma}^n \mathds{1}}.
\end{equation}

The $\Lambda_\sigma$ satisfy the following properties:
\begin{itemize}
\item $\Lambda_\sigma( \mathds{1} ) = 1$;
\item $\left| \Lambda_\sigma (f) \right| \le \| f \|_\infty$;
\item $f \ge g$ implies $\Lambda_\sigma(f) \ge \Lambda_\sigma(g)$ (monotonicity);
\item $\Lambda_\sigma( \lambda f) = \lambda \Lambda_\sigma(f)$, for $\lambda > 0$ (positive homogeneity);
\item $\Lambda_\sigma(f + g)  \ge \Lambda_\sigma(f) + \Lambda_\sigma(g)$ (super-additivity);
\item $\Lambda_\sigma(f + b) = \Lambda_\sigma(f) + b$ for all $b \in \bR$.
\end{itemize}
All the above follows immediately from the definition. Note that it is not clear at the moment if $\Lambda_\sigma$ is linear or not.

Next, we define
\begin{equation}\label{eq:rho-def}
\rho_{\sigma} = \Lambda_{\tau \sigma}(\Hl_{\sigma_1} \mathds{1})\;;\quad \rho = \inf_{\sigma\in\Sigma^{\mathbb{N}}} \rho_\sigma.
\end{equation}
Let $\mathcal{Z}_{\sigma}^n$ be the partition of smoothness intervals of $T_{\sigma}^n$, and $\widehat{\mathcal{Z}}_{\sigma}^n$ be the coarsest partition which is finer than $\mathcal{Z}_{\sigma}^n$ and enjoying the property that the elements of the partition are either disjoint from $\overline{H_{\sigma}^{n}}$ or contained in $H_{\sigma}^{n}$.

Let us define the collections of intervals
\begin{equation}\label{eq:variousets}
\begin{split}
\mathcal{Z}_{\sigma, \star}^n &= \{ Z \in \widehat{\mathcal{Z}}_{\sigma}^n \, | \, Z \subset H_{\sigma}^{n} \},\\
\mathcal{Z}_{\sigma, b}^n &= \{ Z \in \mathcal{Z}_{\sigma, \star}^n \, | \, \Lambda_{\sigma}(\mathds{1}_Z) = 0 \}\\ 
\mathcal{Z}_{\sigma, g}^n &= \{ Z \in \mathcal{Z}_{\sigma, \star}^n \, | \, \Lambda_{\sigma}(\mathds{1}_Z) > 0 \}.
\end{split}
\end{equation}

\begin{defin} We will call contiguous two elements of $\cZ_{\sigma, \star}^n$ that are either contiguous in the usual sense, or separated by a connected component of $(H_\sigma^n)^c = \bigcup_{j=0}^{n-1} (T_\sigma^j)^{-1}(H_{\sigma_{j+1}}^c)$.
\end{defin} 

We can now introduce the second condition needed to state our results.

\begin{description}
\item[{\em (C2)}]there exist constants $K \ge 0$ and $\xi \ge 1$ such that for any $n$ and $\sigma \in \Sigma^{\mathbb{N}}$, at most $K \xi^n$ elements of $\mathcal{Z}_{\sigma, b}^n$ are contiguous. In addition, $\theta:= \xi \Theta <\min\{ \rho,1\}$. In particular, $\rho > 0$. 
\end{description}

Let us turn to the third and last condition. 
\begin{description}
\item[{\em (C3($N, N'$))}] $\hat\epsilon(N,N'):=\min\limits_{k \le N} \inf\limits_{\sigma\in\Sigma^{\bN}}\min\limits_{Z \in \mathcal{Z}_{\sigma,g}^k}\inf\limits_{x\in [0,1]}\frac{(\Hl_{\sigma}^{N'} \mathds{1}_Z)(x)}{(\Hl_{\sigma}^{N'} \mathds{1})(x)} > 0$.
\end{description} 

Note that, by \eqref{eq:monotone_limit}, condition {\em C3($N, N'$)} implies 
$\Lambda_\sigma(\mathds{1}_Z) \ge\epsilon(N,N') > 0$ for all $Z \in \mathcal{Z}_{\sigma,g}^n$, $n \le N$, and $\sigma \in \Sigma^{\mathbb{N}}$. This is morally what we need in the following, however in Lemma \ref{lem:ly} the above more precise condition is used.

We remark also that $m'\geq m$ implies $\hat\epsilon(n,m')\geq \hat\epsilon(n,m)$ and $n \ge n'$ implies $\hat\epsilon(n,m) \le \hat\epsilon(n',m)$. Thus, setting
\[
\epsilon_\star(n)=\min\{ \hat\epsilon(n,m)\;:\; m\geq n, \hat\epsilon(n,m)>0\},
\] 
we have $\epsilon_\star(N)>0$ if and only if {\em (C3($N, N'$))} holds for some $N'\geq N$. Also if $\epsilon_\star(n)>0$, then $\epsilon_\star(n')>0$ for all $n'\leq n$, however $\epsilon_\star$ is not necessarily a decreasing function.

\subsection{ The results} ~\\
We now state our main result, whose proof is given in Section \ref{sec:equivalence}. 

\begin{thm} \label{thm:main}
There exists an integers $n_2 \geq  1$, depending only on the classes $\mathcal{T}$ and $\mathcal{H}$, explicitly computable (see Remark \ref{rem:n2_value}), such that if {\em (C1)}, {\em (C2)} and {\em (C3($n_2, n_3$))} hold for some $n_3 \ge n_2$, then the condition {\bf (Exp)} holds.
In particular, the property of loss memory from Lemma \ref{lem:old-forget} is verified.
\end{thm}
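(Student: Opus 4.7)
My plan is to convert \textbf{(Exp)} into a uniform quenched loss-of-memory statement for the sequential transfer operators $\Hl_\sigma^n$, and then establish that via a sequential Lasota--Yorke inequality combined with a Doeblin-type minorization hidden in \textbf{(C3)}. Given $\bomega$ and $\bw$, define $\sigma=\sigma(\bomega,\bw)\in\Sigma^{\bN}$ by $T_{\sigma_k}=f_{\bomega_{z_k}}$ and $H_{\sigma_k}=G_{\bomega_{z_{k-1}},\bw_{k-1}}$, so that \eqref{eqn:transition_proba} reads $p(\bomega,n,\bw_0\ldots\bw_{n-1})=\int \Hl_\sigma^n h_0\,dx$. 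Using $\int \Hl_{\eta_1}\phi\,dx=\int \phi\,\mathds{1}_{H_{\eta_1}}\,dx$, one obtains
\[
\bP_{\bomega}(\bw_n\mid \bw_0\ldots \bw_{n-1}) = \int \mathds{1}_{H_{\sigma_{n+1}}}\,\frac{\Hl_\sigma^n h_0}{\int \Hl_\sigma^n h_0}\,dx.
\]
The corresponding quantity with $\tau_{z_m}\bomega$ and the truncated path is obtained by the same formula with $\sigma$ replaced by $\tau^m\sigma$. Factoring $\Hl_\sigma^n=\Hl_{\tau^m\sigma}^{n-m}\circ\Hl_\sigma^m$ and setting $g=\Hl_\sigma^m h_0$, \textbf{(Exp)} reduces to
\[
\left|\int \mathds{1}_{H_{\sigma_{n+1}}}\Bigl(\tfrac{\Hl_{\tau^m\sigma}^{n-m} g}{\int \Hl_{\tau^m\sigma}^{n-m} g}-\tfrac{\Hl_{\tau^m\sigma}^{n-m} h_0}{\int \Hl_{\tau^m\sigma}^{n-m} h_0}\Bigr)\,dx\right|\le C\,\nu^{n-m},
\]
so that it suffices to show: for any $\sigma\in\Sigma^{\bN}$ and any two densities $g,h\in\BV$ bounded below by a fixed positive constant, the normalized iterates $\Hl_\sigma^N g/\int \Hl_\sigma^N g$ and $\Hl_\sigma^N h/\int \Hl_\sigma^N h$ are $\cO(\nu^N)$-close in $L^1$, uniformly in $\sigma$.

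The first ingredient is a sequential Lasota--Yorke inequality on $\BV$: there exists $C>0$ such that, uniformly in $\sigma\in\Sigma^{\bN}$ and $f\in\BV$,
\[
\operatorname{Var}(\Hl_\sigma^n f)\le C\,\theta^n \operatorname{Var}(f)+C\,\|f\|_{L^1},
\]
with $\theta=\xi\Theta<1$ by \textbf{(C2)}. The factor $\Theta^{-1}=\inf|T'|$ supplies the one-step contraction of variation; the growth of variation created by cutting with the $H_{\sigma_k}$'s is governed by the refined partitions $\widehat{\mathcal{Z}}_\sigma^n$, whose elements split into ``bad'' ones in $\mathcal{Z}_{\sigma,b}^n$ and ``good'' ones in $\mathcal{Z}_{\sigma,g}^n$. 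The bound $K\xi^n$ on the number of contiguous bad elements from \textbf{(C2)} prevents them from producing more than $\xi^n$ extra variation per run, which combined with the $\Theta^n$ contraction gives the $\theta^n$ rate. Condition \textbf{(C1)} in the form $\inf\Hl_\sigma^n\mathds{1}\ge \ds^n>0$ guarantees $\int\Hl_\sigma^n h\ge c>0$ for $h$ bounded below, so the normalized iterates are well defined and their $\BV$ norms are uniformly bounded.

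The second ingredient is \textbf{(C3($n_2,n_3$))}, which acts as a Doeblin-type minorization: for each good element $Z\in\mathcal{Z}_{\sigma,g}^{n_2}$ one has $\Hl_\sigma^{n_3}\mathds{1}_Z\ge \hat\epsilon(n_2,n_3)\,\Hl_\sigma^{n_3}\mathds{1}$ pointwise. Together with the BV control above, this yields after $n_2+n_3$ iterations a decomposition $\bar h_{n_2+n_3}\ge \alpha\,q_\sigma$ with $\alpha>0$ and some density $q_\sigma$ both independent of the initial density $h$. Writing $\bar h_{n_2+n_3}=\alpha q_\sigma + (1-\alpha)\,r_{h,\sigma}$ with $r_{h,\sigma}$ a sub-probability density, iterating the construction and coupling two such decompositions for $g$ and $h_0$ produces exponential contraction in total variation between the normalized iterates at rate $\nu=(1-\alpha)^{1/(n_2+n_3)}$, which is exactly what \textbf{(Exp)} demands. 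The integer $n_2$ in the statement is fixed as the number of Lasota--Yorke iterations needed for $\theta^{n_2}$ to beat the multiplicative constants inherited from \textbf{(C1)}, so that the good part of the mass is located on elements of $\mathcal{Z}_{\sigma,g}^{n_2}$ that can then be turned into a minorization via $n_3$ additional iterations. The principal technical obstacle is the Lasota--Yorke step: because the cuts by the $H_{\sigma_k}$'s do not commute with the expansion, one must track the geometry of $\widehat{\mathcal{Z}}_\sigma^n$ carefully across iterates and prove that runs of bad intervals do not proliferate---this is precisely the content of \textbf{(C2)} and is where the constants $K$ and $\xi$ enter the contraction rate $\theta$.
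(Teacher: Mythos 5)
Your high-level reduction is the same as the paper's: (Exp) is equivalent to a loss-of-memory statement for the normalized iterates $\Hl_\sigma^n h/\int\Hl_\sigma^n h$, and both you and the paper route this through a sequential Lasota--Yorke estimate on ${\rm BV}$ plus the minorization coming from \emph{(C3)}. However, the Lasota--Yorke inequality you write down is not strong enough, and a step built on it is false as stated.

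The operators $\Hl_\sigma^n$ are \emph{not} mass-preserving: they carry a hole at each step (the indicator $\mathds{1}_{H_{\sigma_k}}$), so $\int\Hl_\sigma^n h$ decays, in general exponentially, at rate $\rho_\sigma\cdots\rho_{\tau^{n-1}\sigma}$. The a priori bound coming from \emph{(C1)} only gives $\int\Hl_\sigma^n h\ge(\inf h)\ds^n$, which tends to $0$; your assertion that it ``guarantees $\int\Hl_\sigma^n h\ge c>0$'' is therefore wrong. Consequently, a Lasota--Yorke estimate of the form $\bigvee\Hl_\sigma^n f\le C\theta^n\bigvee f+C\|f\|_{L^1}$ does not control the variation of the \emph{normalized} iterates: the numerator $\bigvee\Hl_\sigma^n f$ stays of order one while the normalizing factor $\int\Hl_\sigma^n f$ can decay at rate $\rho^n$ with $\rho$ unrelated to $\theta$, so $\bigvee\bigl(\Hl_\sigma^n f/\int\Hl_\sigma^n f\bigr)$ could blow up. This is exactly why the paper's Lemma \ref{lem:ly} has $\Lambda_\sigma(|h|)$---not $\|h\|_{L^1}$---in the lower-order term: $\Lambda_\sigma$ is a conditionally-invariant-type eigenfunctional whose value on iterates scales precisely as $\rho_\sigma\cdots\rho_{\tau^{n-1}\sigma}$ (Lemma \ref{lem:lambda_linear}), and the key hypothesis $\theta<\rho$ in \emph{(C2)} is what forces the cone $C_\sigma^a=\{h:\bigvee h\le a\Lambda_\sigma(h)\}$ to be strictly invariant (Lemma \ref{lem:cone_inv}). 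Without the comparison between variation and $\Lambda_\sigma$ you cannot even get started on your Doeblin step: your proposed minorization needs $\inf_Z h\ge\alpha$ on some good $Z\in\cZ_{\sigma,g}^{n_2}$ uniformly over the iterates, and this is the content of the paper's Lemma \ref{lem:inf_partition}, which is proved inside the cone.

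A second, smaller issue: the paper needs more than $L^1$-closeness of the normalized iterates. Equations \eqref{eqn:cond_prob1}--\eqref{eqn:cond_prob2} and the estimates \eqref{eqn:exp_convergence_integral}--\eqref{eqn:exp_convergence_integral2} require quantitative control of both numerator and denominator of $\int\Hl_\sigma^{n+1}h_0/\int\Hl_\sigma^n h_0$ separately, which the paper obtains from the $L^\infty$-convergence in Proposition \ref{prop:decay} together with the pointwise bounds of Lemmas \ref{lem:property_inv_density} and \ref{lem:tech_lemma}. A Doeblin coupling would naturally give total-variation closeness; to upgrade it to the $L^\infty$ estimates actually used you would in any case need the uniform ${\rm BV}$ (and $\inf$) control that, again, rests on the $\Lambda_\sigma$-based cone machinery. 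So the route you sketch can in principle replace Birkhoff's cone contraction by a direct coupling argument (these are morally equivalent), but the indispensable ingredient you are missing is the escape-rate--corrected Lasota--Yorke inequality; with the $L^1$ version, the argument collapses at the very first normalization.
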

\begin{rem} \label{rem:n2_value} The definition of $n_2$ is a bit cumbersome but explicit:\\
 given ($\mathcal{T}$, $\mathcal{H}$) define the constants (see Lemma \ref{lem:ly} for their meaning)\footnote{  Note that $C_n=\infty$ if $\epsilon_\star(n)=0$.}
 \[
 C_\star =3(C+1)+2K(3C+2); \quad C_n= \frac{(3C+2)(2K\xi +1) \Theta}{\epsilon_\star(n)} 
 \]
where $C$ is such that $\bigvee_Z g_\sigma^n \le C \| g_\sigma^n \|_\infty$ for all $n, \sigma$ and $Z \in \mathcal{Z}_{\sigma, \star}^n$. Recall the definitions of $\rho$ and $\theta$ from {\em (C2)} and define (as in Lemma \ref{lem:cone_inv}) $n_0=  \lceil \frac{\ln 4C_\star^2}{\ln\rho\theta^{-1}}\rceil$. If $\epsilon_\star(n_0)=0$, then define $n_2= n_0$.\footnote{ In this case {\em (C3($n_2, n_3$))} fails for all $n_3\geq n_2$ and hence  Theorem \ref{thm:main} does not apply.} If $\epsilon_\star(n_0)>0$, then $C_n<\infty$ for all $n\leq n_0$, and we can set (as in Remark \ref{rem:a_value}) $a=\max\{1, \frac{15}{11} \max_{i \le n_0} \frac{C_i}{C_\star \theta^i}\}$ and $B = 1+2a C_\star$ (used in Lemma \ref{lem:squeeze}). Finally we define (as introduced in Lemmata \ref{lem:partition},  \ref{lem:inf_partition})
\[
n_2=\left\lceil \frac{\ln{4aB(1+2C_{n_0}\rho^{-n_0})}}{\ln\theta^{-1}\rho}\right\rceil.
\]

 \end{rem}
According to Remark \ref{rem:n2_value}, to check the hypothesis of Theorem \ref{thm:main} one has first to check {\em (C1), (C2)}; compute $n_0$ and  find $N'$ for which {\em (C3($n_0,N'$))} holds; use it to compute $n_2$ and look for an $n_3$ for which {\em (C3($n_2,n_3$))} holds. Given ($\mathcal{T}$, $\mathcal{H}$), this can be rather laborious as we will see in Section \ref{sec:ex-bis}.

As we already pointed out in Section \ref{sec:sup-simp}, the choice of the initial condition might play an important role. The following result states that if we restrict ourselves to initial conditions absolutely continuous to Lebesgue, with density in ${\rm BV}$ bounded uniformly away from 0, this difference is not so important in the sense that for large times, the transition probabilities are exponentially close. For two different initial densities $h_0, h_0' \in {\rm BV}$ with $\inf h_0 > 0$ and $\inf h_0' > 0$, we denote by $\bP_\star$ and $\bP_\star '$ the probability measures corresponding to $h_0$ and $h_0'$ respectively. In section \ref{sec:equivalence} we prove:

\begin{thm} \label{thm:dependence_initial}
Under the assumptions of Theorem \ref{thm:main}, we have for all realisation of the environment $\bomega \in \Omega$, $n \ge 0$ and all densities $h_0, h_0'$ as above:
\[
\left| \mathbb{P}_\star( z(n) \; | \; z(1), \ldots, z(n-1), \bomega) - \mathbb{P}_\star' ( z(n) \; | \; z(1), \ldots, z(n-1), \bomega) \right| \le C_{h_0, h_0 '} \nu^n,
\]
where $C_{h_0, h_0 '} >0$ depends only on the densities $h_0$ and $h_0 '$.
\end{thm}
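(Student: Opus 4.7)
The plan is to reduce Theorem \ref{thm:dependence_initial} to a memory-loss statement for the sequential transfer operators $\Hl_\sigma^n$ of Section \ref{sec:cs}, and then to invoke the cone-contraction machinery that already underlies the proof of Theorem \ref{thm:main}.

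Fix $\bomega \in \Omega$ and $z(1), \ldots, z(n)$, and let $\sigma_1, \ldots, \sigma_n$ be the associated symbolic encoding so that $\Hl_{\sigma_k}$ coincides with the operator $\cL_{\bomega, z(k-1), w(k-1)}$ from \eqref{eq:TOdef}. Set
\[
\phi = \Hl_{\sigma_{n-1}} \cdots \Hl_{\sigma_1} h_0, \qquad \phi' = \Hl_{\sigma_{n-1}} \cdots \Hl_{\sigma_1} h_0',
\]
and normalise $\psi = \phi / \int \phi\, dm$, $\psi' = \phi' / \int \phi'\, dm$. The identity \eqref{eq:iterate_transfer} then yields
\[
\bP_\star(z(n) \mid z(1),\ldots,z(n-1), \bomega) = \int \Hl_{\sigma_n} \psi\, dm,
\]
and similarly for $\bP_\star'$. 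Since $\int \Hl_\sigma f\, dm \le \|f\|_{L^1}$ for any integrable $f$, we obtain
\[
\bigl|\bP_\star(z(n) \mid \cdot, \bomega) - \bP_\star'(z(n) \mid \cdot, \bomega)\bigr| \le \|\psi - \psi'\|_{L^1},
\]
and it suffices to establish the memory-loss bound $\|\psi - \psi'\|_{L^1} \le C_{h_0, h_0'}\, \nu^n$ with $\nu \in (0,1)$ uniform in $\bomega$ and in the path.

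This bound is the heart of the cone-contraction step in the proof of Theorem \ref{thm:main}. Conditions \emph{(C1)--(C3)} are designed precisely so that, after a bounded number of steps, the non-autonomous operators $\Hl_\sigma^n$ map a fixed Birkhoff--Hilbert cone of positive BV densities bounded away from zero strictly into a sub-cone of finite projective diameter, producing a contraction of the Hilbert projective metric at a rate $\nu$ uniform in $\sigma$. Both $\psi$ and $\psi'$ enter such a cone after finitely many iterations, with initial projective diameter controlled by $\|h_0\|_{BV}/\inf h_0$ and by the analogous quantity for $h_0'$; the standard comparison between Hilbert distance and $L^1$ distance within the cone then converts the projective contraction into the required $L^1$ bound, with $C_{h_0, h_0'}$ depending only on $\|h_0\|_{BV}, \inf h_0, \|h_0'\|_{BV}, \inf h_0'$.

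The main obstacle is to ensure that the cone contraction provided by the proof of Theorem \ref{thm:main} is genuinely two-sided: property \textbf{(Exp)} concerns only the forgetting of part of a fixed history, whereas here we need to drive together the images of two genuinely different initial densities. However, the Lasota--Yorke type estimates and projective squeezing lemmas listed in Remark \ref{rem:n2_value} do supply this stronger uniform contraction: \emph{(C1)} gives the positivity needed to define the cone, \emph{(C2)} produces the BV bounds keeping the cone forward-invariant under any $\Hl_{\sigma_k}$, and \emph{(C3)} provides the uniform squeeze into a sub-cone of bounded projective diameter. With these ingredients in place the reduction above completes the argument.
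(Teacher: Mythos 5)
Your reduction is correct, and the mechanism you invoke is indeed the engine behind the paper's proof, but the route you take through it is different from what the paper actually does.

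The paper's proof is a one-liner: it exploits inequality \eqref{eqn:sup_proba}, which has already been established in the course of proving Theorem \ref{thm:main}, namely
\[
\sup_{\sigma} \left| \frac{\int \Hl_\sigma^{n+1} h_0\, dm}{\int \Hl_\sigma^n h_0\, dm} - \rho_{\tau^n\sigma}\,\frac{\int h_{\tau^{n+1}\sigma}\, dm}{\int h_{\tau^n\sigma}\, dm} \right| \le C_{h_0}\,\nu^n,
\]
i.e.\ the conditional probability computed from the initial density $h_0$ converges at rate $\nu^n$ to a limit that depends only on $\sigma$, not on $h_0$. Applying this to both $h_0$ and $h_0'$ and using the triangle inequality immediately gives the theorem with $C_{h_0,h_0'}=C_{h_0}+C_{h_0'}$. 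You, by contrast, rewrite the conditional probability as $\int\Hl_{\sigma_n}\psi\,dm$, with $\psi=\Hl_\sigma^{n-1}h_0/\int\Hl_\sigma^{n-1}h_0\,dm$, reduce to $\|\psi-\psi'\|_{L^1}\le C\nu^n$, and then propose to get this from the cone contraction directly. This is structurally the same idea as the paper's — both sides are compared to a common $h_0$-independent limit, namely the normalized quasi-invariant density $h_{\tau^{n-1}\sigma}/\int h_{\tau^{n-1}\sigma}\,dm$ — but repackaged as an $L^1$ bound on the normalized densities rather than a bound on the ratio of integrals. Your plan does work: $\psi\to h_{\tau^{n-1}\sigma}/\int h_{\tau^{n-1}\sigma}\,dm$ in $L^\infty$ (hence $L^1$) at rate $\nu^{n-1}$ by Proposition \ref{prop:decay} together with the uniform positivity $\Lambda_\sigma(h_0)\int h_{\tau^{n-1}\sigma}\ge\exalpha\inf h_0$ recorded in \eqref{eqn:exp_convergence_integral2}, and your bound $|\int\Hl_{\sigma_n}(\psi-\psi')\,dm|\le\|\psi-\psi'\|_{L^1}$ is clean. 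What you gain is a transparent probabilistic reading (two normalized densities being driven together); what you lose is that you end up re-deriving the content of \eqref{eqn:sup_proba} rather than reusing it. Two small points to tighten: the exponent becomes $\nu^{n-1}$ after your reduction (absorbed into the constant), and the part of your last paragraph suggesting that property \textbf{(Exp)} might only give a one-sided forgetting is a red herring — both \textbf{(Exp)} and your $L^1$ bound follow from the same two-sided statement, Proposition \ref{prop:decay}.
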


Next, we consider the situation where all maps $f_\alpha$ preserve a common density $h_0 \in {\rm BV}$ such that $\inf h_0 > 0$, and when the partitions $\cG_\alpha$ are deterministic, i.e. $G_{\alpha, w} = G_w$ does not depend on $\alpha \in A$. In this situation, the dynamical system $(\Omega_\star, \cF_\star, \bP_\star)$ is measure-preserving, and so condition {\bf (Abs)} holds by Lemma \ref{lem:abs_cond}.

\begin{thm} \label{thm:main_ergodic}
Under the assumptions of Theorem \ref{thm:main}, if the maps $f_\alpha$ preserve a common density, the partitions $\cG_\alpha$ are deterministic, and if furthermore condition {\bf (Pro)} of Section \ref{sec:environ2} holds, then the dynamical system $(\Omega_\star, \cF_\star, \bP_\star)$ is ergodic. In particular, $\bP_\star$-a.e., 
\[
\lim_{n \to \infty} \frac{1}{n} z(n) = \sum_{w \in \cW} w \int_{G_w} h_0 dm.
\]
\end{thm}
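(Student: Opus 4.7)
The plan is to deduce Theorem \ref{thm:main_ergodic} from Theorem \ref{thm:gibbs_walk_erg}, so the work consists of verifying its five hypotheses \textbf{(Pos), (Exp), (Abs), (Ell), (Pro)} and then reading off the drift. As noted in the paragraph preceding the statement, when the $f_\alpha$ share an invariant density and the gates are deterministic, Lemma \ref{lem:povop} together with the semi-conjugacy $\Phi$ of Remark \ref{rem:environ2} ensures that $\bP_\star$ itself is $\cF_\star$-invariant. Hence \textbf{(Abs)} follows from Lemma \ref{lem:abs_cond}, \textbf{(Exp)} is exactly the conclusion of Theorem \ref{thm:main}, and \textbf{(Pro)} is part of our hypotheses.

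Next I would verify \textbf{(Pos)} and \textbf{(Ell)}, which are conditions on the transition probabilities themselves. Iterating \emph{(C1)} yields $\Hl_\sigma^n \Id \ge \ds^n$ uniformly on $[0,1]$ for every admissible $\sigma$, and since $\inf h_0 > 0$,
\[
p(\bomega, n, w_0 \ldots w_{n-1}) \;=\; \int \Hl_\sigma^n h_0 \, dm \;\ge\; (\inf h_0)\,\ds^n \;>\;0,
\]
giving \textbf{(Pos)}. For \textbf{(Ell)} I would write, setting $\phi_n = \Hl_\sigma^n h_0$,
\[
\bP_{\bomega}(w_n \mid w_0 \ldots w_{n-1}) \;=\; \frac{\int_{G_{w_n}} \phi_n\, dm}{\int \phi_n\, dm},
\]
and then bound the right-hand side from below by $C^{-1} \min_{w \in \cW} |G_w|$, where the constant $C$ arises from a uniform bounded-distortion estimate $\sup \phi_n \le C \inf \phi_n$ valid for $n$ large. (Without loss of generality $|G_w|>0$ for every $w \in \cW$, otherwise $w$ can be removed.) Such a distortion bound is precisely what the cone-invariance and Lasota--Yorke type estimates that underpin Theorem \ref{thm:main} (in Section \ref{sec:equivalence}) furnish; extracting it cleanly from that machinery is the step I expect to be the main technical obstacle.

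Once the five hypotheses are in place, Theorem \ref{thm:gibbs_walk_erg} delivers a unique $\cF_\star$-invariant probability $\bQ_\star$ equivalent to $\bP_\star$ for which the system is ergodic. Since $\bP_\star$ is already $\cF_\star$-invariant and trivially equivalent to itself, uniqueness forces $\bQ_\star = \bP_\star$, so $(\Omega_\star,\cF_\star,\bP_\star)$ is ergodic, which is the first assertion of the theorem.

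Finally, for the drift formula, Theorem \ref{thm:gibbs_walk_erg} (with $\varphi(\bomega,\bw) = \bw_0$) gives $V = \int \varphi \, d\bQ_\star = \int \varphi \, d\bP_\star$. Combining Remark \ref{rem:zero-velocity} with the explicit computation in the closing remark of Section \ref{sec:app_dwre},
\[
V \;=\; \sum_{w \in \cW} w \int_\Omega p(\bomega, 1, w)\, d\bP(\bomega) \;=\; \sum_{w \in \cW} w \int_{G_w} h_0 \, dm,
\]
which matches the stated formula (and is consistent, via the semi-conjugacy $\Phi$, with Lemma \ref{lem:zero-velocity}).
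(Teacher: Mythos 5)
Your proposal is correct and follows the same overall reduction the paper uses: deduce the result from Theorem~\ref{thm:gibbs_walk_erg} by checking \textbf{(Pos)}, \textbf{(Exp)}, \textbf{(Abs)}, \textbf{(Ell)}, \textbf{(Pro)}, with \textbf{(Exp)} coming from Theorem~\ref{thm:main}, \textbf{(Abs)} from the push-forward of $\bP_0=\bP\times\lambda$ and Lemma~\ref{lem:abs_cond}, and \textbf{(Pro)} hypothesised. Where you differ from the paper is in the two remaining verifications. For \textbf{(Pos)} your argument $\int\Hl_\sigma^n h_0\,dm\ge(\inf h_0)\ds^n>0$, obtained by iterating \emph{(C1)}, is actually more elementary than the paper's, which instead deploys Proposition~\ref{prop:decay} and Lemma~\ref{lem:property_inv_density} to get $p(\bomega,n,\cdot)\ge\rho^n(\exalpha\inf h_0-\Const\nu^n\|h_0\|_{\BV})$ and then invokes monotonicity of $n\mapsto p$; both work. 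For \textbf{(Ell)} you reduce the problem to a uniform distortion bound $\sup\phi_n\le C\inf\phi_n$ and flag its extraction as the main obstacle; the paper sidesteps any separate distortion statement by applying the already-derived inequality~\eqref{eqn:sup_proba},
\[
\left|\frac{\int\Hl_\sigma^{n+1}h_0\,dm}{\int\Hl_\sigma^n h_0\,dm}-\rho_{\tau^n\sigma}\frac{\int h_{\tau^{n+1}\sigma}\,dm}{\int h_{\tau^n\sigma}\,dm}\right|\le C_{h_0}\nu^n,
\]
together with $\rho_{\tau^n\sigma}\ge\rho$, $\int h_{\tau^{n+1}\sigma}\,dm\ge\exalpha$ and $\int h_{\tau^n\sigma}\,dm\le1+a$ (Lemma~\ref{lem:property_inv_density}), giving $\bP_\bomega(\bw_n\mid\cdots)\ge\rho\exalpha/(1+a)-C_{h_0}\nu^n$ directly. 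The distortion bound you want is indeed available — Proposition~\ref{prop:decay} with the two-sided bounds $\exalpha\le h_\sigma\le 1+a$ pins $\phi_n/(\rho_\sigma\cdots\rho_{\tau^{n-1}\sigma})$ between fixed positive constants for $n$ large — but the \eqref{eqn:sup_proba} route is cleaner and is the one the paper takes. Your uniqueness argument ($\bQ_\star=\bP_\star$) and the drift identification via Remark~\ref{rem:zero-velocity} match the paper.
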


\begin{rem} The assumption that the maps all preserve a common measure and that the partitions are deterministic is only used to check the validity of condition {\bf (Abs)} thanks to Lemma \ref{lem:abs_cond}, and to have an explicit formula for the drift. If for a concrete example, one is able to check {\bf (Abs)} by any other mean, then Theorem \ref{thm:gibbs_walk_erg} applies and there exists $V \in \bR^d$ such that $\lim_{n \to \infty} \frac{1}{n} z(n) = V$, $\bP_\star$-a.s. .
\end{rem}

 The proofs of Theorem \ref{thm:main_ergodic} will be provided in Section \ref{sec:equivalence}.

\section{Existence of Non-Markovian examples: $\beta$-maps}\label{sec:ex-bis}
To ensure that the hypotheses of Theorem \ref{thm:main} are non empty it suffices to verify them in some limiting regime, e.g. when the dynamics has a lot of expansion.  This is the aim of the present section. To further simplify things we will limit ourselves to $\beta$ maps, a popular class of dynamical systems.
\subsection{General $\beta$-maps}
More precisely, we consider the situation where the class of maps is $\cT = \{T_{\beta_1}, T_{\beta_2}\}$ for $\beta_2 > \beta_1 > 1$, with $T_\beta(x) = \beta x \mod 1$ and the partitions $\cG_\alpha$ are such that $\cH = \{[0, \frac 1 2], (\frac 1 2, 1]\}$. Note that $\cW$ is not specified, and $\cG_\alpha$ can be random.\footnote{For instance, $\cW = \{- 1, +1\}$ and whether $[0,\frac 1 2]$ and $(\frac 1 2 , 1]$ correspond to $-1, +1$ or $+1,-1$ respectively is random.} For $ \sigma \in \Sigma^{\mathbb{N}} = (\cT \times \cH)^{\bN}$, we denote by $\beta_{\sigma_i}$ the value of $\beta$ such that $T_{\sigma_i} = T_{\beta_{\sigma_i}}$. For simplicity, we will fix $\betavar >1$ and assume that $\beta_1 = \beta$ and $\beta_2 = \betavar \beta$. We will show that assumptions {\em (C1)}, {\em (C2)} and {\em (C3)} are verified for a large set of $\beta$. When needed, we will denote by $\mathcal{Z}_\sigma^n(\beta)$ the partition of smoothness intervals of $T_\sigma^n$, to emphazise the dependence on $\beta$, and similarly for the objects defined in equation \eqref{eq:variousets}. We will do the same with the subsets of the partition defined in Section \ref{sec:model}.

\begin{prop} \label{prop:result_beta}
There exists a set $\cB \subset (1, \infty)$ and $C>0$, with ${\rm Leb}(\cB \cap (1,t)) \leq C \log t$, for $t>1$, such that the model described above satisfies the assumptions of Theorem \ref{thm:main} when $\beta  \notin \cB$.
\end{prop}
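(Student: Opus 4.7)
I will verify conditions \emph{(C1)}, \emph{(C2)}, and \emph{(C3)} of Section \ref{sec:model} for every $\beta$ outside an explicit set $\cB$. The first two conditions will be established for all $\beta > \beta_0$ with some explicit $\beta_0 > 2\varpi$; the exceptional set $\cB$ arises solely from \emph{(C3)}.

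\textbf{Conditions (C1) and (C2).} For $\beta > 2$ both halves of $[0,1]$ contain full monotonicity branches of both $T_\beta$ and $T_{\varpi\beta}$ (for instance $[0,1/(\varpi\beta)] \subset [0,1/2]$). Summing the pointwise contributions $1/|T'|$ from all full branches inside each $H \in \cH$ gives $\inf \Hl_\sigma \mathds{1} \ge \ds$ with $\ds \to 1/(2\varpi)$ as $\beta \to \infty$, which yields \emph{(C1)}. Since $\Lambda_\sigma$ is monotone and fixes constants, $\rho \ge \ds > 0$. For \emph{(C2)}, the only truncated branches of $T_{\sigma_i}|_{H_{\sigma_i}}$ are those adjacent to the boundary points $\{0,1/2,1\}$, at most four per $\sigma_i$, while every full branch inside $H_{\sigma_i}$ maps onto all of $[0,1]$ and therefore separates successive truncations. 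Hence the number of contiguous elements of $\mathcal{Z}_{\sigma,b}^n$ is uniformly bounded by a constant $K$, so \emph{(C2)} holds with $\xi = 1$; the constraint $\theta = 1/\beta < \min\{\rho,1\}$ is then satisfied for $\beta > \beta_0 := 2\varpi$.

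\textbf{Condition (C3) and construction of $\cB$.} With the constants of the previous step made explicit, $n_2$ of Remark \ref{rem:n2_value} becomes a piecewise constant function of $\beta$ on $(\beta_0,\infty)$ growing at most like $\log\beta$. I must find $n_3 \ge n_2$ for which $\Hl_\sigma^{n_3}\mathds{1}_Z / \Hl_\sigma^{n_3}\mathds{1} \ge \hat\epsilon > 0$ uniformly over good intervals $Z \in \mathcal{Z}_{\sigma,g}^k$ with $k \le n_2$. The endpoints of $\mathcal{Z}_\sigma^k$ are preimages of $\{0,1/2,1\}$ under compositions of $T_\beta$ and $T_{\varpi\beta}$, hence rational functions of $\beta$ of the form $\beta^{-j_1}(\varpi\beta)^{-j_2}(p+q/2)$ for integers $p,q,j_1,j_2$ depending on $\sigma$. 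The key step is to show that a good $Z$ has $\Lambda_\sigma(\mathds{1}_Z)$ bounded below by a positive constant as soon as $|Z| \ge \beta^{-\kappa n_2}$ for a suitable $\kappa$, after which the required ratio bound follows from a standard mixing estimate for $\Hl_\sigma^{n_3}$ over a long enough time $n_3$. Define
\[
\cB \;=\; \bigcup_{n \ge 1}\,\bigcup_{k \le n}\,\bigcup_{\sigma \in \Sigma^k}\,\bigcup_{Z \in \mathcal{Z}_{\sigma,g}^k} \bigl\{\beta \in (\beta_0,\infty):\; n_2(\beta)=n,\; |Z(\beta)| < \beta^{-\kappa n}\bigr\}.
\]
Each set in this union lies in neighborhoods of the zero set of a nonzero rational function of $\beta$, and so has Lebesgue measure $\le C t^{-\kappa n + 1}$ inside $(1,t)$. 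Since at level $n$ there are at most $(\#\Sigma)^n \beta^n$ pairs $(\sigma, Z)$ and $n_2(\beta) \le c\log\beta$, choosing $\kappa$ large enough makes $\sum_{n} (\#\Sigma\cdot t)^n t^{-\kappa n + 1}$ convergent uniformly; collecting over the $O(\log t)$ values taken by $n_2$ on $(\beta_0,t)$ gives ${\rm Leb}(\cB \cap (1,t)) \le C\log t$.

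\textbf{Main obstacle.} The hard part is Part 3: one must balance the smallness of each individual resonance set $\{|Z(\beta)| < \beta^{-\kappa n}\}$ against the (super)-polynomial explosion in the number of combinatorial types $\sigma$, while tracking the $\beta$-dependence of all quantities in Remark \ref{rem:n2_value} accurately enough to produce the claimed $\log t$ bound. A secondary but nontrivial point is the translation of the length bound $|Z| \ge \beta^{-\kappa n_2}$ into the pointwise ratio lower bound demanded by \emph{(C3)}, which requires uniform mixing estimates for random compositions of $\beta$-maps.
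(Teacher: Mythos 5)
Your proposal has several substantive gaps; the core strategy (verify (C1)--(C3) outside an exceptional set $\cB$) agrees with the paper, but the execution diverges in ways that do not appear to close.

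First, your claim that {\em (C2)} holds with $\xi = 1$ is not justified. The argument that "every full branch separates successive truncations" does not preclude contiguous blocks from growing under iteration: a truncated element at level $n$ gets refined at level $n+1$ into (potentially) a full copy of the level-$1$ partition plus new truncations at its two cut ends, and these can be contiguous across blocks. The paper's lemma on contiguous elements establishes the recursion $C_\sigma^{n+1}\le (C^{(1)}+2)C_{\tau\sigma}^n + 2C^{(1)}$, giving $\xi = C^{(1)}+2 = 4$ for $\beta$-maps. You would need a genuinely new argument to get $\xi=1$, and without it the constants feeding into Remark \ref{rem:n2_value} are different.

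Second, and more consequentially, your statement that $n_2$ "grows at most like $\log\beta$" is incorrect and misdirects the rest of the proof. From Remark \ref{rem:n2_value}, $n_2 = \lceil \ln(4aB(1+2C_{n_0}\rho^{-n_0}))/\ln(\theta^{-1}\rho)\rceil$. The denominator is $\sim\ln\beta$ and the numerator is $\sim 5\ln\beta + O(1)$ (since $a, B, C_{n_0}$ are polynomial in $\beta$), so $n_2$ converges to a constant; the paper explicitly computes $n_2 \le 6$ once $\beta_0$ is chosen large enough. This is precisely what makes the measure bound on $\cB$ achievable: one only needs to remove the union of finitely many (six) bad-parameter sets, one for each $n \le 6$.

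Third, because you assume $n_2$ grows, your exceptional set $\cB$ is an infinite union over $n$, and the proposed bound $\sum_n (\#\Sigma\cdot t)^n t^{-\kappa n + 1}$ does not visibly produce $O(\log t)$; for $\kappa > 1$ it gives at best a power of $t$, and the "collecting over $O(\log t)$ values" step is not coherent once $n_2$ is bounded. Finally, even setting aside the combinatorics, you acknowledge as a gap the passage from the interval-size condition $|Z|\ge\beta^{-\kappa n_2}$ to the pointwise ratio lower bound required by {\em (C3)}. The paper handles this via a sharper dichotomy (Lemma \ref{lem:bad_beta_cover}): outside the bad set, each $Z\in\cZ_{\sigma,\star}^n$ either satisfies $T_\sigma^{n+1}(Z\cap H_\sigma^{n+1}) = [0,1]$ or $T_\sigma^n Z\subset H_{\sigma_{n+1}}^c$, and this full-coverage statement gives the pointwise bound directly (via $\Hl_\sigma^{n+1}\mathds{1}_Z \ge (\betavar\beta)^{-(n+1)}$). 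The dichotomy in turn comes from controlling the orbits of $1$ and $\tfrac12$ under random compositions (Lemma \ref{lem:bad_beta_x}), not from a length bound on $Z$; the derivative estimate $(\phi_n^{x,\sigma})'(\beta)\ge\beta^{n-1}x$ is what yields the $\log t$ measure control. I recommend recomputing $n_2$ carefully and replacing the size-of-$Z$ criterion with an orbit-avoidance criterion.
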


The rest of the section is devoted to the proof of Proposition \ref{prop:result_beta}. It suffices to check conditions {\em (C1), (C2)} and {\em (C3)} as explained in Remark \ref{rem:n2_value}.

\subsubsection{Condition {\em (C1)}}

This condition is satisfied if every map $T \in \cT$ admits at least one full branch inside any interval $H \in \cH$. This is the case whenever $\beta \ge 3$.

\subsubsection{Condition {\em (C2)}} \label{sec:condition_c2}

We first give a general criterion to check this condition.  Let $\mathcal{Z}_{\sigma,f}^n$ be the collection of elements $Z$ in $\mathcal{Z}_{\sigma, \star}^n$ such that $T_\sigma^n Z = [0,1]$, and let $\mathcal{Z}_{\sigma,u}^n = \mathcal{Z}_{\sigma,  \star}^n \setminus \mathcal{Z}_{\sigma,f}^n$.

We call a system $\xi$-full branched, $\xi > 0$, if there exists $K >0$ such that for all $\sigma \in \Sigma$ and $n$, the number of contiguous elements in $\mathcal{Z}_{\sigma,u}^n$ is less than $K \xi^n$.
A system $\xi$-full branched satisfies {\em (C2)} with the same $K,\xi$.

\begin{lem} \em Calling $C_\sigma^n$ the maximal number of contiguous elements in $Z_{\sigma,u}^n$, holds $$C_\sigma^n \le 2 \sum_{i=0}^{n-1} (C^{(1)} +2)^i C^{(1)},$$ where $C^{(1)}$ is the supremum over all $\sigma$ of $C_\sigma^1$.
\end{lem}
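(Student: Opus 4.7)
The plan is to prove the bound by induction on $n$, with base case $C_\sigma^1\leq C^{(1)}$ (by the definition of $C^{(1)}$) and inductive step establishing the recursion $C_\sigma^n \leq (C^{(1)}+2)\, C_{\tau\sigma}^{n-1}$ for $n\geq 2$. Iterated with the base, this yields $C_\sigma^n \leq (C^{(1)}+2)^{n-1}C^{(1)}$, which is comfortably dominated by the stated closed form $2\sum_{i=0}^{n-1}(C^{(1)}+2)^i C^{(1)}$; the extra slack in the statement absorbs the bookkeeping for the degenerate configuration discussed at the end.

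To set up the inductive step I fix a maximal contiguous chain $Z_1<Z_2<\cdots <Z_m$ in $\mathcal{Z}_{\sigma,u}^n$ realizing $m=C_\sigma^n$. Each $Z_i$ lies in a single smoothness interval of $T_{\sigma_1}$ and sits inside $H_\sigma^n \subset H_{\sigma_1}$, hence belongs to a unique element of $\widehat{\mathcal{Z}}_\sigma^1$ contained in $H_{\sigma_1}$. Let $I_1<\cdots<I_k$ be the distinct such host branches containing at least one block member, and let $m_j$ be the number of block members in $I_j$, so that $m=\sum_{j=1}^k m_j$.

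The first key estimate is the per-branch bound $m_j\leq C_{\tau\sigma}^{n-1}$. The map $T_{\sigma_1}|_{I_j}$ is a monotone diffeomorphism onto $T_{\sigma_1}(I_j)$, and because $I_j\subset H_{\sigma_1}$ it carries block members in $I_j$ to elements of $\mathcal{Z}_{\tau\sigma,\star}^{n-1}$: insider goes to insider since $H_\sigma^n\cap I_j=T_{\sigma_1}^{-1}(H_{\tau\sigma}^{n-1})\cap I_j$, and unfull goes to unfull since $T_\sigma^n(Z)=T_{\tau\sigma}^{n-1}(T_{\sigma_1}(Z))$. Contiguity transfers as well, because each $(H_\sigma^n)^c$-component inside $I_j$ maps onto a single $(H_{\tau\sigma}^{n-1})^c$-component. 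Thus the images $\{T_{\sigma_1}(Z_i):Z_i\subset I_j\}$ form a contiguous chain in $\mathcal{Z}_{\tau\sigma,u}^{n-1}$, yielding the asserted bound by the definition of $C_{\tau\sigma}^{n-1}$.

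The second key estimate is the branch-count bound $k\leq C^{(1)}+2$, which together with the per-branch bound closes the induction via $m\leq (C^{(1)}+2)\,C_{\tau\sigma}^{n-1}$. Any interior host $I_j$ ($1<j<k$) must lie in $\mathcal{Z}_{\sigma,u}^1$: otherwise $T_{\sigma_1}(I_j)=[0,1]$ and the pullback under $(T_{\sigma_1}|_{I_j})^{-1}$ of any full element of $\mathcal{Z}_{\tau\sigma,f}^{n-1}$ produces a full element of $\mathcal{Z}_{\sigma,f}^n\subset \mathcal{Z}_{\sigma,\star}^n$ strictly between two block members, which is incompatible with contiguity (since the only permissible obstruction in a contiguous chain of $\mathcal{Z}_{\sigma,\star}^n$ is a single $(H_\sigma^n)^c$-component). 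The same argument forces any element of $\widehat{\mathcal{Z}}_\sigma^1$ inside $H_{\sigma_1}$ lying between consecutive $I_j$'s to be unfull; moreover two consecutive insider elements of $\widehat{\mathcal{Z}}_\sigma^1$ are automatically contiguous in $\mathcal{Z}_{\sigma,\star}^1$ because only one $(H_{\sigma_1})^c$-component can separate them. Therefore the union of $I_2,\ldots,I_{k-1}$ with the intervening unfull insiders forms a single contiguous chain in $\mathcal{Z}_{\sigma,u}^1$, whose length is bounded by $C^{(1)}$, and adding the two boundary branches $I_1,I_k$ gives the claim. The main obstacle is the edge case $\mathcal{Z}_{\tau\sigma,f}^{n-1}=\emptyset$: the pullback argument no longer excludes interior full branches at level one, and one must instead argue directly that inside such an $I_j$ the block is forced to cover every level-$n$ insider, so that $m_j$ equals the length of a single contiguous chain in $\mathcal{Z}_{\tau\sigma,\star}^{n-1}$ and is therefore still bounded by $C_{\tau\sigma}^{n-1}$, leaving the overall recursion intact.
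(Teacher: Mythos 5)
Your overall decomposition mirrors the paper's—induct on $n$, split a maximal contiguous chain across the level-1 branches that host it, bound the number of branches and the contribution per branch—but the per-branch estimate has a genuine gap. You claim that $T_{\sigma_1}|_{I_j}$ carries block members in $I_j$ to elements of $\mathcal{Z}_{\tau\sigma,\star}^{n-1}$ and that "unfull goes to unfull." This is only correct for full level-1 branches, or for the interior elements of an unfull branch. When $I_j \in \mathcal{Z}_{\sigma,u}^1$, the map $T_{\sigma_1}|_{I_j}$ lands onto $T_{\sigma_1}(I_j) \subsetneq [0,1]$, and the two extremal level-$n$ elements of $\mathcal{Z}_{\sigma,\star}^n$ inside $I_j$ map to \emph{proper subintervals} of level-$(n-1)$ elements, not to the elements themselves. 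Such a cut piece is automatically unfull at level $n$ even if the underlying level-$(n-1)$ element is \emph{full}; it therefore belongs to the block while its image is not in $\mathcal{Z}_{\tau\sigma,u}^{n-1}$. This is precisely the mechanism the paper singles out ("the elements contiguous to the groups, that must belong to $\mathcal{Z}_{\tau\sigma,f}^n$, are cut while taking preimages"), and it lets the chain inside an unfull level-1 branch reach length $C_{\tau\sigma}^{n-1}+2$ rather than $C_{\tau\sigma}^{n-1}$. As a result the correct recursion is
\[
C_\sigma^{n} \le (C^{(1)}+2)\,C_{\tau\sigma}^{n-1} + 2C^{(1)},
\]
not the $C_\sigma^{n} \le (C^{(1)}+2)\,C_{\tau\sigma}^{n-1}$ you assert; the additive $2C^{(1)}$ is exactly what makes the iterate close to the stated sum $2\sum_{i=0}^{n-1}(C^{(1)}+2)^i C^{(1)}$, and without proving it your induction does not establish the lemma. (Your branch-count bound $k\le C^{(1)}+2$ and the treatment of the configuration $\mathcal{Z}_{\tau\sigma,f}^{n-1}=\emptyset$ are on the same footing as the paper's and are not the issue.) To repair the argument, keep your branch decomposition but replace the per-branch bound for unfull hosts by $m_j \le C_{\tau\sigma}^{n-1}+2$, observing that the two boundary pieces of $I_j$ are the only elements that can fail to pull back from $\mathcal{Z}_{\tau\sigma,u}^{n-1}$; the two extremal (full) hosts $I_1, I_k$, which cannot be fully traversed, contribute at most $C_{\tau\sigma}^{n-1}$ each, yielding the recursion above.
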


\begin{proof}
The proof is by induction on $n$. Clearly it is true for $n=1$. Let us suppose it true for $n$. The elements of the partition $\mathcal{Z}_{\sigma, \star}^{n+1}$ are formed by $\{T_{\sigma_1}^{-1} Z \cap Z_1 \}$ where $Z \in \mathcal{Z}_{\tau \sigma, \star}^{n}$ and $Z_1 \in \mathcal{Z}_{\sigma, \star}^{1}$. Now, if $Z_1 \in \mathcal{Z}_{\sigma, f}^{1}$, the elements maintain the same nature, i.e. if $Z \in \mathcal{Z}_{\tau \sigma, f}^{n}$ (resp. $\mathcal{Z}_{\tau \sigma, u}^{n}$) then $T_{\sigma_1}^{-1} Z \cap Z_1 \in \mathcal{Z}_{\sigma, f}^{n+1}$ (resp. $ \mathcal{Z}_{\sigma, u}^{n+1}$). So we have in $Z_1$ at most $C_{\tau\sigma}^n$ contiguous elements of $\mathcal{Z}_{\sigma, u}^{n+1}$. The only problem arises when a block of contiguous elements ends at the boundary of $Z_1$ since in such a case it can still be contiguous to others elements of $\mathcal{Z}_{\sigma, u}^{n+1}$. Yet, if the contiguous elements of $Z_1$ are in $\mathcal{Z}_{\sigma, f}^{1}$, then there can be at most a block of length $2 C_{\tau\sigma}^n$. One must then analyze what can happen if $Z_1 \in \mathcal{Z}_{\sigma, u}^{1}$. In this case, a set of contiguous elements can either have only partial preimage in $Z_1$, hence we get a shorter group of contiguous elements, or all the group can have preimage. In this last case, the worst case scenario is when the elements contiguous to the groups (that must belong to $\mathcal{Z}_{\tau \sigma, f}^{n}$) are cut while taking preimages. This means that at most two new contiguous elements can be generated, but in this case the group must end at the boundary of $Z_1$. Since there are at most $C_\sigma^1$ contiguous elements in $ \mathcal{Z}_{\sigma, u}^{1}$ in this way we can generate at most $C_\sigma^1 (C_{\tau \sigma}^n+2)$ contiguous elements that, again in the worst case scenario, can be contiguous to two blocks belonging to the neighboring elements in $\mathcal{Z}_{\sigma, f}^{1}$. Accordingly, 
\[
C_\sigma^{n+1} \le C_\sigma^1 (C_{\tau \sigma}^n+2) + 2C_{\tau \sigma}^n = (C_\sigma^1+2)C_{\tau \sigma}^n + 2C_\sigma^1 \le 2 \sum_{i=0}^n (C^{(1)} +2)^i C^{(1)},
\]
where we have used the induction hypothesis.
\end{proof}

Hence any system is $\xi$-full branched with $\xi = C^{(1)} +2$ and $K = \frac{2 C^{(1)}}{C^{(1)} +1}$.

Next, we estimate $\rho$. Remark that $\rho_\sigma \ge \inf \Hl_{\sigma_1} \mathds{1}$. Let $N$ to be the minimal number of full branches of $T$ inside $H$ for any $T \in \mathcal{T}$ and $H \in \mathcal{H}$,
\[
\Hl_{\sigma_1} \mathds{1}(x) = \sum_{T_{\sigma_1} y = x} \mathds{1}_{H_{\sigma_1}}(y) \frac{1}{|T_{\sigma_1}'(y)|} \ge \frac{N}{M},
\]
with $M = \sup_{T} |T'(x)|$. Thus $\rho \ge \frac{N}{M}$, and condition {\em (C2)} is satisfied provided $(C^{(1)} +2) \Theta < \frac{N}{M}$. 

For $\beta$ transformations $C^{(1)} = 2$,  $\Theta = \beta^{-1}$, $M = \betavar \beta$ and $N \ge \lfloor \frac{\beta}{2} \rfloor - 1$. Thus, remembering \eqref{eq:rho-def}, $\rho\geq\frac{ \lfloor \frac{\beta}{2} \rfloor - 1}{\varpi \beta}$; $\xi = 4$; $\theta=4\beta^{-1}$;$K = \frac 4 3$ and {\em (C2)} is satisfied if $4 \betavar \beta < \beta ( \lfloor \frac{\beta}{2} \rfloor - 1)$, which happens if $\beta \ge 8 \betavar +4$.

\subsubsection{Condition {\em (C3)}} We start with a parameter selection.
\begin{lem} \label{lem:bad_beta_cover} 
For each $m \ge 1$, there exists a set $\cB_m \subset (1,   \infty)$ and $C_m>0$ such that ${\rm Leb} (\cB_m \cap (1,t)) \leq C_m \log t$ for all $t >1$ and if $1<\beta \notin \cB_m$, then
\begin{equation} \label{eq:covering_beta}
\begin{split}
&\forall n \le m, \: \: \forall \sigma \in \Sigma^{\mathbb{N}}, \: \: \forall Z \in \mathcal{Z}_{\sigma, \star}^n(\beta), \\
 &T_{\sigma}^{n+1}(Z \cap H_\sigma^{n+1}) = [0,1] \: \: {\text or } \: \: T_\sigma^n Z \subset H_{\sigma_{n+1}}^c.
\end{split}
\end{equation}
\end{lem}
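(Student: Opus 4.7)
My plan is to reduce the failure of \eqref{eq:covering_beta} to a finite family of near-coincidence conditions on piecewise polynomial functions of $\beta$ (arising from iterating the $T_{\sigma_i}$ on the breakpoints $1$ and $1/2$), and then bound the Lebesgue measure of each such condition by $O(\log t)$ on $(1,t)$ through a standard derivative-based argument. To this end, I would first make the failure condition explicit. Since $T_\sigma^n$ is affine on the smoothness piece containing $Z \in \mathcal{Z}_{\sigma,\star}^n$, $T_\sigma^n Z$ is an interval $[a,b] \subset [0,1]$. Because $T_{\sigma_{n+1}}$ is a $\beta$-transformation with slope $\ge \beta$ and $[a,b]\cap H_{\sigma_{n+1}}$ is an interval, we have $T_{\sigma_{n+1}}([a,b] \cap H_{\sigma_{n+1}}) = [0,1]$ iff $|[a,b] \cap H_{\sigma_{n+1}}| \ge 1/\beta_{\sigma_{n+1}}$. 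Hence \eqref{eq:covering_beta} fails precisely when $[a,b]$ meets $H_{\sigma_{n+1}}$ non-trivially and the intersection has length strictly less than $1/\beta_{\sigma_{n+1}} \le 1/\beta$. A short case analysis (straddling $1/2$ versus lying entirely on one side) reduces this to: some endpoint $a$ or $b$ lies within $1/\beta$ of $1/2$, or $b-a < 1/\beta$ with $[a,b]$ staying on one side of $1/2$.

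Next I would classify the possible endpoints. Elements of $\widehat{\mathcal{Z}}_\sigma^n$ arise by cutting the smoothness pieces of $T_\sigma^n$ at preimages of $1/2$. Since $T_\beta(0)=0$ and interior breakpoints of compositions of $T_\beta$'s map eventually to $0$ or $1$, the endpoints of $Z \in \mathcal{Z}_{\sigma,\star}^n$ are sent by $T_\sigma^n$ into the finite set
\[
E_\sigma^n(\beta) := \{0,1\} \cup \bigl\{ \phi_{\sigma, k, c}(\beta) \, : \, 0 \le k \le n-1, \ c \in \{1, 1/2\} \bigr\},
\]
where $\phi_{\sigma, k, c}(\beta) := T_{\sigma_n} \circ \cdots \circ T_{\sigma_{k+1}}(c)$. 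Summing over prefixes of length at most $m$, the total number of candidate functions $\phi$ is bounded by $N_m := 2 m^2 |\Sigma|^m$.

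Then, for each candidate $\phi := \phi_{\sigma, k, c}$ of depth $d := n-k$, I would estimate the corresponding bad set. On each ``combinatorial piece'' of $(1,\infty)$ on which the integer parts occurring in the iterated definition of $\phi$ are constant, one has $\phi(\beta) = \gamma_\sigma c \beta^d - N_\sigma(\beta)$, where $\gamma_\sigma := \prod_{i=k+1}^n \gamma_{\sigma_i} \in \{1, \varpi, \ldots, \varpi^d\}$ and $N_\sigma$ is a polynomial of degree $\le d-1$ with non-negative integer coefficients chosen so that $\phi(\beta) \in [0,1]$. A direct computation gives $\phi'(\beta) \ge c_0 \beta^{d-1}$ on each piece (for $\beta$ bounded away from $1$), while pieces have length $\asymp \beta^{-(d-1)}$ since $\phi$ sweeps through an interval of bounded length. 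Hence ${\rm Leb}\{\beta \in \text{piece} : |\phi(\beta) - v| < 1/\beta\} \le (c_0\beta^d)^{-1}$, and summing over pieces covering $(1,t)$ (of which there are $\asymp s^{d-1}\,ds$ around $\beta = s$) yields
\[
{\rm Leb}\bigl\{\beta \in (1,t) \, : \, |\phi(\beta) - v| < 1/\beta\bigr\} \lesssim \int_1^t \frac{ds}{c_0\, s^d}\cdot s^{d-1} = \frac{\log t}{c_0}.
\]
The ``$b-a < 1/\beta$'' case is treated analogously, replacing $\phi$ by a pairwise difference of candidates (whose derivative on each piece is again of order $\beta^{d-1}$ for the relevant depth $d$). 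Defining $\mathcal{B}_m$ as the union of these finitely many bad sets, together with a bounded interval near $\beta = 1$, yields ${\rm Leb}(\mathcal{B}_m \cap (1,t)) \le C_m \log t$.

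The main technical obstacle is the uniform derivative bound $\phi'(\beta) \ge c_0 \beta^{d-1}$: since the integer coefficients in $N_\sigma$ can themselves be comparable to $\beta^{d-1}$, one must verify that on each combinatorial piece the positive leading term $d \gamma_\sigma c \beta^{d-1}$ truly dominates $N_\sigma'(\beta)$ by a fixed positive fraction. The key observation is that the coefficients of $N_\sigma$ arise from greedy $\beta$-expansion digits of the initial iterate, hence are bounded by $\lfloor \varpi \beta \rfloor$; a short induction on $d$ then yields the desired bound provided $\beta > \beta_0$ for some explicit $\beta_0$, and the bounded complementary range is absorbed into $\mathcal{B}_m$.
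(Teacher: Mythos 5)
Your overall plan coincides with the paper's: classify the forward images of the partition endpoints as orbits of $\frac 12$ and $1$ (this is exactly the paper's Lemma \ref{lem:image_zn}), treat these orbits as functions $\phi_n^{x,\sigma}$ of $\beta$ on combinatorial pieces, establish a derivative lower bound of order $\beta^{n-1}$, and sum up the resulting near-coincidence measures to get a $\log t$ bound. That part is fine and matches the paper closely.

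The gap is in how you propose to prove the derivative bound. You write $\phi(\beta)=\gamma_\sigma c\beta^d - N_\sigma(\beta)$ and propose to dominate $N_\sigma'$ by the leading term $d\gamma_\sigma c\beta^{d-1}$ using the bound $a_j\le \lfloor\varpi\beta\rfloor$ on the digits, claiming this works for $\beta>\beta_0$. This cannot work as stated: already for $d=2$ one has $N_\sigma'=\gamma_{\sigma_2}a_0$ and the crude bound $a_0\le\lfloor\varpi\beta\rfloor$ gives $N_\sigma'\le\varpi^2\beta$, while the positive term is $2\gamma_{\sigma_1}\gamma_{\sigma_2}c\beta$, which for $c=\frac12$ and $\gamma_{\sigma_1}=\gamma_{\sigma_2}=1$ equals $\beta$; since $\varpi>1$, no choice of $\beta_0$ makes the leading term dominate in this crude accounting, and for larger $d$ the apparent cancellation only gets worse. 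What actually saves the argument is the sharper relation $a_j=\lfloor\beta_{\sigma_{j+1}}\phi_j\rfloor\le\beta_{\sigma_{j+1}}\phi_j$, i.e.\ the digit is controlled not by its a-priori range but by the current value $\phi_j$. Equivalently, and much more cleanly, differentiate the recursion $\phi_{k+1}=\beta_{\sigma_{k+1}}\phi_k-\lfloor\beta_{\sigma_{k+1}}\phi_k\rfloor$ directly on each combinatorial piece to get
\[
(\phi_{k+1}^{x,\sigma})'=\gamma_{\sigma_{k+1}}\,\phi_k^{x,\sigma}+\beta_{\sigma_{k+1}}\,(\phi_k^{x,\sigma})'\ \ge\ \beta\,(\phi_k^{x,\sigma})',
\]
both terms being nonnegative, whence $(\phi_n^{x,\sigma})'\ge\beta^{n-1}x$ for all $\beta>1$ with no need for $\beta_0$ at this stage. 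This is precisely the paper's Lemma \ref{lem:derivative_beta}. You should replace the coefficient-bound argument by this one.

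A secondary but real issue is the ``$b-a<1/\beta$'' case, which you propose to handle by taking pairwise differences of the candidate maps $\phi$. But a difference of two monotone functions of the same depth need not be monotone, and the derivative lower bound can fail (the two derivatives, each of order $\beta^{d-1}$, can nearly cancel). The paper sidesteps this entirely: its inductive proof of Lemma \ref{lem:bad_beta_cover} is arranged so that at each step at least one endpoint of $T_\sigma^n Z$ is already known to lie in $\{0,\frac12,1\}$, or both endpoints coincide with those of a shorter-history element handled by the induction hypothesis. Thus only the single-orbit estimate from Lemma \ref{lem:bad_beta_x} (with the margin $3\beta^{-1}$, which is what ensures the interval contains a full smoothness branch rather than merely having length $\ge 1/\beta_{\sigma_{n+1}}$) is ever needed. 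I recommend you adopt this inductive structure as well; it removes the pairwise-difference step and the attendant cancellation problem.
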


This lemma, proven shortly, establishes a dichotomy between good and bad elements of $\cZ_{\sigma, \star}^n(\beta)$ when $\beta \notin \cB_m$: either $T_\sigma^n Z \cap H_{\sigma_{n+1}}$ is large enough to cover $[0,1]$ after one more iteration if $Z$ is good, or it is empty if $Z$ is bad.

\begin{cor} If $\beta \notin \bigcup_{k \le n}\cB_k$ then $\hat\epsilon(n,n+1) \geq  \frac{1}{(2\betavar \beta)^{n+1}}$.
\end{cor}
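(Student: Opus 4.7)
The plan is to directly bound both numerator and denominator of $\Hl_\sigma^{n+1} \Id_Z(x)/\Hl_\sigma^{n+1} \Id(x)$ pointwise in $x$, then take the infima appearing in the definition of $\hat\epsilon(n,n+1)$. Fix $\sigma \in \Sigma^{\bN}$, $k \le n$, and $Z \in \cZ_{\sigma,g}^k$. Since $\widehat{\cZ}_\sigma^n$ subdivides each element of $\widehat{\cZ}_\sigma^k$, the set $Z \cap H_\sigma^n$ decomposes as a disjoint union of pieces $Z'_1,\dots,Z'_M \in \cZ_{\sigma,\star}^n$ with $Z'_i \subset Z$. Because $\beta \notin \cB_n \subset \bigcup_{j\le n}\cB_j$, Lemma \ref{lem:bad_beta_cover} applied with $m=n$ classifies each $Z'_i$ in two types: either $T_\sigma^{n+1}(Z'_i \cap H_\sigma^{n+1}) = [0,1]$ (call this piece \emph{good}, producing a full inverse branch), or $T_\sigma^n Z'_i \subset H_{\sigma_{n+1}}^c$, equivalently $Z'_i \cap H_\sigma^{n+1} = \emptyset$ (\emph{bad}).

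The main step will be to show that at least one $Z'_i$ inside $Z$ is good. If they were all bad, the union $Z \cap H_\sigma^{n+1}$ would be empty, so $\Hl_\sigma^{n+1} \Id_Z \equiv 0$ and therefore $\Hl_\sigma^m \Id_Z \equiv 0$ for every $m \ge n+1$; this forces $\Lambda_\sigma(\Id_Z) = 0$, contradicting the hypothesis $Z \in \cZ_{\sigma,g}^k$. Picking any good $Z'_{i_0} \subset Z$, the map $T_\sigma^{n+1}$ is a monotone bijection from $Z'_{i_0} \cap H_\sigma^{n+1}$ onto $[0,1]$, so there is a well defined inverse branch $\phi : [0,1] \to Z'_{i_0}\cap H_\sigma^{n+1}$. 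Since $|T_{\sigma_j}'| \le \varpi\beta$ for every $j$, one obtains pointwise
\[
\Hl_\sigma^{n+1} \Id_Z(x) \ge g_\sigma^{n+1}(\phi(x)) \ge (\varpi\beta)^{-(n+1)}.
\]

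For the denominator I would use the very soft bound $\Hl_\sigma^{n+1}\Id \le \cL_\sigma^{n+1}\Id$ (where $\cL$ denotes the unrestricted transfer operator). A single $\beta$-map satisfies $\cL_\beta \Id(x) = |T_\beta^{-1}(x)|/\beta \le \lceil\beta\rceil/\beta \le 2$, so a trivial induction along $\sigma$ yields $\cL_\sigma^{n+1}\Id \le 2^{n+1}$. Combining with the previous lower bound gives $\Hl_\sigma^{n+1}\Id_Z(x)/\Hl_\sigma^{n+1}\Id(x) \ge (\varpi\beta)^{-(n+1)}/2^{n+1} = (2\varpi\beta)^{-(n+1)}$; infimising over $x$, minimising over $Z \in \cZ_{\sigma,g}^k$ and over $k \le n$, and taking $\inf_\sigma$, yields the claimed bound on $\hat\epsilon(n,n+1)$.

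The only genuinely non-trivial ingredient is the middle step: turning $\Lambda_\sigma(\Id_Z) > 0$ into the existence of at least one good sub-piece of $Z$, via Lemma \ref{lem:bad_beta_cover}. Everything else reduces to the uniform bounds $(\varpi\beta)^{-1} \le |T'|^{-1} \le \beta^{-1}$ on single-step Jacobians and the elementary estimate $\cL_\beta\Id \le 2$ for the unrestricted transfer operator of a $\beta$-map, which is precisely the reason the parameter-selection Lemma \ref{lem:bad_beta_cover} is invoked with exponent $(2\varpi\beta)^{-(n+1)}$.
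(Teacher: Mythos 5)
Your proof is correct and rests on the same essentials as the paper's: both invoke Lemma~\ref{lem:bad_beta_cover} to produce a full inverse branch of $T_\sigma^{n+1}$ landing in $Z$, bound its Jacobian from below by $(\betavar\beta)^{-(n+1)}$, and bound the denominator via $\Hl_\sigma^{n+1}\Id \le \cL_\sigma^{n+1}\Id \le 2^{n+1}$. The only structural difference is in how the full branch is exhibited. You subdivide $Z \cap H_\sigma^n$ into pieces of $\cZ_{\sigma,\star}^n$, apply the dichotomy at level $n$ to each, and argue by contradiction from $\Lambda_\sigma(\Id_Z)>0$ that one piece must be good. The paper applies the dichotomy directly to $Z$ at level $k$ (the bad alternative $T_\sigma^k Z\subset H_{\sigma_{k+1}}^c$ would force $\Lambda_\sigma(\Id_Z)=0$), obtaining $T_\sigma^{k+1}(Z\cap H_\sigma^{k+1})=[0,1]$, and propagates via the identity $T_\sigma^{n+1}(Z\cap H_\sigma^{n+1}) = T_{\tau^{k+1}\sigma}^{n-k}\bigl(T_\sigma^{k+1}(Z\cap H_\sigma^{k+1})\cap H_{\tau^{k+1}\sigma}^{n-k}\bigr)=[0,1]$, the last equality being guaranteed by \emph{(C1)}. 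The paper's version is a bit leaner, as it needs neither the level-$n$ subdivision nor the contradiction detour. Two small imprecisions in your write-up, neither fatal: (i) it is not true in general that $\widehat{\cZ}_\sigma^n$ refines $\widehat{\cZ}_\sigma^k$ (its cut points are governed by $H_\sigma^n$ and $\cZ_\sigma^n$, not by $H_\sigma^k$); what you actually use, and what does hold, is that any $W\in\cZ_{\sigma,\star}^n$ meeting $Z$ is contained in $Z$, because $W$ is a connected subset of $H_\sigma^n\subset H_\sigma^k$ inside the same smoothness interval of $T_\sigma^k$ as $Z$, and $Z$ is itself a connected component of that smoothness interval intersected with $H_\sigma^k$; (ii) $T_\sigma^{n+1}$ restricted to $Z'_{i_0}\cap H_\sigma^{n+1}$ is onto $[0,1]$ but need not be a monotone bijection, since $Z'_{i_0}$ is a smoothness piece only of $T_\sigma^n$, not of $T_\sigma^{n+1}$; surjectivity is all your estimate requires, so the bound survives.
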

\begin{proof}
By Lemma \ref{lem:bad_beta_cover}, for $Z \in \mathcal{Z}_{\sigma, g}^k(\beta)$, $k \le n$, we have $T_{\sigma}^{n+1}(Z \cap H_\sigma^{n+1}) = T_{\tau^{k+1} \sigma}^{n-k}( T_{\sigma}^{k+1}(Z \cap H_\sigma^{k+1}) \cap H_{\tau^{k+1} \sigma}^{n-k}) =  [0,1]$ and then
\[
\Hl_\sigma^{n+1} \mathds{1}_Z(x) = \sum_{T_\sigma^{n+1} y = x} \frac{\mathds{1}_Z(y) \mathds{1}_{H_\sigma^{n+1}}(y)}{|(T_\sigma^{n+1})'(y)|} \ge \frac{1}{\sup |(T_\sigma^{n+1})'|} \ge \frac{1}{(\betavar \beta)^{n+1}},
\]
and $\Hl_\sigma^{n+1} \mathds{1}(x) \le \mathcal{L}_\sigma^{n+1} \mathds{1}(x) \le 2^{n+1}$, since $\mathcal{L}_{T_{\beta_i}} \mathds{1}(x) \le 2$.
\end{proof}

\begin{proof}[{\bf\emph{Proof of Proposition \ref{prop:result_beta}}}] 
We choose $\cB = (1, \beta_0) \cup \bigcup_{k=1}^6 \cB_k$ for some $\beta_0 > 2\cdot 25^2\varpi+1$.
Since $K = \frac 4 3$, we  have $C_\star = \frac{25}{3}$. Note that if $\beta\geq \beta_0$, then $\lceil \frac{\ln 4C_\star^2}{\ln\rho\theta^{-1}}\rceil<1$ and, by Remark \ref{rem:n2_value}, $n_0=\inf\{n\geq 1\;:\; \epsilon_\star(n)>0\}$. But $\epsilon_\star(1)\geq \hat\epsilon(1,2)\geq (2\varpi\beta)^{-2}$, hence $n_0=1$.
Accordingly, $C_1\leq\frac{280}{3}\varpi^2\beta$;  $a\leq \frac {42}{11}\varpi^2\beta^2$ and $B\leq 1+ \frac{700}{11}\varpi^2\beta^2$ which yields, for all $\beta_0$ large enough, 
\[
n_2 =  \left\lceil \frac{\ln{4aB(1+2C_{1}\rho^{-1})}}{\ln\theta^{-1}\rho}\right\rceil\leq \frac{5\ln\beta+D_1}{\ln\beta-D_2}
\]
for some constants $D_i\geq 0$ depending only on $\varpi$. We can thus choose $\beta_0$ large enough so that $n_2\leq 6$.
\end{proof}

The rest of the section is devoted to the proof of Lemma \ref{lem:bad_beta_cover}.  The basic idea is to discard the $\beta$ for which the $n$-th iterates of elements of $\partial \widehat{Z}_\sigma^n(\beta)$ come too close to $\left\{ 0 , \frac 1 2 , 1 \right\}$.

\begin{lem} \label{lem:image_zn} For all $\sigma \in \Sigma^{\mathbb{N}}$ and $n \ge 1$, one has $T_\sigma^n(\partial \widehat{\mathcal{Z}}_\sigma^n(\beta)) \subset \mathcal{Q}_n(\beta) := \{0,1\} \cup \{ T_{\sigma'}^i(1), T_{\sigma'}^i(\frac 1 2 ) \, / \, i=1, \ldots, n, \, \sigma' \in \Sigma^{\mathbb{N}} \}$. 
\end{lem}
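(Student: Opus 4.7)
The plan is to prove the statement by induction on $n$, exploiting the recursive structure $T_\sigma^{n+1} = T_{\sigma_{n+1}} \circ T_\sigma^n$ and the fact that the family $\{\cQ_n(\beta)\}_n$ is nested and closed under applying any $T_{\sigma_{n+1}}$ (modulo shifting the index $i$ to $i+1$).

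For the base case $n=1$, the partition $\widehat{\cZ}_\sigma^1$ is the common refinement of $\cZ_\sigma^1$ (whose interior boundary points are $k/\beta_{\sigma_1}$, $1 \le k \le \lfloor \beta_{\sigma_1}\rfloor$) and of $\{H_{\sigma_1}, H_{\sigma_1}^c\}$ (whose interior boundary is $\{1/2\}$). The one-sided limits of $T_{\sigma_1}$ at each $k/\beta_{\sigma_1}$ are in $\{0,1\}$, while $T_{\sigma_1}(1/2) = T_{\sigma'}^1(1/2)$ for any $\sigma' \in \Sigma^{\bN}$ with $\sigma'_1 = \sigma_1$, and the endpoints $0,1$ map into $\{0\} \cup \{T_{\sigma'}^1(1)\} \subset \cQ_1(\beta)$.

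For the inductive step, assume the statement holds at rank $n$ for every $\sigma$. The new boundary points of $\widehat{\cZ}_\sigma^{n+1}$ compared to $\widehat{\cZ}_\sigma^n$ come from pulling back, via $T_\sigma^n$, either discontinuities of $T_{\sigma_{n+1}}$ (the points $k/\beta_{\sigma_{n+1}}$), or the partition point $1/2$ bordering $H_{\sigma_{n+1}}$. Hence
\[
\partial \widehat{\cZ}_\sigma^{n+1} \subseteq \partial \widehat{\cZ}_\sigma^n \cup (T_\sigma^n)^{-1}\bigl(\{k/\beta_{\sigma_{n+1}}\}_k \cup \{1/2\}\bigr).
\]
For $x$ in the first set, by the induction hypothesis $T_\sigma^n(x) \in \cQ_n(\beta)$, so $T_\sigma^n(x) \in \{0,1\}$ or $T_\sigma^n(x) = T_{\sigma'}^i(y)$ with $y \in \{1/2, 1\}$ and $i \le n$; applying $T_{\sigma_{n+1}}$ yields respectively $0$, $T_{\sigma''}^1(1)$ (choosing $\sigma''_1$ with $T_{\sigma''_1}=T_{\sigma_{n+1}}$), or $T_{\sigma''}^{i+1}(y)$ where $\sigma''$ agrees with $\sigma'$ on the first $i$ positions and $\sigma''_{i+1}$ realizes $T_{\sigma_{n+1}}$. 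All of these lie in $\cQ_{n+1}(\beta)$ since $i+1 \le n+1$. For $x$ in the second set, $T_\sigma^{n+1}(x)$ is either the one-sided limit of $T_{\sigma_{n+1}}$ at a discontinuity (hence $0$ or $1$) or equals $T_{\sigma_{n+1}}(1/2) = T_{\sigma''}^1(1/2) \in \cQ_1(\beta) \subset \cQ_{n+1}(\beta)$.

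There is no serious obstacle; the argument is essentially bookkeeping. The only subtlety is interpreting $T_\sigma^n$ as the pair of one-sided limits at discontinuity points of $T_\sigma^n$, so that the statement $T_\sigma^n(\partial \widehat{\cZ}_\sigma^n) \subset \cQ_n(\beta)$ is meaningful on both sides of such a boundary; once this convention is adopted, the inclusion $\cQ_n(\beta) \subset \cQ_{n+1}(\beta)$ together with the iteration identity for $T_{\sigma''}^{i+1}$ closes the induction without further work.
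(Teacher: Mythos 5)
Your proof is correct and follows essentially the same inductive scheme as the paper: in both, an element of $\partial\widehat{\cZ}_\sigma^{n+1}$ either already lies in $\partial\widehat{\cZ}_\sigma^n$ (handled by the induction hypothesis and the inclusion $T_{\tau^n\sigma}^1(\cQ_n)\subset\cQ_{n+1}$) or is a $T_\sigma^n$-preimage of $\partial\widehat{\cZ}_{\tau^n\sigma}^1$ (handled by $\cQ_1\subset\cQ_{n+1}$). Your remark about interpreting boundary values via one-sided limits is a reasonable clarification of a point the paper leaves implicit, but the substance of the argument is the same.
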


\begin{proof} We proceed by induction, the result being clearly true for $n=1$. Note that $\mathcal{Q}_n(\beta) \cup T_{\tau^n \sigma}^1 (\mathcal{Q}_n(\beta))  \subset \mathcal{Q}_{n+1}(\beta)$. Since every $Z \in \widehat{\mathcal{Z}}_\sigma^{n+1} (\beta)$ is of the form $Z = Z' \cap (T_\sigma^n)^{-1}(Z'')$ for $Z' \in \widehat{\mathcal{Z}}_\sigma^n(\beta)$ and $Z'' \in \widehat{\mathcal{Z}}_{\tau^n \sigma}^1(\beta)$, if $a \in \partial \widehat{\mathcal{Z}}_\sigma^{n+1} (\beta)$, then either $a \in \partial \widehat{\mathcal{Z}}_\sigma^n(\beta)$ or $T_\sigma^n a \in \partial \widehat{\mathcal{Z}}_{\tau^n \sigma}^1(\beta)$. In the first case, $T_\sigma^{n+1 } a \in T_{\tau^n \sigma}^1( T_\sigma^n (\widehat{\mathcal{Z}}_\sigma^n(\beta))) \subset \mathcal{Q}_{n+1}(\beta)$, and in the second case, $T_\sigma^{n+1} a \in T_{\tau^n \sigma}^1 (\partial \widehat{\mathcal{Z}}_{\tau^n \sigma}^1(\beta)) \subset \mathcal{Q}_1(\beta) \subset \mathcal{Q}_{n+1}(\beta)$.
\end{proof}

We thus see that we need to control all the orbits of $1$ and $\frac 1 2$. To this end, for $x\in (0,1]$ and $\sigma \in \Sigma^{\mathbb{N}}$,\footnote{Note that we will only consider $x =  \frac 1 2$ and $x = 1$ in the following.} we introduce the map $\phi_n^{x,\sigma} : (1,\infty) \to [0,1]$  defined by $\phi_n^{x, \sigma}(\beta) = T_\sigma^n (x)$.\footnote{Recall that $T_{\sigma_i} = T_{\beta_{\sigma_i}}$ and $\beta_{1}=\beta$, $\beta_{2}=\varpi\beta$.} For non negative integers $i_1, \ldots, i_n$, we define 
\[
I_{i_1, \ldots, i_n}^{x, \sigma} = \left\{ \beta \in (1, \infty)\, | \, \lfloor \beta_{\sigma_k} \phi_{k-1}^{x,\sigma} (\beta) \rfloor = i_k, \:\:k=1, \ldots, n \right\}.
\]
Note that $I_{i_1}^{x,\sigma} = [\frac{i_1}{x}, \frac{i_1+1}{x})$ if $\beta_{\sigma_1} = \beta_1$,  $I_{i_1}^{x,\sigma} = [\frac{i_1}{\betavar x}, \frac{i_1+1}{\betavar x})$ if $\beta_{\sigma_1} = \beta_2$, and that in both cases, $I_{i_1}^{x,\sigma} \subset [\frac{i_1}{\betavar x}, \frac{i_1 +1}{x})$. The family $\{I_{i_1, \ldots, i_n,i_{n+1}}^{x, \sigma}\}_{i_{n+1} \ge 0}$ forms a partition into finitely many (at most $ \lfloor \betavar \frac{i_1 + 1}{x} \rfloor +1$) intervals of $I_{i_1, \ldots, i_n}^{x,\sigma}$. From the relation $\phi_{n+1}^{x, \sigma}(\beta) = \beta_ {\sigma_{n+1}} \phi_n^{x, \sigma}(\beta) \mod 1$, we deduce easily by induction:

\begin{lem} \label{lem:derivative_beta} The map $\phi_n^{x, \sigma}$ is $\mathcal{C}^1$ and strictly increasing on each interval $I_{i_1, \ldots, i_n}^{x,\sigma}$, and verifies $(\phi_n^{x,\sigma})'(\beta) \ge \beta^{n-1} x$.
\end{lem}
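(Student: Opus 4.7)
The plan is to prove the lemma by induction on $n$, exploiting that on each interval $I_{i_1,\ldots,i_n}^{x,\sigma}$ the integer parts $\lfloor \beta_{\sigma_k} \phi_{k-1}^{x,\sigma}(\beta)\rfloor$ are locked to the values $i_k$, so the modular reductions in the definition of $T_\sigma^k$ become affine operations and the recursion
\[
\phi_{k+1}^{x,\sigma}(\beta) = \beta_{\sigma_{k+1}}\,\phi_k^{x,\sigma}(\beta) - i_{k+1}
\]
holds identically (no $\bmod\,1$) on that interval. Here $\beta_{\sigma_{k+1}}$ equals either $\beta$ or $\varpi\beta$, so in particular it is a polynomial of degree $1$ in $\beta$ with positive derivative ($1$ or $\varpi$).

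For the base case $n=1$, the relation $\phi_1^{x,\sigma}(\beta)=\beta_{\sigma_1}x - i_1$ is affine in $\beta$, hence $\mathcal{C}^\infty$, with derivative $x$ or $\varpi x$, in either case $\ge x = \beta^{0}x$. For the inductive step, differentiate the recursion using the product rule:
\[
(\phi_{n+1}^{x,\sigma})'(\beta) \;=\; \beta_{\sigma_{n+1}}'(\beta)\,\phi_n^{x,\sigma}(\beta)\;+\;\beta_{\sigma_{n+1}}(\beta)\,(\phi_n^{x,\sigma})'(\beta).
\]
Since $\phi_n^{x,\sigma}(\beta)\in[0,1]$ is nonnegative and $\beta_{\sigma_{n+1}}'(\beta)\in\{1,\varpi\}$ is positive, the first term is $\ge 0$; dropping it and using $\beta_{\sigma_{n+1}}(\beta)\ge \beta$ together with the inductive hypothesis $(\phi_n^{x,\sigma})'(\beta)\ge \beta^{n-1}x$ yields
\[
(\phi_{n+1}^{x,\sigma})'(\beta) \;\ge\; \beta \cdot \beta^{n-1}x \;=\; \beta^{n}x,
\]
which is the desired inequality at rank $n+1$. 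The regularity claim is automatic: on $I_{i_1,\ldots,i_n}^{x,\sigma}$, iterating the affine recursion expresses $\phi_n^{x,\sigma}$ as a polynomial in $\beta$, hence $\mathcal{C}^1$ (in fact $\mathcal{C}^\infty$); the strict monotonicity follows from $(\phi_n^{x,\sigma})'(\beta)\ge \beta^{n-1}x>0$.

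I do not expect any serious obstacle: the whole content is that, once the combinatorics $(i_1,\ldots,i_n)$ is fixed, $\phi_n^{x,\sigma}$ is polynomial, and the lower bound on its derivative comes from the elementary observation that in the product rule the cross term contributes with the correct sign because $\phi_n^{x,\sigma}\ge 0$. The only minor care needed is to notice that the induction really uses $\beta_{\sigma_{n+1}}\ge \beta$ (true whether $\beta_{\sigma_{n+1}}=\beta$ or $=\varpi\beta$ since $\varpi>1$), and that nonnegativity of $\phi_n^{x,\sigma}$, not just boundedness, is what allows us to discard the extra term rather than having to estimate it from below.
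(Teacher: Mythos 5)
Your proof is correct and takes essentially the same route the paper intends: the paper states only that the lemma is ``deduced easily by induction'' from the relation $\phi_{n+1}^{x,\sigma}(\beta) = \beta_{\sigma_{n+1}} \phi_n^{x,\sigma}(\beta) \bmod 1$, and your argument is precisely the spelled-out version of that induction, using the product rule and the nonnegativity of $\phi_n^{x,\sigma}$ to drop the cross term and the bound $\beta_{\sigma_{n+1}}(\beta)\ge\beta$ to close the step.
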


\begin{lem} \label{lem:bad_beta_x} For each $x \in (0,1]$  and $n \ge 1$, there exists $C_{x,n}>0$ and a set $\mathcal{B}_{n,x} \subset (1, \infty)$, with ${\rm Leb}(\mathcal{B}_{n,x}\cap (1,t)) \leq C_{x,n}\log t$, such that, if $\beta \notin \mathcal{B}_{n,x}$,
\[
\forall \sigma \in \Sigma^{\mathbb{N}}, \:\: d(\phi_n^{x, \sigma}(\beta),\{0, 1/ 2 , 1 \}) > 3 \beta^{-1}.
\]
\end{lem}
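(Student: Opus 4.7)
The plan is to reduce to a single event and then apply a dyadic bucketing argument. I will set
\[
\mathcal{B}_{n,x}=\bigcup_{\sigma \in \Sigma^n}\bigcup_{y\in\{0,1/2,1\}} E_{\sigma,y},\qquad E_{\sigma,y}=\{\beta>1 : |\phi_n^{x,\sigma}(\beta)-y|\leq 3\beta^{-1}\};
\]
since $\phi_n^{x,\sigma}$ depends only on $\sigma_1,\ldots,\sigma_n$, this is a finite union, so it suffices to prove $|E_{\sigma,y}\cap(1,t)|\leq C\log t$ for fixed $\sigma, y$ with $C$ depending only on $n$, $x$, $\varpi$.

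For this I will use the partition $\{I_{i_1,\ldots,i_n}^{x,\sigma}\}$ of $(0,\infty)$ into smoothness pieces of $\phi_n^{x,\sigma}$. By Lemma~\ref{lem:derivative_beta}, on each such piece $I$ the map $\phi_n^{x,\sigma}$ is $C^1$ and strictly increasing with derivative at least $\beta^{n-1}x$. Writing $\beta_-(I)=\max(\inf I,1)$, for any $\beta\in I\cap(1,t)$ the constraint $|\phi_n^{x,\sigma}(\beta)-y|\leq 3\beta^{-1}$ forces $\phi_n^{x,\sigma}(\beta)$ to lie in an interval of length $6\beta_-(I)^{-1}$ around $y$; inverting on $I$ via the derivative lower bound yields
\[
|E_{\sigma,y}\cap I\cap(1,t)|\leq \frac{6\beta_-(I)^{-1}}{\beta_-(I)^{n-1}x}=\frac{6}{\beta_-(I)^n\,x}.
\]

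To sum this over pieces I will group them dyadically: let $N_k$ be the number of pieces $I$ with $\beta_-(I)\in[2^k,2^{k+1})$ for $k\geq 0$. The key combinatorial estimate is $N_k\leq C'\cdot 2^{kn}$ for some $C'=C'(n,x,\varpi)$. Indeed, each level-$1$ piece has length $1/(c_1 x)$ with $c_1\in\{1,\varpi\}$, so at most $O(x\,2^k)$ of them meet $[2^k,2^{k+1})$; and at each finer refinement a piece splits into at most $\lfloor\beta_{\sigma_{j+1}}\rfloor+1$ sub-pieces, which is $O(2^k)$ because $\beta_{\sigma_{j+1}}\leq\varpi\beta$ and $\beta\leq 2^{k+1}+O(1)$ on the pieces under consideration. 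Iterating $n-1$ times gives the claim. Then $N_k\cdot 6/(2^{kn}x)=O(1)$ per bucket, and summing over the $O(\log t)$ nonempty buckets produces the desired $O(\log t)$ bound.

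The main technical difficulty lies in the piece-counting step, since the level-$j$ partition depends implicitly on $\beta$ through $\beta_{\sigma_j}\in\{\beta,\varpi\beta\}$ and pieces can straddle bucket boundaries. Careful but routine bookkeeping, using that each $\beta_{\sigma_j}$ is comparable to $\beta$ up to the fixed factor $\varpi$ and that boundary effects contribute only $O(1)$ per bucket, handles these issues. For $t\in(1,2]$ the trivial bound $|E_{\sigma,y}\cap(1,t)|\leq t-1$ can be absorbed into $C_{x,n}$, so only the regime $t>2$ actually requires the dyadic estimate.
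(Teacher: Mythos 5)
Your proof is correct and takes essentially the same approach as the paper: both use the derivative lower bound from Lemma \ref{lem:derivative_beta} to bound the measure of the bad set in each monotonicity piece $I_{i_1,\ldots,i_n}^{x,\sigma}$, together with a count of pieces at each scale. The only difference is cosmetic bookkeeping: the paper indexes by the first digit $i_1$ (each $i_1$ contributes $O(i_1^{-1})$, summed over $i_1\lesssim xt$ to give $O(\log t)$), whereas you bucket the pieces dyadically by their left endpoints (each bucket contributes $O(1)$, over $O(\log t)$ buckets) — the same estimate in two notations.
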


\begin{proof}  Fix $n \ge 1$ and $\sigma \in \Sigma$, and consider $\beta \in I_{i_1, \ldots, i_n}^{x, \sigma}$. We have $\beta^{-1} \le\betavar x i_1^{-1}$ and thus $d(\phi_n^{x, \sigma}(\beta), \{0, 1/ 2, 1\}) > 3 \beta^{- 1}$ whenever 
\begin{equation} \label{eq:bad_beta}
d(\phi_n^{x, \sigma}(\beta), \{0, 1/ 2, 1\}) > 3 \betavar x i_1^{-1}. 
\end{equation}
Define $\mathcal{B}_{n, i_1}^{x, \sigma}$ to be the set of $\beta$ in $I_{i_1}^{x, \sigma}$ which do not satisfy \eqref{eq:bad_beta}.
By Lemma \ref{lem:derivative_beta}, the Lebesgue measure of $ \mathcal{B}_{n, i_1}^{x, \sigma} \cap I_{i_1, \ldots, i_n}^{x, \sigma}$ is less 
than\footnote{In the following, $C_{\betavar, x, n}$ will denote a constant, the value of which may change from one line to another, depending on $\betavar$, $x$ and $n$, but not on $i_1$.}
\[
3 \left( \inf_{I_{i_1, \ldots, i_n}^{x, \sigma}}  |(\phi_n^{x, \sigma})'|\right)^{-1}3 \betavar x  i_1^{-1} \le C_{\betavar, x, n} i_1^{-n}.
\]
As $\{I_{i_1, \ldots, i_n}^{x,\sigma}\}_{i_2, \ldots, i_n}$ forms a partition of $I_{i_1}^{x, \sigma}$ into at most $C_{\betavar, x, n} i_1^{n-1}$ elements, the set $\mathcal{B}_{n, i_1}^{x, \sigma}$ has a measure less than $C_{\betavar, x, n} i_1^{-1} $. For $j=1,2$, set $\mathcal{B}_{n,i_1}^{x,j} = \bigcup_{\{\sigma \, | \, \beta_{\sigma_1} = \beta_j\}} \mathcal{B}_{n, i_1}^{x, \sigma}$. For $n$ fixed, the condition \eqref{eq:bad_beta} depends on $\sigma$ only through its $n$ first terms, and thus $\mathcal{B}_{n,i_1}^{x,j}$ is of measure less than $C_{\betavar, x, n} i_1^{-1}$. The set $\mathcal{B}_{n,x} = \bigcup_{j=1,2} \bigcup_{i_1 \ge 1} \mathcal{B}_{n,i_1}^{x,j}$ then satisfies the conclusion of the lemma.
\end{proof}

We can now conclude the proof:

\begin{proof}[{\bf\emph{Proof of Lemma \ref{lem:bad_beta_cover}}}] 
We set $\cB_m = \cup_{1 \leq n\leq m}\cB_{n,1/2}\cup \cB_{n,1}$ and we proceed by induction over $n \le m$. Note that if $Z \in \cZ_{\sigma, \star}^n(\beta)$, then $Z \cap H_{\sigma}^k = Z$ for all $k \le n$. 

If $Z \in \cZ_{\sigma, \star}^1(\beta)$, then either $Z$ is a full interval of $\cZ_{\sigma, \star}^1 (\beta)$ and so $T_\sigma^2(Z \cap H_\sigma^2) = T_{\sigma_2}([0,1] \cap H_{\sigma_2}) = [0,1]$, or one of the endpoints of $Z$ is $\frac 1 2$ or $1$. In the latter case, the other endpoint of $Z$ must be sent after one iteration to $0$ or $1$ and so if $ T_\sigma^1 Z  \cap H_{\tau \sigma}^1$ is not empty, then $| T_\sigma^1 Z  \cap H_{\tau \sigma}^1| > 3 \beta^{-1}$ by Lemma  \ref{lem:bad_beta_x}. The interval $T_\sigma^1 Z \cap H_{\tau \sigma}^1$ therefore contains at least one full interval of $\cZ_{\tau \sigma, \star}^1(\beta)$, which implies $T_\sigma^2(Z \cap H_\sigma^2) = [0,1]$.

Now, we suppose that \eqref{eq:covering_beta} holds for $n$ and we prove it  holds for $n+1< m$. 

Any $Z \in \cZ_{\sigma, \star}^{n+1}(\beta)$ is of the form $Z = Z' \cap (T_\sigma^n)^{-1}(Z'')$ with $Z' \in {\cZ}_{\sigma, \star}^n(\beta)$ and $Z'' \in {\cZ}_{\tau^n \sigma, \star}^1 (\beta)$. If both endpoints of $Z$ belong to the interior of $Z'$, then $T_\sigma^n Z = Z'' \in {\cZ}_{\tau^n \sigma, \star}^1 (\beta)$ and we have 
\[
T_\sigma^{n+2}(Z \cap H_\sigma^{n+2}) = T_{\tau^n \sigma}^2(T_\sigma^n Z \cap H_{\tau^n \sigma}^2) = T_{\tau^n \sigma}^2(Z'' \cap H_{\tau^n \sigma}^2) = [0,1],
 \]
or
\[
T_\sigma^{n+1} Z = T_{\tau^n \sigma}^1 (T_\sigma^n Z) = T_{\tau^n \sigma}^1 Z'' \subset H_{\sigma_{n+1}}^c,
\]
according to whether $Z''$ is a good a or bad element of $\cZ_{\tau^n \sigma, \star}^1(\beta)$ respectively.

If one endpoint of $Z$ is also an endpoint of $Z'$, then $T_\sigma^n Z = [T_\sigma^n a, b]$\footnote{We write [x,y] to denote the interval joining $x$ and $y$, disregarding whether $x \le y$ or $x \ge y$.} with $a \in \partial {\cZ}_{\sigma, \star}^n(\beta)$ and $b \in \partial {\cZ}_{\tau^n \sigma, \star}^1(\beta)$.
By Lemma \ref{lem:image_zn}, $T_\sigma^n a = T_{\sigma '}^i x$, with $\sigma' \in \Sigma$, $0 \le i \le n$ and $x \in \left\{\frac 1 2 , 1 \right\}$.\footnote{Note that if $T_\sigma^n a = 0$, then we are reduced to the previous situation.} We consider two subcases: either $b \notin \left\{ \frac 1 2 , 1 \right\}$ or $b \in \left\{ \frac 1 2 , 1 \right\}$.

In the first subcase, we have $y :=T_{\sigma_{n+1}}b \in \left\{ 0 , 1 \right\}$. Therefore, $T_\sigma^{n+1} Z = [T_{\sigma_{n+1}} T_{\sigma'}^i x, y]$. Since $i+1 \le n+1 \le m$, $\beta \notin \cB_{n+1, y}$ and one of the endpoints of $T_\sigma^{n+1} Z$ belongs to $\left\{0,  1\right\}$, it follows that $T_\sigma^{n+1} Z \cap H_{\sigma_{n+2}}$, if it is non empty, is an interval of length strictly larger than $3 \beta^{-1}$ by Lemma \ref{lem:bad_beta_x} and thus contains at least one full interval of ${\cZ}_{\tau^{n+1} \sigma, \star}^1(\beta)$. Consequently, $T_{\sigma}^{n+2}(Z \cap H_{\sigma}^{n+2}) = [0,1]$.

In the second subcase, $T_\sigma^n Z = [T_{\sigma'}^i x, b]$ has one endpoint belonging to $\left\{ 0, \frac 1 2 ,1\right\}$, and so, as above, we obtain that $T_\sigma^n Z \cap H_{\sigma_n}$, if it is non empty, is of length strictly larger than $3 \beta^{-1}$ by Lemma \ref{lem:bad_beta_x}. We deduce that $T_\sigma^n Z \cap H_{\sigma_n}$ contains at least one full interval of $\cZ_{\tau^n \sigma, \star}^1(\beta)$, which implies that $T_\sigma^{n+2}(Z \cap H_\sigma^{n+2}) = [0,1]$.

Finally, if both endpoints of $Z$ are also endpoints of $Z'$, then $Z = Z' \in {\cZ}_{\sigma, \star}^n (\beta)$. Since $Z \in \cZ_{\sigma, \star}^{n+1}(\beta)$, we have $T_\sigma^n Z \subset H_{\sigma_{n+1}}$ and so by our induction hypothesis, we can only have $T_\sigma^{n+1} Z = T_\sigma^{n+1}(Z' \cap H_\sigma^{n+1}) = [0,1]$. This implies $T_{\sigma}^{n+2}(Z \cap H_\sigma^{n+2}) = [0,1]$ and concludes the induction.
\end{proof}
\subsection{ Markov maps with non-Markov gates}\label{sec:markovdetg}
The reader might wonder if it is possible to produce an example more similar to the Lorentz gas. In particular, one in which the invariant measure of the maps is always the same and the gates are deterministic, so that one knows explicitly the invariant measure of the process of the environment as seen from the particle. 

This is indeed possible: let $\cT = \{T_{\beta_ 1}, T_{\beta_2}\}$, with $\beta_2 > \beta_1$ both integers, and  partitions $\cG_\alpha = \cG$ deterministic with $\cW = \{-1, 0, +1\}$ and $G_{-1} = [0, y]$, $G_{ 0} = (y, 1-y]$ and $G_{ +1} = (1-y, 1]$, for $0< y <\frac 1 2$. Proceeding similarly to the previous section, but using $y$ as a parameter, instead of $\beta$, we have:

\begin{prop} \label{prop:result_beta_gates}
Let $\betavar>1$. Then for each $1 < \beta_1 < \beta _2 < \betavar \beta_1$ integers, there exists a measurable set $\cB_{\beta_1, \beta_2} \subset (0, \frac 1 2 )$, with ${\rm Leb}(\cB_{\beta_1, \beta_2}) = \cO(  \beta_1^{-1})$ as $\beta_1 \to \infty$, such that the model described above, with $y\not\in \cB_{\beta_1, \beta_2}$, satisfies the assumptions of Theorems \ref{thm:main}.
\end{prop}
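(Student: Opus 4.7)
\smallskip

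\noindent\textbf{Proof proposal.} The plan is to follow the same three-step template of Proposition~\ref{prop:result_beta}: verify \emph{(C1)}, \emph{(C2)} and \emph{(C3($n_2,n_3$))} for an explicit $n_2$, while discarding a small measure set of gate parameters $y$.

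First I would verify \emph{(C1)} and \emph{(C2)} directly from the Markov structure. Since each $\beta_i \in \mathbb{N}$, the branches of $T_{\beta_i}$ are the intervals $[k/\beta_i,(k+1)/\beta_i]$, each mapping bijectively onto $[0,1]$. A gate $G_w$ of length $\ell_w\in\{y,\,1-2y,\,y\}$ contains a full branch of $T_{\beta_i}$ as soon as $\beta_i \ell_w \ge 2$, and in that case the preimages of any $x$ inside $G_w$ under $T_{\beta_i}$ produce at least $\lfloor \beta_i\ell_w\rfloor - 1$ terms contributing to $\Hl_{\sigma_1}\mathds{1}$, giving a uniform lower bound $\ds\gtrsim \beta_1^{-1}$, so \emph{(C1)} holds for all $y$ in a compact subinterval of $(0,1/2)$. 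For \emph{(C2)}, the analysis of Section~\ref{sec:condition_c2} applies verbatim: at each step the only way new contiguous bad elements in $\mathcal{Z}^n_{\sigma,\star}$ can appear is through the cuts produced by the gate endpoints $y,1-y$, so one has $C^{(1)}\le 2$, hence $\xi=4$, $K=4/3$, $\Theta=\beta_1^{-1}$, and the condition $\theta=4\beta_1^{-1}<\rho$ holds whenever $\beta_1$ is large enough relative to $\betavar$ and $\min(y,1-2y)$, which is ensured by restricting to $y$ bounded away from $0$ and $1/2$.

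Next I would define the bad set $\cB_{\beta_1,\beta_2}$. As in Lemma~\ref{lem:image_zn}, $T^n_\sigma(\partial \widehat{\mathcal{Z}}^n_\sigma)$ is contained in the finite set
\[
\mathcal{Q}_n = \{0,1\} \cup \{T^i_{\sigma'}(y),\,T^i_{\sigma'}(1-y) \;:\; 1\le i\le n,\,\sigma'\in \Sigma^{\mathbb{N}}\},
\]
so to force the dichotomy of Lemma~\ref{lem:bad_beta_cover} we must keep $T^i_{\sigma'}(y)$ and $T^i_{\sigma'}(1-y)$ away from $\{0,y,1-y,1\}$ at a distance $\ge 3/\beta_1$ for all $i\le n$ and all $\sigma'$. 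The key point is that, for fixed $\sigma'$, the maps $y\mapsto T^i_{\sigma'}(y)$ and $y\mapsto T^i_{\sigma'}(1-y)$ are piecewise affine in $y$ with slope of absolute value exactly $\prod_{k\le i}\beta_{\sigma'_k}\ge \beta_1^i$ (here $y$ itself is the parameter, not the dynamical point, so there are no parametric degeneracies). Hence the set of $y$'s in one linearity piece for which $T^i_{\sigma'}(y)$ lies in a $3/\beta_1$-neighbourhood of $\{0,y,1-y,1\}$ has length $\cO(\beta_1^{-1-i})$, and summing over the $\cO(\beta_1^{i-1})$ pieces and the $2^i$ choices of $\sigma'$ yields a total contribution $\cO((2/\beta_1)^i)$. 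Summing over $i\le n$ gives $\mathrm{Leb}(\cB_{\beta_1,\beta_2}\cap [\text{good range of }y])=\cO(\beta_1^{-1})$, uniformly in $n$ for $\beta_1$ large (the geometric series converges).

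For $y\notin \cB_{\beta_1,\beta_2}$, an induction identical to the one used in Lemma~\ref{lem:bad_beta_cover} (tracking separately whether the endpoints of $Z\in \mathcal{Z}^n_{\sigma,\star}$ come from $\partial \widehat{\mathcal{Z}}^n_\sigma$ or from the fresh cut at step $n+1$) establishes the dichotomy $T^{n+1}_\sigma(Z\cap H^{n+1}_\sigma)=[0,1]$ or $T^n_\sigma Z\subset H^c_{\sigma_{n+1}}$ for all $n\le m$, from which \emph{(C3($n,n+1$))} follows with $\hat\epsilon(n,n+1)\ge (2\betavar\beta_1)^{-(n+1)}$ as in the corollary to Lemma~\ref{lem:bad_beta_cover}. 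Finally, plugging these values into Remark~\ref{rem:n2_value} shows that $n_2$ stays bounded by a universal $m$ (say $m\le 6$) for all $\beta_1$ large enough, so choosing the bad set as the union $\cB_{\beta_1,\beta_2}=\bigcup_{n\le m} \cB_{n,y}^{(\mathrm{gate})}$ of the sets described above, plus the compact endpoint region where \emph{(C1)} fails, gives the required measure estimate and concludes the proof. The main obstacle I foresee is purely bookkeeping: carefully separating the cases in the induction when the new endpoint created at step $n+1$ happens to fall \emph{exactly} on a preimage of $y$ or $1-y$ from an earlier step (a codimension-one event in $y$ that is already contained in $\cB_{\beta_1,\beta_2}$), and making sure the measure estimate is not spoiled by the fact that $y$ now appears on both sides of the distance inequality $|T^i_{\sigma'}(y)-y|>3/\beta_1$.
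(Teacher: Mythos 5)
Your overall plan — verify \emph{(C1)}, \emph{(C2)}, \emph{(C3)} as in Proposition~\ref{prop:result_beta}, with $y$ playing the role of the parameter $\beta$ — is exactly what the paper intends; the paper itself gives no further details beyond the remark that one should proceed ``similarly to the previous section, but using $y$ as a parameter.''  Your self-raised worry that $y$ appears on both sides of $|T^i_{\sigma'}(y)-y|>3/\beta_1$ is indeed benign: the slope of $y\mapsto T^i_{\sigma'}(y)-y$ is $\prod_{k\le i}\beta_{\sigma'_k}-1\ge\beta_1-1$, still uniformly large.

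There are nonetheless two problems with the argument as written.  The first is an arithmetic slip: the number of linearity pieces of $y\mapsto T^i_{\sigma'}(y)$ on $(0,1/2)$ is $\cO(\prod_{k\le i}\beta_{\sigma'_k})=\cO((\betavar\beta_1)^i)$, not $\cO(\beta_1^{i-1})$, so the per-level contribution is $\cO((2\betavar)^i\,\beta_1^{-1})$ rather than $\cO((2/\beta_1)^i)$.  The parenthetical ``the geometric series converges, uniformly in $n$'' is therefore false; the argument still closes, but only because you ultimately restrict $i$ to $i\le m$ with $m$ fixed, and the conclusion is $\cO((2\betavar)^m\beta_1^{-1})=\cO(\beta_1^{-1})$ for that fixed $m$.

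The second is a genuine gap in the measure estimate.  To discard only the (C3)-bad $y$'s for levels $i\le m$ (plus the $\cO(\beta_1^{-1})$-measure \emph{(C1)}/\emph{(C2)} region near the endpoints of $(0,1/2)$) you need $n_2(\beta_1,\beta_2,y)\le m$ for \emph{every} remaining $y$.  For integer $\beta$-maps one has $\Hl_{\sigma_1}\mathds{1}$ equal to the gate length up to $\cO(\beta_1^{-1})$, hence by linearity of $\Lambda_\sigma$, $\rho\approx\min\{y,1-2y\}$.  Feeding this into Remark~\ref{rem:n2_value} (with $a,B=\cO(\beta_1^2)$ and $\Theta=\beta_1^{-1}$) gives, when $\min\{y,1-2y\}\sim\beta_1^{-\alpha}$,
\[
n_2\;\sim\;\frac{(5+\alpha)\ln\beta_1}{(1-\alpha)\ln\beta_1}\;=\;\frac{5+\alpha}{1-\alpha},
\]
which is bounded only if $\alpha$ is bounded away from $1$.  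Thus the set of $y$ with $n_2(y)>m$ has measure on the order of $\beta_1^{-(m-5)/(m+1)}$, which is always strictly larger than $\beta_1^{-1}$ and does not improve by taking $m$ larger (as the (C3) bad set then grows like $(2\betavar)^m\beta_1^{-1}$).  Your proof, as structured, would only yield a bad set of measure $\cO(\beta_1^{-1+\epsilon})$ for each fixed $\epsilon>0$, not the claimed $\cO(\beta_1^{-1})$.  You should either state the result for $y$ in a fixed compact subinterval of $(0,1/2)$ — where $\rho$ is bounded below and the $n_2$ computation exactly mirrors Proposition~\ref{prop:result_beta} — or supply a sharper estimate of $a$, $B$ or $n_2$ in this Markov setting that does not deteriorate as $y\to 0$ or $y\to 1/2$.
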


In particular, this class of models is non-empty when $\beta_1$ is large enough. Since all maps in $\cT$ preserve the Lebesgue measure and all gates are deterministic, Theorem \ref{thm:main_ergodic} also applies\footnote{Condition {\bf (Pro)} is satisfied here, since $\cW$ is symmetric.} and we therefore have $\frac{1}{n}z(n) \to 0$ a.e., since the drift is equal to 
\[
V = \sum_{w \in \cW} w \int_{G_w} h_0 dm = \sum_{w \in \{ -1, 0 , +1\}} w |G_w| = 0.
\]
By corollary \ref{cor:recurrence}, the walk $(z_n)$ is then recurrent.

\section{Equivalence with a Gibbs random walk}\label{sec:equivalence}
In this section, we prove Theorem \ref{thm:main}. The proof will rely on a property of exponential loss of memory for compositions of the operators $\Hl_{\sigma}$. More precisely, we will investigate the properties of compositions of the form $\Hl_\sigma^n f$, in order to understand better the asymptotics of the probabilities $p(\bomega, n, w_0  \ldots w_{n-1})$.
For convenience, we will consider bi-infinite sequences $\sigma \in \Sigma^{\bZ} = (\cT \times \cH)^{\bZ}$, with $\tau : \Sigma^{\bZ} \to \Sigma^{\bZ}$ the bilateral shift. We can extend the definitions of $\Hl_\sigma^n$, $\rho_\sigma$, $C_\sigma^a$ and $\Lambda_\sigma$ to the case $\sigma \in \Sigma^{\bZ}$ in a straightforward way.

What we mean by loss of memory is made precise by the following result proven  in Section \ref{sec:decay}.
\begin{prop} \label{prop:decay} Under the hypothesis of Theorem \ref{thm:main} there exist $\nu  \in (0,1)$, a family of positive numbers $\{\rho_\sigma \}_{\sigma  \in \Sigma^{\bZ}}$ and a family of positive functions $\{ h_\sigma \}_{\sigma  \in \Sigma^{\bZ}}$ in ${\rm BV}$ such that
$\Lambda_\sigma(h_\sigma)=1$ and for all $\sigma \in \Sigma^{\bZ}$, $f \in {\rm BV}$ and $n \ge 0$:
\begin{equation} \label{eqn:exp_convergence}
\left\| \frac{\Hl_\sigma^n f}{\rho_\sigma \cdots \rho_{\tau^{n-1} \sigma}} - \Lambda_{\sigma}(f) h_{\tau^n \sigma} \right\|_\infty \le \Const \nu^n \| f \|_{\rm BV}.
\end{equation}
\end{prop}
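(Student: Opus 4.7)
The plan is to follow the classical cone-contraction strategy for sequential transfer operators: first establish a Lasota-Yorke (LY) inequality for the iterates $\Hl_\sigma^n$, then construct a cone of positive BV densities that is mapped strictly into itself by $\Hl_\sigma^{n_2}$ uniformly in $\sigma$ (with $n_2$ as in Remark \ref{rem:n2_value}), and finally invoke Birkhoff-Hilbert theory to derive \eqref{eqn:exp_convergence} from the resulting projective contraction.

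For the LY step I would prove
\[
\bigvee \Hl_\sigma^n f \leq C_\star \theta^n \bigvee f + C_n \|\Hl_\sigma^n f\|_\infty,
\]
with the constants $C_\star$ and $C_n$ from Remark \ref{rem:n2_value}. Starting from the standard decomposition of the variation of $\Hl_\sigma^n f$ over the partition $\widehat{\cZ}_\sigma^n$ and using the distortion bound $\bigvee_Z g_\sigma^n \leq C \|g_\sigma^n\|_\infty \leq C \Theta^n$, one obtains a sum of $\sup_Z |f|$-type terms over the elements $Z \in \widehat{\cZ}_\sigma^n$. For $Z \in \cZ_{\sigma,b}^n$, condition \emph{(C2)} groups bad elements in clusters of at most $K\xi^n$ consecutive intervals, and the $\sup_Z|f|$-contributions of such a cluster telescope into $\bigvee f$, so the overall bad contribution is bounded by $K \xi^n \Theta^n \bigvee f = K\theta^n \bigvee f$. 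For $Z \in \cZ_{\sigma,g}^n$, condition \emph{(C3)} provides $\Hl_\sigma^{n_3} \mathds{1}_Z \geq \epsilon_\star(n)\, \Hl_\sigma^{n_3}\mathds{1}$ pointwise, which after pushing forward $n_3$ further steps translates $\sup_Z|f|$ into a multiple of $\|\Hl_\sigma^{n+n_3} f\|_\infty / \epsilon_\star(n)$, thus into $\|\Hl_\sigma^n f\|_\infty$ modulo a bounded factor; this is the source of the $\epsilon_\star(n)^{-1}$ dependence in $C_n$.

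With the LY inequality in hand, I would introduce the cone
\[
\cC_{a,B} = \{f \in {\rm BV} : f \geq 0,\ \bigvee f \leq a \inf f,\ \sup f \leq B \inf f\}
\]
with $a, B$ as in Remark \ref{rem:n2_value}. Condition \emph{(C1)} ($\inf \Hl_\sigma^n \mathds{1} \geq \ds^n$) combined with the LY bound shows that for $n_0$ large enough (the $n_0$ of Remark \ref{rem:n2_value}), $\Hl_\sigma^{n_0}\cC_{a,B}$ sits strictly inside $\cC_{a,B}$, and iterating up to $n_2$ yields a uniform bound on the projective diameter of the image cone. Birkhoff-Hilbert then gives a rate $\tilde\nu \in (0,1)$ such that $\Hl_\sigma^{n_2}$ contracts the Hilbert metric on $\cC_{a,B}$ by a factor $\tilde\nu$, uniformly in $\sigma$. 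The family $\{h_\sigma\}$ is constructed as the projective limit of the backward cocycle: for each $\sigma \in \Sigma^{\bZ}$, the sequence $\Hl_{\tau^{-n}\sigma}^n \mathds{1}$, normalized so that $\Lambda_\sigma(\cdot) = 1$, is Cauchy in $\|\cdot\|_\infty$ with exponential rate, its limit $h_\sigma \in \cC_{a,B}$ satisfying $\Hl_{\sigma_1} h_\sigma = \rho_\sigma h_{\tau\sigma}$ for $\rho_\sigma = \Lambda_{\tau\sigma}(\Hl_{\sigma_1}\mathds{1})$. For general $f \in {\rm BV}$, shifting $f$ by an additive multiple of $\mathds{1}$ proportional to $\|f\|_{\rm BV}$ puts $f_+$ and $f_-$ inside the cone, and applying the projective contraction to each shifted density yields \eqref{eqn:exp_convergence} with $\nu = \tilde\nu^{1/n_2}$, the constant $\Const$ absorbing the shift.

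The main obstacle is the LY inequality together with the matched constant-tracking needed to reproduce the precise form of $n_2$ in Remark \ref{rem:n2_value}: the bad-element estimate from \emph{(C2)} and the good-element estimate from \emph{(C3)} must be combined so that the $\sup_Z|f|$-contribution of good elements is genuinely absorbed into a constant times $\|\Hl_\sigma^n f\|_\infty$ (not merely $\|\Hl_\sigma^{n+n_3} f\|_\infty$), uniformly in $\sigma$, and so that the cone constants $a$ and $B$ chosen compatibly with this LY bound really do close under $\Hl_\sigma^{n_2}$. A secondary issue is upgrading the $\|\cdot\|_\infty$ contraction to the stated BV-to-$\|\cdot\|_\infty$ estimate, which requires reapplying the LY inequality to control the BV norm of the shifts and the error terms.
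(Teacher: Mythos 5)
The overall strategy you propose (sequential Lasota--Yorke inequality, cone invariance, Birkhoff--Hilbert contraction, backward-cocycle construction of $h_\sigma$) is the same as the paper's, but your choice of cone creates a genuine gap. You work with the $\sigma$-independent cone $\cC_{a,B}=\{f\ge 0:\bigvee f\le a\inf f,\ \sup f\le B\inf f\}$ and claim that \emph{(C1)} together with the LY bound gives strict invariance under $\Hl_\sigma^{n_0}$. Tracking the constants, the ratio you need to be small is $\theta^n/\ds^n$: from your LY inequality, $\bigvee\Hl_\sigma^n f\lesssim\theta^n\bigvee f+\Const\inf\Hl_\sigma^n f$ after absorbing $\|\Hl_\sigma^n f\|_\infty$, while $\inf\Hl_\sigma^n f\ge(\inf f)\,\ds^n$, so $\bigvee\Hl_\sigma^n f/\inf\Hl_\sigma^n f\lesssim a(\theta/\ds)^n+\Const$. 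Nothing in \emph{(C1)}--\emph{(C3)} forces $\theta<\ds$; what the hypotheses control is $\rho=\inf_\sigma\Lambda_{\tau\sigma}(\Hl_{\sigma_1}\mathds{1})$, with \emph{(C2)} demanding $\theta<\rho$. This is precisely why the paper works with the $\sigma$-dependent cones $C_\sigma^a=\{h\ge 0:\bigvee h\le a\Lambda_\sigma(h)\}$ built on the functional $\Lambda_\sigma$, which is exactly multiplicative along the cocycle by $\rho_\sigma$ (Lemmas \ref{lem:iteration} and \ref{lem:lambda_linear}); the cone ratio then decays like $(\theta/\rho)^n$, which is where \emph{(C2)} enters. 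Correspondingly, the paper's LY inequality (Lemma \ref{lem:ly}) is stated with $\Lambda_\sigma(|h|)$ on the right-hand side, not $\|\Hl_\sigma^n h\|_\infty$, and that specific form is what makes the cone invariance (Lemma \ref{lem:cone_inv}) close.

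Your side condition $\sup f\le B\inf f$ is also not invariant: under $\Hl_\sigma^n$ it degrades like $(M/\ds)^n$, and the uniform bound $\sup\Hl_\sigma^n\mathds{1}/\inf\Hl_\sigma^n\mathds{1}\le B/\exalpha'$ that would rescue it is exactly Lemma \ref{lem:tech_lemma}, which the paper only obtains \emph{after} carrying out the cone analysis with $C_\sigma^a$ (via Lemma \ref{lem:squeeze} and Remark \ref{rem:inf_bounded1}); it cannot be used as an input without circularity. Once the cone is replaced by $C_\sigma^a$, the Birkhoff contraction and the backward construction of $h_\sigma$ proceed essentially as you outline; but you also need to prove that the backward-limit normalization constants equal the $\rho_\sigma$'s and that $\Lambda_\sigma$ is linear and multiplicative (Lemmas \ref{lem:eigenvalue} and \ref{lem:lambda_linear}) before you can write the product $\rho_\sigma\cdots\rho_{\tau^{n-1}\sigma}$ in \eqref{eqn:exp_convergence}.
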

\begin{rem} The above statement is similar to the one in \cite{MO14}. Note however that here the setting is very different insofar in \cite{MO14} only small holes and nearby maps are considered. The upgrade of the result to large holes and arbitrary maps (as we inescapably need) turns out to be highly non trivial.
\end{rem}
In the following we will need the following information on $\Lambda_\sigma$ and $h_\sigma$, which are proven at the end of Section \ref{sec:decay}.
\begin{lem} \label{lem:property_inv_density} Under the hypothesis of Theorem \ref{thm:main} there exist $\exalpha>0$ such that, for all $n \ge 0$ and $\sigma \in \Sigma^{\mathbb{Z}}$, $\inf h_\sigma \ge \exalpha$ and $\|h_\sigma\|_{\infty} \le 1+a$.\footnote{ The constant $a$ is defined in Remark \ref{rem:n2_value}} In addition, $\sup_\sigma \rho_\sigma < \infty$ and for all $f\in BV$, $\Lambda_{\tau^n \sigma}(\Hl_\sigma^n f) = \rho_\sigma \ldots \rho_{\tau^{n-1} \sigma}\Lambda_{\sigma}(f)$; $\Hl_\sigma^n h_\sigma = \rho_\sigma \ldots \rho_{\tau^{n-1} \sigma}h_{\tau^n\sigma}$. 
\end{lem}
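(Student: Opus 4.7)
The plan is to derive all the claimed properties from the exponential convergence in Proposition \ref{prop:decay} together with the basic properties of $\Lambda_\sigma$ listed after \eqref{eq:monotone_limit}. Throughout, write $\lambda_n(\sigma) := \rho_\sigma \rho_{\tau\sigma} \cdots \rho_{\tau^{n-1}\sigma}$. A repeatedly-used ingredient is the uniform lower bound $\|h_\sigma\|_\infty \ge 1$, which is immediate from $\Lambda_\sigma(h_\sigma)=1$ combined with $|\Lambda_\sigma(f)| \le \|f\|_\infty$; another is that $\{h_\sigma\}$ has uniformly bounded ${\rm BV}$-norm (a byproduct of the cone construction I anticipate in the last paragraph).

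First I would establish the exact eigenfunction identity $\Hl_\sigma h_\sigma = \rho_\sigma h_{\tau\sigma}$, from which $\Hl_\sigma^n h_\sigma = \lambda_n(\sigma) h_{\tau^n\sigma}$ follows by iteration. For any bi-infinite $\sigma'$, apply Proposition \ref{prop:decay} with $f = h_{\sigma'}$ at levels $n$ and $n+1$, and use $\Hl_{\sigma'}^{n+1} = \Hl_{\sigma'_{n+1}} \circ \Hl_{\sigma'}^n$ to equate the two resulting asymptotics. Dividing by $\lambda_n(\sigma')$ and using the uniform estimate $\|\Hl_{\sigma'_{n+1}}\|_{L^\infty \to L^\infty} = \|\Hl_{\sigma'_{n+1}}\mathds{1}\|_\infty \le \Const$ (consequence of $|\Hl_{\sigma_k} f| \le \|f\|_\infty \Hl_{\sigma_k}\mathds{1}$ and of $\cL\mathds{1}$ being bounded for the maps in $\cT$), the residuals collapse to
\begin{equation*}
\|\Hl_{\sigma'_{n+1}} h_{\tau^n \sigma'} - \rho_{\tau^n\sigma'} h_{\tau^{n+1}\sigma'}\|_\infty \le \Const\, \nu^n.
\end{equation*}
Here is where bi-infinite sequences become essential: specialising $\sigma' = \tau^{-n}\sigma$, one has $\tau^n\sigma' = \sigma$, $\tau^{n+1}\sigma' = \tau\sigma$ and $\sigma'_{n+1} = \sigma_1$, so the estimate reads $\|\Hl_\sigma h_\sigma - \rho_\sigma h_{\tau\sigma}\|_\infty \le \Const\,\nu^n$, valid for every $n \ge 1$; letting $n \to \infty$ forces the exact identity.

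Second, for the cocycle identity $\Lambda_{\tau^n \sigma}(\Hl_\sigma^n f) = \lambda_n(\sigma) \Lambda_\sigma(f)$, I would compute $\Hl_\sigma^{n+m} f$ in two ways: directly from Proposition \ref{prop:decay}, and via $\Hl_\sigma^{n+m} f = \Hl_{\tau^n\sigma}^m(\Hl_\sigma^n f)$ with a second application of the same proposition. Subtracting the two expressions and dividing by $\lambda_m(\tau^n\sigma)$ (and using the factorisation $\lambda_{n+m}(\sigma) = \lambda_n(\sigma)\lambda_m(\tau^n\sigma)$) yields
\begin{equation*}
\left\|[\lambda_n(\sigma) \Lambda_\sigma(f) - \Lambda_{\tau^n\sigma}(\Hl_\sigma^n f)]\, h_{\tau^{n+m}\sigma} \right\|_\infty \le \Const\, \nu^m \bigl(\|\Hl_\sigma^n f\|_{\rm BV} + \nu^n \lambda_n(\sigma)\|f\|_{\rm BV}\bigr),
\end{equation*}
whose right-hand side vanishes as $m\to\infty$; since $\|h_{\tau^{n+m}\sigma}\|_\infty \ge 1$ uniformly, the bracketed scalar must be zero.

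Finally, $\sup_\sigma \rho_\sigma < \infty$ follows at once from $\rho_\sigma = \Lambda_{\tau\sigma}(\Hl_\sigma \mathds{1}) \le \|\Hl_\sigma\mathds{1}\|_\infty \le \Const$. The quantitative bounds $\|h_\sigma\|_\infty \le 1+a$ and $\inf h_\sigma \ge \exalpha$ are the only statements that cannot be extracted from the asymptotic convergence in Proposition \ref{prop:decay} alone: they must be inherited from the cone-contraction argument used to construct $h_\sigma$ in the proof of that proposition. Concretely, the argument should produce $h_\sigma$ inside an invariant cone of ${\rm BV}$-densities, parametrised by the constants $a$ and $B$ of Remark \ref{rem:n2_value}, set up so that any element $g$ of the cone normalised by $\Lambda_\sigma(g)=1$ satisfies simultaneously $\|g\|_\infty \le 1+a$, $\inf g \ge \exalpha>0$, and a uniform ${\rm BV}$-bound (the last being the ingredient tacitly used above to make the comparison constants uniform in $\sigma$). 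The main obstacle is therefore not in the present lemma per se, but in arranging the cone built inside the proof of Proposition \ref{prop:decay} so that all three quantitative bounds are delivered simultaneously; once that is done, the lemma is a formal consequence of exponential convergence and the listed properties of $\Lambda_\sigma$.
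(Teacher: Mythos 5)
Your proof is correct and takes a genuinely different route. The paper dispatches this lemma in one line: $\sup_\sigma\rho_\sigma \le M$ by Remark~\ref{rem:tech_facts}, and the rest is a direct citation to Lemmata~\ref{lem:quasiinv_density}, \ref{lem:eigenvalue} and \ref{lem:lambda_linear}; in particular, the eigenfunction identity is obtained in Lemma~\ref{lem:eigenvalue} by extracting $\lambda_\sigma$ as a subsequential limit of $\Lambda_{\tau\sigma}(\Hl_{\tau^{-n}\sigma}^{n+1}\mathds{1})/\Lambda_\sigma(\Hl_{\tau^{-n}\sigma}^n\mathds{1})$, and the cocycle identity together with $\lambda_\sigma=\rho_\sigma$ come from Lemma~\ref{lem:lambda_linear} via the $L^\infty$ convergence of $\Hl_\sigma^n f/\Hl_\sigma^n\mathds{1}$ to $\Lambda_\sigma(f)$. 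You instead treat Proposition~\ref{prop:decay} as a black box and re-derive both identities from the exponential convergence alone: for the eigenfunction identity you cross-compare the level-$n$ and level-$(n+1)$ estimates applied to $f=h_{\sigma'}$, push the level-$n$ estimate through $\Hl_{\sigma'_{n+1}}$, and then shift $\sigma'=\tau^{-n}\sigma$; for the cocycle identity you split $\Hl_\sigma^{n+m}f=\Hl_{\tau^n\sigma}^m\Hl_\sigma^n f$, apply Proposition~\ref{prop:decay} twice and subtract. Both steps check out, and this route sidesteps the intermediate quantity $\lambda_\sigma$ and the subsequence extraction entirely. The cost — which you correctly flag — is that you need $\sup_\sigma\|h_\sigma\|_{\rm BV}<\infty$ to keep the constants uniform as $\sigma'=\tau^{-n}\sigma$ varies, and the quantitative bounds $\|h_\sigma\|_\infty\le 1+a$ and $\inf h_\sigma\ge\exalpha$ cannot be read off the asymptotics: all three are delivered by the cone membership $h_\sigma\in C_\sigma^{a/2}$ from Lemma~\ref{lem:quasiinv_density} (which gives $\bigvee h_\sigma\le a/2$ and $\inf h_\sigma\ge\exalpha$). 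So your argument is complete exactly modulo granting the cone-construction outputs, which is also what the paper's citation to Lemma~\ref{lem:quasiinv_density} amounts to.
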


To prove Proposition \ref{prop:decay}, we will adapt the strategy of \cite{liverani2003lasota}. More precisely, we will show that the family of cones\footnote{By using a generalised variation, e.g. \cite{Ke85, Bu13}, one could probably treat more singular maps. We refrain from exploring this possibility to keep the exposition simpler.}
\[
C_\sigma^a = \{h \in {\rm BV} \; | \; h \neq 0, h\ge 0, \bigvee h \le a \Lambda_\sigma(h)\}
\]
is stricly invariant under compositions of large enough length of transfer operators (i.e. $\Hl_\sigma^n C_\sigma^a \subset C_{\tau^n \sigma}^{a/2}$ for all $n \ge n_0$) for a suitable $a>0$, see Lemma \ref{lem:cone_inv}. From this, we will deduce that $\Hl_\sigma^n C_\sigma^a$ has uniform finite diameter in $C_{\tau^n \sigma}^a$ for the corresponding Hilbert metric (Lemma \ref{lem:finite_diameter}), which will imply that $\Hl_\sigma^n$ is a strict contraction for the Hilbert metric, and then enjoys exponential loss of memory.

To deduce Theorem \ref{thm:main} from Proposition \ref{prop:decay}, we will also need the following technical lemma, which will be proven at the end of Section \ref{sec:decay}:
\begin{lem} \label{lem:tech_lemma} There exists $\exalpha' >0$ such that for all $n \ge 0$ and $\sigma \in \Sigma^{\mathbb{N}}$, we have $\inf \Hl_\sigma^n \mathds{1} \ge \exalpha' \Lambda_{\tau^n \sigma}(\Hl_\sigma^n \mathds{1})$ and $\| \Hl_\sigma^n \mathds{1} \|_\infty \le B \Lambda_{\tau^n \sigma}(\Hl_\sigma^n \mathds{1})$.\footnote{ The constant B is defined in Remark \ref{rem:n2_value}.}
\end{lem}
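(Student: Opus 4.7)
The plan is to reduce both inequalities to Proposition~\ref{prop:decay} applied to $f=\mathds{1}$, supplemented with a direct treatment of an initial finite range of iterates. Indeed, Lemma~\ref{lem:property_inv_density} applied with $f=\mathds{1}$ (and $\Lambda_\sigma(\mathds{1})=1$) gives
\[
\Lambda_{\tau^n\sigma}(\Hl_\sigma^n\mathds{1})=\rho_\sigma\rho_{\tau\sigma}\cdots\rho_{\tau^{n-1}\sigma},
\]
so both bounds are equivalent to uniform two-sided control of the renormalised density $\phi_n^\sigma:=\Hl_\sigma^n\mathds{1}/(\rho_\sigma\cdots\rho_{\tau^{n-1}\sigma})$, uniformly in $n$ and $\sigma$.

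For the asymptotic regime, I would invoke Proposition~\ref{prop:decay} at $f=\mathds{1}$: since $\|\mathds{1}\|_{\rm BV}=1$, this yields $\|\phi_n^\sigma-h_{\tau^n\sigma}\|_\infty\leq \Const\,\nu^n$. By Lemma~\ref{lem:property_inv_density}, $\exalpha\leq h_{\tau^n\sigma}\leq 1+a$ uniformly in $\sigma$ and $n$, so fixing $n_1$ with $\Const\,\nu^{n_1}\leq \exalpha/2$ gives $\exalpha/2\leq \phi_n^\sigma\leq 1+a+\exalpha/2$ for every $n\geq n_1$, uniformly in $\sigma$. Since $a\geq 1$ and $C_\star\geq 3$ (cf.\ Remark~\ref{rem:n2_value}), one has $1+a+\exalpha/2\leq 2aC_\star\leq B$, so both bounds hold on $[n_1,\infty)$ with $\exalpha'=\exalpha/2$.

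For the finitely many remaining iterates $0\leq n<n_1$, I would use elementary uniform estimates: (C1) yields $\inf \Hl_\sigma^n\mathds{1}\geq \ds^n$; the monotonicity of $\Lambda$ combined with (C1) and the definition $\rho_\sigma=\Lambda_{\tau\sigma}(\Hl_{\sigma_1}\mathds{1})$ gives $\rho_\sigma\geq \ds$; and $\sup_\sigma\rho_\sigma<\infty$ by Lemma~\ref{lem:property_inv_density}. A crude bound such as $\|\Hl_\sigma^n\mathds{1}\|_\infty\leq (N_\cT\Theta)^n$, where $N_\cT$ is the maximal number of monotonicity branches of a map in $\cT$, then controls $\phi_n^\sigma$ from above and below on this finite range. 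One takes $\exalpha'$ to be the minimum of $\exalpha/2$ and the small-$n$ lower bounds. The main obstacle I foresee is matching the specific constant $B=1+2aC_\star$ on the small-$n$ range: should the crude bound exceed $B$, the cleanest fallback is to observe that $\mathds{1}\in C_\sigma^a$ (as $\bigvee\mathds{1}=0\leq a\,\Lambda_\sigma(\mathds{1})$), so that by cone invariance $\Hl_\sigma^n\mathds{1}\in C_{\tau^n\sigma}^{a/2}$ for $n\geq n_0$; this gives $\|\Hl_\sigma^n\mathds{1}\|_\infty\leq \inf\Hl_\sigma^n\mathds{1}+\bigvee\Hl_\sigma^n\mathds{1}\leq (1+a/2)\Lambda_{\tau^n\sigma}(\Hl_\sigma^n\mathds{1})\leq B\,\Lambda_{\tau^n\sigma}(\Hl_\sigma^n\mathds{1})$, reducing the verification to the truly finite range $0\leq n<n_0$, which the explicit calibration of $a$ in Remark~\ref{rem:n2_value} is designed to absorb.
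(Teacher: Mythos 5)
Your argument works for the lower bound (the small-$n$ crude estimate, which matches Remark~\ref{rem:inf_bounded1}) and for both bounds in the asymptotic regime $n\ge n_1$, but you have left a genuine gap in the upper bound for $0\le n<n_0$: the assertion that ``the explicit calibration of $a$ in Remark~\ref{rem:n2_value} is designed to absorb'' this range is not a proof, and a crude estimate of the form $(N_\cT\Theta)^n/\rho^n$ is not manifestly $\le B$. Your fallback via cone invariance only invokes the second conclusion of Lemma~\ref{lem:cone_inv} ($\Hl_\sigma^n\mathcal{C}_\sigma^a\subset\mathcal{C}_{\tau^n\sigma}^{a/2}$, valid for $n\ge n_0$) and thus cannot close this range. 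The fix is the \emph{first} conclusion of Lemma~\ref{lem:cone_inv}, namely $\Hl_\sigma^n\mathcal{C}_\sigma^a\subset C_{\tau^n\sigma}^{2aC_\star}$ for \emph{all} $n\ge0$; applied to $\mathds{1}\in C_\sigma^a$ (indeed to $\mathds{1}\in C_\sigma^{a_0}$) it gives $\bigvee\Hl_\sigma^n\mathds{1}\le 2aC_\star\,\Lambda_{\tau^n\sigma}(\Hl_\sigma^n\mathds{1})$ for every $n\ge0$, whence $\|\Hl_\sigma^n\mathds{1}\|_\infty\le\inf\Hl_\sigma^n\mathds{1}+\bigvee\Hl_\sigma^n\mathds{1}\le(1+2aC_\star)\Lambda_{\tau^n\sigma}(\Hl_\sigma^n\mathds{1})=B\,\Lambda_{\tau^n\sigma}(\Hl_\sigma^n\mathds{1})$, with no finite range to patch. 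This is in fact the reason $B$ is defined to be $1+2aC_\star$.

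Beyond this gap, your route is also heavier than needed: the paper obtains the lemma as an immediate corollary of Lemma~\ref{lem:squeeze} (whose proof contains precisely the estimate above) and Remark~\ref{rem:inf_bounded1}, without invoking the full loss-of-memory statement of Proposition~\ref{prop:decay} or the limit density $h_\sigma$. Your derivation of the lower bound and of the upper bound for $n\ge n_1$ is correct, but Proposition~\ref{prop:decay} is downstream of the very inequalities you are trying to prove in the sense that the earlier cone estimates (Lemmata~\ref{lem:cone_inv}, \ref{lem:squeeze}, \ref{lem:inf_bounded}) already suffice and are cheaper; using the direct route avoids importing the machinery of Lemmata~\ref{lem:quasiinv_density}--\ref{lem:lambda_linear} into what is meant to be a short technical step.
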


Recall from Section \ref{sec:walk-rand} that the transition probabilities are given by \eqref{eqn:transition_proba}. For $(\bomega, \bw) \in \Omega_\star$, we define $\sigma = \sigma(\bomega, \bw) \in \Sigma^\bN$ such that $\Hl_{\sigma_n} = \cL_{\bomega, z_{n-1}, \bw_{n-1}}$, so that
\[
p(\bomega, n, \bw_0 \ldots \bw_{n-1}) = \int \Hl_\sigma^n h_0 dm.
\]
We will still denote by $\sigma \in \Sigma^{\bZ}$ an arbitrary element of $\Sigma^{\bZ}$ which coincides with $\sigma$ for future components (for instance, given an arbitrary $\sigma_\star \in \Sigma^{\bZ}$, we identify $\sigma \in \Sigma^\bN$ with the element $\widetilde{\sigma} \in \Sigma^{\bZ}$ defined by $\widetilde{\sigma}_i = \sigma_i$ if $i \ge 1$ and  $\widetilde{\sigma}_i =(\sigma_\star)_i$ if  $i \le 0$).

We are now ready to prove the announced results.

\begin{proof}[{\bf\emph{Proof of Theorem \ref{thm:main}}}]
We have 
\begin{equation} \label{eqn:cond_prob1}
\bP_{\bomega}(\bw_n \; | \; \bw_{0} \ldots \bw_{n-1}) = \frac{p(\bomega, n+1, \bw_0 \ldots \bw_n)}{p(\bomega, n, \bw_0 \ldots \bw_{n-1})} = \frac{\int \Hl_\sigma^{n+1} h_0 dm}{\int \Hl_\sigma^n h_0 dm} 
\end{equation}
and
\begin{equation} \label{eqn:cond_prob2}
\begin{split}
\bP_{\tau_{z_m} \bomega}(\bw_n \; | \; \bw_{m} \ldots \bw_{n-1}) &= \frac{p(\tau_{z_m} \bomega, n -m +1, \bw_m \ldots \bw_n)}{p(\tau_{z_m} \bomega, n-m, \bw_m \ldots \bw_{n-1})}\\
& = \frac{\int \Hl_{\tau_{z_m}\sigma}^{n -m+1} h_0 dm}{\int \Hl_{\tau_{z_m}\sigma}^{n-m} h_0 dm} .
\end{split}
\end{equation}
We can then write 
\[
\frac{\int \Hl_\sigma^{n+1} h_0 dm}{\int \Hl_\sigma^n h_0 dm} = \rho_{\tau^n \sigma} \frac{\int \Hl_\sigma^{n+1} h_0 dm}{\rho_\sigma \ldots \rho_{\tau ^n \sigma}} \frac{\rho_\sigma \ldots \rho_{\tau^{n-1} \sigma}}{\int \Hl_\sigma^n h_0 dm}.
\]
Using Proposition \ref{prop:decay}, we have
\begin{equation} \label{eqn:exp_convergence_integral}
\left| \frac{\int \Hl_\sigma^n h_0 dm}{\rho_\sigma \cdots \rho_{\tau^{n-1} \sigma}} - \Lambda_\sigma(h_0) \int h_{\tau^n \sigma} dm \right| \le \Const \nu^n \| h_0 \|_{\rm BV},
\end{equation}
which also implies
\begin{equation} \label{eqn:exp_convergence_integral2}
\left| \frac{\rho_{\sigma} \cdots \rho_{\tau^{n-1} \sigma}}{\int \Hl_{\sigma}^{n} h_0 dm} - \left( \Lambda_{\sigma}(h_0) \int h_{\tau^n \sigma} dm \right)^{-1}\right| \le \Const \nu^{n} \frac{\| h_0 \|_{\rm BV}}{(\inf h_0)^2},
\end{equation}
since,  by Lemmata \ref{lem:property_inv_density} and \ref{lem:tech_lemma},
\[
\begin{aligned}
\frac{\int \Hl_{\sigma}^{n} h_0 dm}{\rho_{ \sigma} \cdots \rho_{\tau^{n-1} \sigma}} \ge (\inf h_0) \frac{\inf \Hl_{\sigma}^{n} \mathds{1}}{\rho_{ \sigma} \cdots \rho_{\tau^{n-1} \sigma}}\ge (\inf h_0) \frac{ \exalpha' \Lambda_{\tau^n  \sigma}(\Hl_\sigma^n \mathds{1})}{\rho_\sigma \ldots \rho_{\tau^{n-1} \sigma}} = \exalpha' (\inf h_0),
\end{aligned}
\]
and $\Lambda_{\sigma}(h_0) \int h_{\tau^n \sigma} dm \ge \exalpha \inf h_0$.

Note that we also have, using again Lemmata \ref{lem:tech_lemma} and \ref{lem:property_inv_density},
\[
\frac{\int \Hl_\sigma^n h_0 dm}{\rho_\sigma \cdots \rho_{\tau^{n-1} \sigma}} \le \| h_0\|_\infty \frac{\| \Hl_\sigma^n \mathds{1}\|_\infty}{\rho_\sigma \cdots \rho_{\tau^{n-1} \sigma}} \le B \|h_0\|_\infty.
\]
Consequently, using \eqref{eqn:exp_convergence_integral} with $n$ replaced by $n+1$, \eqref{eqn:exp_convergence_integral2}, Lemma \ref{lem:property_inv_density} and the above inequalities:
\[
\begin{aligned}
&\left|\frac{\int \Hl_\sigma^{n+1} h_0 dm}{\int \Hl_\sigma^n h_0 dm} - \rho_{\tau^n\sigma} \frac{\int h_{\tau^{n+1} \sigma} dm}{\int h_{\tau^n \sigma} dm} \right| \le  \left[ \left| \frac{\int \Hl_\sigma^{n+1} h_0 dm}{\rho_\sigma \cdots \rho_{\tau^{n} \sigma}} - \Lambda_\sigma(h_0) \int h_{\tau^{n+1} \sigma} dm \right|\right. \\
& \left.\times\frac{\rho_{\sigma} \cdots \rho_{\tau^{n} \sigma}}{\int \Hl_{ \sigma}^{n} h_0 dm} + \rho_{\tau^n \sigma}\Lambda_\sigma(h_0) \int h_{\tau^{n+1} \sigma} dm \left|\frac{\rho_{\sigma} \cdots \rho_{\tau^{n-1} \sigma}}{\int \Hl_{\sigma}^{n} h_0 dm} - \frac{1}{\Lambda_{\sigma}(h_0) \int h_{\tau^n \sigma} dm } \right| \right]  \\ 
& \le \Const \nu^n \left( \frac{\|h_0\|_{\rm BV}}{ \inf h_0} + \frac{\|h_0\|_{\rm BV}^2}{(\inf h_0)^2} \right).
\end{aligned}
\]
Hence, there exists $C_{h_0}$ depending only on $h_0$ such that, for all $n \ge 0$, 
\begin{equation} \label{eqn:sup_proba}
\sup_{\sigma \in \Sigma} \left| \frac{\int \Hl_\sigma^{n+1} h_0 dm}{\int \Hl_\sigma^n h_0 dm} - \rho_{\tau^n \sigma} \frac{\int h_{\tau^{n+1} \sigma} dm}{\int h_{\tau^n \sigma} dm} \right| \le C_{h_0} \nu^n.
\end{equation}
Equation \eqref{eqn:sup_proba}, by \eqref{eqn:cond_prob1} and \eqref{eqn:cond_prob2}, implies {\bf (Exp)}.
\end{proof}

\begin{proof}[{\bf\emph{Proof of Theorem \ref{thm:dependence_initial}}}]
By \eqref{eqn:sup_proba}, we have
\[
\sup_{\sigma \in \Sigma} \left| \frac{\int \Hl_\sigma^{n+1} h_0 dm}{\int \Hl_\sigma^n h_0 dm} - \frac{\int \Hl_\sigma^{n+1} h_0 ' dm}{\int \Hl_\sigma^n h_0 ' dm} \right| \le (C_{h_0} + C_{h_0 '}) \nu^n,
\]
for all $n \ge 0$, and the theorem follows with $C_{h_0, h_0 '} = C_{h_0} + C_{h_0 '}$.
\end{proof}

\begin{proof}[{\bf\emph{Proof of Theorem \ref{thm:main_ergodic}}}]
By Theorem \ref{thm:gibbs_walk_erg}, the discussion in Section \ref{sec:app_dwre}, and since {\bf (Exp)} already holds by Theorem \ref{thm:main}, it is enough to show that assumptions {\bf (Pos)} and {\bf (Ell)} are satisfied for the probabilities defined by \eqref{eqn:transition_proba}.
\\ ~
\paragraph{ \em Verification of {\bf (Pos)}}
By Proposition \ref{prop:decay}, Lemma \ref{lem:property_inv_density} and equation \eqref{eq:rho-def}
\[
\begin{aligned}
&p(\bomega, n, \bw_0 \ldots \bw_{n-1}) = \int \Hl_\sigma^n h_0 dm  = \rho_\sigma \cdots \rho_{\tau^{n-1} \sigma} \frac{\int \Hl_\sigma^n h_0 dm}{\rho_\sigma \cdots \rho_{\tau^{n-1} \sigma}} \\ 
& \ge  \rho^n  \left( \Lambda_\sigma(h_0) \int h_{\tau^n \sigma} - \Const \nu^n \|h_0\|_{\rm BV} \right) 
 \ge  \rho^n  \left( \exalpha \inf h_0 - \Const \nu^n \|h_0\|_{\rm BV} \right).
\end{aligned}
\]
Consequently, $p(\bomega, n, \bw_0 \ldots \bw_{n-1}) > 0$ for all $n$ large enough, and since this quantity is non-increasing, this proves the positivity for all $n \ge 0$.
\\ ~
\paragraph{ \em Verification of {\bf (Ell)}}
By \eqref{eqn:sup_proba} and Lemma \ref{lem:property_inv_density}, we have for all $n \ge 0$,
\[
\begin{aligned}
\bP_{\bomega} (\bw_n \; | \; \bw_{n-1} \ldots \bw_0) & = \frac{\int \Hl_\sigma^{n+1} h_0 dm}{\int \Hl_\sigma^n h_0 dm}  \ge \rho_{\tau^n \sigma} \frac{\int h_{\tau^{n+1} \sigma} dm}{\int h_{\tau^n \sigma} dm} - C_{h_0} \nu^n \\
& \ge \rho \frac{\exalpha}{1+a} - C_{h_0} \nu^n,
\end{aligned}
\]
which proves {\bf (Ell)}.
\end{proof}
\section{Loss of Memory}\label{sec:decay}

To prove Proposition \ref{prop:decay}, we will adapt the strategy of \cite{liverani2003lasota} to our non-stationary case, and employ the theory of Hilbert metrics, that we recall below. 

\begin{defin} Let $\mathcal{V}$ be a vector space. We will call convex cone a subset $\mathcal{C} \subset \mathcal{V}$ which enjoys the following properties:

\begin{enumerate}[(i)]

\item $\mathcal{C} \cap - \mathcal{C} = \emptyset$.
\item $\forall \lambda > 0$, $\lambda \mathcal{C} = \mathcal{C}$.
\item $\mathcal{C}$ is a convex set.
\item $\forall f,g \in \mathcal{C}$, $\forall \alpha_n \in \mathbb{R}$ $\alpha_n \to \alpha$, $g - \alpha_n f \in \mathcal{C} \Rightarrow g - \alpha f \in \mathcal{C} \cup \{0\}$.

\end{enumerate}
\end{defin}

We now define the Hilbert metric on $\mathcal{C}$:

\begin{defin} The distance $d_{\mathcal{C}}(f,g)$ between two points $f,g$ in $\mathcal{C}$ is given by

\[
\begin{aligned}
&\alpha(f,g) &= &\sup \{ \lambda > 0 \; | \; g - \lambda f \in \mathcal{C}\}, \\
&\beta(f,g) &= &\inf \{ \mu > 0 \; | \; \mu f - g \in \mathcal{C}\}, \\
&d_{\mathcal{C}}(f,g) &= &\log \frac{\beta(f,g)}{\alpha(f,g)},
\end{aligned}
\]
where we take $\alpha=0$ or $\beta = \infty$ when the corresponding sets are empty.
\end{defin}

The next theorem shows that every positive linear operator is a contraction, provided that the diameter of the image is finite.

\begin{thm}[{\cite[Theorem 1.1]{Liverani95-2}}] \label{thm:contraction_cone} Let $\mathcal{V}_1$ and $\mathcal{V}_2$ be two vector spaces, $\mathcal{C}_1 \subset \mathcal{V}_1$ and $\mathcal{C}_2 \subset \mathcal{V}_2$ two convex cones and $L : \mathcal{V}_1 \to \mathcal{V}_2$ a positive linear operator (which implies $L(\mathcal{C}_1) \subset \mathcal{C}_2$). If we denote 
\[
\Delta = \sup_{f,g \in L(\mathcal{C}_1)} d_{\mathcal{C}_2}(f,g),
\]
then
\[
d_{\mathcal{C}_2}(Lf,Lg) \le \tanh\left( \frac{\Delta}{4} \right) d_{\mathcal{C}_1}(f,g) \text{ } \forall f,g \in \mathcal{C}_1.
\]
\end{thm}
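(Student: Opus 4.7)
This is the classical Birkhoff--Hopf cone contraction theorem in the form given by Liverani, and the argument is purely algebraic in nature. I would proceed in three stages: first obtain the trivial non-expansiveness, then exploit the finite diameter bound $\Delta$, and finally perform a Möbius/hyperbolic optimization to extract the sharp constant $\tanh(\Delta/4)$.

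The first stage is an immediate consequence of positivity. Given $f, g \in \mathcal{C}_1$ with finite Hilbert distance, let $a = \alpha(f,g)$ and $b = \beta(f,g)$. By definition $g - af$ and $bf - g$ lie in $\overline{\mathcal{C}_1}$, so applying $L$ gives $Lg - aLf$ and $bLf - Lg$ in $\overline{\mathcal{C}_2}$. This yields $\alpha(Lf, Lg) \ge a$ and $\beta(Lf, Lg) \le b$, hence $d_{\mathcal{C}_2}(Lf, Lg) \le d_{\mathcal{C}_1}(f,g)$. No contraction is gained yet.

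The second stage uses the bounded diameter to improve on the above. Since $Lg - aLf$ and $bLf - Lg$ both lie in $\overline{L(\mathcal{C}_1)}$, their mutual Hilbert distance in $\mathcal{C}_2$ is at most $\Delta$. Accordingly there exist $s \le t$ with $\log(t/s) \le \Delta$ such that
$$
s(Lg - aLf) \preceq bLf - Lg \preceq t(Lg - aLf)
$$
in the order induced by $\mathcal{C}_2$. Rearranging each inequality produces
$$
\frac{ta+b}{t+1}\, Lf \preceq Lg \preceq \frac{sa+b}{s+1}\, Lf,
$$
which gives the bound
$$
\frac{\beta(Lf, Lg)}{\alpha(Lf, Lg)} \le \frac{(s+r)(t+1)}{(s+1)(t+r)}, \qquad r := b/a = \exp d_{\mathcal{C}_1}(f,g).
$$

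The third stage is the optimization. The map $r \mapsto \frac{(s+r)(t+1)}{(s+1)(t+r)}$ is a Möbius transformation of $(0,\infty)$ which is the identity when $s = t$ and which, in the hyperbolic parametrization $r = e^{2\rho}$, acts as a translation whose amplitude is controlled by the cross-ratio of $\{-s, -t, 0, \infty\}$. Under the constraint $t/s \le e^{\Delta}$, the worst case is attained (by a direct calculus exercise, or by invoking the composition formula in $\mathrm{PSL}_2(\mathbb{R})$) at the symmetric choice $s = e^{-\Delta/2}$, $t = e^{\Delta/2}$, and produces the sharp inequality
$$
\log \frac{\beta(Lf,Lg)}{\alpha(Lf,Lg)} \le \tanh\!\left(\frac{\Delta}{4}\right) \log r,
$$
which is the claim. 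The main obstacle is precisely this last calculation: identifying $\tanh(\Delta/4)$ as the exact contraction factor requires either the hyperbolic-geometry interpretation of the Hilbert metric on the positive ray or a careful one-variable optimization; the rest of the proof is essentially bookkeeping, with minor attention needed for the degenerate cases $d_{\mathcal{C}_1}(f,g) \in \{0, \infty\}$ and for points on $\partial \mathcal{C}_2$, which are handled through property (iv) in the definition of a convex cone.
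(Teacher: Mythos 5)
The paper does not prove this theorem; it is quoted with a citation to \cite{Liverani95-2} and used as a black box, so there is no internal proof to compare against. Your argument is the standard Birkhoff--Liverani proof from the cited reference, and the structure is correct: non-expansiveness from positivity; the bounded-diameter hypothesis applied to $u := L(g-af)$ and $v := L(bf-g)$, which lie in $L(\mathcal{C}_1)$ once the degenerate cases $u=0$ or $v=0$ are dispatched via property (iv); rearrangement into the bound $\log\frac{(s+r)(t+1)}{(s+1)(t+r)}$ with $r=e^{d_{\mathcal{C}_1}(f,g)}$, $s=\alpha(u,v)$, $t=\beta(u,v)$; and an optimization yielding $\tanh(\Delta/4)$.

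The one place to be careful is the final optimization, which is the genuine computational content. For fixed $r>1$, the maximum of $\log\frac{(s+r)(t+1)}{(s+1)(t+r)}$ over $s\le t$ with $t/s\le e^{\Delta}$ occurs at $s=\sqrt{r}\,e^{-\Delta/2}$, $t=\sqrt{r}\,e^{\Delta/2}$, \emph{not} at the $r$-independent pair $(e^{-\Delta/2},e^{\Delta/2})$ you named; at the true extremizer the value equals $2\log\bigl[\cosh\tfrac{D+\delta}{2}\big/\cosh\tfrac{D-\delta}{2}\bigr]=4\,\mathrm{artanh}\bigl(\tanh\tfrac{D}{2}\tanh\tfrac{\Delta}{4}\bigr)$ with $D=\tfrac12\log r$, $\delta=\Delta/2$, and the claim reduces to $\mathrm{artanh}(c\tanh x)\le cx$ for $c\in(0,1)$, $x\ge 0$, whose difference has derivative $\frac{c(1-c^2)\tanh^2 x}{1-c^2\tanh^2 x}\ge 0$. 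The constant $\tanh(\Delta/4)$ is the supremum of the ratio, approached only as $r\to 1^+$; your $(e^{-\Delta/2},e^{\Delta/2})$ is the limit of the extremizers as $r\to 1$, not the extremizer at fixed $r$. This does not invalidate the sketch --- the calculus you defer to does close --- but since it is the only non-bookkeeping step, the imprecision is worth fixing if you write this out in full.
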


The following lemma links the Hilbert metric to suitable norms on $\mathcal{V}$:

\begin{lem}[{\cite[Lemma 2.2]{LSV98}}] \label{lem:link_hilbert} Let $\| \cdot \|$ be a norm on $\mathcal{V}$ such that
\[
\forall f,g \in \mathcal{V} \text{ } g-f, g+f \in \mathcal{C} \Rightarrow \|f \| \le \|g\|
\]
and let $\ell: \mathcal{C} \to \mathbb{R}^{+}$ be a homogeneous and order preserving function, i.e.
\[
\begin{aligned}
&\forall f \in \mathcal{C}, \forall \lambda \in \mathbb{R}^+& &\ell(\lambda f) = \lambda \ell(f), \\
&\forall f,g, \in \mathcal{C}& &g-f \in \mathcal{C} \Rightarrow \ell(f) \le \ell(g),
\end{aligned}
\]
then
\[
\forall f, g \in \mathcal{C} \text{ } \ell(f) = \ell(g) > 0 \Rightarrow \| f - g \| \le (e^{d_{\mathcal{C}}(f,g)} - 1) \min(\|f\|, \|g\|).
\]

\end{lem}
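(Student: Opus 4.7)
The plan is to bound $\|f-g\|$ by exhibiting an element $u\in\cC$ such that $u+(f-g)$ and $u-(f-g)$ both lie in $\cC$, so that the norm hypothesis yields $\|f-g\|\le\|u\|$; then to show that an appropriate choice of $u$ has norm $(e^{d_\cC(f,g)}-1)\|f\|$. Write $\alpha=\alpha(f,g)$, $\beta=\beta(f,g)$, so $d_\cC(f,g)=\log(\beta/\alpha)$. The first step is to check that the normalisation $\ell(f)=\ell(g)$ forces $\alpha\le 1\le \beta$: for $\lambda<\alpha$ we have $g-\lambda f\in\cC$ and $g-(g-\lambda f)=\lambda f\in\cC$, so order-preservation gives $\lambda\ell(f)=\ell(\lambda f)\le\ell(g)$, hence $\lambda\le 1$; letting $\lambda\nearrow\alpha$ yields $\alpha\le 1$. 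The bound $\beta\ge 1$ is obtained symmetrically from $\mu f-g\in\cC$ for $\mu>\beta$.

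Next, set $M=\max(\beta-1,1-\alpha)\ge 0$ and $u=Mf\in\cC$. For $\mu>\beta$ close enough to $\beta$ one has
\[
u+(f-g)=(M+1)f-g=(M+1-\mu)f+(\mu f-g),
\]
a non-negative combination of elements of $\cC$ (using $M+1\ge\beta$), hence in $\cC\cup\{0\}$. Similarly, for $\lambda<\alpha$ close to $\alpha$,
\[
u-(f-g)=(M-1)f+g=(M-1+\lambda)f+(g-\lambda f),
\]
with $M\ge 1-\alpha\ge 1-\lambda$ and $g-\lambda f\in\cC$, so this lies in $\cC\cup\{0\}$ as well. After a routine approximation argument to replace ``$\cup\{0\}$'' by membership in $\cC$ (perturbing $u$ by $\varepsilon f$ and letting $\varepsilon\to 0$), the norm hypothesis applied with the choice $f\leftarrow f-g$, $g\leftarrow u$ gives $\|f-g\|\le\|u\|=M\|f\|$.

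The last step is the inequality $M\le\beta/\alpha-1=e^{d_\cC(f,g)}-1$. Since $\alpha\le 1$, $\alpha(\beta-1)\le\beta-\alpha$, i.e.\ $\beta-1\le(\beta-\alpha)/\alpha$; and since $\beta\ge 1$, $1-\alpha\le\beta-\alpha\le(\beta-\alpha)/\alpha$. Thus $M\le e^{d_\cC(f,g)}-1$, proving $\|f-g\|\le(e^{d_\cC(f,g)}-1)\|f\|$. Swapping the roles of $f$ and $g$ replaces $(\alpha,\beta)$ by $(1/\beta,1/\alpha)$ and so leaves $d_\cC(f,g)$ unchanged, yielding the companion bound $\|f-g\|\le(e^{d_\cC(f,g)}-1)\|g\|$; taking the minimum concludes. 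The main technical point is the handling of the boundary cases $\lambda\to\alpha$, $\mu\to\beta$, where cone axiom (iv) is needed to pass from strict membership in $\cC$ to membership in $\cC\cup\{0\}$, but this is standard once the algebraic identity above is available.
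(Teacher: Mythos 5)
The paper does not itself prove this lemma --- it is cited from Liverani--Saussol--Vaienti --- so there is no in-paper proof to match against; your argument, however, is correct. The strategy is the standard one: derive $\alpha\le 1\le\beta$ from $\ell(f)=\ell(g)>0$ using homogeneity and monotonicity of $\ell$, exhibit $u\in\cC$ with $u\pm(f-g)\in\cC$, apply the norm hypothesis with $f\leftarrow f-g$, $g\leftarrow u$, and finally bound $\|u\|$ by $(e^{d_\cC(f,g)}-1)\|f\|$. Your choice $u=Mf$ with $M=\max(\beta-1,1-\alpha)$ is a valid (in fact slightly sharper) variant of the more common choice $u=(\beta-\alpha)f$; both are dominated by $\beta/\alpha-1=e^{d_\cC(f,g)}-1$ thanks to $\alpha\le1\le\beta$. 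The $\varepsilon$-perturbation $u_\varepsilon=(M+\varepsilon)f$ correctly handles the degenerate cases where $g-\alpha f$ or $\beta f-g$ vanishes (or where $M=0$, forcing $f=g$ by cone property (i)), and the concluding symmetry step properly uses $(\alpha(g,f),\beta(g,f))=(1/\beta(f,g),1/\alpha(f,g))$, which leaves $d_\cC$ invariant. For completeness one might add the remark that if $\alpha=0$ or $\beta=\infty$ the right-hand side is $+\infty$ and the inequality is vacuous, so assuming $0<\alpha\le 1\le\beta<\infty$ at the outset costs nothing.
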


From now on, we will always assume that conditions {\em (C1)} and {\em (C2)} hold. Our main tool will be the following Lasota-Yorke type inequality:
\begin{lem} \label{lem:ly} If condition {\em (C3($N, N'$))} holds, then for any $n \le N$, for any $\sigma \in \Sigma^{\mathbb{N}}$ and $h \in {\rm BV}$, we have 
\[
\bigvee \Hl_\sigma^n h \le C_\star (\xi \Theta)^n \bigvee h + C_N \Lambda_\sigma (|h|),
\] 
where $C_\star =3(C+1)+2K(3C+2)$,  $C_N= \frac{(3C+2)(2K\xi +1) \Theta}{\epsilon_\star(N)}$, and $C$ is such that $\bigvee_Z g_\sigma^n \le C \| g_\sigma^n \|_\infty$ for all $n, \sigma$ and $Z \in \mathcal{Z}_{\sigma, \star}^n$.\footnote{The constant $C$ exists by the usual bounded distortion estimates. In particular, note that if all the maps in $\mathcal{T}$ are piecewise linear, then $C=0$.}
\end{lem}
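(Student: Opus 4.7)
My starting point will be the branch-by-branch decomposition
\[
\Hl_\sigma^n h = \sum_{Z \in \cZ_{\sigma,\star}^n} \Id_{T_\sigma^n Z}\cdot(g_\sigma^n h)\circ(T_\sigma^n|_Z)^{-1},
\]
valid because $T_\sigma^n|_Z$ is a $C^1$ diffeomorphism onto its image and $g_\sigma^n$ is supported on $H_\sigma^n=\bigsqcup_{Z\in\cZ_{\sigma,\star}^n}Z$. By sub-additivity of total variation, it suffices to estimate each summand. For a single $Z$, the variation equals $\bigvee_Z(g_\sigma^n h)$ plus at most two boundary jumps bounded by $\|g_\sigma^n h\|_{\infty, Z}$. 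Using $\|g_\sigma^n\|_\infty\le\Theta^n$, $\bigvee_Z g_\sigma^n\le C\Theta^n$, and $\|h\|_{\infty, Z}\le\inf_Z|h|+\bigvee_Z h$, the product rule for variation will yield a bound of the shape $(C+3)\Theta^n\bigvee_Z h+(C+2)\Theta^n\inf_Z|h|$. Summing over the disjoint $Z$'s gives the preliminary estimate
\[
\bigvee \Hl_\sigma^n h \le (C+3)\Theta^n \bigvee h + (C+2)\Theta^n \sum_{Z \in \cZ_{\sigma,\star}^n}\inf_Z|h|.
\]

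Next I plan to split the sum $\sum_Z\inf_Z|h|$ along the good/bad dichotomy. For $Z\in\cZ_{\sigma,g}^n$, since $n\le N$, condition {\em (C3($N,N'$))} together with \eqref{eq:monotone_limit} yields $\Lambda_\sigma(\Id_Z)\ge\hat\epsilon(N,N')\ge\epsilon_\star(N)$. Applying the monotonicity, super-additivity and positive homogeneity of $\Lambda_\sigma$ to the pointwise inequality $|h|\ge\sum_{Z\in\cZ_{\sigma,g}^n}(\inf_Z|h|)\Id_Z$ will give
\[
\sum_{Z\in\cZ_{\sigma,g}^n}\inf_Z|h| \le \frac{\Lambda_\sigma(|h|)}{\epsilon_\star(N)}.
\]

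For bad elements, I will use {\em (C2)} to decompose $\cZ_{\sigma,b}^n$ (ordered along $[0,1]$) into maximal contiguous blocks $\{\cB_j\}$, each of cardinality at most $K\xi^n$. For each $\cB_j$, I pick a good element $Z_g(j)\in\cZ_{\sigma,g}^n$ contiguous to $\cB_j$ (a left neighbor whenever available, otherwise a right neighbor), and let $\hat I_j$ denote the convex hull of $\cB_j\cup Z_g(j)$. For $Z_i\in\cB_j$, $x\in Z_i$, $y\in Z_g(j)$, the elementary bound $|h(x)|\le|h(y)|+\bigvee_{\hat I_j}h$ gives $\|h\|_{\infty, Z_i}\le\inf_{Z_g(j)}|h|+\bigvee_{\hat I_j}h$, hence $\sum_{Z_i\in\cB_j}\inf_{Z_i}|h|\le K\xi^n\bigl(\inf_{Z_g(j)}|h|+\bigvee_{\hat I_j}h\bigr)$. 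The key combinatorial observation will be that, with this choice, any given good element serves as $Z_g(j)$ for at most two blocks (the doubling occurring only when a block sits at an endpoint of $[0,1]$), and consecutive hulls $\hat I_j$, $\hat I_{j+1}$ overlap in at most one good element. Summing then gives $\sum_j\inf_{Z_g(j)}|h|\le 2\Lambda_\sigma(|h|)/\epsilon_\star(N)$ and $\sum_j\bigvee_{\hat I_j}h\le 2\bigvee h$, and therefore
\[
\sum_{Z\in\cZ_{\sigma,b}^n}\inf_Z|h| \le 2K\xi^n\Bigl(\frac{\Lambda_\sigma(|h|)}{\epsilon_\star(N)} + \bigvee h\Bigr).
\]

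The end-game will exploit the assumption $\theta=\xi\Theta<1$ from {\em (C2)}. Substituting the three estimates above yields $\bigvee\Hl_\sigma^n h\le[(C+3)+2(C+2)K\xi^n]\Theta^n\bigvee h+(C+2)(1+2K\xi^n)\Theta^n\Lambda_\sigma(|h|)/\epsilon_\star(N)$. For $n\ge 1$ the inequality $(1+2K\xi^n)\Theta^n=\Theta^n+2K(\xi\Theta)^n\le\Theta(1+2K\xi)$ bounds the $\Lambda_\sigma(|h|)$ coefficient by $(C+2)(1+2K\xi)\Theta/\epsilon_\star(N)\le C_N$; similarly $\Theta^n\le(\xi\Theta)^n$ (since $\xi\ge 1$) bounds the $\bigvee h$ coefficient by $[(C+3)+2(C+2)K](\xi\Theta)^n\le C_\star(\xi\Theta)^n$; both last inequalities reduce to direct comparison with the stated formulas $C_\star=3(C+1)+2K(3C+2)$ and $C_N=(3C+2)(2K\xi+1)\Theta/\epsilon_\star(N)$. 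The case $n=0$ is trivial since $C_\star\ge 1$. I expect the main obstacle to be the combinatorial bookkeeping of the bad blocks in the third step: a subtle point is that ``contiguous'' allows bad elements separated only by gaps of $(H_\sigma^n)^c$, so care is needed to confirm that the hulls $\hat I_j$ and the chosen neighbors $Z_g(j)$ genuinely enjoy the two near-disjointness properties, without which the variation contribution from bad blocks would not telescope to a bounded multiple of $\bigvee h$.
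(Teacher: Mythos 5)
Your proof is correct and follows essentially the same strategy as the paper's: branch-by-branch variation estimate via $\Hl_\sigma^n h=\sum_Z(g_\sigma^n h)\circ(T_\sigma^n|_Z)^{-1}\Id_{T_\sigma^n Z}$, then a good/bad split of $\cZ_{\sigma,\star}^n$, with \emph{(C3)} absorbing $\sum_{\text{good}}\inf_Z|h|$ into $\Lambda_\sigma(|h|)/\epsilon_\star(N)$ and the \emph{(C2)} block combinatorics giving the $2K\xi^n(\cdot)$ bound for bad elements. Your per-branch constants $(C+3,\,C+2)$ are a bit tighter than the paper's $(3(C+1),\,3C+2)$ (the paper inserts one extra $\sup\le\inf+\bigvee$ step), but both comfortably fit inside the stated $C_\star$ and $C_N$, and your more explicit hull-based bad-block bookkeeping reproduces the paper's estimate $\sum_{\text{bad}}\inf_Z|h|\le 2K\xi^n\bigl(\sum_{\text{good}}\inf_Z|h|+\bigvee h\bigr)$.
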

\begin{proof}
Remark that the case $n=0$ is immediate, since $C_\star \ge 1$ and $C_N \ge 0$. We thus only consider $n \ge 1$.

First notice that $\Hl_\sigma^n (h \mathds{1}_Z) = 0$ if $Z \in \widehat{\mathcal{Z}}_{\sigma}^n \setminus \mathcal{Z}_{\sigma, \star}^n$. We can then write 

\[
\Hl_\sigma^n h = \sum_{Z \in \mathcal{Z}_{\sigma, \star}^n} \Hl_\sigma^n( \mathds{1}_Z h) = \sum_{Z \in \mathcal{Z}_{\sigma, \star}^n} (\mathds{1}_Z g_\sigma^n h) \circ (T_{\sigma, Z}^n)^{-1},
\]
where $ (T_{\sigma, Z}^n)^{-1}$ is the inverse branch of $T_\sigma^n$ restricted to $Z$. Accordingly, 

\[
\bigvee \Hl_\sigma^n h \le \sum_{Z \in \mathcal{Z}_{\sigma, \star}^n} \bigvee \mathds{1}_{T_\sigma^n Z} (g_\sigma^n h) \circ (T_{\sigma, Z}^n)^{-1}.
\]

We estimate each term of the sum separately.
\[
\begin{aligned}
\bigvee \mathds{1}_{T_\sigma^n Z} (g_\sigma^n h) \circ (T_{\sigma, Z}^n)^{-1} & \le \bigvee_Z h g_\sigma^n + 2 \sup_Z |h g_\sigma^n| \\ 
&\le 3 \bigvee_Z h g_\sigma^n + 2 \inf_Z |h g_\sigma^n| \\
&\le 3 \| g_\sigma^n\|_{\infty} \bigvee_Z h + 3 \sup_Z |h| \bigvee_Z g_\sigma^n + 2 \inf_Z |h g_\sigma^n| \\
&\le 3 \|g_\sigma^n\|_{\infty} \bigvee_Z h + 3C \sup_Z |h| \|g_\sigma^n\|_{\infty} + 2 \|g_\sigma^n\|_{\infty} \inf_Z |h| \\
& \le 3(C+1) \|g_\sigma^n\|_{\infty} \bigvee_Z h+ (3C +2) \|g_\sigma^n\|_{\infty} \inf_Z |h|.
\end{aligned}
\]
By assumption {\em (C3($N, N'$))}, we have for each $x \in [0,1]$,
\[
\inf_{Z \in \mathcal{Z}_{\sigma, g}^n} \frac{\Hl_\sigma^{N'} \mathds{1}_Z(x)}{\Hl_\sigma^{N'} \mathds{1}(x)}  \ge \hat{\epsilon}(N, N') \ge \epsilon_\star(N) > 0.
\]

Accordingly, for each $x \in [0,1]$, $h \in {\rm BV}$ and $Z \in \mathcal{Z}_{\sigma, g}^n$ holds  $$\Hl_\sigma^{N'} (|h| \mathds{1}_Z)(x) \ge \inf_Z |h| \Hl_\sigma^{N'} \mathds{1}_Z(x) \ge \inf_Z |h| \epsilon_\star(N) \Hl_\sigma^{N'} \mathds{1}(x).$$ 

To deal with elements in $\mathcal{Z}_{\sigma, b}^n$, we use condition {\em (C2)} which insures that elements of $Z_{\sigma, g}^n$ can be separated by at most $K \xi^n$ elements of $\mathcal{Z}_{\sigma,b}^n$. For each $Z \in \mathcal{Z}_{\sigma, b}^n$, let $I_{\pm}(Z)$ be the union of the contiguous elements of $\mathcal{Z}_{\sigma,b}^n$ on the left and on the right of $Z$ respectively. Clearly, for each $Z' \subset I_{\pm}(Z)$, holds
\[
\inf_{Z'} |h| \le \inf_Z |h| + \bigvee_{I_{\pm}(Z)} h.
\]
Accordingly, 
\[
\sum_{Z \in \mathcal{Z}_{\sigma,b}^n} \inf_Z |h| \le 2 K \xi^n \left[ \sum_{Z \in \mathcal{Z}_{\sigma,g}^n} \inf_Z |h| + \bigvee h \right].
\]
For all $x$, we thus have 
\[
\begin{aligned}
\sum_{Z \in \mathcal{Z}_{\sigma, \star}^n} \inf_Z |h| &\le (2 K \xi^n +1) \sum_{Z \in \mathcal{Z}_{\sigma,g}^n} \inf_Z |h| + 2 K\xi^n \bigvee h \\
& \le (2 K \xi^n +1) \frac{1}{\epsilon_\star(N)} \sum_{Z \in \mathcal{Z}_{\sigma,g}^n} \frac{\Hl_\sigma^{N'} (|h| \mathds{1}_Z)(x)}{\Hl_\sigma^{N'} \mathds{1}(x)} + 2 K\xi^n \bigvee h \\
& \le (2 K \xi^n +1) \frac{1}{\epsilon_\star(N)} \frac{\Hl_\sigma^{N'} (|h|)(x)}{\Hl_\sigma^{N'} \mathds{1}(x)} + 2K \xi^n \bigvee h.
\end{aligned}
\]
We can then conclude
\[
\begin{split}
\bigvee \Hl_\sigma^n h \le& \left[3(C+1) + (3C+2) 2 K \xi^n\right] \|g_\sigma^n \|_{\infty} \bigvee h \\
&+ (3C+2) (2K \xi^n +1) \|g_\sigma^n\|_{\infty} \frac{1}{\epsilon_\star(N)}  \frac{\Hl_\sigma^{N'} (|h|)(x)}{\Hl_\sigma^{N'} \mathds{1}(x)}.
\end{split}
\] 
Taking the inf over $x$ and recalling that $\|g_\sigma^n\|_{\infty} \le \Theta^n$, $ \inf_x \frac{\Hl_\sigma^{N'} (|h|)(x)}{\Hl_\sigma^{N'} \mathds{1}(x)} \le \Lambda_\sigma(|h|)$, $\xi \geq 1$ and $\xi \Theta < 1$ by {\em (C2)}, we obtain the result.
\end{proof}

We will show that the family of cones $$\mathcal{C}_\sigma^a = \{ h \in {\rm BV} \; | \; h \neq 0, h \ge 0, \bigvee h \le a \Lambda_\sigma(h)\}$$ is strictly invariant under the transfer operators defined above. 

Recall that, under assumption {\em(C2)}, $\theta = \xi \Theta < \rho$.

\begin{lem} \label{lem:iteration} For all $\sigma$ and all $g \in {\rm BV}$, $g \ge 0$, we have $\Lambda_{\tau \sigma}(\Hl_{\sigma_1} g) \ge \rho_\sigma \Lambda_\sigma (g)$. In particular, $\Lambda_{\tau^n \sigma}(\Hl_\sigma^n g) \ge \rho^n \Lambda_\sigma (g)$ and $\Lambda_{\tau^n \sigma}(\Hl_\sigma^n \mathds{1}) \ge \rho^n$.
\end{lem}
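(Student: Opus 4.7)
The plan is to exploit the semigroup identity $\Hl_\sigma^{n+1}=\Hl_{\tau\sigma}^n\circ\Hl_{\sigma_1}$ together with the super-multiplicativity of the pointwise infimum for positive functions. First I would write, for $g\in{\rm BV}$, $g\ge 0$, and any $n\ge 1$, the elementary identity
\[
\frac{\Hl_{\tau\sigma}^n(\Hl_{\sigma_1}g)}{\Hl_{\tau\sigma}^n\mathds{1}}
\;=\;\frac{\Hl_\sigma^{n+1}g}{\Hl_\sigma^{n+1}\mathds{1}}\cdot\frac{\Hl_{\tau\sigma}^n(\Hl_{\sigma_1}\mathds{1})}{\Hl_{\tau\sigma}^n\mathds{1}},
\]
which makes sense because condition \emph{(C1)} ensures $\inf\Hl_{\tau\sigma}^n\mathds{1}\ge\ds^n>0$ and similarly for $\Hl_\sigma^{n+1}\mathds{1}$. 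Taking the pointwise infimum and using $\inf(fg)\ge(\inf f)(\inf g)$ for positive $f,g$ yields
\[
\inf\frac{\Hl_{\tau\sigma}^n(\Hl_{\sigma_1}g)}{\Hl_{\tau\sigma}^n\mathds{1}}
\;\ge\;\inf\frac{\Hl_\sigma^{n+1}g}{\Hl_\sigma^{n+1}\mathds{1}}\cdot\inf\frac{\Hl_{\tau\sigma}^n(\Hl_{\sigma_1}\mathds{1})}{\Hl_{\tau\sigma}^n\mathds{1}}.
\]

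Next I would let $n\to\infty$ and invoke the fact, already established right after the definition of $\Lambda_\sigma$, that each of the sequences $\inf\Hl_\sigma^m f/\Hl_\sigma^m\mathds{1}$ is monotone and converges to $\Lambda_\sigma(f)$. The left-hand side tends to $\Lambda_{\tau\sigma}(\Hl_{\sigma_1}g)$; the first factor on the right tends to $\Lambda_\sigma(g)$ (re-indexing $m=n+1$); and the second factor tends to $\Lambda_{\tau\sigma}(\Hl_{\sigma_1}\mathds{1})=\rho_\sigma$ by the very definition of $\rho_\sigma$. Passing to the limit gives the single-step inequality $\Lambda_{\tau\sigma}(\Hl_{\sigma_1}g)\ge\rho_\sigma\Lambda_\sigma(g)$.

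The iterated inequality $\Lambda_{\tau^n\sigma}(\Hl_\sigma^n g)\ge\rho^n\Lambda_\sigma(g)$ then follows by a straightforward induction on $n$: apply the single-step bound to the shifted sequence $\tau^{n-1}\sigma$ and the function $\Hl_\sigma^{n-1}g$ (which is still in ${\rm BV}$ and nonnegative because $\Hl$ has a nonnegative kernel), obtaining $\Lambda_{\tau^n\sigma}(\Hl_\sigma^n g)\ge\rho_{\tau^{n-1}\sigma}\Lambda_{\tau^{n-1}\sigma}(\Hl_\sigma^{n-1}g)$, then iterate and use $\rho_{\tau^k\sigma}\ge\rho$ from \eqref{eq:rho-def}. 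The final assertion is the particular case $g=\mathds{1}$, recalling $\Lambda_\sigma(\mathds{1})=1$.

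The only technical subtlety is ensuring strict positivity of denominators so that the inequality $\inf(fg)\ge(\inf f)(\inf g)$ applies and the ratios are well-defined; this is precisely what assumption \emph{(C1)} provides, via $\inf\Hl_\sigma^n\mathds{1}\ge\ds^n$. No other ingredient is required.
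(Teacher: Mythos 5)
Your proof is correct and follows essentially the same route as the paper's: write $\Hl_{\tau\sigma}^n\Hl_{\sigma_1}g$ as $\Hl_\sigma^{n+1}g$, compare the ratio $\Hl_\sigma^{n+1}g/\Hl_{\tau\sigma}^n\mathds{1}$ to the two ratios whose limits give $\Lambda_\sigma(g)$ and $\rho_\sigma$, take infima and pass to the limit, then iterate using $\rho_{\tau^k\sigma}\ge\rho$. The only cosmetic difference is in how the infimum is extracted: the paper applies one more operator $\Hl_{\sigma_{n+1}}$ to $\tfrac{\Hl_\sigma^n g}{\Hl_\sigma^n\mathds{1}}\Hl_\sigma^n\mathds{1}$ and uses positivity of the operator to pull out $\inf\tfrac{\Hl_\sigma^n g}{\Hl_\sigma^n\mathds{1}}$, whereas you factor the ratio directly and use $\inf(fg)\ge(\inf f)(\inf g)$; both yield the same bound in the limit.
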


\begin{proof} If we prove the first part of the statement, the second part follows by iteration, since $\rho_\sigma \ge \rho$ for all $\sigma$. For each $g \in {\rm BV}$, $g \ge 0$ and $x \in [0,1]$,

\[
\frac{\Hl_{\tau \sigma}^n \Hl_{\sigma_1} g(x)}{\Hl_{\tau \sigma}^n \mathds{1}(x)} \ge \frac{\Hl_{\sigma_{n+1}}\left[\frac{\Hl_\sigma^n g}{\Hl_\sigma^n \mathds{1}} \Hl_\sigma^n \mathds{1} \right] (x)}{\Hl_{\tau \sigma}^n \mathds{1}(x)} \ge \frac{\Hl_{\tau \sigma}^n (\Hl_{\sigma_1} \mathds{1})(x)}{\Hl_{\tau \sigma}^n \mathds{1}(x)} \inf \frac{\Hl_\sigma^n g}{\Hl_\sigma^n \mathds{1}}
\]

and taking the inf on $x$ and the limit $n \to \infty$, we get the result.
\end{proof}

\begin{lem} \label{lem:cone_inv}  Let $n_0\geq\lceil \frac{\ln 4C_\star^2}{\ln\rho\theta^{-1}}\rceil$ such that {\em C3($n_0, N'$)} holds for some $N'\geq n_0$, then for all $a \ge a_0 = \frac{15}{11} \max_{i \le n_0} \frac{C_i}{C_\star \theta^i}$ and $\sigma$, we have 
\[
 \Hl_\sigma^n \mathcal{C}_\sigma^a \subset C_{\tau^n \sigma}^{2a C_\star} \: \: \forall n \ge 0 \: \: \text{   and   } \: \:\Hl_\sigma^n \mathcal{C}_\sigma^a \subset \mathcal{C}_ {\tau^n \sigma}^{a/2} \: \: \forall n \ge n_0.
 \]
\end{lem}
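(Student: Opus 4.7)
The key object is the cone parameter $b_n(\sigma,h):=\bigvee \Hl_\sigma^n h / \Lambda_{\tau^n\sigma}(\Hl_\sigma^n h)$, i.e.\ the smallest $b$ for which $\Hl_\sigma^n h\in \mathcal{C}_{\tau^n\sigma}^{b}$. It is well defined because for $h\in\mathcal{C}_\sigma^a$ one has $\Lambda_\sigma(h)>0$: otherwise $\bigvee h=0$ would force $h$ to be a positive constant and then $\Lambda_\sigma(h)=h>0$, a contradiction. I would bound $b_n$ by combining the Lasota--Yorke estimate of Lemma~\ref{lem:ly} with the lower bound $\Lambda_{\tau^n\sigma}(\Hl_\sigma^n g)\ge \rho^n\Lambda_\sigma(g)$ of Lemma~\ref{lem:iteration}, and then iterate on blocks of length $n_0$.

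\textbf{The range $n\le n_0$.} In this range {\em (C3($n_0,N'$))} implies $\epsilon_\star(n)\ge \epsilon_\star(n_0)>0$, so Lemma~\ref{lem:ly} applies with $N=n$ and each $C_i$ with $i\le n_0$ is finite. Inserting $\bigvee h\le a\Lambda_\sigma(h)$ into the Lasota--Yorke estimate and dividing by $\rho^n\Lambda_\sigma(h)$ gives
\[
b_n \;\le\; aC_\star(\theta/\rho)^n + C_n/\rho^n.
\]
The hypothesis $a\ge a_0 = \tfrac{15}{11}\max_{i\le n_0} C_i/(C_\star\theta^i)$ is equivalent to $C_n\le \tfrac{11}{15}aC_\star\theta^n$, and hence $C_n/\rho^n\le \tfrac{11}{15}aC_\star(\theta/\rho)^n$. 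Therefore
\[
b_n \;\le\; \tfrac{26}{15}\,aC_\star(\theta/\rho)^n \;\le\; \tfrac{26}{15}\,aC_\star \;<\; 2aC_\star,
\]
which is the first inclusion on $n\le n_0$. At $n=n_0$ the defining inequality $(\theta/\rho)^{n_0}\le 1/(4C_\star^2)$ sharpens this to $b_{n_0}\le \tfrac{13}{30C_\star}\,a\le a/2$, using $C_\star\ge 3$ (immediate from $C_\star = 3(C+1) + 2K(3C+2)$); this is the second inclusion at $n=n_0$.

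\textbf{Iteration for $n>n_0$ and the main obstacle.} For $n>n_0$ I would write $n=qn_0+r$ with $q\ge 1$ and $0\le r<n_0$. Since $\mathcal{C}^{a/2}\subset \mathcal{C}^a$, the preceding step applied to the shifted datum $\Hl_\sigma^{n_0}h\in\mathcal{C}^a$ yields $\Hl_\sigma^{2n_0}h\in\mathcal{C}^{a/2}$; by induction on $q$, $\Hl_\sigma^{qn_0}h\in \mathcal{C}_{\tau^{qn_0}\sigma}^{a/2}$ for every $q\ge 1$. One further application of the range-$n\le n_0$ computation to $\Hl_\sigma^{qn_0}h\in \mathcal{C}^a$ with $r\le n_0$ extra iterations places $\Hl_\sigma^n h$ in $\mathcal{C}_{\tau^n\sigma}^{2aC_\star}$, completing the first inclusion; the second inclusion on all of $n\ge n_0$ follows from the analogous block-by-block bookkeeping, exploiting that once the orbit enters $\mathcal{C}^{a/2}$ the Lasota--Yorke bound is fed with the smaller constant $a/2$ in place of $a$. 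The main difficulty is the simultaneous calibration of the constants: $15/11$ in $a_0$, $4C_\star^2$ in $n_0$ and the two targets $2aC_\star$ and $a/2$ are tuned so that $C_n/\rho^n$ absorbs into at most $\tfrac{11}{15}$ of $aC_\star(\theta/\rho)^n$, and the resulting $\tfrac{26}{15}aC_\star(\theta/\rho)^n$ falls below $a/2$ precisely at $n_0$ while remaining below $2aC_\star$ over the whole initial range.
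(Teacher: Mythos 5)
Your plan—Lasota–Yorke (Lemma~\ref{lem:ly}) plus the lower bound $\Lambda_{\tau^n\sigma}(\Hl_\sigma^n g)\ge\rho^n\Lambda_\sigma(g)$ of Lemma~\ref{lem:iteration}, iterated on $n_0$-blocks—is exactly the paper's, and your computations for $n\le n_0$ are correct (including the observation that $b_{n_0}\le \frac{13}{30C_\star}a\le a/2$ using $\alpha:=C_\star^2(\theta/\rho)^{n_0}\le\tfrac14$ and $C_\star\ge 3$). The induction $\Hl_\sigma^{qn_0}h\in\mathcal{C}^{a/2}$ for $q\ge1$, and the first inclusion for general $n$, also check out. (Minor slip: from {\em (C3($n_0,N'$))} you can infer $\epsilon_\star(n)>0$ for $n\le n_0$, hence $C_n<\infty$; you cannot infer $\epsilon_\star(n)\ge\epsilon_\star(n_0)$, since the paper notes $\epsilon_\star$ need not be monotone. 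This does not affect your argument.)

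The gap is in the second inclusion for $n=qn_0+r$ with $q\ge1$ and $0<r<n_0$. You put the remainder $r$ at the \emph{end}: from $\Hl_\sigma^{qn_0}h\in\mathcal{C}^{a/2}$ you take $r$ more steps, and claim that ``feeding in $a/2$'' saves the day. But $r$ further Lasota–Yorke steps give at best
\[
b_{qn_0+r}\;\le\;\tfrac{a}{2}\,C_\star(\theta/\rho)^r+\tfrac{C_r}{\rho^r}\;\le\;\Bigl(\tfrac12+\tfrac{11}{15}\Bigr)aC_\star(\theta/\rho)^r\;=\;\tfrac{37}{30}\,aC_\star(\theta/\rho)^r,
\]
and since $r<n_0$ the only constraint available is $(\theta/\rho)^{n_0}\le\tfrac{1}{4C_\star^2}$, so $(\theta/\rho)^r$ can be arbitrarily close to $1$ (e.g.\ $\theta/\rho=0.9$, $n_0$ large, $r=1$). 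Then $\tfrac{37}{30}aC_\star(\theta/\rho)^r$ is comfortably above $a/2$. Halving $a$ cannot compensate a factor $C_\star\ge 3$ that is not damped. The correct ordering—and what the paper's backward iteration \eqref{eqn:iterate_LY} actually implements—is to peel off the remainder $m=r$ steps \emph{first} and then apply the $k$ full $n_0$-blocks: writing $b_r\le\tfrac{26}{15}aC_\star(\theta/\rho)^r$ and iterating $b'\le\beta b+\gamma$ with $\beta=C_\star(\theta/\rho)^{n_0}\le\tfrac1{12}$, $\gamma=C_{n_0}\rho^{-n_0}\le\tfrac{11}{15}a\beta$, one gets $b_n\le\beta^q b_r+\gamma/(1-\beta)$, and now the crucial point is $C_\star\beta^q(\theta/\rho)^r\le C_\star\beta\le\alpha\le\tfrac14$: the $r$ uncontrolled steps are swallowed into the global factor $C_\star^{q+1}(\theta/\rho)^n\le\alpha$. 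This yields $b_n\le\tfrac{13}{30}a+\tfrac{a}{15}=\tfrac a2$. The ordering is not cosmetic: the $r$-step noise is only contracted if the $n_0$-blocks come after it.
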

\begin{proof} Let $n_0 \in \mathbb{N}$ which will be chosen later. Let $h \in \mathcal{C}_\sigma^a$, then we can write each $n$ as $n = k n_0 + m$, $m<n_0$, and by Lemma \ref{lem:ly}, we have
\begin{equation} \label{eqn:iterate_LY}
\begin{aligned}
&\bigvee \Hl_\sigma^n h  \le C_\star \theta^{n_0} \bigvee \Hl_\sigma^{(k-1)n_0 +m} h + C_{n_0} \Lambda_{\tau^{(k-1)n_0 + m} \sigma}( \Hl_\sigma^{(k-1)n_0 + m} h) \\
& \le C_\star^k \theta^{k n_0} \bigvee \Hl_\sigma^m h + \sum_{i=0}^{k-1} C_{n_0} (C_\star \theta^{n_0})^i \Lambda_{\tau^{(k-i-1)n_0 + m} \sigma}( \Hl_\sigma^{(k-i-1)n_0 +m} h) \\
& \le C_\star^{k+1} \theta^n \bigvee h + \sum_{i=0}^{k-1} C_{n_0} (C_\star \theta^{n_0})^i  \Lambda_{\tau^{(k-i-1)n_0 + m} \sigma}(\Hl_\sigma^{(k-i-1)n_0 +m} h) \\
&\phantom{=}+ C_m (C_\star \theta^{n_0})^k \Lambda_\sigma(h).
\end{aligned}
\end{equation}
Using Lemma \ref{lem:iteration}, we obtain
\[
\bigvee \Hl_\sigma^n h \le \left[ \left(a + \frac{C_m}{C_\star \theta^m} \right) \frac{C_\star^{k+1} \theta^n}{\rho^n} +\frac{C_{n_0}}{\rho^{n_0}} \sum_{i=0}^{k-1} \left( \frac{C_\star \theta^{n_0}}{\rho^{n_0}} \right)^i \right] \Lambda_{\tau^n \sigma}(\Hl_\sigma^n h).
\]

If $k=0$ we have 
\[
\bigvee \Hl_\sigma^n h \le 2a C_\star \Lambda_{\tau^n \sigma}(\Hl_\sigma^n h).
\]
For $k>0$ let us set $\tau=\theta\rho^{-1}$, by ({\bf C2}) $\tau<1$, and let $n_0$ such that $\alpha=C_\star^2\tau^{n_0}\leq \frac 14$. Since $C_\star\geq 3$ then $C_\star\tau^{n_0}\leq\frac 1{12}$. Hence
\[
\bigvee \Hl_\sigma^n h \le \left\{\frac{1}{4} a+a_0\frac{11}{15}  \left[ \frac 14 + \frac{C_\star^2\tau^{n_0}}{C_\star-\alpha } \right] \right\}\Lambda_{\tau^n \sigma}(\Hl_\sigma^n h)\leq \frac 12 a\Lambda_{\tau^n \sigma}(\Hl_\sigma^n h).
\]
\end{proof}

\begin{lem} \label{lem:squeeze} Let $B = 1 + 2a_0 C_\star$. If {\em (C3($n_0, N'$))} holds,  then for each $h \in {\rm BV}$, $h \ge 0$, $n \in \mathbb{N}$ and $\sigma \in \Sigma^{\mathbb{N}}$, $$\Lambda_{\tau^n \sigma}(\Hl_\sigma^n \mathds{1}) \Lambda_\sigma(h) \le \Lambda_{\tau^n \sigma}(\Hl_\sigma^n h) \le B \Lambda_{\tau^n \sigma}(\Hl_\sigma^n \mathds{1}) \Lambda_\sigma(h).$$
\end{lem}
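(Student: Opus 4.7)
The plan is to handle the two inequalities separately; both flow from the telescoping $\Hl_\sigma^{n+m}=\Hl_{\tau^n\sigma}^m\circ\Hl_\sigma^n$ and the definition of $\Lambda$ as a monotone limit of ratios. For the lower bound $\Lambda_{\tau^n\sigma}(\Hl_\sigma^n\mathds{1})\Lambda_\sigma(h)\le\Lambda_{\tau^n\sigma}(\Hl_\sigma^n h)$, I would set $c_m:=\inf_x \Hl_\sigma^{n+m}h(x)/\Hl_\sigma^{n+m}\mathds{1}(x)$; the monotonicity argument preceding \eqref{eq:monotone_limit} shows that $(c_m)_m$ is non-decreasing with limit $\Lambda_\sigma(h)$. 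The pointwise inequality $\Hl_\sigma^{n+m}h\ge c_m\Hl_\sigma^{n+m}\mathds{1}$ rewrites, by telescoping, as $\Hl_{\tau^n\sigma}^m(\Hl_\sigma^n h)\ge c_m\Hl_{\tau^n\sigma}^m(\Hl_\sigma^n\mathds{1})$; dividing by $\Hl_{\tau^n\sigma}^m\mathds{1}$, taking $\inf_x$, and letting $m\to\infty$ yields the desired inequality directly from the definition of $\Lambda_{\tau^n\sigma}$.

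For the upper bound the plan is dual. I would set $d_m:=\sup_x \Hl_\sigma^{n+m}h(x)/\Hl_\sigma^{n+m}\mathds{1}(x)$; by expressing $\Hl_\sigma^{k+1}h(x)/\Hl_\sigma^{k+1}\mathds{1}(x)$ as a weighted average over preimages of $y\mapsto\Hl_\sigma^k h(y)/\Hl_\sigma^k\mathds{1}(y)$, one sees that $(d_m)_m$ is non-increasing in $m$. The pointwise bound $\Hl_\sigma^{n+m}h\le d_m\Hl_\sigma^{n+m}\mathds{1}$, together with the elementary inequality $\inf(AB)\le(\sup A)\inf B$ for $A,B\ge 0$, gives by the same telescoping argument
\[
\Lambda_{\tau^n\sigma}(\Hl_\sigma^n h)\le\Bigl(\lim_{m\to\infty} d_m\Bigr)\Lambda_{\tau^n\sigma}(\Hl_\sigma^n\mathds{1}),
\]
so the upper bound reduces to establishing $\lim_{m\to\infty} d_m\le B\Lambda_\sigma(h)$.

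The hard part will be this last sup-bound, because a priori $d_0$ is only controlled by $\|h\|_\infty$, which need not be comparable to $\Lambda_\sigma(h)$. The plan is to exploit Lemma \ref{lem:cone_inv} applied to $\mathds{1}\in\mathcal{C}_\sigma^{a_0}$ (which holds since $\bigvee\mathds{1}=0$): this places $\Hl_\sigma^m\mathds{1}$ in $\mathcal{C}_{\tau^m\sigma}^{2a_0 C_\star}$ for every $m$, and combining $\sup\le\inf+\bigvee$ with $\inf\le\Lambda$ and $\bigvee\le 2a_0 C_\star\Lambda$ yields exactly $\sup\Hl_\sigma^m\mathds{1}\le B\Lambda_{\tau^m\sigma}(\Hl_\sigma^m\mathds{1})$ with $B=1+2a_0 C_\star$ as in the statement. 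Combining this control of the denominator with the iterated Lasota--Yorke estimate of \eqref{eqn:iterate_LY}, which after enough iterations bounds $\bigvee\Hl_\sigma^m h$ by a contracting term of order $\theta^m\bigvee h$ plus a universal multiple of $\Lambda_\sigma(h)$, and with the lower bound $\Lambda_{\tau^m\sigma}(\Hl_\sigma^m h)\ge\rho^m\Lambda_\sigma(h)$ from Lemma \ref{lem:iteration}, should yield $d_m\le B\Lambda_\sigma(h)$ in the limit. The decisive role of condition \emph{(C2)}, ensuring $\theta<\rho$, is that this BV contraction in $m$ eventually washes out the $h$-dependent initial variation, leaving only the universal cone-factor $B$ dictated by $\Hl_\sigma^m\mathds{1}$.
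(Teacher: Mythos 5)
Your lower bound is correct and matches the paper's: both rest on the pointwise inequality $\Hl_\sigma^{n+m}h\ge c_m\Hl_\sigma^{n+m}\mathds{1}$ with $c_m=\inf\frac{\Hl_\sigma^{n+m}h}{\Hl_\sigma^{n+m}\mathds{1}}\nearrow\Lambda_\sigma(h)$, telescoping, dividing by $\Hl_{\tau^n\sigma}^m\mathds{1}$, and passing to the limit. The cone bound $\|\Hl_\sigma^n\mathds{1}\|_\infty\le B\Lambda_{\tau^n\sigma}(\Hl_\sigma^n\mathds{1})$ that you derive from Lemma \ref{lem:cone_inv} with $\mathds{1}\in\mathcal{C}_\sigma^{a_0}$ is also exactly the paper's.

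The upper bound, however, has a genuine gap. Your reduction $\Lambda_{\tau^n\sigma}(\Hl_\sigma^nh)\le d_\infty\,\Lambda_{\tau^n\sigma}(\Hl_\sigma^n\mathds{1})$ with $d_\infty=\lim_m\sup\frac{\Hl_\sigma^{m}h}{\Hl_\sigma^{m}\mathds{1}}$ is correct, but the remaining claim $d_\infty\le B\Lambda_\sigma(h)$ is not accessible with the tools you cite. To bound $\sup\frac{\Hl_\sigma^mh}{\Hl_\sigma^m\mathds{1}}$ one needs a \emph{lower} bound on $\inf\Hl_\sigma^m\mathds{1}$ comparable to $\Lambda_{\tau^m\sigma}(\Hl_\sigma^m\mathds{1})$; the cone argument only gives the \emph{upper} bound $\sup\Hl_\sigma^m\mathds{1}\le B\Lambda_{\tau^m\sigma}(\Hl_\sigma^m\mathds{1})$, and under the hypotheses of this lemma (only \emph{(C3($n_0,N'$))}, not \emph{(C3($n_2,n_3$))}, so Lemma \ref{lem:inf_bounded} is unavailable) the only lower bound on $\inf\Hl_\sigma^m\mathds{1}$ is $\ds^m$ from \emph{(C1)}, which cannot absorb the $\Lambda_\sigma(h)$-term coming out of the iterated Lasota--Yorke estimate. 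In effect your plan would establish something close to Lemma \ref{lem:lambda_linear}, which lives several lemmas downstream and itself relies on the present statement, so the route is circular. The fix is a one-line rearrangement: write
\[
\frac{\Hl_{\tau^n\sigma}^m(\Hl_\sigma^nh)}{\Hl_{\tau^n\sigma}^m\mathds{1}}
=\frac{\Hl_\sigma^{n+m}h}{\Hl_\sigma^{n+m}\mathds{1}}\cdot\frac{\Hl_{\tau^n\sigma}^m(\Hl_\sigma^n\mathds{1})}{\Hl_{\tau^n\sigma}^m\mathds{1}}
\le\frac{\Hl_\sigma^{n+m}h}{\Hl_\sigma^{n+m}\mathds{1}}\cdot\|\Hl_\sigma^n\mathds{1}\|_\infty,
\]
that is, take the \emph{sup} of the $\mathds{1}$-ratio (a fixed, computable constant) and the \emph{inf} of the $h$-ratio. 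Taking $\inf_x$ and $m\to\infty$ then gives $\Lambda_{\tau^n\sigma}(\Hl_\sigma^nh)\le\Lambda_\sigma(h)\|\Hl_\sigma^n\mathds{1}\|_\infty$, and the cone bound you already have finishes the proof. Your version took the sup of the $h$-ratio, which is a limiting object not controllable at this stage.
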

\begin{proof} For $x \in [0,1]$, we have
\[
\frac{\Hl_{\tau^n \sigma}^m (\Hl_\sigma^n h)(x)}{\Hl_{\tau^n \sigma}^m \mathds{1}(x)} \ge \frac{\Hl_{\tau^m \sigma}^n \left[\frac{\Hl_\sigma^m h}{\Hl_\sigma^m \mathds{1}} \Hl_\sigma^m \mathds{1} \right](x)}{\Hl_{\tau^n \sigma}^m \mathds{1}(x)} \ge \frac{\Hl_{\tau^n \sigma}^m (\Hl_\sigma^n \mathds{1}) (x)}{\Hl_{\tau^n \sigma}^m \mathds{1}(x)} \inf \frac{\Hl_\sigma^m h}{\Hl_\sigma^m \mathds{1}}
\]
where we have used twice the fact that $\Hl_{\sigma^n \sigma}^m \Hl_\sigma^n = \Hl_{\sigma^m \sigma}^n \Hl_\sigma^m$. Taking the inf on $x$ and the limit $m \to \infty$, we get the first inequality.
For the second, for $x \in [0,1]$, we have 
\[
\frac{\Hl_{\tau^n \sigma}^m(\Hl_\sigma^n h) (x)}{\Hl_{\tau^n \sigma}^m \mathds{1}(x)} = \frac{\Hl_{\tau^n \sigma}^m(\Hl_\sigma^n h) (x)}{\Hl_{\tau^n \sigma}^m(\Hl_\sigma^n \mathds{1})(x)} \frac{\Hl_{\tau^n \sigma}^m (\Hl_\sigma^n \mathds{1})(x)}{\Hl_{\tau^n \sigma}^m \mathds{1} (x)} \le \frac{\Hl_\sigma^{n+m} h(x)}{\Hl_\sigma^{n+m} \mathds{1}(x)} \| \Hl_\sigma^n \mathds{1} \|_\infty,
\]
which, by taking the inf on $x$ and the limit $m \to \infty$, yields $$\Lambda_{\tau^n \sigma}(\Hl_\sigma^n h) \le \| \Hl_\sigma^n \mathds{1}\|_\infty \Lambda_\sigma(h).$$
By applying Lemma \ref{lem:cone_inv} to $\mathds{1} \in \mathcal{C}_\sigma^{a_0}$, we obtain $\bigvee \Hl_\sigma^n \mathds{1} \le 2 a_0 C_\star \Lambda_{\tau^n \sigma}(\Hl_\sigma^n \mathds{1})$. Thus 
\[ 
\| \Hl_\sigma^n \mathds{1} \|_\infty \le \Lambda_{\tau^n \sigma}(\Hl_\sigma^n \mathds{1}) + \bigvee \Hl_\sigma^n \mathds{1} \le (1 + 2a_0 C_\star) \Lambda_{\tau^n \sigma}(\Hl_\sigma^n \mathds{1})
= B\Lambda_{\tau^n \sigma}(\Hl_\sigma^n \mathds{1}).
\]
\end{proof}

\begin{lem} \label{lem:partition}  Let $n_1(\delta)=\lceil\frac{\ln\delta^{-1}}{\ln\theta^{-1}\rho}\rceil$. Then, for each $\delta \in (0,1)$ and $n \ge n_1(\delta)$, the partition $\widehat{\mathcal{Z}}_\sigma^n$ has the property
\[
\sup_{Z \in \widehat{\mathcal{Z}}_\sigma^n} \Lambda_\sigma(\mathds{1}_Z) \le \delta.
\]
\end{lem}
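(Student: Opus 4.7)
My plan is to exploit the dichotomy built into the definition of $\widehat{\cZ}_\sigma^n$: each $Z \in \widehat{\cZ}_\sigma^n$ either lies in $\cZ_{\sigma,\star}^n$ (i.e.\ $Z \subset H_\sigma^n$), or is disjoint from $\overline{H_\sigma^n}$. The latter case is immediate. If $Z \cap H_\sigma^n = \emptyset$, then $g_\sigma^n$ vanishes on $Z$ (indeed $g_\sigma^n(y) \ne 0$ requires $T_\sigma^{k-1}y \in H_{\sigma_k}$ for every $k \le n$), so $\Hl_\sigma^n \mathds{1}_Z \equiv 0$. By the semigroup identity $\Hl_\sigma^m \mathds{1}_Z = \Hl_{\tau^n \sigma}^{m-n} \Hl_\sigma^n \mathds{1}_Z$ this propagates to all $m \ge n$, and combined with the fact that $\inf \Hl_\sigma^m \mathds{1} > 0$ (by iterating {\em (C1)}) and the monotonicity of the defining sequence, we conclude $\Lambda_\sigma(\mathds{1}_Z) = 0 \le \delta$.

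For the interesting case $Z \in \cZ_{\sigma,\star}^n$, I would apply the left inequality of Lemma \ref{lem:squeeze} with $h = \mathds{1}_Z$ (which is non-negative) to get
\[
\Lambda_\sigma(\mathds{1}_Z) \le \frac{\Lambda_{\tau^n \sigma}(\Hl_\sigma^n \mathds{1}_Z)}{\Lambda_{\tau^n \sigma}(\Hl_\sigma^n \mathds{1})}.
\]
The numerator is bounded via $|\Lambda_\sigma(f)| \le \|f\|_\infty$ by $\|\Hl_\sigma^n \mathds{1}_Z\|_\infty$, which in turn is at most $\Theta^n$: indeed $\widehat{\cZ}_\sigma^n$ refines the smoothness partition $\cZ_\sigma^n$ of $T_\sigma^n$, so $T_\sigma^n|_Z$ is injective, and the explicit formula $\Hl_\sigma^n \mathds{1}_Z(x) = g_\sigma^n((T_\sigma^n|_Z)^{-1}(x))\, \mathds{1}_{T_\sigma^n Z}(x)$ together with $\|g_\sigma^n\|_\infty \le \Theta^n$ yields the claim. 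The denominator is bounded from below by $\rho^n$ using Lemma \ref{lem:iteration}.

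Combining the two bounds gives $\Lambda_\sigma(\mathds{1}_Z) \le (\Theta/\rho)^n \le (\theta/\rho)^n$, since $\theta = \xi\Theta$ with $\xi \ge 1$. Condition {\em (C2)} ensures $\theta < \rho$, hence $\theta^{-1}\rho > 1$, and the choice $n \ge n_1(\delta) = \lceil \ln \delta^{-1} / \ln(\theta^{-1}\rho) \rceil$ is exactly what is needed to make $(\theta/\rho)^n \le \delta$. I do not anticipate a real obstacle here: the substantive observation is that Lemma \ref{lem:squeeze} reduces the intrinsic quantity $\Lambda_\sigma(\mathds{1}_Z)$ to the concrete ratio $\|\Hl_\sigma^n\mathds{1}_Z\|_\infty / \Lambda_{\tau^n \sigma}(\Hl_\sigma^n \mathds{1})$, after which the uniform-expansion factor $\Theta^n$ competes against the uniform lower bound $\rho^n$ to produce the announced exponential decay.
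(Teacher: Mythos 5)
Your proof is correct and follows essentially the same route as the paper: bound $\|\Hl_\sigma^n\mathds{1}_Z\|_\infty$ by $\Theta^n$ using the injectivity of $T_\sigma^n$ on $Z$ (since $\widehat{\cZ}_\sigma^n$ refines the smoothness partition), then divide by the lower bound $\rho^n$ for $\Lambda_{\tau^n\sigma}(\Hl_\sigma^n\mathds{1})$ supplied by Lemma~\ref{lem:iteration}. The one stylistic difference is that you pass through the left inequality of Lemma~\ref{lem:squeeze}, whose statement carries the hypothesis \emph{(C3($n_0,N'$))}, whereas the paper establishes the needed pointwise ratio estimate directly, which keeps Lemma~\ref{lem:partition} free of any \emph{(C3)}-type assumption.
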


\begin{proof}
Remark that $n_1(\delta)$ is well defined due to condition {\em (C2)} and is such that for all $n \ge n_1(\delta)$, $\theta^n \rho^{-n} \le \delta$. Then, for $Z \in \widehat{\mathcal{Z}}_\sigma^n$,
\[
\Hl_\sigma^n \mathds{1}_Z(x) = \sum_{T_\sigma^n y =x} g_\sigma^n(y) \mathds{1}_Z(y) \le \|g_\sigma^n\|_\infty \le \theta^n.
\]

Accordingly, for each $x \in [0,1]$,

\[
\frac{\Hl_{\tau^n \sigma}^m \Hl_\sigma^n \mathds{1}_Z(x)}{\Hl_{\tau^n \sigma}^m \Hl_\sigma^n \mathds{1}(x)} \le  \frac{\theta^n}{\inf\frac{\Hl_{\tau^n \sigma}^m(\Hl_\sigma^n \mathds{1})}{\Hl_{\tau^n \sigma}^m \mathds{1}}}.
\]

Taking the inf on $x$ and the limit $m \to \infty$, this yields $$\Lambda_\sigma(\mathds{1}_Z) \le  \frac{\theta^n}{\Lambda_{\tau^n \sigma}(\Hl_\sigma^n \mathds{1})} \le \theta^n \rho^{-n} \le \delta $$
where we have used Lemma \ref{lem:iteration}.
\end{proof}

\begin{lem} \label{lem:inf_partition}  Let $n_2(a)=n_1\left([4aB(1+2C_{n_0}\rho^{-n_0})]^{-1}\right)$. If {\em (C3($n_0, N'$))} holds, then for each $a \ge a_0$, $n \ge n_2$ and $h \in \mathcal{C}_\sigma^a$ there exists $Z \in \mathcal{Z}_{\sigma,g}^n$ with
\[
\inf_Z h \ge \frac 1 4 \Lambda_\sigma(h).
\]
\end{lem}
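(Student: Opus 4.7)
My plan is to argue by contradiction: suppose $\inf_Z h < \tfrac{M}{4}$ for every $Z\in\mathcal{Z}_{\sigma,g}^n$, where $M=\Lambda_\sigma(h)$. Since $h\in\mathcal{C}_\sigma^a$ we have $\inf h\le\Lambda_\sigma(h)\le\sup h\le\inf h+\bigvee h$, hence $\|h\|_\infty\le(1+a)M$. Moreover, on each good $Z$ the contradiction hypothesis yields $\sup_Z h\le\inf_Z h+\bigvee_Z h<\tfrac{M}{4}+\bigvee_Z h$. Decomposing over $\widehat{\mathcal{Z}}_\sigma^n$ and using $\|h\|_\infty\le(1+a)M$ on $S_g^c$, I get
\[
h \le \tfrac{M}{4}\mathds{1}_{S_g} + \sum_{Z\in\mathcal{Z}_{\sigma,g}^n}\bigvee_Z h\cdot\mathds{1}_Z + (1+a)M\,\mathds{1}_{S_g^c},
\]
where $S_g=\bigcup_{Z\in\mathcal{Z}_{\sigma,g}^n}Z$ and $S_b=\bigcup_{Z\in\mathcal{Z}_{\sigma,b}^n}Z$.

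Applying $\Hl_\sigma^n$ annihilates the non-$\star$ piece of $\mathds{1}_{S_g^c}$, leaving $\Hl_\sigma^n\mathds{1}_{S_g^c}=\Hl_\sigma^n\mathds{1}_{S_b}$. Since $\|g_\sigma^n\|_\infty\le\Theta^n$ and $\sum_Z\bigvee_Z h\le\bigvee h\le aM$, the middle sum is bounded pointwise by $aM\Theta^n$, giving
\[
\Hl_\sigma^n h \le \tfrac{M}{4}\Hl_\sigma^n\mathds{1} + aM\Theta^n\mathds{1} + (1+a)M\,\Hl_\sigma^n\mathds{1}_{S_b}.
\]
I then take $\Lambda_{\tau^n\sigma}$ of both sides. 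The left is bounded below by $M\Lambda_{\tau^n\sigma}(\Hl_\sigma^n\mathds{1})$ by Lemma~\ref{lem:squeeze}. On the right, the constant $aM\Theta^n$ pulls out exactly via $\Lambda(f+c\mathds{1})=\Lambda(f)+c$; after dividing by $\Lambda_{\tau^n\sigma}(\Hl_\sigma^n\mathds{1})\ge\rho^n$ (Lemma~\ref{lem:iteration}), it contributes $a(\Theta/\rho)^n$, which is $o(1)$ as $n\to\infty$ by condition \emph{(C2)}.

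The crux is to control the last term. Here my plan is to combine Lemma~\ref{lem:partition} (which for $n\ge n_2\ge n_1(\delta)$ gives $\Lambda_\sigma(\mathds{1}_Z)\le\delta$ uniformly in $Z$) with Lemma~\ref{lem:squeeze} applied to each summand $\mathds{1}_Z$ to get the $\Lambda$-bound $\Lambda_{\tau^n\sigma}(\Hl_\sigma^n\mathds{1}_Z)\le B\delta\,\Lambda_{\tau^n\sigma}(\Hl_\sigma^n\mathds{1})$, and then to pass from $\Lambda$-control to pointwise control by running the Lasota--Yorke inequality of Lemma~\ref{lem:ly} across $n_0$ iterates --- this is where the factor $(1+2C_{n_0}\rho^{-n_0})$ naturally enters, since $\|\Hl_\sigma^n\mathds{1}_{S_b}\|_\infty\le\Lambda_{\tau^n\sigma}(\Hl_\sigma^n\mathds{1}_{S_b})+\bigvee\Hl_\sigma^n\mathds{1}_{S_b}$ and Lemma~\ref{lem:ly} converts the variation into a multiple of $\Lambda$ at the cost of the factor $C_{n_0}\rho^{-n_0}$. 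Controlling this step cleanly is the main technical obstacle, as it requires carefully exploiting the hierarchy of LY iterations encoded in $n_0$. With the precise choice $\delta=[4aB(1+2C_{n_0}\rho^{-n_0})]^{-1}$ built into $n_2$, the contribution of the $\Hl_\sigma^n\mathds{1}_{S_b}$ term is at most $\tfrac{(1+a)}{4a}M\,\Lambda_{\tau^n\sigma}(\Hl_\sigma^n\mathds{1})\le \tfrac{M}{2}\Lambda_{\tau^n\sigma}(\Hl_\sigma^n\mathds{1})$ (since $a\ge a_0\ge 1$). Combining gives $M\le\tfrac{M}{4}+\tfrac{M}{2}+o(1)$, which for $n\ge n_2$ large enough contradicts $M>0$. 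Hence some good $Z$ must have $\inf_Z h\ge\tfrac{M}{4}$.
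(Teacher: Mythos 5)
Your overall shape---proof by contradiction, split over $\mathcal{Z}_{\sigma,g}^n$ and $\mathcal{Z}_{\sigma,b}^n$, and aim for $\Lambda_\sigma(h) \le \tfrac{3}{4}\Lambda_\sigma(h) + (\text{small})$---matches the paper. But a crucial device from the paper's proof is missing, and without it the argument does not close. The paper never applies $\Hl_\sigma^n$ and then evaluates $\Lambda_{\tau^n\sigma}$; it applies $\Hl_\sigma^m$ for an \emph{auxiliary} parameter $m$, keeps $n$ fixed, and at the end divides by $\Lambda_{\tau^m\sigma}(\Hl_\sigma^m\mathds{1})$ and lets $m\to\infty$. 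This limit kills exactly the residual terms that your version leaves behind: the contribution $\sum_{Z\in\mathcal{Z}_{\sigma,g}^n}\bigvee_Z h\,\|g_\sigma^m\|_\infty$ and the bad-$Z$ contributions $\#\mathcal{Z}_{\sigma,b}^n\cdot 2C_\star\kappa^m$ both vanish as $m\to\infty$ because $n$---and hence the number of summands---is fixed. You instead obtain an error of order $a(\Theta/\rho)^n$ plus a bad-$Z$ remainder at time $n$, and you conclude with ``for $n\ge n_2$ large enough.'' That is a different statement than the lemma, which asserts the bound for \emph{every} $n\ge n_2$ with $n_2$ defined solely by the condition $\sup_Z\Lambda_\sigma(\mathds{1}_Z)\le [4aB(1+2C_{n_0}\rho^{-n_0})]^{-1}$; nothing in that definition controls $a(\Theta/\rho)^{n_2}$.

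The bad-$Z$ step has a second, independent gap. You want $\Lambda_{\tau^n\sigma}(\Hl_\sigma^n\mathds{1}_{S_b})\le B\delta\,\Lambda_{\tau^n\sigma}(\Hl_\sigma^n\mathds{1})$ by combining Lemma~\ref{lem:partition} with Lemma~\ref{lem:squeeze}, but Lemma~\ref{lem:partition} bounds $\Lambda_\sigma(\mathds{1}_Z)$ for each single $Z$, not $\Lambda_\sigma(\mathds{1}_{S_b})=\Lambda_\sigma(\sum_b\mathds{1}_Z)$; at this stage of the paper $\Lambda_\sigma$ is only known to be super-additive, so $\Lambda_\sigma(\mathds{1}_{S_b})$ is not controlled from above by the sum of $\Lambda_\sigma(\mathds{1}_Z)$. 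The paper instead works $Z$-by-$Z$: for $Z\in\mathcal{Z}_{\sigma,b}^n$, $\Lambda_\sigma(\mathds{1}_Z)=0$ exactly (by definition of a bad block), so $\Lambda_{\tau^m\sigma}(\Hl_\sigma^m\mathds{1}_Z)=0$ by Lemma~\ref{lem:squeeze}, and the whole of $\Hl_\sigma^m\mathds{1}_Z$ is bounded by its variation $\le 2C_\star^{m/n_0+1}\theta^m$; the sum over the finitely many (but possibly $\sim\xi^n M^n$-many) bad $Z$'s then vanishes after normalization when $m\to\infty$. Trying to bound $\bigvee\Hl_\sigma^n\mathds{1}_{S_b}$ directly via Lemma~\ref{lem:ly} also fails, since $\bigvee\mathds{1}_{S_b}$ is proportional to the number of bad intervals, which can grow exponentially in $n$. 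In short: the auxiliary $m\to\infty$ limit is not a cosmetic choice; it is what converts ``small'' into ``zero'' and lets the precise value of $n_2$ be dictated only by Lemma~\ref{lem:partition}.
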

\begin{proof}
For each $n,m$ with $n <m$, we can write
\[
\Hl_\sigma^m h(x) = \sum_{Z \in \widehat{\mathcal{Z}}_\sigma^n} \Hl_\sigma^m (h \mathds{1}_Z)(x) = \sum_{Z \in \mathcal{Z}_{\sigma, \star}^n} \Hl_\sigma^m (h \mathds{1}_Z)(x).
\]
Suppose the lemma is not true. Then, we have
\[
\begin{aligned}
\Hl_\sigma^m h(x) &= \sum_{Z \in \mathcal{Z}_{\sigma,g}^n} \Hl_\sigma^m(h \mathds{1}_Z)(x) + \sum_{Z \in \mathcal{Z}_{\sigma,b}^n} \Hl_\sigma^m (h \mathds{1}_Z)(x) \\
& \le \sum_{Z \in \mathcal{Z}_{\sigma,g}^n} \Hl_\sigma^m \mathds{1}_Z(x) \frac{\Lambda_\sigma(h)}{4} +  \sum_{Z \in \mathcal{Z}_{\sigma,g}^n} \Hl_\sigma^m \mathds{1}_Z(x) \bigvee_Z h + \| h \|_\infty \sum_{Z \in \mathcal{Z}_{\sigma,b}^n} \Hl_\sigma^m \mathds{1}_Z(x) \\
& \le \Hl_\sigma^m \mathds{1}(x) \frac{\Lambda_\sigma(h)}{4} + \sum_{Z \in \mathcal{Z}_{\sigma,g}^n} \left[ \Lambda_{\tau^m \sigma}(\Hl_\sigma^m \mathds{1}_Z) + \bigvee \Hl_\sigma^m \mathds{1}_Z \right] \bigvee_Z h \\
&\phantom{=}+ \| h \|_\infty \sum_{Z \in \mathcal{Z}_{\sigma,b}^n} \Hl_\sigma^m \mathds{1}_Z(x).
\end{aligned}
\]
If $Z \in \mathcal{Z}_{\sigma,b}^n$, by Lemma \ref{lem:squeeze}, we have $\Lambda_{\tau^m \sigma}(\Hl_\sigma^m \mathds{1}_Z) \le B \Lambda_{\tau^m \sigma}(\Hl_\sigma^m \mathds{1}) \Lambda_\sigma(\mathds{1}_Z) = 0$, which implies 
\[
\Hl_\sigma^m \mathds{1}_Z (x) \le \bigvee \Hl_\sigma^m \mathds{1}_Z  \le 2 C_\star^{m/n_0 +1} \theta^m \le 2 C_\star (C_\star^{1/n_0} \theta \rho^{-1})^m \Lambda_{\tau^m \sigma}(\Hl_\sigma^m \mathds{1}),
\]
by inequality \eqref{eqn:iterate_LY} and Lemma \ref{lem:iteration}.

If $Z \in \mathcal{Z}_{\sigma,g}^n$, the same argument gives
\[
\begin{aligned}
\bigvee \Hl_\sigma^m \mathds{1}_Z & \le 2 C_\star^{m/n_0 +1} \theta^m + 2 C_{n_0} \rho^{-n_0} \Lambda_{\tau^m \sigma}(\Hl_\sigma^m \mathds{1}_Z) \\
& \le \left[2 C_\star (C_\star^{1/n_0} \theta \rho^{-1})^m + 2 C_{n_0} \rho^{-n_0} B \Lambda_\sigma(\mathds{1}_Z) \right]\Lambda_{\tau^m \sigma}(\Hl_\sigma^m \mathds{1}) .
\end{aligned}
\]
Setting $\kappa = C_\star^{1/n_0} \theta \rho^{-1} \le 4^{-1/n_0}$ and using Lemma \ref{lem:squeeze} again, we have 
\[
\begin{aligned}
\Lambda_{\tau^m\sigma}(\Hl_\sigma^m h) \le & \frac{\Lambda_\sigma(h)}{4} \Lambda_{\tau^m \sigma}(\Hl_\sigma^m \mathds{1})\\
& + \sum_{Z \in \mathcal{Z}_{\sigma,g}^n} \Lambda_{\tau^m \sigma}(\Hl_\sigma^m \mathds{1}) \bigvee_Z h \left[ B (1 + 2 C_{n_0} \rho^{-n_0})  \Lambda_\sigma( \mathds{1}_Z) + 2 C_\star \kappa^m\right] \\ &+ \|h \|_\infty \sum_{Z \in \mathcal{Z}_{\sigma,b}^n} 2 C_\star \kappa^m \Lambda_{\tau^m \sigma}(\Hl_\sigma^m \mathds{1}) .
\end{aligned}
\]
Dividing this inequality by $\Lambda_{\tau^m \sigma}(\Hl_\sigma^m \mathds{1})$ and taking the limit $m \to \infty$ yields
\[
\begin{aligned}
\Lambda_\sigma(h) & \le \frac{\Lambda_\sigma(h)}{4} + B (1 + 2C_{n_0} \rho^{-n_0}) \bigvee h \sup_{Z \in \mathcal{Z}_{\sigma,g}^n} \Lambda_\sigma(\mathds{1}_Z) \\
& \le \left[ \frac 1 4 + a B (1 + 2C_{n_0} \rho^{-n_0}) \sup_{Z \in \mathcal{Z}_{\sigma,g}^n} \Lambda_\sigma(\mathds{1}_Z) \right] \Lambda_\sigma(h) \le \frac 1 2 \Lambda_\sigma(h)
\end{aligned}
\]
where we applied Lemma \ref{lem:partition}. This yields the announced contradiction.
\end{proof}
\begin{rem} \label{rem:a_value} From now on we set $a = \max \{a_0, 1\}$.
\end{rem}

\begin{rem} \label{rem:tech_facts}
Since $\cT \times \cH$ is finite, there exists $M>1$ such that $\|\Hl_\sigma^1 \mathds{1} \|_\infty \leq M$. Hence, by condition {\em (C1)},  $\ds^n\leq \inf \Hl_\sigma^n \mathds{1}\leq  \|\Hl_\sigma^n \mathds{1} \|_\infty \le M^n$ for all $n \ge 0$ and $\sigma \in \Sigma^{\mathbb{N}}$.
\end{rem}

\begin{lem} \label{lem:inf_bounded} If {\em (C3($n_2, n_3$))} holds for some $n_3 \ge n_2$, then there exists $\exalpha > 0$ such that
\[
\inf \Hl_\sigma^n f \ge \exalpha \Lambda_{\tau^n \sigma}(\Hl_\sigma^n f),
\]
for all $n\ge n_3$, all $\sigma \in \Sigma^{\mathbb{N}}$ and all $f \in C_\sigma^a$.
\end{lem}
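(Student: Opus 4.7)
\emph{Proof plan.} The plan is to first reduce the statement to the special case $f=\mathds{1}$ (which amounts to a uniform distortion bound on $\Hl_\sigma^n\mathds{1}$) and then to establish that distortion bound by a time-shift trick.

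\emph{Reduction to $f=\mathds{1}$.} Given $f\in\cC_\sigma^a$, I will apply Lemma \ref{lem:inf_partition} at partition level $n_2$ to produce $Z_\star\in\cZ_{\sigma,g}^{n_2}$ with $\inf_{Z_\star} f\ge \Lambda_\sigma(f)/4$. Condition {\em (C3($n_2,n_3$))} supplies $\Hl_\sigma^{n_3}\mathds{1}_{Z_\star}\ge \hat\epsilon(n_2,n_3)\Hl_\sigma^{n_3}\mathds{1}$ pointwise, and the semigroup identity $\Hl_\sigma^{n}=\Hl_{\tau^{n_3}\sigma}^{n-n_3}\Hl_\sigma^{n_3}$ upgrades this to $\Hl_\sigma^n\mathds{1}_{Z_\star}\ge \hat\epsilon(n_2,n_3)\Hl_\sigma^n\mathds{1}$ for every $n\ge n_3$. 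Combining with $f\ge (\inf_{Z_\star}f)\mathds{1}_{Z_\star}$ gives the pointwise bound $\Hl_\sigma^n f\ge \tfrac{\hat\epsilon(n_2,n_3)\Lambda_\sigma(f)}{4}\Hl_\sigma^n\mathds{1}$. Taking infima and invoking Lemma \ref{lem:squeeze} (which yields $\Lambda_{\tau^n\sigma}(\Hl_\sigma^n f)\le B\,\Lambda_{\tau^n\sigma}(\Hl_\sigma^n\mathds{1})\Lambda_\sigma(f)$) reduces the target inequality to the uniform distortion bound $\inf\Hl_\sigma^n\mathds{1}\ge c\,\Lambda_{\tau^n\sigma}(\Hl_\sigma^n\mathds{1})$, $n\ge n_3$, $\sigma\in\Sigma^{\bN}$, for some $c>0$.

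\emph{The distortion bound (main obstacle).} Applying the previous argument naively with $f=\mathds{1}$ is vacuous, since $\inf_Z\mathds{1}=1$ conveys no information. The key is a time-shift: for $n\ge n_0+n_3$ I will apply Lemma \ref{lem:inf_partition} not to $\mathds{1}$ but to the already-iterated function $h:=\Hl_\sigma^{n-n_3}\mathds{1}$, regarded at the shifted sequence $\tau^{n-n_3}\sigma$. Lemma \ref{lem:cone_inv} places $h$ in $\cC^{a/2}_{\tau^{n-n_3}\sigma}\subset\cC^{a}_{\tau^{n-n_3}\sigma}$, so Lemma \ref{lem:inf_partition} produces $Z\in\cZ^{n_2}_{\tau^{n-n_3}\sigma,g}$ with $\inf_Z h\ge \Lambda_{\tau^{n-n_3}\sigma}(h)/4$. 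Then {\em (C3($n_2,n_3$))} applied at $\tau^{n-n_3}\sigma$ and the semigroup identity $\Hl_\sigma^n=\Hl_{\tau^{n-n_3}\sigma}^{n_3}\Hl_\sigma^{n-n_3}$ yield
\[
\Hl_\sigma^n\mathds{1}\;\ge\;\tfrac{\hat\epsilon(n_2,n_3)}{4}\Lambda_{\tau^{n-n_3}\sigma}(h)\,\Hl_{\tau^{n-n_3}\sigma}^{n_3}\mathds{1}.
\]
Taking infima, together with {\em (C1)} and Remark \ref{rem:tech_facts}, produces a lower bound on $\inf\Hl_\sigma^n\mathds{1}$ proportional to $\Lambda_{\tau^{n-n_3}\sigma}(h)\,\ds^{n_3}$. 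A second application of Lemma \ref{lem:squeeze} to the same decomposition $\Hl_\sigma^n\mathds{1}=\Hl_{\tau^{n-n_3}\sigma}^{n_3}h$ produces an upper bound on $\Lambda_{\tau^n\sigma}(\Hl_\sigma^n\mathds{1})$ proportional to $\Lambda_{\tau^{n-n_3}\sigma}(h)\,M^{n_3}$. Dividing, the unknown factor $\Lambda_{\tau^{n-n_3}\sigma}(h)$ cancels, yielding the distortion bound with constant $\hat\epsilon(n_2,n_3)(\ds/M)^{n_3}/(4B)$, uniform in $\sigma$ and $n\ge n_0+n_3$.

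\emph{Finite range and conclusion.} For the residual range $n_3\le n<n_0+n_3$ I will use the crude estimates $\inf\Hl_\sigma^n\mathds{1}\ge\ds^n$ and $\Lambda_{\tau^n\sigma}(\Hl_\sigma^n\mathds{1})\le \|\Hl_\sigma^n\mathds{1}\|_\infty\le M^n$ from Remark \ref{rem:tech_facts} to obtain a positive constant depending only on $n_0$ and $n_3$; the overall $\exalpha>0$ is the minimum of the two constants so obtained, multiplied by $\hat\epsilon(n_2,n_3)/(4B)$ from the reduction step. The genuine difficulty is concentrated in the distortion bound: cone invariance alone cannot bound $\inf$ by $\Lambda$ (as it would require $a<2$), and applying Lemma \ref{lem:inf_partition} directly to $\mathds{1}$ is circular. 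Shifting the $\inf_Z$ argument onto the already-iterated function $\Hl_\sigma^{n-n_3}\mathds{1}$ is what breaks the circularity and allows condition {\em (C3)} to generate the distortion estimate.
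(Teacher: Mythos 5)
Your proof is correct and takes a genuinely different route from the paper. The paper works with a general $f\in C_\sigma^a$ throughout: it first establishes the case $n=n_3$ directly, combining Lemma \ref{lem:inf_partition} and condition \emph{(C3)} (for the lower bound $\inf\Hl_\sigma^{n_3}f\ge\frac{\epsilon_\star(n_2)}{4}\Lambda_\sigma(f)\inf\Hl_\sigma^{n_3}\mathds{1}$) with Lemma \ref{lem:squeeze} and the crude estimate $\ds^{n_3}\le\Hl_\sigma^{n_3}\mathds{1}\le M^{n_3}$ (for the upper bound on $\Lambda_{\tau^{n_3}\sigma}(\Hl_\sigma^{n_3}f)$); it then extends to $n=n_3+kn_0$ by iterating $kn_0$ times first, using Lemma \ref{lem:cone_inv} to keep $\Hl_\sigma^{kn_0}f$ in the cone, and reapplying the $n_3$-case; finally it fills in the residual range $r<n_0$ by a direct comparison via $\Hl_{\tau^{n'}\sigma}^r$. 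You instead factor the problem: you first reduce to $f=\mathds{1}$, converting the lemma into the uniform distortion bound $\inf\Hl_\sigma^n\mathds{1}\ge c\,\Lambda_{\tau^n\sigma}(\Hl_\sigma^n\mathds{1})$ (which is essentially the paper's Remark \ref{rem:inf_bounded1}), and you establish that bound by the time-shift device of applying Lemma \ref{lem:inf_partition} not at $\sigma$ but at $\tau^{n-n_3}\sigma$ to the already-iterated $\Hl_\sigma^{n-n_3}\mathds{1}$, which lies in the cone $C_{\tau^{n-n_3}\sigma}^{a/2}$ by Lemma \ref{lem:cone_inv} once $n-n_3\ge n_0$, so that \emph{(C3($n_2,n_3$))} precisely supplies the remaining $n_3$ iterates. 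This in effect reverses the logical dependency: the paper obtains the $\mathds{1}$-distortion estimate as a corollary of the lemma (Remark \ref{rem:inf_bounded1}), while you prove it first and derive the lemma from it. Both routes use the same ingredients — Lemma \ref{lem:inf_partition}, condition \emph{(C3)}, Lemma \ref{lem:squeeze}, cone invariance, and the crude $\ds/M$ bounds on a finite range — but yours concentrates the difficulty in a single scalar inequality at the modest price of an extra factor $\hat\epsilon(n_2,n_3)/(4B)$ in the final constant, whereas the paper carries the general $f$ through each stage.
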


\begin{proof} We first remark that, by Lemma \ref{lem:inf_partition}, for each $f \in C_\sigma^a$, there exists $Z \in \mathcal{Z}_{\sigma, g}^{n_2}$ such that 
\[
\inf_Z f \ge \frac 1 4 \Lambda_\sigma(f).
\]
Consequently, for any $\ell \ge 0$, we have 
\[
\inf \Hl_\sigma^\ell f \ge \frac 1 4 \Lambda_\sigma(f) \inf \frac{\Hl_\sigma^\ell \mathds{1}_Z}{\Hl_\sigma^\ell \mathds{1}} \inf \Hl_\sigma^\ell \mathds{1}.
\]
Since condition {\em (C3($n_2,n_3$))} holds, one has
\[
\inf \frac{\Hl_\sigma^{\ell} \mathds{1}_Z}{\Hl_\sigma^{\ell} \mathds{1}} \ge \epsilon_\star(n_2)
\]
for all $\sigma \in \Sigma^{\mathbb{N}}$, $Z  \in \mathcal{Z}_{\sigma,g}^{n_2}$ and $\ell \ge n_3$, and we thus get
\begin{equation} \label{eqn:inf_L}
\inf \Hl_\sigma^{\ell} f \ge \frac{\epsilon_\star(n_2)}{4} \Lambda_\sigma(f) \inf \Hl_\sigma^{\ell} \mathds{1}.
\end{equation}
We first prove Lemma \ref{lem:inf_bounded} when $n=n_3$ and then extend it to all $n \ge n_3$.
By Lemma \ref{lem:squeeze}, 
\[
\Lambda_\sigma(f) \ge B^{-1} \frac{\Lambda_{\tau^{n_3}\sigma}(\Hl_\sigma^{n_3} f)}{\Lambda_{\tau^{n_3}\sigma}(\Hl_\sigma^{n_3} \mathds{1})} \ge B^{-1} \frac{\Lambda_{\tau^{n_3}\sigma}(\Hl_\sigma^{n_3} f)}{\sup \Hl_\sigma^{n_3} \mathds{1}}.
\]
By Remark \ref{rem:tech_facts} and \eqref{eqn:inf_L} with $\ell=n_3$, we obtain
\[
\inf \Hl_\sigma^{n_3} f \ge \hat\exalpha \Lambda_{\tau^{n_3} \sigma}(\Hl_\sigma^{n_3} f),
\]
with $\hat\exalpha = B^{-1} \frac{\epsilon_\star(n_2)}{4} (\frac{\ds}{M})^{n_3}$.

By Lemma \ref{lem:cone_inv}, $\Hl_\sigma^{n_0} C_\sigma^a \subset C_{\tau^{n_0} \sigma}^a$ for all $\sigma$. We thus have
\[
\inf \Hl_\sigma^{n_3 + kn_0} f \ge \hat\exalpha \Lambda_{\tau^{n_3 + kn_0} \sigma}(\Hl_\sigma^{n_3 + kn_0} f),
\]
for all $k \ge 0$, $\sigma \in \Sigma^{\mathbb{N}}$ and $f \in C_\sigma^a$.

Let now $n \ge n_3$. We write $n = kn_0 + n_3 +r = n' + r$ with $r < n_0$. We have
\[
\begin{aligned}
\inf \Hl_\sigma^n f = \Hl_{\tau^{n'} \sigma}^r \Hl_\sigma^{n'} f & \ge \hat\exalpha \Lambda_{\tau^{n'} \sigma}(\Hl_\sigma^{n'}f) \inf \Hl_{\tau^{n'}\sigma}^r \mathds{1} \\ & \ge \hat\exalpha \ds^r \Lambda_{\tau^{n'} \sigma}(\Hl_\sigma^{n'}f) \\ & \ge\hat\exalpha \ds^{n_0} \Lambda_{\tau^{n'} \sigma}(\Hl_\sigma^{n'}f).
\end{aligned}
\]

But, 
\[
\begin{aligned}
\Lambda_{\tau^{n'} \sigma}(\Hl_\sigma^{n'}f) &= \lim_{k \to \infty} \inf \frac{\Hl_{\tau^{n'+r}\sigma}^k \Hl_{\tau^{n'} \sigma}^r \Hl_{\sigma}^{n'} f}{\Hl_{\tau^{n'+r}\sigma}^k \Hl_{\tau^{n'}\sigma}^r \mathds{1}} \\
& \ge M^{-r} \lim_{k \to \infty} \inf \frac{\Hl_{\tau^{n'+r} \sigma}^k \Hl_\sigma^{n'+r} f}{\Hl_{\tau^{n'+r} \sigma}^k \mathds{1}} \\
& = M^{-r} \Lambda_{\tau^{n'+r} \sigma}(\Hl_\sigma^{n'+r} f) \\ & \ge M^{-n_0} \Lambda_{\tau^n \sigma}(\Hl_\sigma^n f).
\end{aligned}
\]
We have thus proved the result with $\exalpha = \hat\exalpha (\frac{\ds}{M})^{n_0}$.
\end{proof}
\begin{cor}\label{cor:rho} It holds true $\rho\leq 1$, where $\rho$ is defined in \eqref{eq:rho-def}.\footnote{ Hence the min in condition $(\bf{C2})$ was, a posteriori, superfluous.}
\end{cor}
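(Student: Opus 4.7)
The idea is to combine the lower bound on $\Hl_\sigma^n\mathds{1}$ coming from Lemma \ref{lem:inf_bounded} (which gives growth like $\rho^n$) with an obvious upper bound on the $L^1$–norm of $\Hl_\sigma^n\mathds{1}$ coming from the duality definition of $\Hl_\sigma$. If $\rho>1$ these two estimates are incompatible for large $n$.

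More concretely, first observe that the constant function $\mathds{1}$ belongs to $C_\sigma^a$ for every $\sigma$, since $\bigvee\mathds{1}=0\leq a\Lambda_\sigma(\mathds{1})=a$. Hence, by Lemma \ref{lem:inf_bounded} applied with $f=\mathds{1}$, and Lemma \ref{lem:iteration}, we have for every $\sigma\in\Sigma^{\bN}$ and every $n\geq n_3$
\[
\inf \Hl_\sigma^n\mathds{1}\;\geq\;\exalpha\,\Lambda_{\tau^n\sigma}(\Hl_\sigma^n\mathds{1})\;\geq\;\exalpha\,\rho^n.
\]

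Next I use the duality definition of the transfer operator: since $\Hl_\sigma^n f(x)=\sum_{T_\sigma^n y=x}g_\sigma^n(y)f(y)$ with $g_\sigma^n=\mathds{1}_{H_\sigma^n}/|(T_\sigma^n)'|$, a change of variables gives
\[
\int_0^1 \Hl_\sigma^n\mathds{1}\,dm\;=\;\int_0^1\mathds{1}_{H_\sigma^n}\,dm\;\leq\;1.
\]
Since $[0,1]$ has unit Lebesgue measure, this forces $\inf\Hl_\sigma^n\mathds{1}\leq 1$ for every $n$ and every $\sigma$.

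Combining the two inequalities yields $\exalpha\,\rho^n\leq 1$ for every $n\geq n_3$. As $\exalpha>0$ is independent of $n$, letting $n\to\infty$ shows that we cannot have $\rho>1$, and so $\rho\leq 1$. There is essentially no obstacle here: the argument is a one-line combination of Lemmata \ref{lem:iteration} and \ref{lem:inf_bounded} with the elementary $L^1$–contraction of $\Hl_\sigma$; the only point worth noting is that $\mathds{1}\in C_\sigma^a$ so that Lemma \ref{lem:inf_bounded} is indeed applicable.
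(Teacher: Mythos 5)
Your proof is correct and is essentially the paper's own argument: both apply Lemma \ref{lem:inf_bounded} with $f=\mathds{1}$ together with Lemma \ref{lem:iteration} to get $\inf \Hl_\sigma^n\mathds{1}\geq \exalpha\rho^n$, and then compare with the bound $\inf \Hl_\sigma^n\mathds{1}\leq\int\Hl_\sigma^n\mathds{1}\,dm\leq 1$. You spell out the $L^1$ estimate and the membership $\mathds{1}\in C_\sigma^a$ a bit more explicitly, but the route is the same.
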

\begin{proof}
By Lemma \ref{lem:iteration} and Lemma  \ref{lem:inf_bounded}, with $f=1$, we see that, for all $n\geq n_3$, $\inf \widehat\cL^n_\sigma 1\geq \varrho \rho^n$ but $\inf \widehat\cL^n_\sigma 1\leq \int \widehat\cL^n_\sigma 1\leq 1$, so $\rho\leq 1$.
\end{proof}

\begin{rem} \label{rem:inf_bounded1}
Since by Remark \ref{rem:tech_facts}, $ \| \Hl_\sigma^n \mathds{1} \|_\infty \le M^n$ and $\inf \Hl_\sigma^n \mathds{1} \ge \ds^n$ for all $n \ge 0$ and $\sigma \in \Sigma^{\mathbb{N}}$, we have 
\[
\inf \Hl_\sigma^n \mathds{1} \ge \exalpha' \Lambda_{\tau^n \sigma} (\Hl_\sigma^n \mathds{1}),
\]
with $\exalpha' = \min \{\exalpha, 1, \frac{\ds}{M}, \ldots, (\frac{\ds}{M})^{n_3 - 1} \}$.
\end{rem}
\begin{lem} \label{lem:finite_diameter} 
If {\em (C3($n_2, n_3$))} holds for some $n_3 \ge n_2$, then for all $\sigma \in \Sigma^{\mathbb{N}}$ and $n \ge n_3$, one has $$\Hl_\sigma^n \mathcal{C}_\sigma^a \subset C_{\tau^n \sigma}^a$$ with finite diameter less than $\Delta_n$, uniformly in $\sigma$. 
\end{lem}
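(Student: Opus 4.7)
The plan is to combine the strict cone invariance of Lemma \ref{lem:cone_inv} with the uniform lower bound on $\inf\Hl_\sigma^n f$ of Lemma \ref{lem:inf_bounded} in order to control directly the Birkhoff coefficients $\alpha,\beta$ of any two images inside the ambient cone $\cC_{\tau^n\sigma}^a$. First, for $n\ge n_3$ (if necessary after enlarging $n_3$ to $\max\{n_3,n_0\}$), Lemma \ref{lem:cone_inv} gives the inclusion $\Hl_\sigma^n\cC_\sigma^a\subset\cC_{\tau^n\sigma}^{a/2}\subset\cC_{\tau^n\sigma}^a$. Given $f,g\in\cC_\sigma^a$ I will set $F=\Hl_\sigma^n f$, $G=\Hl_\sigma^n g$, use positive homogeneity of $\Lambda$ to normalize $\Lambda_{\tau^n\sigma}(F)=\Lambda_{\tau^n\sigma}(G)=1$ (which does not affect the Hilbert distance), and read off from $F,G\in\cC_{\tau^n\sigma}^{a/2}$ together with the general inequality $\|h\|_\infty\le\Lambda(h)+\bigvee h$ (obtained from $\sup h\le\inf h+\bigvee h$ and $\inf h\le\Lambda(h)$) the pointwise estimates $\bigvee F,\bigvee G\le a/2$ and $\|F\|_\infty,\|G\|_\infty\le 1+a/2$. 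Lemma \ref{lem:inf_bounded} additionally provides $\inf F,\inf G\ge\exalpha$.

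To lower-bound $\alpha(F,G)=\sup\{\lambda>0:G-\lambda F\in\cC_{\tau^n\sigma}^a\}$ I will verify the two defining conditions. The positivity requirement $G-\lambda F\ge 0$ reduces to $\lambda\le\inf(G/F)$, which is at least $\exalpha/(1+a/2)$ by the above bounds. For the cone condition $\bigvee(G-\lambda F)\le a\,\Lambda_{\tau^n\sigma}(G-\lambda F)$ I will combine the elementary upper bound $\bigvee(G-\lambda F)\le\bigvee G+\lambda\bigvee F$ with the lower bound
\[
\Lambda_{\tau^n\sigma}(G-\lambda F)\ge\Lambda_{\tau^n\sigma}\bigl(G-\lambda\|F\|_\infty\mathds{1}\bigr)=\Lambda_{\tau^n\sigma}(G)-\lambda\|F\|_\infty,
\]
obtained from monotonicity and the translation identity $\Lambda(h+b)=\Lambda(h)+b$. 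Plugging in the numerical bounds turns the resulting linear inequality in $\lambda$ into the sufficient condition $\lambda\le 1/(3+a)$, so $\alpha(F,G)\ge\min\{\exalpha/(1+a/2),\,1/(3+a)\}$. A symmetric computation for $\beta(F,G)=\inf\{\mu>0:\mu F-G\in\cC_{\tau^n\sigma}^a\}$ yields $\beta(F,G)\le\max\{(1+a/2)/\exalpha,\,3+a\}$.

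Combining these estimates gives
\[
d_{\cC_{\tau^n\sigma}^a}(F,G)\le\log\frac{\beta(F,G)}{\alpha(F,G)}\le\log\max\bigl\{(1+a/2)^2/\exalpha^{2},\,(3+a)^2\bigr\},
\]
a constant depending only on $a$ and $\exalpha$; in the notation of the lemma one can therefore take $\Delta_n$ independent of both $\sigma$ and $n$. The delicate point of the plan is the lower bound on $\Lambda(G-\lambda F)$: super-additivity only delivers the \emph{upper} bound $\Lambda(G-\lambda F)\le\Lambda(G)-\lambda\Lambda(F)$, which goes the wrong way, so the workable lower bound must come from monotonicity combined with the translation identity, forcing $\|F\|_\infty$ rather than $\Lambda(F)$ to appear in the estimate. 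This is precisely why having $F,G$ in the strictly smaller cone $\cC_{\tau^n\sigma}^{a/2}$ together with a positive lower bound on $\inf F,\inf G$ (via Lemma \ref{lem:inf_bounded}) is essential to close the cone condition with a definite positive gap.
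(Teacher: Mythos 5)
Your proof is correct, and while it rests on the same Birkhoff cone machinery as the paper, the route is genuinely different in two respects. The paper's proof cites a pre-packaged formula (\cite[Lemma 3.1]{Liverani1995}) bounding $d_{\cC^a}(g,\mathds{1})$, and hence the diameter via $2\sup_g d(g,\mathds{1})$; the quantities fed into it are $\inf\Hl_\sigma^n h$ and $\sup\Hl_\sigma^n h$ controlled through \eqref{eqn:inf_L}, Lemma \ref{lem:cone_inv} and Lemma \ref{lem:squeeze}, together with the crude bounds $\ds^n\le\inf\Hl_\sigma^n\mathds{1}\le\|\Hl_\sigma^n\mathds{1}\|_\infty\le M^n$ from Remark \ref{rem:tech_facts}; this produces a $\Delta_n$ that grows with $n$. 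You instead normalise $\Lambda_{\tau^n\sigma}(F)=\Lambda_{\tau^n\sigma}(G)=1$ and compute the Birkhoff coefficients $\alpha(F,G)$, $\beta(F,G)$ directly from scratch, and, crucially, you replace the raw use of \eqref{eqn:inf_L} by Lemma \ref{lem:inf_bounded}, which already packages the lower bound $\inf F\ge\exalpha\Lambda_{\tau^n\sigma}(F)$ in a normalisation-compatible form; this eliminates the $\ds^n$, $M^n$ factors and yields a diameter bound $\log\max\{(1+a/2)^2/\exalpha^2,(3+a)^2\}$ that is uniform in $n$. Your handling of the cone condition for $G-\lambda F$ via monotonicity plus the translation identity (rather than super-additivity, which indeed goes the wrong way) is exactly the right move and is correct. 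Two small remarks: the $n$-uniformity you gain is not actually exploited downstream, since Lemma \ref{lem:quasiinv_density} only invokes $\Delta_{n_3}$; and your caveat about enlarging $n_3$ to $\max\{n_3,n_0\}$ is well taken, since Lemma \ref{lem:cone_inv} requires $n\ge n_0$ and the paper's own proof silently assumes $n_3\ge n_0$ when invoking it.
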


\begin{proof}
By \eqref{eqn:inf_L}, we have
\[
\inf \Hl_\sigma^n h \ge \frac {\epsilon_\star(n_2)} 4 \Lambda_\sigma(h)  \inf \Hl_\sigma^n \mathds{1},
\]
for any $\sigma \in \Sigma^{\mathbb{N}}$, $h \in C_\sigma^a$ and $n \ge n_3$ and, using Lemmata \ref{lem:cone_inv} and \ref{lem:squeeze},
\[
\sup \Hl_\sigma^n h \le \Lambda_{\tau^n \sigma}(\Hl_\sigma^n h) + \bigvee \Hl_\sigma^n h \le \left(1 + \frac a 2\right) B \Lambda_\sigma(h) \Lambda_{\tau^{n} \sigma}(\Hl_\sigma^n \mathds{1}).
\]
 A simple adaptation of the proof of \cite[Lemma 3.1]{Liverani1995} yields
\[
d_{\mathcal{C}_\sigma^a}(g, \mathds{1}) \le \log \left[\frac{\max \left\{(1+\nu) \Lambda_\sigma(g), \sup g \right\}}{\min\left\{(1- \nu) \Lambda_\sigma(g), \inf g\right\}} \right]
\]
for any $g \in \mathcal{C}_{\sigma}^{\nu a}$ with $0<\nu<1$. By Lemma \ref{lem:cone_inv} $\Hl_\sigma^n (\mathcal{C}_\sigma^a) \subset \mathcal{C}_{\tau^n \sigma}^{a/2}$, so we can choose $\nu = \frac 1 2$ and, recalling Remark \ref{rem:tech_facts}, we obtain:
\[
{\rm diam}_{\mathcal{C}_{\tau^n \sigma}^a} \Hl_\sigma^n (\mathcal{C}_\sigma^a) \le 2 \log \left[ \frac{\max\left\{\frac 3 2 , B M^n (1 + \frac a 2)\right\}}{\min \left\{ \frac 1 2 , \frac{{\epsilon_\star(n_2)} \ds^n}{4}\right\}} \right] =: \Delta_n < \infty
\]
for any $\sigma \in \Sigma^{\mathbb{N}}$.
\end{proof}

Since we are interested in functions of the form $\Hl_{\tau^{-n} \sigma}^n f$, we will need to consider functions $f$ that belong to the intersections of all the cones $C_\sigma^a$, $\sigma \in \Sigma^{\bZ}$. For this purpose, we introduce the family of cones
\[
C_{\inf}^a = \{ f \in {\rm BV} \, : \, f \neq 0, \, f \ge 0, \, \bigvee f \le a \inf f \}.
\]
We have $C_{\inf}^a \subset C_{\sigma}^a$ for any $\sigma \in \Sigma^{\bZ}$, and thus $d_{C_\sigma^a} \le d_{C_{\inf}^a}$ by Theorem \ref{thm:contraction_cone}.

\begin{lem} \label{lem:quasiinv_density}
There exist $\nu  \in (0,1)$ and a family of positive functions $\{ h_\sigma \}_{\sigma  \in \Sigma^{\bZ}}$ in ${\rm BV}$ such that for all $f \in C_{\inf}^a$, $\sigma \in \Sigma^{\bZ}$ and $n \ge 0$:
\[
\left\| \frac{\Hl_{\tau^{-n} \sigma}^n f}{\Lambda_\sigma(\Hl_{\tau^{-n} \sigma}^n f)} - h_\sigma \right\|_\infty \le \Const \nu^n,
\]
and
\[
\left\| \frac{\Lambda_\sigma(\Hl_{\tau^{-n} \sigma}^n f)}{\Hl_{\tau^{-n} \sigma}^n f} - h_\sigma^{-1} \right\|_\infty \le \Const \nu^n.
\]
Furthermore, $h_\sigma \in C_\sigma^{a/2}$, $\|h_\sigma\|_\infty \le 1+a$ and $\inf h_\sigma \ge \exalpha > 0$ for all $\sigma \in \Sigma^{\bZ}$, where $\exalpha$ is defined in Lemma \ref{lem:inf_bounded}.
\end{lem}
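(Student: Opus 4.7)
The plan is to build $h_\sigma$ as a limit of normalized pull-back iterates, using the Hilbert metric contraction established in the preceding lemmas, in the spirit of \cite{liverani2003lasota}. By Lemma \ref{lem:finite_diameter}, for every $\sigma \in \Sigma^{\bZ}$ the operator $\Hl_\sigma^{n_3}$ sends $C_\sigma^a$ into a subset of $C_{\tau^{n_3}\sigma}^a$ of Hilbert diameter at most $\Delta_{n_3}$. Theorem \ref{thm:contraction_cone} then yields a uniform contraction factor $\kappa := \tanh(\Delta_{n_3}/4) < 1$ for $d_{C^a}$. Iterating over blocks of length $n_3$ and setting $\nu := \kappa^{1/n_3}$, I will obtain
\[
d_{C_{\tau^n \sigma}^a}(\Hl_\sigma^n f_1, \Hl_\sigma^n f_2) \le \Const \nu^n
\]
for all $f_1, f_2 \in C_\sigma^a$ and $n \ge 0$.

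Next I will show that $g_n := \Hl_{\tau^{-n}\sigma}^n f / \Lambda_\sigma(\Hl_{\tau^{-n}\sigma}^n f)$ is Cauchy in $L^\infty$. For $m > n$, decompose
\[
\Hl_{\tau^{-m}\sigma}^m f = \Hl_{\tau^{-n}\sigma}^n\bigl(\Hl_{\tau^{-m}\sigma}^{m-n} f\bigr),
\]
where both $f$ and $\Hl_{\tau^{-m}\sigma}^{m-n} f$ lie in $C_{\tau^{-n}\sigma}^a$ (the first because $C_{\inf}^a \subset C_{\tau^{-n}\sigma}^a$, the second by Lemma \ref{lem:cone_inv} once $m-n \ge n_0$). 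The previous contraction then gives $d_{C_\sigma^a}(\Hl_{\tau^{-n}\sigma}^n f, \Hl_{\tau^{-m}\sigma}^m f) \le \Const \nu^n$. To convert this into an $L^\infty$ estimate I will appeal to Lemma \ref{lem:link_hilbert} with $\ell = \Lambda_\sigma$ and the supremum norm: $\Lambda_\sigma$ is positively homogeneous and order-preserving, and the compatibility $\|f\|_\infty \le \|g\|_\infty$ whenever $g \pm f \in C_\sigma^a$ is immediate, since then $|f| \le g$ pointwise. After normalizing so that $\Lambda_\sigma(g_n)=\Lambda_\sigma(g_m)=1$, I get
\[
\|g_m - g_n\|_\infty \le (e^{\Const \nu^n}-1)\,\|g_n\|_\infty = \cO(\nu^n),
\]
which produces the limit $h_\sigma \in L^\infty$; the same argument applied to two different $f \in C_{\inf}^a$ shows that $h_\sigma$ does not depend on $f$.

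To finish, I will verify the announced properties of $h_\sigma$ by passing to the limit. The estimate $|\Lambda_\sigma(u) - \Lambda_\sigma(v)| \le \|u-v\|_\infty$ (which follows from super-additivity of $\Lambda_\sigma$ together with $|\Lambda_\sigma|\le\|\cdot\|_\infty$) yields $\Lambda_\sigma(h_\sigma)=1$. Lower semicontinuity of $\bigvee$ under uniform convergence, combined with $g_n \in C_\sigma^{a/2}$ (Lemma \ref{lem:cone_inv} for $n \ge n_0$), gives $\bigvee h_\sigma \le a/2$, hence $h_\sigma \in C_\sigma^{a/2}$ and $\|h_\sigma\|_\infty \le 1 + a/2 \le 1+a$. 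The lower bound $\inf h_\sigma \ge \exalpha$ is the pointwise limit of Lemma \ref{lem:inf_bounded}. Finally, the second inequality of the lemma follows from the first via $|u^{-1} - v^{-1}| = |u-v|/(uv)$, using that $g_n$ and $h_\sigma$ are bounded below by $\exalpha/2$ once $n$ is large enough.

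The main obstacle is not any single estimate but the bookkeeping required to ensure that all constants---$\Delta_{n_3}$, $\kappa$, and the cone-invariance thresholds---are genuinely uniform in $\sigma \in \Sigma^{\bZ}$; this is guaranteed because the earlier lemmas already involved suprema over $\Sigma^{\bN}$ and the operators $\Hl_\sigma^n$ depend only on the forward components of $\sigma$, so the extension to bilateral sequences is automatic.
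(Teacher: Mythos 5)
Your proof follows the paper's argument step by step: the same cone-contraction machinery via Lemma \ref{lem:finite_diameter} and Theorem \ref{thm:contraction_cone}, the same transfer from the Hilbert metric to $L^\infty$ via Lemma \ref{lem:link_hilbert} with $\ell = \Lambda_\sigma$ and $\|\cdot\|=\|\cdot\|_\infty$, and the same derivation of the stated bounds on $h_\sigma$ from Lemmata \ref{lem:cone_inv} and \ref{lem:inf_bounded}. The one detail worth tightening is the Cauchy estimate: your decomposition $\Hl_{\tau^{-m}\sigma}^m f = \Hl_{\tau^{-n}\sigma}^n\bigl(\Hl_{\tau^{-m}\sigma}^{m-n}f\bigr)$ places the inner factor in $C_{\tau^{-n}\sigma}^a$ only when $m-n\ge n_0$; for $0<m-n<n_0$ Lemma \ref{lem:cone_inv} only lands it in the larger cone $C_{\tau^{-n}\sigma}^{2aC_\star}$, so the contraction estimate does not apply directly. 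This is bookkeeping rather than a conceptual gap: the paper avoids the case split by comparing $\Hl_{\tau^{-n}\sigma}^{2n_3+r}f$ with $\Hl_{\tau^{-(n+m)}\sigma}^{2n_3+r+m}f$ inside $C_{\tau^{-(k-2)n_3}\sigma}^a$, both having already absorbed at least $n_3$ iterates so that Lemma \ref{lem:finite_diameter} puts them in $C^a$; a triangle inequality through $g_{m+n_0}$ would equally well repair your version.
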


\begin{proof}
Writing $n \ge 2 n_3$ as $n = k n_3 + r$, with $k \ge 2$ and $r < n_3$, for all $f \in C_{\inf}^a$, $\sigma \in \Sigma^{\bZ }$ and $m \ge 0$,  we have by Theorem \ref{thm:contraction_cone} and Lemma \ref{lem:finite_diameter}:
\[
d_{C_\sigma^a}(\Hl_{\tau^{-n} \sigma}^n f, \Hl_{\tau^{-(n+m)} \sigma}^{n+m} f) \le \gamma^{k-2} d_{C_{\tau^{-(k-2)n_3} \sigma}^a} ( \Hl_{\tau^{-n} \sigma}^{2n_3 + r} f, \Hl_{\tau^{-(n+m)} \sigma}^{2n_3 + r + m} f),
\]
with $\gamma = \tanh \left( \frac{\Delta_{n_3}}{4} \right) <1$.

Since both $\Hl_{\tau^{-n} \sigma}^{n_3 + r}f$ and $\Hl_{\tau^{-(n+m)} \sigma}^{n_3 + r + m} f$ belong to $C_{\tau^{-(k-1) n_3} \sigma}^a$ by Lemma \ref{lem:finite_diameter} again, as $f \in C_{\inf}^a \subset C_{\tau^{-n} \sigma}^a \cap C_{\tau^{-(n+m)} \sigma}^a$, we have, using Lemma \ref{lem:finite_diameter} one more time:
\[
d_{C_{\tau^{-(k-2)n_3} \sigma}^a} ( \Hl_{\tau^{-n} \sigma}^{2n_3 + r} f, \Hl_{\tau^{-(n+m)} \sigma}^{2n_3 + r + m} f) \le \Delta_{n_3}.
\]
Consequently, for all $n\ge 2 n_3$, $m \ge 0$, $\sigma \in \Sigma^{\bZ}$ and $f \in C_{\inf}^a$,
\[
d_{C_\sigma^a}(\Hl_{\tau^{-n} \sigma}^n f, \Hl_{\tau^{-(n+m)} \sigma}^{n+m} f) \le \Const \nu^n,
\]
with $\nu = \gamma^{\frac{1}{n_3}}$.

Using Lemma \ref{lem:link_hilbert} with $\| \cdot \| = \| \cdot \|_\infty$ and $ \ell( \cdot ) = \Lambda_\sigma ( \cdot )$, we get
\[
\left\| \frac{\Hl_{\tau^{-n} \sigma}^n f}{\Lambda_\sigma(\Hl_{\tau^{-n} \sigma}^n f)} - \frac{\Hl_{\tau^{-(n+m)} \sigma}^{n+m} f}{\Lambda_\sigma(\Hl_{\tau^{-(n+m)} \sigma}^{n+m} f)} \right\|_\infty \le (e^{d_{C_\sigma^a}(\Hl_{\tau^{-n} \sigma}^n f, \Hl_{\tau^{-(n+m)} \sigma}^{n+m} f)} - 1) \left\| \frac{\Hl_{\tau^{-n} \sigma}^n f}{\Lambda_\sigma(\Hl_{\tau^{-n} \sigma}^n f)} \right\|_\infty.
\]
Since $\Hl_{\tau^{-n} \sigma}^n f \in C_\sigma^a$, we have 
\begin{equation} \label{eqn:sup_hsigma}
\| \Hl_{\tau^{-n} \sigma}^n f \|_\infty \le \Lambda_\sigma( \Hl_{\tau^{-n} \sigma}^n f ) + \bigvee \Hl_{\tau^{-n} \sigma}^n f \le (1+a) \Lambda_\sigma(\Hl_{\tau^{-n} \sigma}^n f),
\end{equation}
and we deduce that
\[
\left\| \frac{\Hl_{\tau^{-n} \sigma}^n f}{\Lambda_\sigma(\Hl_{\tau^{-n} \sigma}^n f)} - \frac{\Hl_{\tau^{-(n+m)} \sigma}^{n+m} f}{\Lambda_\sigma(\Hl_{\tau^{-(n+m)} \sigma}^{n+m} f)} \right\|_\infty \le \Const \nu^n.
\]
This implies that $\frac{\Hl_{\tau^{-n} \sigma}^n f}{\Lambda_\sigma(\Hl_{\tau^{-n} \sigma}^n f)}$ is a Cauchy sequence in $L^\infty$, and thus converges to a function $h_\sigma \in L^\infty$. Since $C_{\sigma}^{a/2}$ is closed in $L^\infty$ and $\Hl_{\tau^{-n} \sigma}^n f \in C_\sigma^{a/2}$ for $n \ge n_0$ by Lemma \ref{lem:cone_inv}, we have $h_\sigma \in C_{\sigma}^{a/2}$. Passing to the limit $m \to \infty$ in the previous relation, we obtain
\[
\left\| \frac{\Hl_{\tau^{-n} \sigma}^n f}{\Lambda_\sigma(\Hl_{\tau^{-n} \sigma}^n f)} - h_\sigma \right\|_\infty \le \Const \nu^n,
\]
for all $n \ge 2 n_3$, and all $\sigma \in \Sigma^{\bZ}$.
Using the same reasoning, we have for any pair $f, f' \in C_{\tau^{-n} \sigma}^a$, 
\begin{equation} \label{eqn:loss_memory}
\left\| \frac{\Hl_{\tau^{-n} \sigma}^n f}{\Lambda_\sigma(\Hl_{\tau^{-n} \sigma}^n f)} - \frac{\Hl_{\tau^{-n} \sigma}^{n} f'}{\Lambda_\sigma(\Hl_{\tau^{-n} \sigma}^{n} f')} \right\|_\infty \le \Const \nu^n,
\end{equation}
which proves that the limit $h_\sigma$ does not depend on the choice of $f \in C_{\inf}^a$.
By Lemma \ref{lem:inf_bounded}, $\Hl_{\tau^{-n} \sigma}^n f \ge \exalpha \Lambda_\sigma( \Hl_{\tau^{-n} \sigma}^n f)$ for all $n \ge n_3$, whence we obtain $\inf h_\sigma \ge \exalpha$. Remark that \eqref{eqn:sup_hsigma} implies $\| h_\sigma \|_\infty \le 1 + a$.
When $n < 2 n_3$, we have
\[
\left\| \frac{\Hl_{\tau^{-n} \sigma}^n f}{\Lambda_\sigma(\Hl_{\tau^{-n} \sigma}^n f)} - h_\sigma \right\|_\infty \le \frac{M^{2 n_3}}{\ds^{2 n_3}} \frac{\sup f}{\inf f} + (1 + a) \le (1+a) \left( \frac{M^{2 n_3}}{\ds^{2 n_3}} + 1 \right) \le \Const \nu^n,
\]
where $M > 1$ and $0 < \ds < 1$ are defined in Remark \ref{rem:tech_facts}.
For $n \ge n_3$ and $f \in C_{\inf}^a$, since $\inf h_\sigma \ge \exalpha$ and $\inf \frac{\Hl_{\tau^{-n} \sigma}^n f}{\Lambda_\sigma(\Hl_{\tau^{-n} \sigma}^n f)} \ge \exalpha$ by Lemma \ref{lem:inf_bounded}, we have
\[
\left\| \frac{\Lambda_\sigma(\Hl_{\tau^{-n} \sigma}^n f)}{\Hl_{\tau^{-n} \sigma}^n f} - h_\sigma^{-1} \right\|_\infty \le \exalpha^{-2} \left\| \frac{\Hl_{\tau^{-n} \sigma}^n f}{\Lambda_\sigma(\Hl_{\tau^{-n} \sigma}^n f)} - h_\sigma \right\|_\infty \le \Const \nu^n.
\]
We handle the case $n < n_3$ as previously, since $\| h_\sigma \|_\infty \le 1+a$.
\end{proof}

\begin{lem}  \label{lem:eigenvalue}
For all $\sigma  \in \Sigma^{\bZ}$, there exists $\lambda_\sigma \ge \rho_\sigma$ such that $\Hl_\sigma^n h_\sigma = \lambda_\sigma \cdots \lambda_{\tau^{n-1} \sigma} h_{\tau^n \sigma}$ for all $n \ge 1$.
\end{lem}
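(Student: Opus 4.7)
The plan is to establish the one-step identity
\[
\Hl_{\sigma_1} h_\sigma = \lambda_\sigma\, h_{\tau \sigma} \quad \text{for all } \sigma \in \Sigma^{\bZ},
\]
and then derive the full statement by an immediate induction on $n$: assuming the identity at level $k$, applying it at $\tau^k \sigma$ converts $\Hl_{\sigma_{k+1}} h_{\tau^k \sigma}$ into $\lambda_{\tau^k \sigma} h_{\tau^{k+1}\sigma}$, which gives the product formula $\Hl_\sigma^n h_\sigma = \lambda_\sigma \cdots \lambda_{\tau^{n-1}\sigma}\, h_{\tau^n\sigma}$.

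To extract the one-step identity, I would exploit the shift relation $\tau^{-(n+1)}(\tau\sigma) = \tau^{-n}\sigma$ together with Lemma \ref{lem:quasiinv_density}. Fix any $f \in C_{\inf}^a$. Since $\Hl_{\sigma_1}$ is a bounded operator on $L^\infty$, applying it to the convergence $\frac{\Hl_{\tau^{-n}\sigma}^n f}{\Lambda_\sigma(\Hl_{\tau^{-n}\sigma}^n f)} \to h_\sigma$ gives
\[
\frac{\Hl_{\tau^{-n}\sigma}^{n+1} f}{\Lambda_\sigma(\Hl_{\tau^{-n}\sigma}^n f)} \xrightarrow{L^\infty} \Hl_{\sigma_1} h_\sigma,
\]
while Lemma \ref{lem:quasiinv_density} applied at $\tau\sigma$ (and the shift relation above) gives
\[
\frac{\Hl_{\tau^{-n}\sigma}^{n+1} f}{\Lambda_{\tau\sigma}(\Hl_{\tau^{-n}\sigma}^{n+1} f)} \xrightarrow{L^\infty} h_{\tau\sigma}.
\]
The first sequence is the scalar $\lambda_\sigma^{(n)} := \Lambda_{\tau\sigma}(\Hl_{\tau^{-n}\sigma}^{n+1} f)/\Lambda_\sigma(\Hl_{\tau^{-n}\sigma}^n f)$ times the second. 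Since Lemma \ref{lem:quasiinv_density} also yields $\inf h_{\tau\sigma} \ge \exalpha > 0$, the scalars $\lambda_\sigma^{(n)}$ must converge to a finite limit $\lambda_\sigma > 0$, and passing to the limit produces $\Hl_{\sigma_1} h_\sigma = \lambda_\sigma h_{\tau\sigma}$.

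For the lower bound $\lambda_\sigma \ge \rho_\sigma$, I would first observe $\Lambda_\sigma(h_\sigma) = 1$: indeed $\Lambda_\sigma$ is positively homogeneous and $L^\infty$-continuous (because $|\Lambda_\sigma(g)| \le \|g\|_\infty$), so applying it to the normalized approximating sequence yields the constant value $1$ in the limit. Consequently $\lambda_\sigma = \Lambda_{\tau\sigma}(\Hl_{\sigma_1} h_\sigma) \ge \rho_\sigma \Lambda_\sigma(h_\sigma) = \rho_\sigma$ by Lemma \ref{lem:iteration}.

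The only delicate point — and hence the would-be obstacle — is confirming that the scalar ratios $\lambda_\sigma^{(n)}$ genuinely converge rather than merely stay bounded. This is not really an obstacle once the two $L^\infty$ limits are in hand with a strictly positive second limit: the convergence can be read off by evaluating at any fixed $x \in [0,1]$ (since $h_{\tau\sigma}(x) > 0$ a.e., or indeed everywhere by the uniform lower bound), or more cleanly by noting that $\Lambda_{\tau\sigma}(\Hl_{\sigma_1} h_\sigma)$ directly identifies $\lambda_\sigma$ via continuity of $\Lambda_{\tau\sigma}$. No further machinery beyond Lemmata \ref{lem:iteration} and \ref{lem:quasiinv_density} should be required.
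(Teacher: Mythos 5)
Your proof is correct, and while the overall shape (apply $\Hl_{\sigma_1}$ to the conclusion of Lemma \ref{lem:quasiinv_density}, use the shift relation, pass to the limit, induct) matches the paper, the way you extract the scalar $\lambda_\sigma$ is genuinely different. The paper first applies Lemma \ref{lem:squeeze} to the ratios $\lambda_\sigma^{(n)}$ to obtain the uniform bound $\rho_\sigma \le \lambda_\sigma^{(n)} \le B\rho_\sigma$, and then extracts a convergent subsequence $\{n_j\}$; the lower bound $\lambda_\sigma \ge \rho_\sigma$ comes for free from that squeeze. You instead show the \emph{full} sequence $\lambda_\sigma^{(n)}$ converges by writing $\lambda_\sigma^{(n)} = \Lambda_{\tau\sigma}(A_n)$ (positive homogeneity) and invoking $L^\infty$-continuity of $\Lambda_{\tau\sigma}$, then get $\lambda_\sigma \ge \rho_\sigma$ separately from Lemma \ref{lem:iteration} after establishing $\Lambda_\sigma(h_\sigma)=1$. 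Your route avoids Lemma \ref{lem:squeeze} and the subsequence argument and yields a slightly stronger statement (actual convergence rather than convergence along a subsequence), though this extra strength is not needed for the lemma; the paper's route is more economical in that it derives both existence of the limit and the lower bound from the single inequality of Lemma \ref{lem:squeeze}.

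One small imprecision: the parenthetical reason you give for $L^\infty$-continuity of $\Lambda_\sigma$ — that $|\Lambda_\sigma(g)| \le \|g\|_\infty$ — does not by itself imply continuity, because at this stage $\Lambda_\sigma$ is not known to be linear (linearity is only proved in Lemma \ref{lem:lambda_linear}). The continuity you need does hold, but the right derivation uses two of the listed properties together: monotonicity and $\Lambda_\sigma(f+b) = \Lambda_\sigma(f)+b$. From $\|f-g\|_\infty \le \epsilon$ one gets $g - \epsilon \le f \le g + \epsilon$, hence $\Lambda_\sigma(g) - \epsilon \le \Lambda_\sigma(f) \le \Lambda_\sigma(g) + \epsilon$, i.e.\ $\Lambda_\sigma$ is $1$-Lipschitz for the $L^\infty$ norm. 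With that fix your argument is complete.
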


\begin{proof} Applying Lemma \ref{lem:squeeze} with $h = \Hl_{\tau^{-n} \sigma}^n \mathds{1}$, we have by definition of $\rho_\sigma = \Lambda_{\tau \sigma}(\Hl_\sigma^1 \mathds{1})$:
\[
\rho_\sigma \le \frac{\Lambda_{\tau \sigma}(\Hl_{\tau^{-n}\sigma}^{n+1} \mathds{1})}{\Lambda_\sigma(\Hl_{\tau^{-n } \sigma}^n \mathds{1})} \le B \rho_\sigma.
\]
Consequently, there exist a subsequence $\{ n_j \}$ and $\lambda_{\sigma} \in [\rho_\sigma, B \rho_\sigma]$ such that 
\[
\lambda_{\sigma} = \lim_j \frac{\Lambda_{\tau \sigma}(\Hl_{\tau^{-n_j}\sigma}^{n_j+1} \mathds{1})}{\Lambda_\sigma(\Hl_{\tau^{-n_j } \sigma}^{n_j} \mathds{1})}.
\]
We can now compute
\[
\begin{aligned}
\Hl_\sigma^1 h_\sigma = \lim_j \Hl_\sigma^1 \frac{\Hl_{\tau^{- n_j} \sigma}^{n_j} \mathds{1}}{\Lambda_\sigma(\Hl_{\tau^{- n_j} \sigma}^{n_j} \mathds{1})} & = \lim_j \frac{\Hl_{\tau^{-n_j} \sigma}^{n_j +1} \mathds{1}}{\Lambda_{\tau \sigma}(\Hl_{\tau^{-n_j} \sigma}^{n_j +1} \mathds{1})} \frac{\Lambda_{\tau \sigma}(\Hl_{\tau^{-n_j} \sigma}^{n_j +1} \mathds{1})}{\Lambda_\sigma(\Hl_{\tau^{- n_j} \sigma}^{n_j} \mathds{1})} \\
& = \lim_j \frac{\Hl_{\tau^{-(n_j+1)} \tau \sigma}^{n_j +1} \mathds{1}}{\Lambda_{\tau \sigma}(\Hl_{\tau^{-(n_j+1)} \tau \sigma}^{n_j +1} \mathds{1})} \frac{\Lambda_{\tau \sigma}(\Hl_{\tau^{-n_j} \sigma}^{n_j +1} \mathds{1})}{\Lambda_\sigma(\Hl_{\tau^{- n_j} \sigma}^{n_j}\mathds{1})} \\
& = \lambda_{\sigma} h_{\tau \sigma}.
\end{aligned}
\]\
The general case $n \ge 1$ is obtained by a simple induction.
\end{proof}

\begin{lem} \label{lem:lambda_linear}
For all $\sigma \in \Sigma^{\mathbb{N}}$, the functional $\Lambda_\sigma$ (restricted to ${\rm BV}$) is linear, positive, and enjoys the property 
$\Lambda_{\tau^n \sigma}(\Hl_\sigma^n f) = \rho_\sigma \cdots \rho_{\tau^{n-1} \sigma} \Lambda_\sigma(f)$ for all $f \in {\rm BV}$ and $n \ge 1$. Moreover, $\lambda_\sigma = \rho_\sigma$ and $\left\| \frac{\Hl_\sigma^n f}{\Hl_\sigma^n \mathds{1}} - \Lambda_\sigma(f) \right\|_\infty  \le \Const \nu^n \| f \|_{\rm BV}$ for all $f \in {\rm BV}$.
\end{lem}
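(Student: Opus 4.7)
The plan is to derive every assertion from a single master estimate: uniform exponential convergence $\Hl_\sigma^n f / \Hl_\sigma^n \mathds{1} \to \Lambda_\sigma(f)$ in $L^\infty$, with rate $\Const \nu^n \|f\|_{\rm BV}$. Once this is in hand, linearity is immediate (the LHS is linear in $f$ and uniform limits are unique), and the cocycle identity and the identification $\lambda_\sigma = \rho_\sigma$ follow by short algebraic manipulations.

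First I would establish the master estimate for $f \in C_{\inf}^a$. Since $f, \mathds{1} \in C_\sigma^a$ for every $\sigma$, Lemma \ref{lem:finite_diameter} combined with Theorem \ref{thm:contraction_cone} iterated along blocks of length $n_3$ gives $d_{C_{\tau^n \sigma}^a}(\Hl_\sigma^n f, \Hl_\sigma^n \mathds{1}) \le \Const \nu^n$ with $\nu = \tanh(\Delta_{n_3}/4)^{1/n_3}$. Unpacking the Hilbert metric: setting $\alpha_n = \alpha(\Hl_\sigma^n f, \Hl_\sigma^n \mathds{1})$ and $\beta_n = \beta(\Hl_\sigma^n f, \Hl_\sigma^n \mathds{1})$, one has $\alpha_n \Hl_\sigma^n f \le \Hl_\sigma^n \mathds{1} \le \beta_n \Hl_\sigma^n f$ pointwise, so $\beta_n^{-1} \le \Hl_\sigma^n f / \Hl_\sigma^n \mathds{1} \le \alpha_n^{-1}$ with $\log(\beta_n/\alpha_n) \le \Const \nu^n$; uniform lower bounds on $\alpha_n$ and upper bounds on $\beta_n$ needed to convert projective closeness into an $L^\infty$ estimate are supplied by Lemmata \ref{lem:inf_bounded} and \ref{lem:squeeze} (alternatively, invoke Lemma \ref{lem:link_hilbert} with $\ell = \Lambda_{\tau^n \sigma}$). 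A positive operator preserves pointwise inequalities $A h \le g \le B h$; applying this to $L = \Hl_{\tau^n \sigma}^p$ shows that $\Hl_\sigma^{n+p} f / \Hl_\sigma^{n+p} \mathds{1}$ stays inside the shrinking interval $[\beta_n^{-1}, \alpha_n^{-1}]$, so the sequence is Cauchy in $L^\infty$ and converges to a constant $C_\sigma(f)$. Since by definition $\Lambda_\sigma(f) = \lim_n \inf (\Hl_\sigma^n f / \Hl_\sigma^n \mathds{1})$, uniform convergence to a constant forces $C_\sigma(f) = \Lambda_\sigma(f)$. To extend to $f \in {\rm BV}$, pick $c = c(\|f\|_{\rm BV})$ so that $f + c\mathds{1} \in C_{\inf}^a$ (possible because $\bigvee f \le \|f\|_{\rm BV}$); linearity of $\Hl_\sigma^n$ together with the already-known identity $\Lambda_\sigma(g+b) = \Lambda_\sigma(g) + b$ produces the claimed rate $\Const \|f\|_{\rm BV} \nu^n$. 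Linearity of $\Lambda_\sigma$ on ${\rm BV}$ is then immediate from $\Hl_\sigma^n (f+g)/\Hl_\sigma^n \mathds{1} = \Hl_\sigma^n f / \Hl_\sigma^n \mathds{1} + \Hl_\sigma^n g / \Hl_\sigma^n \mathds{1}$ and uniqueness of uniform limits; positivity is preserved from each finite approximation.

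For the cocycle identity, I would split $\Hl_\sigma^{n+1} = \Hl_{\tau \sigma}^n \Hl_\sigma^1$ and apply the master estimate in the environment $\tau \sigma$ with initial functions $\Hl_\sigma^1 f$ and $\Hl_\sigma^1 \mathds{1}$, giving $\Hl_\sigma^{n+1} f / \Hl_{\tau\sigma}^n \mathds{1} \to \Lambda_{\tau \sigma}(\Hl_\sigma^1 f)$ and $\Hl_\sigma^{n+1} \mathds{1} / \Hl_{\tau \sigma}^n \mathds{1} \to \Lambda_{\tau \sigma}(\Hl_\sigma^1 \mathds{1}) = \rho_\sigma$. Taking the quotient of these two limits and comparing with $\Hl_\sigma^{n+1} f / \Hl_\sigma^{n+1} \mathds{1} \to \Lambda_\sigma(f)$ yields $\Lambda_{\tau \sigma}(\Hl_\sigma^1 f) = \rho_\sigma \Lambda_\sigma(f)$; iteration produces the full formula. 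Finally, applying $\Lambda_{\tau \sigma}$ to the eigen-relation $\Hl_\sigma^1 h_\sigma = \lambda_\sigma h_{\tau \sigma}$ from Lemma \ref{lem:eigenvalue} yields $\rho_\sigma \Lambda_\sigma(h_\sigma) = \lambda_\sigma \Lambda_{\tau \sigma}(h_{\tau \sigma})$ by the cocycle identity and positive homogeneity; since $\Lambda_{\sigma'}(h_{\sigma'}) = 1$ for every $\sigma'$ (use the continuity $|\Lambda_{\sigma'}(g) - \Lambda_{\sigma'}(h)| \le \|g - h\|_\infty$, which follows from monotonicity and $\Lambda_{\sigma'}(\cdot + b) = \Lambda_{\sigma'}(\cdot) + b$, together with the defining limit for $h_{\sigma'}$ in Lemma \ref{lem:quasiinv_density}), we conclude $\lambda_\sigma = \rho_\sigma$.

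The main obstacle I anticipate is purely bookkeeping: checking that all constants appearing in the Hilbert contraction and in the uniform two-sided bounds on $\alpha_n, \beta_n$ are genuinely independent of $\sigma$ and of $f$ beyond the dependence through $\|f\|_{\rm BV}$, and that the passage from Hilbert projective distance to $L^\infty$ distance is done with the right normalising functional. No new ingredients are required; everything reduces to Lemmata \ref{lem:cone_inv}, \ref{lem:finite_diameter}, \ref{lem:inf_bounded}, \ref{lem:squeeze} and \ref{lem:eigenvalue} already proven above.
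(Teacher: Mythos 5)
Your proof is correct, and it relies on the same engine as the paper's — the Hilbert‑metric contraction furnished by Lemma \ref{lem:finite_diameter} together with Theorem \ref{thm:contraction_cone} — but you package the master estimate somewhat more directly. The paper obtains $\|\Hl_\sigma^n f/\Hl_\sigma^n\mathds{1} - \Lambda_\sigma(f)\|_\infty \le \Const\nu^n\|f\|_{\rm BV}$ as a corollary of Lemma \ref{lem:quasiinv_density} via the three‑factor decomposition
$\frac{\Hl_\sigma^n f}{\Hl_\sigma^n\mathds{1}} = \frac{\Hl_\sigma^n f}{\Lambda_{\tau^n\sigma}(\Hl_\sigma^n f)}\cdot\frac{\Lambda_{\tau^n\sigma}(\Hl_\sigma^n f)}{\Lambda_{\tau^n\sigma}(\Hl_\sigma^n\mathds{1})}\cdot\frac{\Lambda_{\tau^n\sigma}(\Hl_\sigma^n\mathds{1})}{\Hl_\sigma^n\mathds{1}}$,
which funnels through the $h_{\tau^n\sigma}$'s and $h_{\tau^n\sigma}^{-1}$'s; you instead apply the projective contraction directly to the forward pair $(\Hl_\sigma^n f, \Hl_\sigma^n\mathds{1})$ inside $C_{\tau^n\sigma}^a$ and convert to $L^\infty$ via Lemma \ref{lem:link_hilbert} (or the equivalent two‑sided pointwise bounds), bypassing the quasi‑invariant densities for this step. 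This is a genuine, if modest, simplification. Everything else — identifying the uniform limit with $\Lambda_\sigma(f)$ through the defining $\lim\inf$, extending from $C_{\inf}^a$ to ${\rm BV}$ via $f\mapsto f+c$, the cocycle identity from $\Lambda_{\tau\sigma}(\Hl_\sigma^1 f) = \lim_n \frac{\Hl_\sigma^{n+1}f}{\Hl_\sigma^{n+1}\mathds{1}}\cdot\frac{\Hl_\sigma^{n+1}\mathds{1}}{\Hl_{\tau\sigma}^n\mathds{1}}$, and $\lambda_\sigma=\rho_\sigma$ by evaluating at $h_\sigma$ — matches the paper's proof exactly. One point you treat more carefully than the paper: the paper simply invokes $\Lambda_\sigma(h_\sigma)=1$, while you justify it via the $1$‑Lipschitz continuity of $\Lambda_\sigma$ in $L^\infty$ (from monotonicity and $\Lambda_\sigma(\cdot+b)=\Lambda_\sigma(\cdot)+b$) combined with $\Lambda_\sigma$ being $1$ on each normalized iterate defining $h_\sigma$ in Lemma \ref{lem:quasiinv_density}; this is the right way to fill that in.
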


\begin{proof}
For $f \in C_{\inf}^a$, we can write
\[
\frac{\Hl_\sigma^n f}{\Hl_\sigma^n \mathds{1}} = \frac{\Hl_\sigma^n f}{\Lambda_{\tau^n \sigma}(\Hl_\sigma^n f)} \frac{\Lambda_{\tau^n \sigma}(\Hl_\sigma^n f)}{\Lambda_{\tau^n \sigma}(\Hl_\sigma^n \mathds{1})} \frac{\Lambda_{\tau^n \sigma}(\Hl_\sigma^n \mathds{1})}{\Hl_\sigma^n \mathds{1}}.
\]
So
\[
\begin{aligned}
&\left\| \frac{\Hl_\sigma^n f}{\Hl_\sigma^n \mathds{1}} - \frac{\Lambda_{\tau^n \sigma}(\Hl_\sigma^n f)}{\Lambda_{\tau^n \sigma}(\Hl_\sigma^n \mathds{1})} \right\|_\infty = \frac{\Lambda_{\tau^n \sigma}(\Hl_\sigma^n f)}{\Lambda_{\tau^n \sigma}(\Hl_\sigma^n \mathds{1})} \left\| \frac{\Hl_\sigma^n f}{\Lambda_{\tau^n \sigma}(\Hl_\sigma^n f)} \frac{\Lambda_{\tau^n \sigma}(\Hl_\sigma^n \mathds{1})}{\Hl_\sigma^n \mathds{1}} - 1 \right\|_\infty \\
 & \le \|f\|_\infty \left( \left\|  \frac{\Hl_\sigma^n f}{\Lambda_{\tau^n \sigma}(\Hl_\sigma^n f)} - h_\sigma \right\|_\infty \left\| \frac{\Lambda_{\tau^n \sigma}(\Hl_\sigma^n \mathds{1})}{\Hl_\sigma^n \mathds{1}} \right\|_\infty + \|h_\sigma\|_\infty \left\| \frac{\Lambda_{\tau^n \sigma}(\Hl_\sigma^n \mathds{1})}{\Hl_\sigma^n \mathds{1}} - h_\sigma^{-1} \right\|_\infty \right).
\end{aligned}
\]
Since $\left\| \frac{\Lambda_{\tau^n \sigma}(\Hl_\sigma^n \mathds{1})}{\Hl_\sigma^n \mathds{1}} \right\|_\infty \le \exalpha^{-1}$ for $n \ge n_3$ by Lemma \ref{lem:inf_bounded}, we get, using Lemma \ref{lem:quasiinv_density}, for all $f \in C_{\inf}^a$ and $n \ge n_3$:
\begin{equation} \label{eqn:limit_lambda}
\left\| \frac{\Hl_\sigma^n f}{\Hl_\sigma^n \mathds{1}} - \frac{\Lambda_{\tau^n \sigma}(\Hl_\sigma^n f)}{\Lambda_{\tau^n \sigma}(\Hl_\sigma^n \mathds{1})} \right\|_\infty \le \Const \nu^n \|f \|_\infty.
\end{equation}
But, $\Lambda_\sigma(f) = \lim_{n\to\infty} \inf \frac{\Hl_\sigma^n f}{\Hl_\sigma^n \mathds{1}}$ by definition, and, since $\frac{\Lambda_{\tau^n \sigma}(\Hl_\sigma^n f)}{\Lambda_{\tau^n \sigma}(\Hl_\sigma^n \mathds{1})}$ are constants, we deduce that $\lim_{n\to\infty} \frac{\Lambda_{\tau^n \sigma}(\Hl_\sigma^n f)}{\Lambda_{\tau^n \sigma}(\Hl_\sigma^n \mathds{1})} = \Lambda_\sigma(f)$ and 
\[
\lim_{n \to \infty} \left\| \frac{\Hl_\sigma^n f}{\Hl_\sigma^n \mathds{1}} - \Lambda_\sigma(f) \right\|_\infty = 0.
\]
Now, if $f \in {\rm BV}$, we have $f + c \in C_{\inf}^a$ for $c = (1+a^{-1}) \|f \|_{\rm BV}$, so we get that $\Lambda_\sigma(f) = \lim_{n\to\infty} \frac{\Hl_\sigma^n f}{\Hl_\sigma^n \mathds{1}}$ in $L^\infty$ for all $f \in {\rm BV}$, since $\Lambda_\sigma(f + c) = \Lambda_\sigma(f) + c$. The linearity of $\Lambda$ follows from the linearity of the limit.

Next, as $\Hl_\sigma^1 f \in {\rm BV}$, we know that
\[
\Lambda_{\tau \sigma} (\Hl_\sigma^1 f) = \lim_{n \to \infty} \frac{\Hl_\sigma^{n+1} f}{\Hl_{\tau \sigma}^n \mathds{1}} = \lim_{n \to \infty} \frac{\Hl_\sigma^{n+1} f}{\Hl_\sigma^{n+1} \mathds{1}} \frac{\Hl_\sigma^{n+1} \mathds{1}}{\Hl_{\tau \sigma}^n \mathds{1}} = \Lambda_\sigma(f) \Lambda_{\tau \sigma}(\Hl_\sigma^1 \mathds{1}) = \rho_\sigma \Lambda_\sigma(f).
\]
But then $\rho_\sigma = \lambda_\sigma$ is obtained by taking $f = h_\sigma$, since $\Lambda_\sigma(h_\sigma) = 1$.
In particular, we have $\Lambda_{\tau^n \sigma} ( \Hl_\sigma^n f) = \rho_\sigma \cdots \rho_{\tau^{n-1} \sigma} \Lambda_\sigma(f)$ for all $f \in {\rm BV}$, so $\frac{\Lambda_{\tau^n \sigma}(\Hl_\sigma^n f)}{\Lambda_{\tau^n \sigma}(\Hl_\sigma^n \mathds{1})} = \Lambda_\sigma(f)$. But if we look back to \eqref{eqn:limit_lambda}, the above implies that for all $f \in C_{\inf}^a$ and $n \ge n_3$:
\[
\left\| \frac{\Hl_\sigma^n f}{\Hl_\sigma^n \mathds{1}} - \Lambda_\sigma(f) \right\|_\infty \le \Const \nu^n \| f \|_\infty.
\]
This can be easily extended to all $n \ge 0$, since $\left\| \frac{\Hl_\sigma^n f}{\Hl_\sigma^n \mathds{1}} - \Lambda_\sigma(f) \right\|_\infty \le 2 \|f \|_\infty$. We can again cover the general case $f \in {\rm BV}$ using the fact that $f + c \in C_{\inf}^a$ for $c = (1 + a^{-1}) \|f \|_{\rm BV}$, which finally implies
\[
\left\| \frac{\Hl_\sigma^n f}{\Hl_\sigma^n \mathds{1}} - \Lambda_\sigma(f) \right\|_\infty \le \Const \nu^n \| f + c \|_\infty \le \Const \nu^n \|f \|_{\rm BV}.
\]
\end{proof}

\begin{rem} Following closely the ideas of \cite{liverani2003lasota}, it is possible to prove that $\Lambda_\sigma$ can be interpreted as a non-atomic measure $\mu_\sigma$, i.e. $\Lambda_\sigma(f) = \int f \, d \mu_\sigma$ for all $f \in {\rm BV}$, and that the measure $\nu_\sigma$ defined by $d \nu_\sigma = h_\sigma d \mu_\sigma$ satisfies $(T_\sigma^1)_\star \nu_\sigma = \nu_{\tau \sigma}$. Since we will not make use of these facts, we leave their proofs to the interested reader.
\end{rem}

The main properties of $\Lambda_\sigma$ being proved, we can now improve Lemma \ref{lem:quasiinv_density} by extending it to general functions in ${\rm BV}$ and deduce Proposition \ref{prop:decay}:
\begin{proof}[{\bf\emph{Proof of Proposition \ref{prop:decay}}}]
By \eqref{eqn:loss_memory} with $f' = h_\sigma$, for any $f \in C_\sigma^a$, we get using Lemma \ref{lem:lambda_linear}
\[
\left\| \frac{\Hl_\sigma^n f}{\rho_\sigma \cdots \rho_{\tau^{n-1} \sigma}} - \Lambda_{\sigma}(f) h_{\tau^n \sigma} \right\|_\infty \le \Const \nu^n \Lambda_{\sigma} (f) \le \Const \nu^n \|f \|_\infty.
\]
Now, if $f \in {\rm BV}$, we have $f + c h_\sigma \in C_{\sigma}^a$ for all $\sigma \in \Sigma^{\bZ}$ with $c = 2(1+a^{-1}) \|f \|_{\rm BV}$. Indeed, since $\bigvee h_\sigma \le \frac{a}{2} \Lambda_\sigma(h_\sigma) = \frac{a}{2}$ by Lemma \ref{lem:quasiinv_density}, we have
\[
\bigvee (f + c h_\sigma) \le \bigvee f + c \bigvee h_\sigma \le \bigvee f + \frac{ac}{2},
\]
and
\[
\Lambda_\sigma(f + c h_\sigma) = \Lambda_\sigma(f) + c \ge \inf f + c.
\]
So $f + c h_\sigma \in C_\sigma^a$ if $c \ge 2( a^{-1} \bigvee f - \inf f)$, which is the case for our particular choice of $c$.
Consequently, we have
\[
\left\| \frac{\Hl_\sigma^n (f+c h_\sigma)}{\rho_\sigma \cdots \rho_{\tau^{n-1} \sigma}} - \Lambda_{\sigma}(f+c h_\sigma) h_{\tau^n \sigma} \right\|_\infty \le \Const \nu^n \| f + c h_\sigma \|_\infty \le \Const \nu^n \| f \|_{\rm BV},
\]
which leads to \eqref{eqn:exp_convergence} after simplifications, since 
\[
\Hl_\sigma^n(f+c h_\sigma) = \Hl_\sigma^nf + c \rho_\sigma \cdots \rho_{\tau^{n-1} \sigma} h_{\tau^n \sigma}
\]
and $\Lambda_{\sigma}(f + c h_\sigma) = \Lambda_{\sigma} (f) + c$.
\end{proof}

\begin{proof}[{\bf\emph{Proof of Lemma \ref{lem:property_inv_density}}}] By Remark \ref{rem:tech_facts}, we have $\rho_\sigma \le \| \Hl_\sigma^1 \mathds{1} \|_\infty \le M$, and so $\sup_\sigma \rho_\sigma \le M < \infty$.
The remaining statements are immediate consequences of Lemmata \ref{lem:quasiinv_density}, \ref {lem:eigenvalue} and \ref{lem:lambda_linear}.
\end{proof}

\begin{proof}[{\bf\emph{Proof of Lemma \ref{lem:tech_lemma}}}]
This follows immediately from Lemma \ref{lem:squeeze} and Remark \ref{rem:inf_bounded1}.
\end{proof}

\bibliographystyle{abbrv} \bibliography{rwre}

\end{document}